\def \i{\mathbf{i}}
\newcommand{\cp}{{\mathcal P}}
\def\EE{\mathcal{EE}}
\newcommand{\Co}{\mathcal{C}}
\def\N{\mathbb{N}} 
\def\P{\mathbb{P}}
\def\E{\mathbb{E}}
\def\EE{\mathcal{E}}
\def\R{\mathbb{R}}
\def\t{\textrm}
\newcommand{\expp}[1]{\mathop {\mathrm{e}^{ #1}}}
\def\f{\bold{f}}
\def\w{\widetilde}
\def\ind{{\mathchoice {\rm 1\mskip-4mu l} {\rm 1\mskip-4mu l}
{\rm 1\mskip-4.5mu l} {\rm 1\mskip-5mu l}}}
\def\ud{d}
\newcommand{\be} {\begin{equation}}
\newcommand{\ee} {\end{equation}}
\newcommand{\bea} {\begin{eqnarray}}
\newcommand{\eea} {\end{eqnarray}}
\newcommand{\Bea} {\begin{eqnarray*}}
\newcommand{\Eea} {\end{eqnarray*}}
\newcommand{\linf}[1]{\underset{#1\to+\infty}\longrightarrow}
\numberwithin{equation}{section}
\def\be{\begin{eqnarray}}
\def\ee{\end{eqnarray}}
\def\ben{\begin{eqnarray*}}
\def\een{\end{eqnarray*}}
\def\me{\medskip \noindent}
\def\bi{\bigskip \noindent}
\def\E{\mathbb{E}}
\def\P{\mathbb{P}}
\def\f{{\cal F}}
\let\text=\textstyle
\def\One{{\mathbf 1}}   
\newcommand{\loi}{{\cal L}}
\newcommand{\sm}{{s-}}
\newcommand{\ds}{\displaystyle}
\newcommand{\intot}{\displaystyle \int _0^t }
\newcommand{\indiq}{{\bf 1}}
\newcommand{\e}{{\epsilon}}
\newcommand{\intrd}{\ds{\int_{\rit^d}}}
\newcommand{\rit}{\mathbb{R}}
\newcommand{\nit}{\mathbb{N}}
\newcommand{\dit}{\mathbb{D}}
\def\me{\medskip\noindent}
\def\bi{\bigskip\noindent}
\newtheorem{ann}{Assumption}[section]
\newtheorem{Thm}{Theorem}[section]
\newtheorem{Lem}[Thm]{Lemma}
\newtheorem{Prop}[Thm]{Proposition}
\newtheorem{Cor}[Thm]{Corollary}
\newtheorem{Def}[Thm]{Definition}
\theoremstyle{definition} \newtheorem{Notation}{Notation}
\theoremstyle{definition} \newtheorem*{key}{Key words}
\theoremstyle{exercice} \newtheorem*{exo}{Exercise}
\theoremstyle{definition} \newtheorem*{ams}{A.M.S. Classification}
\theoremstyle{remark}
\newtheorem{rema}[Thm]{Remark}
\begin{document}
\title{ Some stochastic models for structured populations : \\
scaling limits and long time  behavior}
\author{  Vincent Bansaye\thanks{CMAP, Ecole Polytechnique, CNRS, route de
    Saclay, 91128 Palaiseau Cedex-France; E-mail: \emph{vincent.bansaye@polytechnique.edu}}
   \  \& \
    Sylvie M\'el\'eard\thanks{CMAP, Ecole Polytechnique, CNRS, route de
    Saclay, 91128 Palaiseau Cedex-France; E-mail: \emph{sylvie.meleard@polytechnique.edu}} 
 }
\maketitle \vspace{1.5cm}
\begin{abstract}

The first chapter  concerns monotype population models. We first  study  general birth and death processes and we give non-explosion and extinction criteria, moment computations and a pathwise representation.  We then show how different scales  may lead to different qualitative approximations, either ODEs or SDEs. The prototypes of these equations are the logistic (deterministic) equation and the logistic Feller diffusion process. The convergence in law of the sequence of processes is proved by tightness-uniqueness argument.  In these large population  approximations, the competition between individuals leads to nonlinear drift terms. 

We then focus on models without interaction but including exceptional events due either to demographic stochasticity or to environmental stochasticity. In the first case, an individual may have  a large number of offspring and we introduce  the class of continuous state branching processes. In the second case, catastrophes may occur and kill a random fraction of the population and  the process enjoys a quenched branching property.  We emphasize on the study  of the  Laplace transform, which allows us to classify  the long time behavior of these processes.

In the second chapter, we model structured populations by measure-valued stochastic differential equations. Our approach is based on the individual dynamics.  The individuals are characterized by parameters which have an influence on their survival or reproduction ability. Some of these parameters can be genetic and are  inheritable except when mutations occur, but they can also be a space location or a quantity of parasites.  The individuals compete for resources or other environmental constraints.
We describe the population by a point measure-valued Markov process. We study macroscopic approximations of this process depending on the interplay between different scalings and obtain in the limit either   integro-differential equations or reaction-diffusion equations or nonlinear super-processes. In each case, we insist on the specific techniques  for the proof of convergence and for the study of  the limiting model. The limiting processes offer different models of mutation-selection dynamics.

Then, we study two-level models motivated  by cell division dynamics, where the cell population is  discrete  and characterized by a trait, which may be continuous. In particular, we finely study
a process for parasite infection and the trait is  the parasite load. The latter grows following a Feller diffusion and  is randomly shared in the two daughter cells when the cell divides. Finally, we focus on  the neutral case when the rate of division of cells is constant but
the trait evolves following a general Markov process and may split in a random number of cells. The long time behavior  of the structured population is then  linked and derived from the  behavior a well chosen SDE (monotype population).
\end{abstract}

\begin{key}
Population models - Birth and death processes - large population approximations - Continuous state branching processes - Branching processes in random environment - Measure-valued Markov processes - Martingale properties - Two-level models - Cell division dynamics. 
\end{key}

\begin{ams} 60J80, 60J75, 60G57, 60H10, 92D25, 92D15.
\end{ams}

\tableofcontents
\section{Introduction}

This course concerns the stochastic modeling of population dynamics. In the first part, we focus on monotypic populations described by one dimensional stochastic differential equations with jumps. 
We consider their scaling limits for large populations and
study the long time behavior
of  the limiting processes. It is achieved thanks to  martingale properties, Poisson measure representations and stochastic calculus.
These tools and results  will be used and extended to  measure-valued processes in the second part. 
The latter is dedicated to structured populations, where individuals are characterized by a trait  belonging to  a continuum.  
  
In the first section, we  define birth and death processes  with   rates  depending on the state of the population and recall some long time properties based on recursion equations. A pathwise representation of the processes using Poisson point measures is introduced,  from which we deduce some martingale properties.  We represent  the carrying capacity of the underlying environment through a scaling  parameter $K\in \mathbb{N}$  and state our results in the limit of large $K$.   Depending on the  demographic rates,   the    population size  renormalized by  $K$  is  approximated either by  the solution of an ordinary differential equation or by  the solution of a stochastic differential equation. The proofs are based on   martingale properties and tightness-uniqueness arguments. When the  per individual  death rate is an affine function of  the population size,  in the limit we obtain  either a so called  logistic equation or     a logistic Feller diffusion process. The long time  behavior of these limiting dynamics is studied. Assuming a constant  per capita death rate leads to a Feller diffusion  which  satisfies  the branching property:  two disjoint subpopulations evolve independently. In that case, specific tools using Laplace transforms can be used. 
We extend this class of processes 
by  adding jumps, which may be due either to demographic stochasticity or to environmental stochasticity.
We consider  them separately and  we characterize their finite dimensional laws and  long time behavior using the branching property, the generator and martingale properties. First, we focus on Continuous State Branching Processes which arise as  scaling limits of branching processes when the individuals may have a very large number
of offspring. This gives rise to a jump term whose rate is proportional to the size of the population. Using the Lamperti transform, we can then both describe their scaling limits 
and classify the long time behavior : extinction, absorption at 
$0$ or exponential growth to infinity.
The second class  of   jump processes models random environmental catastrophes, which kill a random fraction of the population.  The continuous state process  can  be obtained as a limit of discrete particle systems, where the demographic dynamics of the population and the  environmental catastrophes occur on different timescales. Now, only the {\it quenched} branching property holds and the long time behavior of the Laplace exponent is more subtle.
We recover the three usual regimes, subcritical, critical and supercritical but the subcritical case is split  in three sub-cases leading  to  different  asymptotics for the survival probability. 

 The second part concerns structured populations whose  individuals are  characterized by a type taking values in a continuum. Two main examples are developed. The first one  models Darwinian evolution where the type is an heritable trait subject to to selection and mutation. The second case describes  cell division with  parasite infection  and the type of a cell is the amount of parasites it carries.  In both cases, the mathematical model is a  measure-valued Markov process with jumps. Therefore, we develop some stochastic tools for such processes and use a pathwise representation driven by Poisson point measures to obtain martingale properties. We consider different approximations of the  process, depending on the parameter $K$, which as before scales the 
population size but now also the demographic rates. The limiting theorems are proved using compactness-uniqueness arguments and the semimartingale decomposition of the measure-valued processes. \\
 In the first  two subsections, the population model includes mutations which may occur during each birth event with some positive probability. The mutant inherits a random perturbation of the ancestor's trait. The individuals compete for resources and the  individual death rate depends on the  whole population trait distribution, leading to nonlinearities in the limit. In the large population case, the limiting equation is a nonlinear integro-differential equation. In the allometric case, when the demographic rates are much larger but the mutation amplitude very small in an appropriate scale, the limiting object can be either a nonlinear reaction-diffusion equation or a nonlinear super-process. The latter is a continuous measure-valued  process whose law is characterized by martingale properties. Simulations show the qualitative differences between the
  trait supports for  these different asymptotics. It means that a change of scales in the parameters  leads to quite different evolutive scenarios. 
Let us point out  that  the classical models for population dynamics in  an ecological or mutational framework can thus be explained  from the  birth and death processes describing the evolution of the population at the level of the individuals.\\
 In the last two subsections, we describe  two-level models motivated by cell division dynamics.
First, we consider a finite population of dividing cells. The cells are infected by parasites which  may influence their division rates.
 The parasites  are more abundant and reproduce and die faster than the cells and their growth is  modeled
 by a Feller diffusion. When the cells divide, the parasite load is randomly shared in the two daughter cells. Following a random cell lineage (by keeping one daughter cell at random at each division)
makes appear a Feller diffusion with catastrophes. 
When studying the number of infected cells for large times, we obtain different regimes depending on the positivity or not of a parameter based on the division rate, the parasite splitting law and the parasite growth rate. Finally, we consider the long time behavior of a structured population when the genealogical tree is a branching process. It allows multiple offspring and  deaths. Between the branching events, the individual  traits evolve independently following a Markov process. 
The ergodicity of a well chosen one dimensional auxiliary Markov process
allows to prove the convergence of the trait distribution  within the population when time goes to infinity.

\bi
{\bf Notation}

\noindent
For a Polish space $E$, ${\cal P}(E)$ denotes the space of probability measures on $E$.

\me
The spaces $C^2_{b}(\mathbb{R})$,  $C^2_{b}(\mathbb{R}_{+})$,  $C^2_{b}(\mathbb{R}^d)$ are the spaces of bounded continuous functions whose first and second derivatives are bounded and continuous,  resp. on $\mathbb{R}$,  $\mathbb{R}_{+}$, $\mathbb{R}^d$. 

\me
In all what follows, $C$  denotes a constant real number whose value can change from one line to the other.

\bi
{\bf Acknowledgment}

\noindent The authors wish to warmly thank Amandine V\'eber for the  reading of the manuscript and her suggestions.

\noindent

\part{Discrete Monotype Population Models and One-dimensional Stochastic Differential Equations}

In the first chapter, we concentrate on one-dimensional models for population dynamics. After recalling the main properties of the birth and death processes, we study different scaling limits using a martingale approach. 
Then we investigate the long time behavior of some classes of limiting processes, in the case of large reproduction events or random environment using the branching property. 

\section{Birth and Death Processes}

\subsection{Definition and non-explosion criterion}

\begin{Def} A \index{birth and death process}  \textbf{birth and death process} is a pure jump Markov process whose  jumps steps are equal to   $\pm 1$. The transition rates are as follows: \ben
\left\{\begin{array}{ccc} i \to i+1 &\hbox{ at rate } &
\lambda_i\\i \to i-1 &\hbox{ at rate } & \mu_i,\end{array}\right.
\een  $(\lambda_i)_{i\in \mathbb{N}^*}$ and $(\mu_i)_{i\in \mathbb{N}^*}$  being two sequences of positive real numbers and 
 $\lambda_0=\mu_0=0$. \end{Def} 
 
\me In this case, the infinitesimal generator is the matrix $(Q_{i,j})$ defined on $\mathbb{N}\times \mathbb{N}$ by
$$Q_{i,i+1}=\lambda_{i}\ ,\ Q_{i,i-1}=\mu_{i}\ , \ Q_{i,i} = -(\lambda_{i}+\mu_{i})\,, \ \ Q_{i,j}=0 \hbox{ otherwise}.$$
The global jump rate for a population with size  $i\geq 1$ is $\ \lambda_{i}+\mu_{i}$. After a random time distributed according an exponential law with parameter  $\ \lambda_{i}+\mu_{i}$, the process increases by $1$  with probability ${\lambda_{i}\over \lambda_{i}+\mu_{i}}$  and decreases by $-1$ with probability ${\mu_{i}\over \lambda_{i}+\mu_{i}}$. If $\ \lambda_{i}+\mu_{i} = 0$, the process is absorbed at $i$.

\me Recall that if $(P_{i,j}(t)  : t\in \mathbb{R}_{+})$ denotes the transition semigroup of the process, then
\ben
P_{i,i+1}(h) = \lambda_{i} \, h + o(h)\ ;\ 
P_{i,i-1}(h) = \mu_{i} \, h + o(h)\ ;\ 
P_{i,i}(h) = 1 - (\lambda_{i}+\mu_{i}) \, h + o(h).
\een

\me \textbf{Examples:} The constant  numbers $\lambda$, $\mu$, $\rho$, $c$ are positive. 

1) The \index{ Yule process} Yule process corresponds to the case $\lambda_i=i \lambda$, 
$\ \mu_i=0$.

2) The branching process or \index{linear birth and death process } linear birth and death process : $\lambda_i=i \lambda,\ \mu_i=i \mu$.

3) The \index{ birth and death process  with immigration} birth and death process  with immigration :
$\lambda_i=i \lambda + \rho,\ \mu_i=i \mu$.

4) The \index{logistic birth and death process} logistic birth and death process : 
 $\lambda_i=i \lambda,\ \mu_i=i \mu +
c\ i(i-1)$.

\bi The following theorem characterizes the  non-explosion  in finite time of the process. In this case,  the  process will be defined and will have a.s. finite value  at any time $t\in\mathbb{R}_{+}$.

\begin{Thm}
\label{nm-vie} Suppose that $\lambda_i>0$ for all $i\geq 1$.
Then the  birth and death process has almost surely  an infinite life time 
if and only if the following series diverges: \be
\label{non-exp} \sum_{i\geq 1}
\left({1\over \lambda_i}+{\mu_i\over
\lambda_i\lambda_{i-1}}+\cdots+{\mu_i\cdots
\mu_2\over\lambda_i\cdots\lambda_2\lambda_1}\right) = +\infty.\ee
\end{Thm}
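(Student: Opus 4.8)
The idea is to compress the series into a single auxiliary function $h:\mathbb N\to\mathbb R_+$ that grows exactly when \eqref{non-exp} diverges and that makes $h(X_t)-t$ a (local) martingale up to the first exit from $\{1,\dots,n\}$; the explosion question then reduces to controlling hitting times by optional stopping. Concretely, write $a_i$ for the $i$-th summand of \eqref{non-exp} (with $a_0:=0$); one checks directly from the definition that $\lambda_i a_i=1+\mu_i a_{i-1}$ for every $i\ge1$. Setting $h(0)=h(1)=0$ and $h(n)=\sum_{i=1}^{n-1}a_i$, this recursion is precisely the statement that $Qh(i)=\lambda_i\big(h(i+1)-h(i)\big)+\mu_i\big(h(i-1)-h(i)\big)=1$ for all $i\ge1$ (and $Qh(0)=0$). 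Hence, by Dynkin's formula (or by inspecting the jump structure directly), with $T_j=\inf\{t:X_t=j\}$ the process $h(X_{t\wedge T_0\wedge T_n})-(t\wedge T_0\wedge T_n)$ is a bounded martingale; note also $h(n)\uparrow\sum_{i\ge1}a_i$, so $h(n)\to\infty$ if and only if the series diverges.

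For the implication ``series diverges $\Rightarrow$ a.s.\ infinite lifetime'', I would start from $X_0=k$ and apply optional stopping: $h(n)\,\mathbb P_k(T_n\le t,\,T_n<T_0)\le\mathbb E_k[h(X_{t\wedge T_0\wedge T_n})]=h(k)+\mathbb E_k[t\wedge T_0\wedge T_n]\le h(k)+t$, so letting $n\to\infty$ gives $\mathbb P_k(T_n\le t,\,T_n<T_0)\to0$. On the event that the explosion time $\zeta=\lim_nT_n$ is $\le t$, the process makes infinitely many $\pm1$ jumps in $[0,t]$, hence (the holding times at any fixed state being i.i.d.\ exponential) it must tend to $+\infty$ without being absorbed at $0$; so $\{\zeta\le t\}\subset\{T_n\le t,\,T_n<T_0\}$ for every $n$, which forces $\mathbb P_k(\zeta\le t)=0$ for all $t$, i.e.\ $\zeta=\infty$ a.s.\ from every state.

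For the converse I would argue the contrapositive. If $S:=\sum_{i\ge1}a_i<\infty$ then $h\le S$, and the same optional-stopping identity gives $\mathbb E_k[t\wedge T_0\wedge T_n]\le S$; letting $n\to\infty$ and then $t\to\infty$ (monotone convergence) yields $\mathbb E_k[T_0\wedge\zeta]\le S<\infty$, so $T_0\wedge\zeta<\infty$ a.s. It then remains to see that, from any $k\ge1$, absorption at $0$ fails with positive probability: the embedded jump chain is the birth–death random walk on $\mathbb N$ with up/down steps $\lambda_i/(\lambda_i+\mu_i)$, $\mu_i/(\lambda_i+\mu_i)$ and $0$ absorbing, and the classical gambler's-ruin identity gives $\mathbb P_k(T_0=\infty)>0$ as soon as $\sum_{n}\mu_1\cdots\mu_n/(\lambda_1\cdots\lambda_n)<\infty$; and that last series converges because its $n$-th term is $\mu_1$ times the final summand of $a_n$, hence $\le\mu_1a_n$, so its sum is $\le\mu_1S<\infty$. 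On $\{T_0=\infty\}$ we then have $\zeta=T_0\wedge\zeta<\infty$, so the lifetime is finite with positive probability.

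The one genuinely creative step is guessing $h$ — equivalently, recognizing $\sum_i a_i$ as $\sum_i\mathbb E_i[\text{time to climb from }i\text{ to }i{+}1]$ — and verifying $Qh\equiv1$; everything after that is optional-stopping bookkeeping. The point I would watch most carefully in the converse direction is that $\mathbb E_k[T_0\wedge\zeta]<\infty$ by itself does \emph{not} yield explosion, since the finiteness could be entirely accounted for by absorption at $0$; ruling this out is exactly what the transience of the embedded walk — together with the elementary inequality $\mu_1\cdots\mu_n/(\lambda_1\cdots\lambda_n)\le\mu_1 a_n$ linking the two series — is for.
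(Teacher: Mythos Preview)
Your argument is correct and complete. In particular, the recursion $\lambda_i a_i=1+\mu_i a_{i-1}$ is exactly right, the optional--stopping bound $h(n)\,\mathbb P_k(T_n\le t,\,T_n<T_0)\le h(k)+t$ is clean, the inclusion $\{\zeta\le t\}\subset\{T_n\le t,\,T_n<T_0\}$ for $n>k$ follows from the fact that on $\{\zeta<\infty\}$ each state is visited only finitely often (your i.i.d.\ exponential holding-time argument) and $0$ is never reached; and in the converse direction the monotone limit $T_0\wedge T_n\uparrow T_0\wedge\zeta$ is legitimate since, on $\{T_0=\infty,\,\zeta<\infty\}$, the process hits every level before $\zeta$ and $\sup_n T_n=\zeta$, while on the complementary events $T_0\wedge T_n$ stabilises or diverges as required. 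The comparison $\frac{\mu_1\cdots\mu_n}{\lambda_1\cdots\lambda_n}\le\mu_1 a_n$ linking the two series is a nice observation.

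Your route, however, is genuinely different from the paper's. The paper first establishes the abstract Reuter-type criterion valid for \emph{any} minimal jump chain: non-explosion is equivalent to the equation $Qx=x$ having no nontrivial bounded nonnegative solution, proved by analysing $h_i^{(n)}=\mathbb E_i(e^{-T_n})$ and its limit $\mathbb E_i(e^{-T_\infty})$. Only afterwards does it specialise to birth--death, unwinding the recursion $Qx=x$ algebraically to reach the series. Your approach bypasses the abstract criterion entirely: you build by hand a function with $Qh\equiv 1$, so that $h(X_t)-t$ is a martingale, and settle both directions by optional stopping plus the gambler's-ruin transience of the embedded walk. What you gain is directness and a transparent probabilistic meaning of the series (each $a_i$ is the increment of the ``clock'' function $h$); what the paper gains is a criterion that applies verbatim to chains that are not nearest-neighbour. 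One small caveat: your parenthetical interpretation of $a_i$ as $\mathbb E_i[\text{time to climb to }i{+}1]$ is only heuristic here, since with absorbing $0$ that expectation is infinite whenever $\mu_1>0$; the identity $Qh=1$ is the rigorous statement you actually use, and it stands on its own.
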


\begin{Cor}
\label{xor-exist}
If for any $i$, $\lambda_{i}\leq \lambda \, i $, with $\lambda>0$,  the process is well defined on $\mathbb{R}_{+}$.
\end{Cor}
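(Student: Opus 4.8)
The plan is to obtain the corollary as an immediate consequence of Theorem \ref{nm-vie}. Since a birth and death process is by definition built from sequences of \emph{positive} rates $\lambda_i$, the hypothesis ``$\lambda_i>0$ for all $i\ge 1$'' of that theorem holds automatically; so it only remains to check that the series \eqref{non-exp} diverges whenever $\lambda_i\le\lambda\,i$.

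The single computation to carry out is a lower bound on \eqref{non-exp}. Each summand is a finite sum of non-negative terms whose first term is $1/\lambda_i$, hence
$$\sum_{i\geq 1}\left(\frac{1}{\lambda_i}+\frac{\mu_i}{\lambda_i\lambda_{i-1}}+\cdots+\frac{\mu_i\cdots\mu_2}{\lambda_i\cdots\lambda_1}\right)\ \geq\ \sum_{i\geq 1}\frac{1}{\lambda_i}\ \geq\ \frac{1}{\lambda}\sum_{i\geq 1}\frac{1}{i}\ =\ +\infty,$$
by divergence of the harmonic series. By Theorem \ref{nm-vie} this forces the process to have an almost surely infinite life time, which is exactly the assertion (absorption at $0$ is of course still allowed, and does not contradict being well defined on $\mathbb{R}_+$).

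I do not expect any genuine obstacle here: the point is simply that bounding the birth rates by those of a Yule process ($\lambda_i=\lambda i$) keeps one in the non-explosive regime, and the death rates $\mu_i$ only add further non-negative mass to \eqref{non-exp}, so they cannot spoil the divergence. If one wished to avoid invoking Theorem \ref{nm-vie} altogether, an alternative would be a pathwise coupling bounding the process above by a Yule process $Y$ with $\E[Y_t]=Y_0\,e^{\lambda t}<\infty$ for every $t$, hence non-explosive; but the argument via \eqref{non-exp} is the shortest.
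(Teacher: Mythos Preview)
Your proof is correct and is exactly the argument the paper has in mind: the corollary is stated immediately after Theorem~\ref{nm-vie} without a separate proof, so the intended justification is precisely to observe that $\sum_{i\ge 1}1/\lambda_i\ge \lambda^{-1}\sum_{i\ge 1}1/i=+\infty$ already forces the series \eqref{non-exp} to diverge. Your remark that the $\mu_i$-terms only add non-negative mass, and your aside about the alternative Yule coupling, are both apt but not needed.
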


\begin{rema}\rm One can check that   the birth and death processes mentioned in the examples above satisfy this property and are well defined on $\mathbb{R}_{+}$.
\end{rema}

\begin{proof}[Proof of Theorem \ref{nm-vie}]
Let $(T_n)_n$ be the sequence of jump times  of the process and $(S_n)_n$ the sequence of the inter-jump times,
$$S_n=T_n-T_{n-1}, \quad  \forall n\geq 1; \quad T_0=0, \quad S_0=0.$$
We define $T_\infty=\lim_n T_n$. The process doesn't explode almost surely and  is well defined on $\mathbb{R}_{+}$ if and only if for any $ i \geq 1$,
$\ \mathbb{P}_i(T_\infty <+\infty) = 0$.

\noindent The proof consists in showing that the process doesn't explode almost surely if and only if  the unique non-negative and bounded solution  $x = (x_i)_{i\in \mathbb{N}}$ of
$\ Q\, x = x\ $
is the null solution. This proof is actually achieved  for any integer valued pure jump Markov process.
We will then see that it is  equivalent to \eqref{non-exp} for birth and death processes. 

\noindent  For any $i\geq 1$, we set $h_i^{(0)}=1$ and for $n\geq 1$, 
$$h_i^{(n)} =\mathbb{E}_i(\exp(-T_n))= \mathbb{E}_i\left(\exp(-\sum_{k=1}^n  S_k)\right).$$
We have
\begin{eqnarray*} \mathbb{E}_i\left(\exp\left(-\sum_{k=1}^{n+1} S_k\right) \big| S_1\right) 
 = \exp(- S_1)\   \mathbb{E}_i\left(\mathbb{E}_{X_{S_1}}\left(\exp(-\sum_{k=1}^n S_k)\right)\right),
\end{eqnarray*} 
by  the Markov property, the independence of $S_{1}$ and $X_{S_1}$ and  since the jump times of the shifted  process   are  $T_n - S_1$. Moreover,
$$\mathbb{E}_{i}\left(\mathbb{E}_{X_{S_1}}\left(\exp(-\sum_{k=1}^n S_k)\right)\right) = \sum_{j\neq i} \mathbb{P}_i(X_{S_1} = j)\ 
\mathbb{E}_j\left(\exp(-\sum_{k=1}^n S_k)\right) =  \sum_{j\neq i} {Q_{i,j}\over q_i}\  h_j^{(n)},$$
where  $q_{i}= \sum_{j\ne i} Q_{i,j}$.
Therefore, 
for all $n\geq 0$,
$$h_i^{(n+1)} =  \mathbb{E}_i\left(\mathbb{E}_i\left(\exp(-\sum_{k=1}^{n+1} S_k)\big| S_1\right)\right)= \sum_{j\neq i} {Q_{i,j}\over q_i} \ h_j^{(n)}\ \mathbb{E}_i(\exp(-S_1)).$$
Since $\mathbb{E}_i(\exp(-S_1))=\int_0^\infty q_i e^{-q_i s} e^{-s} ds = {q_i\over 1+ q_i},$
we finally obtain that 
\be
\label{rec-hn}
h_i^{(n+1)} = \sum_{j\neq i} {Q_{i,j}\over 1 + q_i}\  h_j^{(n)}.\ee
Let $(x_i)_i$ be a non-negative   solution of  $Qx=x$ bounded by $1$.  We get $h_{i}^{(0)}=1 \geq x_{i}$ and thanks to the previous formula, we  deduce by induction that for   all $i\geq 1$ and for all $n\in   \mathbb{N}$, 
$h_i^{(n)} \geq x_i\geq 0.$
Indeed if $ h_j^{(n)}\geq x_{j}$, we get $h_i^{(n+1)} \geq  \sum_{j\neq i} {Q_{i,j}\over 1+q_i} x_j$.
As $x$ is  solution of $Qx=x$, it satisfies  $x_i=\sum_{j} Q_{i,j} \ x_j = Q_{i,i} x_i + \sum_{j\neq i} Q_{i,j} x_j  = -q_i  x_i + \sum_{j\neq i} Q_{i,j} x_j,$
thus $ \sum_{j\neq i} {Q_{i,j}\over 1+q_i} x_j  =x_i$ and $h_i^{(n+1)}\geq x_{i}$.  

\noindent If the  process doesn't explode almost surely,  we have $T_\infty =+\infty$ a.s. and $\lim_n h_i^{(n)}=0$. Making $n$ tend to infinity in the previous inequality, we deduce that $x_i=0$. 
Thus, in this case, the unique non-negative and bounded solution of $Qx=x$ is the null solution. 

\noindent Let us now assume that the process explodes with a   positive probability. Let $\ z_i = \mathbb{E}_i(e^{-T_\infty})$.  There exists $i$ such that $\mathbb{P}_i(T_\infty<+\infty)>0$ and for this integer $i$, $z_i>0$.
Going to the limit with  $\ T_\infty =\lim_n T_n$ and $\ T_n=\sum_{k=1}^n S_k$ yields $\ z_j=\lim_n h_j^{(n)}$.  Making $n$ tend to infinity proves that $z$ is a non-negative and bounded solution  of 
$Q z = z$, with $z_i>0$. It ensures that the process   doesn't explode almost surely if and only if  the unique non-negative and bounded solution  $x = (x_i)_{i\in \mathbb{N}}$ of
$\ Q\, x = x\ $ is $x=0$.

\me We  apply this result to the birth and death process. We assume that 
  $\lambda_i>0$ for $i\geq 1$ and $\lambda_0=\mu_0=0$.
Let   $(x_i)_{i\in \mathbb{N}}$ be a non-negative  solution  of the equation $Qx=x$.   
For $n\geq 1$, introduce $\ \Delta_ n= x_n-x_{n-1}$. Equation $Qx=x$ can be written $\ x_0 = 0\ $ and  
$$\lambda_n x_{n+1} -(\lambda_n + \mu_n) x_n + \mu_n x_{n-1} = x_n\ ,\ \forall n \geq 1.$$
Setting $f_n= \displaystyle{1\over \lambda_n} $ and $ g_n= \displaystyle{\mu_n\over \lambda_n} $, we get
$$\Delta_1=  x_1\,;\, \Delta_2= \Delta_1 \,  g_1 + f_1\,  x_1\,;\,  \ldots \, ;\, \Delta_{n+1} = \Delta_n \,  g_n + f_n\,  x_n.$$
Remark that for all $n$, $\Delta_n \geq 0$ and the sequence $(x_n)_n$ is non-decreasing. 
If $x_1=0$, the solution is zero. Otherwise we deduce that $$ \Delta_{n+1} = f_n x_n +  \sum_{k=1}^{n-1} f_k\, g_{k+1}\cdots  g_n\, x_k\, +\,  g_{1}\cdots  g_n \, x_1.$$
Since $(x_k)_k$ is non-decreasing  and defining
$\ \ \displaystyle{r_n = {1\over \lambda_n}  + \sum_{k=1}^{n-1}  {\mu_{k+1} \cdots \mu_n\over \lambda_k  \lambda_{k+1} \cdots \lambda_n} +  {\mu_{1} \cdots \mu_n\over \lambda_1   \cdots \lambda_n}}  $,
 it follows that
$\ r_n \ x_1 \leq \Delta_{n+1} \leq  r_n \ x_n$,
and by iteration
$$x_1(1+ r_1+ \cdots +r_n ) \leq x_{n+1}\leq  x_1 \ \prod_{k=1}^n(1+r_k) .$$

\noindent Therefore we have proved that the boundedness of the sequence  $(x_n)_n$ is equivalent to the convergence of   $\ \sum_{k} r_k$ and Theorem \ref{nm-vie} is proved.
\end{proof}

\subsection{Kolmogorov equations and invariant measure}

\me Let us recall the  Kolmogorov equations, (see for example Karlin-Taylor\cite{Karlin1975}).

\me Forward Kolmogorov equation: for all $i,j \in \mathbb{N}$, 
\be
{dP_{i,j}\over dt}(t) &=& \sum_{k} P_{i,k}(t)\, Q_{k,j} = P_{i,j+1}(t) Q_{j+1,j} + P_{i,j-1}(t) Q_{j-1,j} + P_{i,j}(t) Q_{j,j}\nonumber \\
&=& \mu_{j+1} P_{i,j+1}(t) + \lambda_{j-1} P_{i,j-1}(t) - (\lambda_{j}+ \mu_{j}) P_{i,j}(t). \label{nm-prog}
\ee
Backward Kolmogorov equation: for all $i,j \in \mathbb{N}$, 
\be
{dP_{i,j}\over dt}(t) &=& \sum_{k} Q_{i,k} \, P_{k,j}(t) =  Q_{i,i-1} P_{i-1,j}(t) + Q_{i,i+1} P_{i+1,j}(t)  + Q_{i,i} P_{i,j}(t) \nonumber \\
&=& \mu_{i} P_{i-1,j}(t) + \lambda_{i} P_{i+1,j}(t) - (\lambda_{i}+ \mu_{i}) P_{i,j}(t).  \label{nm-retrog}
\ee
Let us define for all $j\in \mathbb{N}$ the probability measure $$p_{j}(t) = \mathbb{P}( X(t) = j) = \sum_{i} \mathbb{P}( X(t) = j|\, X(0)=i)  \mathbb{P}( X(0) = i) = \sum_{i} \mathbb{P}( X(0) = i) P_{i,j}(t).$$
A straightforward computation  shows that the forward Kolmogorov equation \eqref{nm-prog} reads
\be
\label{kolmo-BD}
{d\, p_{j}\over dt}(t)= \lambda_{j-1}\, p_{j-1}(t) + \mu_{j+1} \, p_{j+1}(t)
-(\lambda_{j} + \mu_{j}) \, p_{j}(t).
\ee
This equation is useful to find an invariant measure, that is a sequence $(q_{j})_{j}$ of nonnegative real numbers  with $\ \sum_{j} q_{j}<+\infty$ and satisfying for all $j$, 
$$\lambda_{j-1}\, q_{j-1} + \mu_{j+1} \, q_{j+1}
-(\lambda_{j} + \mu_{j}) \, q_{j} = 0.
$$

\subsection{Extinction criterion - Extinction time}

\me Some of the following  computation can be found in \cite{Karlin1975} or in \cite{Allen2011}, but they are finely developed in \cite{BansayeMeleardRichard}.

\me Let $T_{0}$ denote the extinction time and  $\ u_i = \mathbb{P}_{i}(T_{0}<\infty)\ $ the probability to see extinction in finite time starting from state $i$.

 \noindent Conditioning by the first jump $X_{T_{1}}\in \{-1,+1\}$,  we get the following recurrence property: for all $i\geq 1$,
 \be
\lambda_i u_{i+1}-(\lambda_i+\mu_i)u_i +\mu_i u_{i-1} =0 \label{ext1}
\ee

\noindent This equation can also be easily obtained from the backward Kolmogorov equation \eqref{nm-retrog}. Indeed
$$u_{i} = \mathbb{P}_{i}(\exists t>0, X_{t}=0) =\mathbb{P}_{i}(\cup_{t} \{X_{t}=0\}) = \lim_{t\to \infty} P_{i,0}(t),$$
and
$${dP_{i,0}\over dt}(t) =  \mu_{i} P_{i-1,0}(t) + \lambda_{i} P_{i+1,0}(t) - (\lambda_{i}+ \mu_{i}) P_{i,0}(t).$$

\me
Let us solve \eqref{ext1}.  We know that   $u_{0}=1$. Let us first assume that for a state  $N$,  $\lambda_{N}=0$ and $\lambda_{i}>0$ for  $i<N$.  Define
$\ u_i^{(N)}=\mathbb{P}_i(T_{0}<T_N)$,
where $T_{N}$ is the hitting time of $N$.  Thus  $u_{0}^N =1$ et $u_{N}^N =0$.
Setting $$U_N =\sum_{k=1}^{N-1} {\mu_1\cdots \mu_k\over
\lambda_1\cdots\lambda_k},$$
straightforward computations  using \eqref{ext1} yield that  for $i\in
\{1,\cdots,N-1\}$ \ben u_i^{(N)} = (1+U_N)^{-1} \sum_{k=i}^{N-1}
{\mu_1\cdots \mu_k\over \lambda_1\cdots\lambda_k} \quad \emph{and in particular }  \
u_1^{(N)} = {U_N\over 1+U_N}.\een 
 
 \noindent For the general case,  let   $N$ tend to infinity. We observe that extinction will happen (or not) almost surely in finite time depending on the convergence of the series  $\ \displaystyle{\sum_{k=1}^{\infty} {\mu_1\cdots \mu_k\over
\lambda_1\cdots\lambda_k}}$.

 \begin{Thm}
 \label{ext-bd}
 
 \medskip
 (i) If $\ \displaystyle{\sum_{k=1}^{\infty} {\mu_1\cdots \mu_k\over
\lambda_1\cdots\lambda_k} = +\infty}$, then the
 extinction probabilities $u_{i}$ are equal to  $1$.  Hence we have almost-sure  extinction of the birth and death process for any finite initial condition.
 
 \medskip
 (ii) If $\ \displaystyle{\sum_{k=1}^{\infty} {\mu_1\cdots \mu_k\over
\lambda_1\cdots\lambda_k}} = U_\infty<\infty$, then for $i\geq 1$,
 \ben u_i= (1+U_\infty)^{-1} \sum_{k=i}^\infty {\mu_1\cdots
\mu_k\over \lambda_1\cdots\lambda_k}.\een
There is a  positive probability for the process to survive for any  positive inital condition. 
\end{Thm}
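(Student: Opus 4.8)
The plan is to obtain the values of the $u_i$ by letting the truncation level $N\to\infty$ in the explicit expression
$u_i^{(N)}=\mathbb{P}_i(T_0<T_N)=(1+U_N)^{-1}\sum_{k=i}^{N-1}\frac{\mu_1\cdots\mu_k}{\lambda_1\cdots\lambda_k}$
derived above, the only genuine work being the identification $\lim_N u_i^{(N)}=u_i:=\mathbb{P}_i(T_0<\infty)$. The closed form itself is the routine step already quoted: $i\mapsto u_i^{(N)}$ solves the linear recurrence \eqref{ext1} on $\{1,\dots,N-1\}$ with $u_0^{(N)}=1$ and $u_N^{(N)}=0$, which one integrates via the finite differences $\Delta_n=u_n^{(N)}-u_{n-1}^{(N)}$ exactly as in the proof of Theorem~\ref{nm-vie}; the value of $\lambda_N$ is irrelevant here since we only ask to reach $0$ before $N$.

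First I would show $u_i^{(N)}\uparrow u_i$. Because the jumps are $\pm1$, the process must visit $N$ before reaching $N+1$, hence $T_{N+1}\ge T_N$, so the events $\{T_0<T_N\}$ increase with $N$. Their union clearly lies in $\{T_0<\infty\}$. Conversely, on $\{T_0<\infty\}$ the process is frozen once it reaches $0$ (as $\lambda_0=\mu_0=0$), so $T_0$ precedes the explosion time and only finitely many jumps occur on $[0,T_0]$; in particular $M:=\sup_{t\le T_0}X_t<\infty$. For every $N>M$ the level $N$ is then never attained, so $T_N=+\infty>T_0$, which gives $\{T_0<\infty\}=\bigcup_N\{T_0<T_N\}$. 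By continuity of $\mathbb{P}_i$ from below, $u_i^{(N)}\to u_i$.

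It then suffices to compute the limit. Writing $c_k=\frac{\mu_1\cdots\mu_k}{\lambda_1\cdots\lambda_k}$ and $U_N=\sum_{k=1}^{N-1}c_k$, we have $u_i^{(N)}=(1+U_N)^{-1}\bigl(U_N-\sum_{k=1}^{i-1}c_k\bigr)$. In case (i), $U_N\to+\infty$, so for each fixed $i$ this ratio tends to $1$; hence $u_i=1$ and extinction is almost sure from every finite initial state. In case (ii), $U_N\to U_\infty<\infty$, so $u_i^{(N)}\to(1+U_\infty)^{-1}\sum_{k\ge i}c_k$, the announced value; and since $\sum_{k\ge i}c_k\le U_\infty<1+U_\infty$, this is strictly less than $1$ for every $i\ge1$, so $\mathbb{P}_i(\text{non-extinction})=1-u_i>0$.

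The single delicate point is the limit identification of the second paragraph: a priori one must exclude trajectories that never hit $0$ yet also fail to reach arbitrarily large levels, i.e. trajectories that blow up in finite time. This is exactly what the observation that absorption at $0$ precludes any prior explosion disposes of, so the statement holds under the sole hypothesis $\lambda_i>0$; if one additionally wished to track explosion occurring with positive probability, one would invoke the non-explosion criterion \eqref{non-exp} of Theorem~\ref{nm-vie} alongside this argument. Everything else is bookkeeping.
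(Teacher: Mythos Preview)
Your proof is correct and follows exactly the approach the paper takes: compute $u_i^{(N)}=\mathbb P_i(T_0<T_N)$ explicitly from the recurrence, then let $N\to\infty$. You actually supply the step the paper leaves implicit, namely the justification that $u_i^{(N)}\uparrow u_i$ via $\{T_0<\infty\}=\bigcup_N\{T_0<T_N\}$; the paper simply writes ``let $N$ tend to infinity'' and reads off the dichotomy. One minor remark: your closing paragraph describes the delicate case as ``trajectories that never hit $0$ yet also fail to reach arbitrarily large levels,'' but such trajectories lie outside both sets and are harmless; the actual obstruction you correctly handled in the second paragraph is the possibility that $T_N\le T_0<\infty$ for all $N$, which is ruled out because hitting $0$ in finite time forces boundedness on $[0,T_0]$.
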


\me
{\bf Application of  Theorem \ref{ext-bd} to the  binary branching process} (linear birth and death process): any  individual gives birth at rate $\lambda$ and dies at rate $\mu$. The population process is a binary branching process and individual life times are exponential variables with parameter $\lambda+\mu$. An individual   either gives birth to  $2$ individuals with probability
${\lambda\over \lambda+\mu}$ or dies with probability
${\mu\over \lambda+\mu}$.

\me Applying the previous results, one gets that when $\lambda\leq \mu$, i.e. when the process is sub-critical or critical, the sequence  $(U_N)_N$ tends to infinity with  $\,N$ and there is extinction with probability  $1$. Conversely, if 
$\lambda> \mu$, the sequence $(U_N)_N$ converges to  ${\mu\over
\lambda-\mu}$ and straightforward computations yield
$u_i=(\mu/\lambda)^i$.

\bi {\bf Application of Theorem  \ref{ext-bd} to the logistic birth and death process.} Let us assume that the birth and death rates are given by  \be \label{logistique}
\lambda_i=\lambda\, i\ ;\ \mu_i=\mu\, i+ c\, i(i-1).\ee
The parameter $c$ models the competition pressure between two individuals. 
It's easy to show that in this case, the series $\ \displaystyle{\sum_{k=1}^{\infty} {\mu_1\cdots \mu_k\over
\lambda_1\cdots\lambda_k}}\,$ diverges, leading to the almost sure extinction of the process. Hence  the competition between individuals makes the extinction inevitable.

\bi Let us now come back to the general case and assume that the series $\ \displaystyle{\sum_{k=1}^{\infty} {\mu_1\cdots \mu_k\over
\lambda_1\cdots\lambda_k}}$ diverges. The extinction time $T_{0}$ is well defined and we wish to compute its moments.

\me We use the standard notation $$ \pi_{1}= \frac{1}{\mu_{1}}\ ;\ \pi_{n}= \frac{\lambda_{1}\ldots \lambda_{n-1}}{\mu_{1}\ldots \mu_{n}}\ \quad  \forall n\geq 2.$$

\me
\begin{Prop} Let us assume that 
\be
\label{extinct}\ \sum_{k=1}^{\infty} {\mu_1\cdots \mu_k\over
\lambda_1\cdots\lambda_k} = \sum_{n} \frac{1}{\lambda_{n}\pi_{n}} = + \infty.\ee
Then 

(i)  For any $a>0$ and $n\geq1$,
\be
\label{recurrence_Gn}  G_{n}(a)=\mathbb{E}_{n+1}(\exp(-aT_n))=1+\frac{\mu_n+a}{\lambda_n}-\frac{\mu_n}{\lambda_n}\frac{1}{G_{n-1}(a)}.
\ee

(ii) $\E_{1}(T_{0}) = \sum_{k\geq 1} \pi_{k}$ and for every $n\geq 2$,
$$\E_{n}(T_{0}) = \sum_{k\geq 1} \pi_{k} + \sum_{k= 1}^{n-1}  \frac{1}{\lambda_{k}\pi_{k}} \sum_{i\geq k+1} \pi_{i}= \sum_{k= 1}^{n-1} 
\left(\sum_{i\geq k+1} \frac{\lambda_{k+1} \ldots \lambda_{i-1}}{\mu_{k+1}\ldots \mu_{i}}\right).$$
\end{Prop}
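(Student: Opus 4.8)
The plan is to obtain (i) by a one-step / excursion decomposition of the first-passage time, and to deduce (ii) by letting $a\downarrow 0$ in the resulting recursion, the one genuinely delicate point being a truncation argument that controls the behaviour at infinity. Throughout I use that, by Theorem~\ref{ext-bd}(i), assumption~\eqref{extinct} is exactly the condition for almost sure extinction; since the chain moves only by $\pm1$, from any state the first-passage time below any lower level is dominated by $T_0$ and hence a.s. finite, so $G_n(a):=\mathbb{E}_{n+1}(e^{-aT_n})$ satisfies $0<G_n(a)\le1$ and $G_n(0)=1$.

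\emph{Proof of (i).} Start from $n+1$ and condition on the first jump. After an exponential holding time $S$ of parameter $\lambda_{n+1}+\mu_{n+1}$, the chain goes to $n$ with probability $\mu_{n+1}/(\lambda_{n+1}+\mu_{n+1})$, in which case $T_n=S$; otherwise it goes to $n+2$, and then, jumps being $\pm1$, it must return to $n+1$ before reaching $n$, so by the strong Markov property $T_n$ equals $S$ plus an independent copy of the passage time from $n+2$ to $n+1$ plus a further independent copy of the passage time from $n+1$ to $n$. Taking Laplace transforms, with $\mathbb{E}(e^{-aS})=(\lambda_{n+1}+\mu_{n+1})/(\lambda_{n+1}+\mu_{n+1}+a)$, gives $G_n(a)(\lambda_{n+1}+\mu_{n+1}+a)=\mu_{n+1}+\lambda_{n+1}G_{n+1}(a)G_n(a)$, i.e. $G_n(a)=\mu_{n+1}/(\lambda_{n+1}+\mu_{n+1}+a-\lambda_{n+1}G_{n+1}(a))$, where the denominator is $\ge \mu_{n+1}+a>0$ because $G_{n+1}(a)<1$. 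Replacing $n$ by $n-1$ and solving this identity for $G_n(a)$ gives exactly~\eqref{recurrence_Gn}.

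\emph{Proof of (ii).} Since $(1-e^{-at})/a\uparrow t$ as $a\downarrow0$ and $T_n<\infty$ a.s., monotone convergence gives $(1-G_n(a))/a\uparrow m_n:=\mathbb{E}_{n+1}(T_n)\in[0,+\infty]$. Rewriting~\eqref{recurrence_Gn} as $1-G_n(a)=\tfrac{\mu_n}{\lambda_n}\cdot\tfrac{1-G_{n-1}(a)}{G_{n-1}(a)}-\tfrac{a}{\lambda_n}$, dividing by $a$ and letting $a\downarrow0$ (using $G_{n-1}(0)=1$) yields the linear recursion $\lambda_n m_n=\mu_n m_{n-1}-1$, valid in $[0,+\infty]$, with $m_0=\mathbb{E}_1(T_0)$; equivalently $m_j=\tfrac1{\mu_{j+1}}+\tfrac{\lambda_{j+1}}{\mu_{j+1}}m_{j+1}$. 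Iterating the latter up to level $N$ gives $m_j=\sum_{i=j+1}^N\tfrac{\lambda_{j+1}\cdots\lambda_{i-1}}{\mu_{j+1}\cdots\mu_i}+\tfrac{\lambda_{j+1}\cdots\lambda_N}{\mu_{j+1}\cdots\mu_N}m_N$, so $m_j\ge\sigma_j:=\sum_{i\ge j+1}\tfrac{\lambda_{j+1}\cdots\lambda_{i-1}}{\mu_{j+1}\cdots\mu_i}$. Since by the strong Markov property $\mathbb{E}_n(T_0)=\sum_{j=0}^{n-1}m_j$ (descend one level at a time), one inequality, $\mathbb{E}_n(T_0)\ge\sum_{j=0}^{n-1}\sigma_j$, is done. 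For the converse, fix $N$ and consider the birth and death process $X^{(N)}$ with the same rates except $\lambda^{(N)}_N=0$: it lives effectively on the finite set $\{0,\dots,N\}$ with $0$ absorbing and reached a.s., hence has finite mean hitting times of $0$, and first-step analysis — now ending at the boundary value $\mathbb{E}^{(N)}_N(T_{N-1})=1/\mu_N$ — gives $\mathbb{E}^{(N)}_n(T_0)=\sum_{j=0}^{n-1}\sum_{i=j+1}^N\tfrac{\lambda_{j+1}\cdots\lambda_{i-1}}{\mu_{j+1}\cdots\mu_i}$, which increases to $\sum_{j=0}^{n-1}\sigma_j$ as $N\to\infty$. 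Coupling $X$ and $X^{(N)}$ so that they coincide up to the first time $X$ reaches $N$, one has $T_0^{X^{(N)}}=T_0^X$ on the event $A_N=\{T_0^X<T_N^X\}$, and $A_N\uparrow\Omega$ a.s. (under~\eqref{extinct} the process goes extinct without exploding, so $\sup_t X_t<\infty$ a.s.); hence, by monotone convergence, $\mathbb{E}_n(T_0)=\lim_N\mathbb{E}_n(T_0;A_N)\le\liminf_N\mathbb{E}^{(N)}_n(T_0)=\sum_{j=0}^{n-1}\sigma_j$. Thus $\mathbb{E}_n(T_0)=\sum_{j=0}^{n-1}\sigma_j$; noting $\sigma_0=\sum_{i\ge1}\pi_i$ and $\tfrac{\lambda_{k+1}\cdots\lambda_{i-1}}{\mu_{k+1}\cdots\mu_i}=\tfrac{\pi_i}{\lambda_k\pi_k}$ for $k\ge1$, so that $\sigma_k=\tfrac1{\lambda_k\pi_k}\sum_{i\ge k+1}\pi_i$, this is exactly the two displayed forms (and, taking $n=1$, $\mathbb{E}_1(T_0)=m_0=\sigma_0=\sum_k\pi_k$).

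\emph{Main obstacle.} The two recursions are routine algebra; the real work is making the passage to infinity legitimate — equivalently, showing the remainder $\tfrac{\lambda_1\cdots\lambda_N}{\mu_1\cdots\mu_N}m_N$ tends to $0$, i.e. that the probabilistic mean is the \emph{minimal} nonnegative solution of the recursion rather than one growing at infinity. This is exactly what the truncated processes $X^{(N)}$ and the monotone limit provide, and it is also why one must argue in $[0,+\infty]$ throughout: the means are genuinely infinite precisely when $\sum_k\pi_k=+\infty$, so assuming finiteness from the start would be circular.
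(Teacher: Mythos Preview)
Your proof is correct and follows essentially the same route as the paper's: the same first-step decomposition for (i), the same linear recursion $\mu_n m_{n-1}=\lambda_n m_n+1$ obtained by letting $a\to 0$, and the same truncation $\lambda_N^{(N)}=0$ together with the coupling on $\{T_0<T_N\}$ to pass to the limit. The only cosmetic differences are that you use monotone convergence of $(1-G_n(a))/a$ (a bit more careful about the $+\infty$ case than the paper's bare ``differentiating at $a=0$'') and get the lower bound from the nonnegative remainder in the iterated recursion, whereas the paper gets it from the stochastic inequality $T_0^N\le T_0$; both are equivalent.
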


\begin{proof}
\noindent (i)  Let $\tau_n$ be a random variable distributed as $T_n$ under $\P_{n+1}$ and consider the Laplace transform of $\tau_n$. Following  \cite[p. 264]{Anderson1991} and
by the Markov property, we have
$$\tau_{n-1}\overset{(\mathrm{d})}=\One_{\{Y_n=-1\}}{\cal E}_n+\One_{\{Y_n=1\}}\left({\cal E}_n+\tau_{n}+\tau'_{n-1}\right)$$
where $Y_n$, ${\cal E}_n$, $\tau'_{n-1}$ and $\tau_{n}$ are independent random variables,
 ${\cal E}_n$ is an exponential  random variable with parameter $\lambda_n+\mu_n$ and $\tau'_{n-1}$ is distributed as $\tau_{n-1}$ and $\P(Y_n=1)=1-\P(Y_n=-1)=\lambda_n/(\lambda_n+\mu_n)$.
Hence, we get
$$G_{n-1}(a)=\frac{\lambda_n+\mu_n}{a+\lambda_n+\mu_n}\left(G_n(a)G_{n-1}(a)\frac{\lambda_n}{\lambda_n+\mu_n}+\frac{\mu_n}{\lambda_n+\mu_n}\right)$$
and \eqref{recurrence_Gn} follows. \\

 \noindent (ii) Differentiating \eqref{recurrence_Gn}  at $a=0$, we get
\begin{equation*}\label{eq_1750}
\E_{n}(T_{n-1}) = \frac{\lambda_n}{\mu_n} \ \E_{n+1}(T_{n}) +\frac{1}{\mu_{n}}, \quad n\geq1.
 \end{equation*}
Following the proof of Theorem \ref{ext-bd}, we first deal with the particular case when $\lambda_{N}=0$ for some $N>n$, $\ \E_{N}(T_{N-1}) = \frac{1}{\mu_{N}}$ and 
a simple induction gives
$$\E_{n}(T_{n-1}) =  \frac{1}{\mu_{n}} +  \sum_{i=n+1}^N\frac{\lambda_{n}\ldots \lambda_{i-1}}{\mu_{n}\ldots \mu_{i}}.$$

We get $\ \E_{1}(T_{0})  = \sum_{k=1}^N \pi_{k}$ and  writing $\ \E_{n}(T_{0}) = \sum_{k=1}^n \E_{k}(T_{k-1})$,
 we deduce that 
$$\E_{n}(T_{0})  = \sum_{k=1}^N \pi_{k} + \sum_{k=1}^{n-1}  \frac{1}{\lambda_{k}\pi_{k}} \sum_{i=k+1}^N \pi_{i}.$$
In the general case, let $N>n$. Thanks to \eqref{extinct}, $T_{0}$ is finite and the process a.s. does not explode in finite time for any initial condition. Then  $T_{N}\to \infty$ $\P_{n}$-a.s., where we use the convention $\{T_{N}=+\infty\}$ on the event where the process does not attain $N$. The monotone convergence theorem yields
$$\E_{n}(T_0 ;  T_0\leq T_N )\linf{N}
\E_{n}(T_0).$$ Let us  consider a birth and death process $X^N$ with birth and death rates $(\lambda^N_k,\mu^N_k : k\geq 0)$ such that $(\lambda^N_k,\mu^N_k)= ({\lambda}_k,{\mu}_k)$ for $k\ne N$ and ${\lambda}^N_N=0, {\mu}^N_N=\mu_N$. \\
 Since
$(X_t :  t\leq T_N)$ and $({X}^N_t : t\leq {T}^N_N)$ have the same distribution under $\P_{n}$, we get 
$$\E_{n}\left(T_0 ;  T_0\leq T_N \right)=\E_{n}\left(T_0^N;  T_0^N\leq T_N^N \right),$$
which yields $$\E_{n}(T_0)=\lim_{N\rightarrow \infty} \E_{n}\left(T_0^N ;  T_0^N\leq T_N^N \right)\leq \lim_{N\rightarrow \infty} \E_{n}\left(T_0^N \right),$$
where the convergence of the last term is due to the stochastic monotonicity of $T_0^N$  with respect to $N$ under $\P_{n}$.
Using now that  $T_0^N$ is stochastically smaller than $T_0$ under $\P_{n}$, we have also
$$\E_{n}(T_0)\geq \E_{n}(T_0^N).$$
We deduce that
$$\E_{n}(T_0)=\lim_{N\rightarrow \infty}\E_{n}(T_0^N)=\lim_{N\rightarrow \infty} \sum_{k= 1}^N \pi_{k} + \sum_{k= 1}^{n-1}  \frac{1}{\lambda_{k}\pi_{k}} \sum_{i= k+1}^N \pi_{i},$$
which ends up the proof.
\end{proof}


\me
\begin{exo}
 Assume \eqref{extinct}. 
Show that for every $n\geq 0$,
\ben
\label{mom2}
 \E_{n+1}(T_{n}^2)&=&\frac{2}{\lambda_n\pi_n}\sum_{i\geq n} \lambda_i\pi_i\,\E_{i+1}(T_{i})^2 ;\\
 \E_{n+1}(T_{n}^3)&=&\frac{6}{\lambda_n\pi_n}\sum_{i\geq n} \lambda_i\pi_i\,\E_{i+1}(T_{i})\,{\rm Var}_{i+1}(T_{i}).
\een
\end{exo}
\bi

\subsection{Trajectorial representation of  birth and death processes}

\me We consider as previously a birth and death process with birth rates $(\lambda_{n})_{n}$ and death rates $(\mu_{n})_{n}$. We write  $\lambda_{n}=\lambda(n) $ and $\mu_{n}= \mu(n)$, where $\,\lambda(.)$ and $\,\mu(.)$ are two functions defined on $\mathbb{R}_{+}$. We assume further that there exist $\bar \lambda>0$ and $ \bar \mu>0$ such that for any $x\geq 0$,
\be 
\label{hyp-mu} \lambda(x)\leq \bar \lambda\,x\quad ; \quad \mu(x) \leq \bar \mu(1+x^2).
\ee
 This assumption is satisfied for the logistic case where $\lambda(x)=  \lambda\, x$ and $\mu(x) = c x(x-1) +  \mu\, x$.

\me Assumption \eqref{hyp-mu} is a sufficient condition ensuring the existence of the process on $\mathbb{R}_{+}$, as observed in Corollary \ref{xor-exist}.

\begin{Prop}
\label{mom-1}
On the same probability space, we consider a   Poisson point measure $N(ds,du)$   with intensity $\, dsdu$ on $\mathbb{R}_{+} \times \mathbb{R}_{+}$ (see Appendix). We also consider a random variable $Z_{0}$ independent of $N$ and introduce the filtration $(\f_{t})_{t}$ given by $\f_{t}= \sigma(Z_{0}, N((0,s]\times A), s\leq t,  A \in {\cal B}(\mathbb{R}_{+}))$. 

\me  The left-continuous and right-limited non-negative Markov process $(Z_{t})_{t\geq 0}$ defined by
\be
\label{BD-traj}
Z_{t}= Z_{0} + \int_{0}^t\int_{\mathbb{R}_{+}} \left(\One_{\{u\leq \lambda(Z_{s-})\}} - \One_{\{ \lambda(Z_{s-})\leq u \leq  \lambda(Z_{s-}) +  \mu(Z_{s-})\}}\right)\,N(ds,du)
\ee
is a birth and death process with birth (resp. death) rates $(\lambda_{n})_{n}$ (resp. 
$(\mu_{n})_{n}$).

\me If for $p\geq 1$, $\E( Z_{0}^p)<+\infty$, then for any $T>0$,
\be
\label{BD-moment}
\E\big(\sup_{t\leq T}\, Z_{t}^p\big)<+\infty.\ee
\end{Prop}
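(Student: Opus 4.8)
The plan is to establish the two assertions separately: first the pathwise construction and identification of the law, then the $L^p$ estimate.

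\textbf{Construction and identification.} I would build the solution of \eqref{BD-traj} recursively along the atoms of $N$. Since $N$ a.s.\ has finitely many atoms in every set $[0,T]\times[0,A]$, for each $m\in\mathbb{N}$ there are only finitely many atoms of $N$ in $[0,T]\times[0,\bar\lambda m+\bar\mu(1+m^2)]$; this lets one define $Z$ unambiguously up to $\tau_m=\inf\{t\ge0:Z_t\ge m\}$ — between two consecutive atoms lying in the region $\{u\le\lambda(Z_{s-})+\mu(Z_{s-})\}$ the process is constant, and at such an atom it jumps by $+1$ if $u\le\lambda(Z_{s-})$ and by $-1$ if $\lambda(Z_{s-})<u\le\lambda(Z_{s-})+\mu(Z_{s-})$. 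That $\tau_m\uparrow+\infty$, so that $Z$ is defined on all of $\mathbb{R}_+$, follows from \eqref{hyp-mu}: since $\lambda_n=\lambda(n)\le\bar\lambda n$ and downward jumps can only lower the value, this is precisely the non-explosion guaranteed by Corollary~\ref{xor-exist} (equivalently, $Z$ is dominated by the linear pure-birth process with per-individual rate $\bar\lambda$ started at $Z_0$). Finally, applying the elementary jump change-of-variables formula to $f(Z_t)$ for $f$ bounded and compensating $N$ by $ds\,du$ shows that
\[
f(Z_t)-f(Z_0)-\int_0^t\Big(\lambda(Z_s)\big(f(Z_s+1)-f(Z_s)\big)+\mu(Z_s)\big(f(Z_s-1)-f(Z_s)\big)\Big)\,ds
\]
is a local martingale, which identifies the generator with $Q$; hence $(Z_t)$ is the birth and death process with rates $(\lambda_n),(\mu_n)$.

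\textbf{Moment bound.} The key point is that because $Z\ge0$ the path of $Z$ is controlled by the \emph{birth} events alone, whose rate is only linear, and not by the quadratically growing death rate. Writing $N^+_t=\int_0^t\int_{\mathbb{R}_+}\One_{\{u\le\lambda(Z_{s-})\}}N(ds,du)$ for the number of up-jumps and $Y_t=Z_0+N^+_t$, dropping the nonpositive death term in \eqref{BD-traj} gives $0\le Z_t\le Y_t$ for every $t$; since $Y$ is nondecreasing, $\sup_{t\le T}Z_t^p\le Y_T^p$, so it suffices to bound $\mathbb{E}(Y_T^p)$. To do this, localize by $\tau_m$, apply the jump formula to $Y^p_{t\wedge\tau_m}$ and take expectations; the compensated part has zero expectation and
\[
\mathbb{E}\big(Y_{t\wedge\tau_m}^p\big)=\mathbb{E}(Z_0^p)+\mathbb{E}\int_0^{t\wedge\tau_m}\big((Y_s+1)^p-Y_s^p\big)\,\lambda(Z_s)\,ds.
\]
Using $(y+1)^p-y^p\le C(1+y^{p-1})$ together with $\lambda(Z_s)\le\bar\lambda Z_s\le\bar\lambda Y_s$, the integrand is at most $C(1+Y_s^p)$, whence $\mathbb{E}(Y_{t\wedge\tau_m}^p)\le\mathbb{E}(Z_0^p)+C\int_0^t(1+\mathbb{E}(Y_{s\wedge\tau_m}^p))\,ds$. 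Grönwall's lemma gives $\mathbb{E}(Y_{t\wedge\tau_m}^p)\le(\mathbb{E}(Z_0^p)+CT)e^{CT}$ for $t\le T$, uniformly in $m$; letting $m\to\infty$ and using Fatou's lemma (with $\tau_m\uparrow\infty$) yields $\mathbb{E}(Y_T^p)\le(\mathbb{E}(Z_0^p)+CT)e^{CT}<\infty$, and therefore \eqref{BD-moment}.

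\textbf{Main obstacle.} The one genuinely delicate point is that a naive estimate working directly with $Z_t^p$ and the total variation of its jumps produces, through the death term $\mu(Z_s)$ of order $Z_s^2$, a contribution of order $\mathbb{E}(Z_s^{p+1})$ — a moment higher than the one being controlled. The remedy is exactly the observation above: bounding the whole trajectory by $Z_0$ plus the (linearly rated) number of births removes $\mu$ from the estimate entirely, so that assumption \eqref{hyp-mu} enters only through the sublinearity $\lambda(x)\le\bar\lambda x$.
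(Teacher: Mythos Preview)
Your argument is correct and follows essentially the same route as the paper: isolate the birth contribution (you package it as the auxiliary nondecreasing process $Y_t=Z_0+N_t^+$, the paper drops the nonpositive death term directly in the decomposition of $Z^p$), use $\lambda(x)\le\bar\lambda x$ and $(y+1)^p-y^p\le C(1+y^{p-1})$, localize, apply Gr\"onwall, then pass to the limit via Fatou. The only minor difference is that the paper extracts non-explosion ($\tau_m\to\infty$) as a \emph{consequence} of the uniform bound $\sup_m\E(\sup_{t\le T\wedge\tau_m}Z_t^p)\le C_{T,p}$, whereas you invoke Corollary~\ref{xor-exist} separately beforehand; both are valid.
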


\begin{proof}
For $n\in \mathbb{N}$, let us introduce the stopping times
$$T_{n}=\inf\{t>0, Z_{t}\geq n\}.$$
For $t\geq 0$, we have
\ben Z_{t\wedge T_{n}}^p = Z_{0}^p && + \int_{0}^{t\wedge T_{n}}  \int_{\mathbb{R}_{+}} \left((Z_{s-}+1)^p
 - Z_{s-}^p\right)\One_{\{u\leq \lambda(Z_{s-})\}}\, N(ds,du) \\
 && +  \int_{0}^{t\wedge T_{n}}  \int_{\mathbb{R}_{+}} \left((Z_{s-}-1)^p
 - Z_{s-}^p\right) \One_{\{ \lambda(Z_{s-})\leq u \leq  \lambda(Z_{s-}) +  \mu(Z_{s-})\}}\,N(ds,du).
\een
The second part of the r.h.s. is non-positive and the first part is increasing in time, yielding the upper bound 
$$\sup_{s\leq t}Z_{s\wedge T_{n}}^p \leq Z_{0}^p + \int_{0}^{t\wedge T_{n}}  \int_{\mathbb{R}_{+}} \left((Z_{s-}+1)^p
 - Z_{s-}^p\right)\One_{\{u\leq \lambda(Z_{s-})\}} \,N(ds,du).$$
 Since  there exists $C>0$ such that
 $\ (1+x)^{p} - x^p
\leq C(1+x^{p-1})$ for any $x\geq 0$ and by  \eqref{hyp-mu},
we get
 \ben
 \E(\sup_{s\leq t}Z_{s\wedge T_{n}}^p )\leq \E(Z_{0}^p) + C\, \bar{\lambda}\, \E\left(\int_{0}^{t\wedge T_{n}} Z_{s} \,(1+  Z_{s}^{p-1})\, ds\right)
\leq  \bar C \left(1 + \int_{0}^{t} \E\big(\sup_{u\leq s\wedge T_{n}}\,Z_{u}^p \big)\, ds\right),\een
where $\bar C$ is a positive number independent of $n$. 
Since the process is bounded by $n$ before $T_{n}$,  Gronwall's Lemma implies the existence (for any $T>0$) of a constant number $C_{T,p}$ independent of $n$ such that
\be
\label{Gronwall}\E\big(\sup_{t\leq T\wedge T_{n}}\,Z_{t}^p\big) \leq C_{T,p}.
\ee
In particular,  the sequence $(T_{n})_{n}$ tends to infinity almost surely. Indeed, otherwise there  would exist $T_{0}>0$ such that $\P(\sup_{n}T_{n} <T_{0})>0$. Hence
$\, \E\big(\sup_{t\leq T_{0}\wedge T_{n}}\,Z_{t}^p\big) \geq n^p \,\P(\sup_{n}T_{n} <T_{0})$,
which contradicts \eqref{Gronwall}. 
Making $n$ tend to infinity in \eqref{Gronwall} and using Fatou's Lemma yield  \eqref{BD-moment}.
\end{proof}

\me Remark that given $Z_{0}$ and $N$, the process defined by \eqref{BD-traj}
 is unique. Indeed it can be inductively constructed. It is thus unique in law.
Let us now recall its infinitesimal generator and give some martingale properties.

\begin{Thm}
\label{BD-mart}
Let us assume that $\E(Z_{0}^p)<\infty$, for $p\geq 2$. 

\me (i) The infinitesimal  generator of the Markov process $Z$  is defined for any bounded measurable  function $\phi$  from $\mathbb{R}_{+}$ into $\mathbb{R}$  by
$$L\phi(z) = \lambda(z)(\phi(z+1)-\phi(z)) + \mu(z)(\phi(z-1)-\phi(z)).$$

\noindent (ii)  For any measurable function $\phi$ such that $|\phi(x)| + |L\phi(x)| \leq C\,(1+x^p)$, the process $M^\phi$ defined by
\be
\label{mphi}
M^\phi_{t} = \phi(Z_{t})-\phi(Z_{0}) - \int_{0}^t L\phi(Z_{s})ds\ee
is a left-limited and right-continous (c\`adl\`ag)  $(\f_{t})_{t}$-martingale.

\me (iii)
The process $M$ defined by
\be
\label{mm} M_{t} = Z_{t} - Z_{0} - \int_{0}^t (\lambda(Z_{s}) - \mu(Z_{s}))ds\ee
is a square-integrable martingale with quadratic variation
\be
\label{varq}\langle M\rangle_{t} = \int_{0}^t (\lambda(Z_{s}) + \mu(Z_{s}))ds.\ee
\end{Thm}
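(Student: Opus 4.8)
All three assertions will be read off from the pathwise representation \eqref{BD-traj} together with the stochastic calculus for Poisson point measures recalled in the Appendix: the change of variables along jumps, the compensation formula, and the $L^2$ isometry for integrals against a compensated Poisson measure. The core of the argument is (ii); (i) and (iii) follow by specializing $\phi$. The only quantitative ingredient is the moment bound \eqref{BD-moment} of Proposition \ref{mom-1}, which, combined with the growth condition \eqref{hyp-mu}, is exactly what turns the a priori \emph{local} martingales coming out of the stochastic integrals into genuine (and, for $M$, square-integrable) martingales.

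\textbf{Proof of (ii).} Since $Z$ is non-explosive and moves only by jumps of size $\pm1$, summing the increments of $\phi(Z)$ along the atoms of $N$ gives, pathwise,
\begin{multline*}
\phi(Z_t)-\phi(Z_0)=\int_0^t\!\!\int_{\R_+}\Big[(\phi(Z_{s-}+1)-\phi(Z_{s-}))\,\One_{\{u\le\lambda(Z_{s-})\}}\\
+(\phi(Z_{s-}-1)-\phi(Z_{s-}))\,\One_{\{\lambda(Z_{s-})<u\le\lambda(Z_{s-})+\mu(Z_{s-})\}}\Big]\,N(ds,du).
\end{multline*}
Writing $N=\tilde N+ds\,du$ with $\tilde N$ the compensated measure and integrating the deterministic part in $u$ over the two intervals, of lengths $\lambda(Z_s)$ and $\mu(Z_s)$, produces precisely $\int_0^t L\phi(Z_s)\,ds$; hence $M^\phi_t$ equals the integral of the bracketed integrand against $\tilde N$, which is adapted and càdlàg. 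To see it is a true martingale, use the stopping times $T_n=\inf\{t:Z_t\ge n\}$ from the proof of Proposition \ref{mom-1}: on $[0,T_n]$ the integrand is bounded, so $(M^\phi_{t\wedge T_n})_t$ is a martingale. By the growth hypothesis $|\phi(x)|+|L\phi(x)|\le C(1+x^p)$ and \eqref{BD-moment}, both $\sup_{s\le t}|\phi(Z_s)|\le C(1+\sup_{s\le t}Z_s^p)$ and $\int_0^t|L\phi(Z_s)|\,ds\le Ct(1+\sup_{s\le t}Z_s^p)$ are integrable, so $M^\phi_{t\wedge T_n}\to M^\phi_t$ in $L^1$ as $n\to\infty$ (recall $T_n\uparrow\infty$ a.s.); passing to the limit in $\E[M^\phi_{t\wedge T_n}\mid\f_s]=M^\phi_{s\wedge T_n}$ gives the martingale property of $M^\phi$.

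\textbf{Proof of (i) and (iii).} For bounded $\phi$, \eqref{hyp-mu} yields $|L\phi(z)|\le 2\|\phi\|_\infty(\lambda(z)+\mu(z))\le C(1+z^2)$, so (ii) with $p=2$ applies and $\E_z[\phi(Z_t)]-\phi(z)=\E_z\int_0^tL\phi(Z_s)\,ds$; since the first jump time is a.s.\ positive, $s\mapsto L\phi(Z_s)$ is a.s.\ right-continuous at $0$, and for $t\le1$ the integrand is dominated by $C(1+\sup_{s\le1}Z_s^2)\in L^1$, so dividing by $t$ and letting $t\downarrow0$ gives $t^{-1}(\E_z[\phi(Z_t)]-\phi(z))\to L\phi(z)$, which is (i). For (iii), take $\phi(z)=z$: then $L\phi(z)=\lambda(z)-\mu(z)$ and $|z|+|\lambda(z)-\mu(z)|\le C(1+z^2)$ by \eqref{hyp-mu}, so (ii) with $p=2$ shows that $M$ in \eqref{mm} is a martingale, equal to the integral of $h(s,u)=\One_{\{u\le\lambda(Z_{s-})\}}-\One_{\{\lambda(Z_{s-})<u\le\lambda(Z_{s-})+\mu(Z_{s-})\}}$ against $\tilde N$. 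The two indicators being disjoint, $h^2(s,u)=\One_{\{u\le\lambda(Z_{s-})+\mu(Z_{s-})\}}$, whose $u$-integral is $\lambda(Z_{s-})+\mu(Z_{s-})$; the $L^2$ isometry then gives $\E(M_t^2)=\E\int_0^t(\lambda(Z_s)+\mu(Z_s))\,ds$, finite by \eqref{hyp-mu} and \eqref{BD-moment}, so $M$ is square-integrable, and the same isometry — equivalently, the fact that $[M]_t=\sum_{s\le t}(\Delta Z_s)^2$ counts the jumps of $Z$ on $[0,t]$ and has predictable compensator $\int_0^t(\lambda(Z_s)+\mu(Z_s))\,ds$ — identifies $\langle M\rangle_t$ as in \eqref{varq}.

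\textbf{Main obstacle.} The only real subtlety is that the integrands of the stochastic integrals against $\tilde N$ are unbounded (through $\lambda(Z_{s-}),\mu(Z_{s-})$ and through the growth of $\phi$), so a priori these integrals are only local martingales; upgrading them to true — and, for $M$, square-integrable — martingales is exactly what the a priori estimate \eqref{BD-moment} is for. Everything else is routine manipulation of the Poisson calculus.
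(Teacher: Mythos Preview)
Your proof is correct. The route differs from the paper's mainly in part (iii), and to a lesser extent in (ii).

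For (ii), the paper simply invokes Dynkin's theorem to get that $M^\phi$ is a local martingale, then notes that the growth bound on $\phi$ and $L\phi$ together with \eqref{BD-moment} make all terms integrable, hence $M^\phi$ is a true martingale. You instead unpack the compensation formula explicitly from the pathwise representation \eqref{BD-traj} and localize with the stopping times $T_n$; this is the same content as Dynkin's theorem spelled out by hand.

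For (iii), the paper takes a different path: it first strengthens the hypothesis to $\E(Z_0^3)<\infty$, applies (ii) to both $\phi_1(x)=x$ and $\phi_2(x)=x^2$ (note that $|L\phi_2(z)|\le C(1+z^3)$ under \eqref{hyp-mu}, which is why the third moment is needed), and then compares the martingale obtained from $\phi_2$ with the one coming from It\^o's formula applied to $Z^2$; the uniqueness of the Doob--Meyer decomposition forces \eqref{varq}. A final localization removes the extra moment assumption. Your approach --- reading $\langle M\rangle$ directly from the $L^2$ isometry for the compensated Poisson integral, or equivalently from the predictable compensator of $[M]_t=\sum_{s\le t}(\Delta Z_s)^2$ --- is more direct and works under $\E(Z_0^2)<\infty$ without the detour through a temporary third-moment hypothesis. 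The paper's route, on the other hand, is the one that generalizes verbatim to the measure-valued setting of Theorem~\ref{martingales}, where the quadratic variation is again obtained by comparing the martingale problem for $\langle\nu,f\rangle^2$ with It\^o's formula.
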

 
 \me  Remark that  the drift term of \eqref{mm} involves the difference between the birth and  death rates (i.e. the growth rate), while \eqref{varq} involves  the sum of both rates.  Indeed the drift term describes the mean behavior whereas the quadratic variation reports  the random fluctuations. 
 
 \me
  \begin{proof}
 $(i)$ is well known.
 
 \me  $(ii)$  Dynkin's theorem implies that $M^\phi$ is a local martingale.
 By the assumption on  $\ \phi$ and \eqref{BD-moment}, all the terms of the r.h.s. of  \eqref{mphi} are integrable. Therefore $M^\phi$ is a martingale.
 
\me $(iii)$ We first assume that $\E(Z_{0}^3)<+\infty$. By \eqref{hyp-mu}, we may apply $(ii)$ to both functions $\phi_{1}(x)=x$ and  $\phi_{2}(x)=x^2$. Hence 
$\ M_{t} = Z_{t} - Z_{0} - \int_{0}^t (\lambda(Z_{s}) - \mu(Z_{s}))ds\,$ and  $
Z_{t}^2 - Z_{0}^2- \int_{0}^t \big(\lambda(Z_{s})(2Z_{s}+1) - \mu(Z_{s})(1-2Z_{s})\big)ds\,$ are martingales. The process $Z$ is a semi-martingale and  It\^{o}'s formula applied to $Z^2$ gives  that $
Z_{t}^2 - Z_{0}^2- \int_{0}^t 2Z_{s} \big(\lambda(Z_{s}) - \mu(Z_{s})\big)ds - \langle M\rangle_{t}\,$ is a martingale. The uniqueness of the Doob-Meyer decomposition leads to
\eqref{varq}. The general case $\E(Z_{0}^2)<+\infty$ follows by a standard localization argument. 
\end{proof}

\section{Scaling Limits for Birth and Death Processes}
\label{LPA}

\me If the  population is  large, so many birth and death events occur that  the dynamics becomes difficult to describe individual per individual. Living systems need resources in order  to survive and reproduce and the biomass per capita depends on the order of magnitude of these resources. 
 We introduce a  parameter $K\in\mathbb{N}^*=\{1,2,\ldots\}$ scaling either the size of the population or the total amount of resources. We assume that the individuals are weighted by ${1\over K}$. 
 
 \me In this section, we  show that depending on the scaling relations  between the population size and  the demographic parameters, the population size process will be approximate either by a deterministic process or by a stochastic process. These approximations will lead to different long time behaviors. 
 
\me  In the rest of this section, we consider a sequence of  birth and death processes $Z^K$ parametrized by $K$, where the  birth and death  rates for the population state $n\in \mathbb{N}$ are given by $\lambda_{K}(n)$ and $\mu_{K}(n)$. Since the individuals are weighted by ${1\over K}$,   the  population dynamics  is modeled by the  process $(X^K_{t}, t\geq 0)\in \mathbb{D}(\mathbb{R}_{+}, \mathbb{R}_{+})$  with   jump amplitudes  $\pm {1\over K}$ and defined for $t\geq 0$ by
\be
\label{pop-weight}
X^K_{t} = {Z^K_{t}\over K}.
\ee
 This process is a Markov process with generator
 \be
 \label{gen}L_{K}\phi(x)= \lambda_{K}(Kx)\big(\phi(x+{1\over K})-\phi(x)\big) +  \mu_{K}(Kx)\big(\phi(x-{1\over K})-\phi(x)\big).
 \ee
 Therefore, adapting Proposition \ref{mom-1} and Theorem \ref{BD-mart}, one can easily show that if $\,\lambda_{K}(n)\leq \bar \lambda n$ (uniformly in $K$) and if
 \be
 \label{init-1} \sup_{K} \E((X^K_{0})^3)<+\infty,\ee
 then 
 \be
 \label{unif-1}
 \sup_{K} \E(\sup_{t\leq T}(X^K_{t})^3)<+\infty,\ee
  and for any $K\in \mathbb{N}^*$, the process 
  \be
\label{mar-K}\ M^K_{t} = X^K_{t} - X^K_{0} - {1\over K}\int_{0}^t (\lambda_{K}(Z^K_{s}) - \mu_{K}(Z^K_{s}))ds\ee
is a square integrable martingale with quadratic variation 
\be
\label{var-K}
 \langle M^K\rangle_{t} = {1\over K^2}\, \int_{0}^t (\lambda_{K}(Z^K_{s}) + \mu_{K}(Z^K_{s}))ds.\ee

 \subsection{Deterministic approximation  -  Malthusian and logistic Equations }

\me Let us now assume that the birth and death rates satisfy the following assumption:
\be
\label{coeff-1}
&&\lambda_{K}(n) = n \lambda\left({n\over K}\right); \ \mu_{K}(n) = n \mu\left({n\over K}\right), \   \hbox{where the functions } \ \nonumber\\
&&  \lambda\  \hbox{and} \ \mu\  \hbox{ are non negative and Lipschitz continuous on } \ \mathbb{R}_{+},\nonumber\\
&& \lambda(x) \leq \bar \lambda \quad ; \quad \mu(x)\leq \bar \mu (1+x).
\ee

\me We will focus on two particular cases:
\begin{description}
\item  The linear case: $\lambda_{K}(n) = n \lambda$ and $\ \mu_{K}(n) = n \mu$, with $\lambda, \mu >0$. 

\item 
 The logistic case:  $\lambda_{K}(n) = n \lambda$ and $\ \mu_{K}(n) =n (\mu + \displaystyle{c\over K} n)$ with $\lambda, \mu, c>0$. \end{description}
By \eqref{init-1}, the population size is of the order of magnitude of $K$ and the biomass per capita is  of order ${1\over K}$. This explains that  the competition pressure from one individual  to another one in the logistic case is proportional to 
${1\over K}$.

\bi We are interested in the limiting behavior of the process $(X^K_{t}, t\geq 0)$ when $K\to \infty$.

\begin{Thm}
\label{det-1}
Let us assume  \eqref{coeff-1}, \eqref{init-1} and that the sequence $(X^K_{0})_{K}$ converges in law (and in probability)  to a real number $x_{0}$. Then for any $T>0$, the sequence of processes $(X^K_{t}, t\in[0,T])$ converges in law (and hence in probability), in $\mathbb{D}([0,T], \mathbb{R}_{+})$, to the continuous deterministic function $(x(t), t\in[0,T])$ solution of the ordinary differential equation
\be
\label{lim-det}
x'(t) = x(t)(\lambda(x(t)) - \mu(x(t)))\ ; x(0)=x_{0}.
\ee
\end{Thm}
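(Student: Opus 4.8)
The plan is to use the classical martingale/tightness/uniqueness scheme for proving convergence of Markov processes to an ODE limit, exactly in the spirit of Ethier--Kurtz. First I would record the semimartingale decomposition already available from the excerpt: by \eqref{mar-K} and \eqref{coeff-1},
\[
X^K_t = X^K_0 + \int_0^t \big(\lambda(X^K_s)-\mu(X^K_s)\big)\,ds + M^K_t,
\]
where $M^K$ is a square-integrable martingale with $\langle M^K\rangle_t = \frac{1}{K}\int_0^t \big(\lambda(X^K_s)+\mu(X^K_s)\big)X^K_s\,ds$ (using $\lambda_K(n)=n\lambda(n/K)$, $\mu_K(n)=n\mu(n/K)$). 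The $\frac1K$ prefactor on the bracket is the whole point: it forces the martingale part to vanish.

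The key steps, in order. \textbf{(1) Uniform moment bound.} From \eqref{unif-1} we have $\sup_K \E(\sup_{t\le T}(X^K_t)^3)<\infty$; this is the a priori control used everywhere below. \textbf{(2) Tightness in $\mathbb{D}([0,T],\R_+)$.} Decompose $X^K = X^K_0 + A^K + M^K$ with $A^K_t=\int_0^t(\lambda(X^K_s)-\mu(X^K_s))\,ds$. Using the linear growth bounds on $\lambda,\mu$ from \eqref{coeff-1} together with step (1), the finite-variation part $A^K$ is Lipschitz in $t$ with a uniformly integrable Lipschitz ``constant'', so the Aldous--Rebolledo criterion applies: for stopping times $\sigma\le\tau\le(\sigma+\delta)\wedge T$, $\E|A^K_\tau-A^K_\sigma|\le C\,\E\!\big(\int_\sigma^\tau(1+X^K_s+ (X^K_s)^{?})\,ds\big)$ which is $O(\delta)$, and $\E|M^K_\tau-M^K_\sigma|^2 = \E|\langle M^K\rangle_\tau-\langle M^K\rangle_\sigma| \le \frac{C}{K}\,\delta \to 0$. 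Together with the compact containment from step (1) (Markov inequality gives $\sup_K\P(\sup_{t\le T}X^K_t>R)\to0$ as $R\to\infty$), this yields tightness of the laws of $(X^K)$. \textbf{(3) Identification of the limit.} Let $X$ be the limit of any convergent subsequence. Since $\langle M^K\rangle_T\to 0$ in $L^1$ by the $\frac1K$ factor and step (1), Doob's inequality gives $\E(\sup_{t\le T}|M^K_t|^2)\le 4\E\langle M^K\rangle_T\to0$, so $M^K\to 0$ uniformly. The map $y\mapsto y_\cdot - y_0 - \int_0^\cdot(\lambda(y_s)-\mu(y_s))\,ds$ is continuous on $\mathbb{D}$ for the Skorokhod topology at continuous limit points (continuity of $\lambda-\mu$ and boundedness on the relevant range), so passing to the limit shows $X$ is a.s. continuous and satisfies $X_t = x_0 + \int_0^t(\lambda(X_s)-\mu(X_s))\,ds$, i.e.\ \eqref{lim-det} in integrated form. \textbf{(4) Uniqueness of the ODE.} Because $\lambda$ and $\mu$ are Lipschitz on $\R_+$ (assumption \eqref{coeff-1}), the vector field $x\mapsto x(\lambda(x)-\mu(x))$ is locally Lipschitz; combined with the a priori bound keeping solutions in a compact set on $[0,T]$, Cauchy--Lipschitz (or a direct Gronwall argument) gives a unique solution $x(t)$. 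Hence every subsequential limit equals $x$, so the whole sequence converges in law to the deterministic $x$, and convergence in law to a constant upgrades to convergence in probability.

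The main obstacle is step (2), the tightness of the finite-variation parts $A^K$: one must control $\E(\sup_{s\le T}X^K_s)$ and, because $\mu(x)\le\bar\mu(1+x)$ makes $\mu(X^K_s)$ only linearly bounded while $A^K$ involves it unweighted, this is precisely what the cube-moment bound \eqref{unif-1} is designed to absorb; the delicate point is checking the Aldous condition with stopping times and verifying that the ``modulus'' term $\int_\sigma^\tau(1+X^K_s)\,ds$ is uniformly small in expectation, which follows from uniform integrability of $\sup_{s\le T}X^K_s$ (a consequence of the uniform $L^3$ bound). Everything else is routine once the moment estimate and the $1/K$ scaling of $\langle M^K\rangle$ are in hand.
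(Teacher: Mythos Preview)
Your approach is essentially the paper's own: compactness via Aldous--Rebolledo on the drift and bracket, vanishing of the martingale part thanks to the $1/K$ factor, identification of the limit through the integrated ODE, and uniqueness of \eqref{lim-det} by Lipschitz continuity. The structure and every ingredient match.

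There is, however, a recurring algebraic slip you should fix. From \eqref{mar-K} with $\lambda_K(n)=n\lambda(n/K)$ and $Z^K_s=KX^K_s$ one gets $\tfrac{1}{K}\lambda_K(Z^K_s)=X^K_s\,\lambda(X^K_s)$, so the drift is
\[
A^K_t=\int_0^t X^K_s\big(\lambda(X^K_s)-\mu(X^K_s)\big)\,ds,
\]
not $\int_0^t(\lambda(X^K_s)-\mu(X^K_s))\,ds$; likewise the limiting equation reads $X_t=x_0+\int_0^t X_s(\lambda(X_s)-\mu(X_s))\,ds$, which is what \eqref{lim-det} actually says. This correction also resolves your ``$(X^K_s)^{?}$'': since $\mu(x)\le\bar\mu(1+x)$ and $\lambda$ is bounded, $|X^K_s(\lambda(X^K_s)-\mu(X^K_s))|\le C(1+(X^K_s)^2)$, so the Aldous bound is
\[
\E|A^K_{S'}-A^K_S|\le C\,\delta\,\E\Big(\sup_{s\le T}\big(1+(X^K_s)^2\big)\Big),
\]
which is exactly where the uniform $L^3$ bound \eqref{unif-1} (hence uniform $L^2$ on the sup) is used. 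With this fix, your argument is complete and coincides with the paper's.
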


\me In the linear case, the limiting equation is the Malthusian equation
$$x'(t) = x(t)(\lambda-\mu).$$ In the logistic case, one obtains the logistic equation
\be
\label{log}
x'(t)=x(t)(\lambda-\mu-c\,x(t)).\ee These two equations have different long time behaviors. In the Malthusian case, depending on the sign of $\lambda-\mu$, the solution of the equation  tends to $+\infty$ or to $0$ as time goes to infinity, modeling the explosion or extinction of the population.  In the logistic case and  if   the growth rate $\lambda-\mu$ is positive, the solution  converges to the carrying capacity $\displaystyle\ {\lambda-\mu\over c}>0$. The competition between individuals  yields a regulation of the population size. 

\me
\begin{proof}
The proof is based on a compactness-uniqueness argument.  More precisely, the scheme of the proof is the following:

1) Uniqueness of the limit.

2) Uniform estimates on the moments.

3) Tightness of the sequence of laws of $(X^K_{t}, t\in [0,T])$ in the Skorohod space. We will use the Aldous and Rebolledo criterion.

4) Identification of the limit. 

\me 
Thanks to Assumption \eqref{coeff-1}, the uniqueness of the solution of equation \eqref{lim-det} is obvious. 
We also have \eqref{unif-1}. Therefore it remains to prove the  tightness of the sequence of laws and to identify the limit. 
Recall (see for example \cite{EK} or \cite{JM86}) that since the processes $(X^K_{t}= X^K_{0}+ M^K_{t}+ A^K_{t})_{t}$
 are semimartingales,  tightness will be proved as soon as we have

$(i)$ The sequence of laws of $(\sup_{t\leq T}|X^K_{t}|)$ is tight,

$(ii)$ The finite variation processes $\langle M^K\rangle$ and $A^K$ satisfy the Aldous conditions. 

\me  
Let us recall the Aldous condition (see \cite{Al78}): let  $(Y^K)_K$ be a sequence of $\f_{t}$-adapted processes   and  $\tau$  the set of stopping times for the filtration $(\f_{t})_{t}$.  The Aldous  condition can be written: 
$\forall \varepsilon>0$, $\forall \eta>0$, $\exists \delta>0$, $K_{0}$ such that
$$\sup_{K\geq K_{0}}\, \sup_{S,S'\in \tau; S\leq S'\leq (S+\delta)\wedge T}\P(|Y^K_{S'}-Y^K_{S}|>\varepsilon)\leq \eta.$$

Let us show this property for the sequence $(A^K)_{K}$. 
We have
\ben
\E(|A^K_{S'}-A^K_{S}|)&\leq& \E\left(\int_{S}^{S'}X^K_{s}|\lambda(X^K_{s})-\mu(X^K_{s})| ds\right)\\
&\leq& C\E\left(\int_{S}^{S'}(1+(X^K_{s})^2)ds\right)\quad  \ \hbox{by }\  \eqref{coeff-1}\\
&\leq& C\, \delta\,\E\left(\sup_{s\leq T}(1+(X^K_{s})^2)\right)
\een
which tends to $0$ uniformly in $K$ as $\delta$ tends to $0$.
We use a similar argument for $(\langle M^K\rangle)_{K}$ to conclude for the tightness of the laws of $(X^K)_{K}$.  Prokhorov's Theorem implies the relative compactness of this family of laws in the set of probability measures on $\mathbb{D}([0,T],\mathbb{R})$, leading to the existence of a limiting value $\,Q$. 

\me Let us now identify the limit. The jumps of $X^K$ have the amplitude ${1\over K}$. 
Since the mapping $x\to \sup_{t\leq T}|\Delta x(t)|$ is continuous from $\mathbb{D}([0,T],\mathbb{R})$ into $\mathbb{R}_{+}$, then the probability measure  $Q$  only charges  the subset of continuous functions. For any $t>0$, we define on $\mathbb{D}([0,T],\mathbb{R})$ the function
$$\psi_{t}(x) = x_{t}- x_{0} - \int_{0}^t (\lambda(x_{s}) - \mu(x_{s}))\ x_{s} ds. $$
The assumptions yield $$|\psi_{t}(x)| \leq C\, \sup_{t\leq T}(1+(x_{t})^2)$$
and  we deduce the uniform integrability of the sequence $(\psi_{t}(X^K))_{K}$ from \eqref{unif-1}. 
The projection mapping $x \to x_{t}$ isn't continuous on $\mathbb{D}([0,T],\mathbb{R})$ but since $\,Q$ only charges the continuous paths, we deduce that $X \to \psi_{t}(X)$ is $Q$-a.s. continuous, if $X$ denotes the canonical process. Therefore, since $Q$ is the weak limit of a subsequence of $({\cal L}(X^K))_{K}$ (that for simplicity we still denote  ${\cal L}(X^K)$) and using the uniform integrability of $(\psi_{t}(X^K))_{K}$, we get
$$\E_{Q}(|\psi_{t}(X)| ) = \lim_{K} \E(|\psi_{t}(X^K)| ) =\lim_{K} \E(|M^K_{t}| ).$$
But
$$ \E(|M^K_{t}| ) \leq \left(\E(|M^K_{t}| ^2)\right)^{1/2}$$
tends to $0$ by  \eqref{var-K}, \eqref{coeff-1} and  \eqref{unif-1}. Hence the limiting process $X$  is the deterministic solution of the  equation
$$x(t) = x_{0} + \int_{0}^t x_{s} (\lambda(x_{s}) - \mu(x_{s}))  ds.
$$
That ends the proof. 
\end{proof}

\me
\subsection{Stochastic approximation - Feller and logistic Feller diffusions}
\label{ScalingDim1}
Let us now  assume that 
\be
\label{coeff-2}
&&\lambda_{K}(n) = n\left(\gamma K + \lambda\right); \ \mu_{K}(n) = n \left(\gamma K + \mu + {c\over K} n\right),
\ee
where $\gamma, \lambda, \mu, c$ are nonnegative constants and $\lambda>\mu$.
The coefficient $\gamma>0$ is called the allometry coefficient. Such   population model describes the behavior of small individuals  which are born or die very fast. 
As we will see in the next theorem, this assumption changes the qualitative nature of the large population approximation. 
\begin{Thm} \label{Thlimstoc} Assume \eqref{coeff-2} and 
\eqref{init-1} and that the random variables $\,X^K_{0}$ converge in law to a   random variable $X_{0}$. Then   for any $T>0$, the sequence of processes $(X^K_{t}, t\in[0,T])$ converges in law, in $\mathbb{D}([0,T], \mathbb{R}_{+})$, to the continuous diffusion process $(X_{t}, t\in[0,T])$ solution of the stochastic  differential equation
\be
\label{lim-sto}
X_{t} =X_0+\int_0^t \sqrt{2\gamma X_{s}} dB_{s} + \int_0^t X_s(\lambda - \mu - c\,X_{s})ds.
\ee
\end{Thm}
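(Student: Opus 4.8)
The plan is to reproduce the compactness--uniqueness scheme of Theorem~\ref{det-1}: (1) prove well-posedness of the limiting SDE~\eqref{lim-sto}; (2) establish moment estimates on $X^K$ uniform in $K$; (3) prove tightness of the laws of $(X^K_t,t\in[0,T])$ in $\mathbb{D}([0,T],\mathbb{R}_+)$ via the Aldous--Rebolledo criterion; (4) identify every limit point as the law of the solution of~\eqref{lim-sto}. The only structural novelty compared with the deterministic case is that the martingale part $M^K$ of the decomposition~\eqref{mar-K} no longer vanishes in the limit: under~\eqref{coeff-2} the dominant parts of $\lambda_K$ and $\mu_K$ cancel in the finite-variation term $\int_0^t X^K_s(\lambda-\mu-cX^K_s)\,ds$ but reinforce each other in the bracket~\eqref{var-K}, which equals $\int_0^t 2\gamma X^K_s\,ds$ up to a remainder of order $1/K$; this is exactly what produces the Feller diffusion coefficient $\sqrt{2\gamma X}$ in~\eqref{lim-sto}.

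\emph{Step~1 (uniqueness).} The diffusion coefficient $x\mapsto\sqrt{2\gamma x}$, extended by $0$ on $\mathbb{R}_-$, is $\tfrac12$-H\"older continuous, i.e.\ precisely the borderline case covered by the Yamada--Watanabe theorem, while the drift $x\mapsto x(\lambda-\mu-cx)$ is locally Lipschitz; hence pathwise uniqueness holds up to a possible explosion time, and non-explosion (together with the fact that a nonnegative solution stays nonnegative) follows from the moment control of Step~2, the negative cubic term in the action of the generator on $x\mapsto x^2$ forbidding escape to $+\infty$. So~\eqref{lim-sto} has a unique strong solution, and in particular the associated martingale problem is well posed. \emph{Step~2 (moments).} I would apply the martingale property of Theorem~\ref{BD-mart}(ii), in the form adapted to $X^K$ with generator~\eqref{gen}, to $\phi(x)=x^2$: a short computation gives $L_K\phi(x)=2\gamma x+2x^2(\lambda-\mu)-2cx^3+\tfrac1K x(\lambda+\mu+cx)\le C(1+x^2)$ with $C$ independent of $K$, the point being that the competition term $-2cx^3$ is negative (this replaces the uniform sublinear bound on $\lambda_K$, which is no longer available under~\eqref{coeff-2}). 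Combining this with Doob's inequality for $M^K$, whose bracket is bounded by $\int_0^t 2\gamma X^K_s\,ds$, and with Gronwall's lemma, exactly as in Proposition~\ref{mom-1}, yields $\sup_K\E(\sup_{t\le T}(X^K_t)^2)<\infty$ under~\eqref{init-1}.

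\emph{Steps~3 and~4 (tightness and identification).} Writing $X^K_t=X^K_0+M^K_t+A^K_t$ with $A^K_t=\int_0^t X^K_s(\lambda-\mu-cX^K_s)\,ds$, the Aldous condition for $A^K$ and for $\langle M^K\rangle$ follows, as in the proof of Theorem~\ref{det-1}, from the bound $\E|A^K_{S'}-A^K_S|+\E|\langle M^K\rangle_{S'}-\langle M^K\rangle_S|\le C\delta\,\E\big(\sup_{s\le T}(1+(X^K_s)^2)\big)$ valid for stopping times $S\le S'\le(S+\delta)\wedge T$, which tends to $0$ uniformly in $K$ by Step~2; together with tightness of $\sup_{t\le T}|X^K_t|$ this gives relative compactness and hence a subsequential limit law $Q$. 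Since the jumps of $X^K$ have amplitude $1/K\to0$, $Q$ charges only continuous trajectories. To identify $Q$ I would pass to the limit in the martingale problem: for $f\in C^2_b(\mathbb{R}_+)$, relation~\eqref{mar-K} shows $f(X^K_t)-f(X^K_0)-\int_0^t L_Kf(X^K_s)\,ds$ is a martingale, and a second-order Taylor expansion of $f$ yields $L_Kf(x)\to \gamma x f''(x)+x(\lambda-\mu-cx)f'(x)$ with a remainder dominated by $C(1+x^2)/K$; combined with the uniform integrability from Step~2 and the continuity of the relevant functionals under $Q$, this shows that under $Q$ the canonical process solves the martingale problem associated with~\eqref{lim-sto}. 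Equivalently, one checks that $M_t:=X_t-X_0-\int_0^t X_s(\lambda-\mu-cX_s)\,ds$ is a continuous martingale with $\langle M\rangle_t=\int_0^t 2\gamma X_s\,ds$ (by passing to the limit in $M^K$, $\langle M^K\rangle$, and $(M^K)^2-\langle M^K\rangle$), so that by the representation theorem for continuous martingales there is, on a possibly enlarged space, a Brownian motion $B$ with $M_t=\int_0^t\sqrt{2\gamma X_s}\,dB_s$. By the uniqueness of Step~1 all subsequential limits coincide, and the full sequence $(X^K)_K$ converges in law to $X$.

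The hard part, relative to Theorem~\ref{det-1}, is twofold. First, the well-posedness of~\eqref{lim-sto} in Step~1: the diffusion coefficient is not Lipschitz and the drift is super-linear, so one genuinely needs the Yamada--Watanabe criterion plus a separate non-explosion argument rather than a Cauchy--Lipschitz argument. Second, the limiting bookkeeping must be precise enough to see that the $O(\gamma K)$ parts of the rates cancel \emph{exactly} in the drift while contributing \emph{exactly} $2\gamma X$ to the bracket; moreover the uniform moment bounds now rest on the sign of the competition term, since the linear-in-$K$ rates destroy the uniform sublinear control on $\lambda_K$ used in the deterministic case.
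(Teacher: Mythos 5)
Your proposal is correct and follows the paper's own compactness--uniqueness scheme almost step by step: uniform moment bounds, Aldous--Rebolledo tightness, passage to the limit in the martingale problem with the Taylor expansion of $L_K$ (the $O(\gamma K)$ parts cancelling in the drift and producing $2\gamma x$ in the bracket), and the representation theorem on an enlarged space to recover the Brownian motion. The one place where you genuinely deviate is the uniqueness step: you invoke Yamada--Watanabe (the $\tfrac12$-H\"older diffusion coefficient and locally Lipschitz drift) together with a localization/non-explosion argument, whereas the paper appeals to Ikeda--Watanabe's one-dimensional criteria, treating \eqref{lim-sto} as uniquely defined up to $T_0\wedge T_\infty$ with $0$ absorbing, and then proves via the scale-function/speed-measure computation (Lemma \ref{extinction}) that for $c>0$ the process is a.s.\ absorbed at $0$ before any explosion. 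Both routes are valid: yours gives pathwise uniqueness and strong existence with no explicit computation, while the paper's route simultaneously yields the qualitative long-time information (a.s.\ absorption at $0$) that is reused later; in either case one should be explicit that uniqueness is needed within the class of nonnegative, non-exploding solutions in which the limit points lie. Two small remarks: the paper first derives the uniform third-moment bound \eqref{inter} (using $\phi(x)=x^3$ and the exact cancellation of the $\gamma K$ terms) before the pathwise second-moment bound \eqref{borne}, while you work directly with $\phi(x)=x^2$ --- equivalent under \eqref{init-1}; and in your Step~2 the essential point is the cancellation of the $O(K)$ parts in the second difference rather than the negativity of $-2cx^3$ (your displayed bound $L_K\phi(x)\le C(1+x^2)$ holds even for $c=0$), so that parenthetical claim should be softened, though it does not affect the argument.
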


\noindent In this case, the limiting process is stochastic. Indeed there are so many birth and death jump events that the stochasticity cannot completely disappear. 
 Hence the  term $\sqrt{2\gamma X_{t}} dB_{t} $ models  the demographic stochasticity. Its variance is proportional to the renormalized population size. When $c=0$, we get the Feller diffusion equation
\be
\label{Feller}dX_{t} = \sqrt{2\gamma X_{t}} dB_{t} + X_t(\lambda-\mu)dt.\ee If $c\neq 0$, Equation \eqref{lim-sto} is called by extension the logistic Feller diffusion equation  (see Etheridge \cite{Et04} and Lambert \cite{lam05}). 
 
 \me
 \begin{proof}
 Here again the proof is based on a uniqueness-compactness argument.
 
\noindent Let us first prove the uniqueness in law of a solution of \eqref{lim-sto}. We use a general result  concerning one-dimensional stochastic
differential equations (see Ikeda-Watanabe \cite{SDEsApp1} p.448). The diffusion and drift coefficients are of class  $C^1$ and non zero on $(0,+\infty)$ but 
can cancel at $0$. So Equation \eqref{lim-sto} is uniquely defined until the stopping time $T_{e}=T_{0}\wedge T_{\infty}$ where $T_{0}$ is the hitting time 
of $0$ and $T_{\infty}$ the explosion time. Furthermore, $0$ is an absorbing point for the process.  In the particular case where $c=0$ (no interaction), 
the process stays in $(0,\infty)$ or goes to extinction almost surely (see  Subsection 4.1 and Proposition \ref{fel-beh}). When $c>0$,   the process goes to extinction almost surely, as recalled below.
 
 \begin{Lem}
 \label{extinction}
 For any $x>0$, $\P_{x}(T_{e}=T_{0}<+\infty) =1$ if $c>0$. 
 \end{Lem}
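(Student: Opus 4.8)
The plan is to analyze the scale function and speed measure of the one-dimensional diffusion \eqref{lim-sto} on $(0,\infty)$ with $c>0$, and to use Feller's boundary classification (as in Ikeda--Watanabe \cite{SDEsApp1}) to show that the boundary $0$ is reached in finite time almost surely while $+\infty$ is not. First I would record that, on $(0,\infty)$, the generator reads $\mathcal{A}f(x) = \gamma x f''(x) + x(\lambda-\mu-cx) f'(x)$, so the relevant ratio of drift to diffusion is $b(x)/a(x) = (\lambda-\mu-cx)/\gamma$, which is integrable near both $0$ and any finite point but grows linearly at $+\infty$. A scale density is $s(x) = \exp\bigl(-\int_{x_0}^x (\lambda-\mu-cy)/(\gamma y)\,dy\bigr) = x^{-(\lambda-\mu)/\gamma}\exp(cx/\gamma)$ up to a multiplicative constant; the scale function is $p(x) = \int_{x_0}^x s(y)\,dy$.

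Next I would carry out the boundary analysis. Near $0$: since $s(y) \sim y^{-(\lambda-\mu)/\gamma}$ and $\lambda>\mu$, whether $p(0^+)$ is finite or infinite depends on whether $(\lambda-\mu)/\gamma < 1$; regardless, one combines this with the speed density $m(x) = 1/(\gamma x s(x)) = \gamma^{-1} x^{(\lambda-\mu)/\gamma - 1}\exp(-cx/\gamma)$ and checks Feller's criteria: the integral $\int_{0^+}\bigl(p(x)-p(0^+)\bigr)m(x)\,dx$ (when $p(0^+)$ finite) or $\int_{0^+} m(x)\int_x^{x_0} s(u)\,du\,dx$ is finite, so $0$ is either a regular or an exit boundary — in both cases attainable — and moreover it is absorbing by the structure of the equation (the coefficients vanish there). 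At $+\infty$: $s(y)$ grows like $e^{cy/\gamma}$, so $p(+\infty)=+\infty$, and the standard test shows $\int^{\infty} p(x) m(x)\,dx = +\infty$ because $m$ decays like $e^{-cx/\gamma}$ against $p$ growing like $e^{cx/\gamma}$, which after multiplication is only polynomially controlled; hence $+\infty$ is a natural (non-attainable) boundary. I would then invoke the dichotomy $T_e = T_0 \wedge T_\infty$ already stated in the excerpt: since $T_\infty = +\infty$ a.s. and the process cannot stay bounded away from $0$ forever without being recurrent (which the scale function rules out, as $p(+\infty)=+\infty$ forces $p(X_{t\wedge T_0})$, a nonnegative local martingale, to converge, which is incompatible with $X$ neither hitting $0$ nor escaping to $+\infty$), we conclude $\P_x(T_0<+\infty)=1$, hence $\P_x(T_e = T_0 < +\infty)=1$.

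An alternative, perhaps cleaner route I would keep in reserve is a Lyapunov/comparison argument: the logistic Feller diffusion \eqref{lim-sto} with $c>0$ is stochastically dominated, on $\{X_t \geq M\}$ for $M$ large, by a diffusion with strictly negative drift, and on any compact away from $0$ the diffusion coefficient is bounded below, so one can build a supermartingale of the form $e^{-\theta X_t}$ or use the explicit scale function to get $\E_x[T_0]<\infty$ directly (indeed, finiteness of $\int_{0}^{x_0} p(x) m(x)\,dx$ together with $\int_{x_0}^\infty (\text{const})\, m(x)\, du < \infty$ would even give integrability of $T_0$).

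The main obstacle I anticipate is purely bookkeeping at the boundary $0$: the exponent $(\lambda-\mu)/\gamma$ is an arbitrary positive real, so the borderline cases (e.g. $(\lambda-\mu)/\gamma = 1$, or $0$ being "regular" versus "exit") must all be handled, and one must check carefully that in every sub-case Feller's attainability integral converges — this is where sign conventions for the scale and speed densities and the precise form of Feller's test (as stated in \cite{SDEsApp1}, p.~448) need to be applied without slips. The non-attainability of $+\infty$ is comparatively robust, since the $e^{cx/\gamma}$ growth of the scale density dominates everything. Once the boundary classification is settled, the conclusion $\P_x(T_e=T_0<+\infty)=1$ is immediate from the structure already laid out before the lemma.
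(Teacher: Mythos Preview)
Your approach is the same as the paper's: both invoke the scale/speed analysis from Ikeda--Watanabe \cite{SDEsApp1} (the paper packages it via the two functions $\Lambda$ and $\kappa$ and the equivalence ``$\Lambda(+\infty)=+\infty$ and $\kappa(0^+)<+\infty$'' $\Leftrightarrow$ ``$T_e=T_0<+\infty$ a.s.''), so the strategy is right.

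However, there is a computational slip that matters. You correctly write $b(x)/a(x)=(\lambda-\mu-cx)/\gamma$, but then your scale density carries an extra $1/y$:
\[
s(x)=\exp\Bigl(-\int_{x_0}^x \frac{\lambda-\mu-cy}{\gamma}\,dy\Bigr)
\ \propto\ \exp\Bigl(\frac{c}{2\gamma}x^2-\frac{\lambda-\mu}{\gamma}x\Bigr),
\]
\emph{not} $x^{-(\lambda-\mu)/\gamma}e^{cx/\gamma}$. With the correct $s$, the scale density is bounded and positive near $0$, so $p(0^+)$ is finite in all cases, the speed density $m(x)\sim C/x$, and the attainability integral at $0$ reduces to checking that $\int_{0^+} |\ln x|\,dx<\infty$ --- no case split on $(\lambda-\mu)/\gamma$ is needed. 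At $+\infty$, $s$ now grows like $e^{cx^2/(2\gamma)}$ (even faster than your $e^{cx/\gamma}$), so $\Lambda(+\infty)=+\infty$ is even clearer.

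The slip is not cosmetic: with your incorrect $s(x)\sim x^{-(\lambda-\mu)/\gamma}$ near $0$, whenever $(\lambda-\mu)/\gamma\geq 1$ you get $p(0^+)=-\infty$, which in Feller's classification forces $0$ to be entrance or natural, hence \emph{not} attainable --- and your claim that $0$ is ``regular or exit'' in that sub-case would be false. So the proof as written breaks precisely in the regime you flagged as ``bookkeeping''. Once you fix the integrand, the whole case split evaporates and the verification is the one-line ``straightforward computation'' the paper alludes to.
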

 \begin{proof}[Proof of Lemma \ref{extinction}] 
 Recall Ikeda-Watanabe's results in \cite{SDEsApp1} (see also Shreve-Karatzas \cite{Karatzas:98}  Prop. 5.32). Let
 $Y_{t}$ denote the solution of the one-dimensional  stochastic differential equation 
 $dY_{t} = \sigma(Y_{t}) d B_{t} + b(Y_{t}) dt$. Let us  introduce the two functions:
 \ben 
 \Lambda(x) &=& \int_{1}^x \exp\left(-\int_{1}^z \frac{2 b(y)}{\sigma^2(y)} dy \right) dz ;\\
 \kappa(x) &=& \int_{1}^x  \exp\left(-\int_{1}^z \frac{2 b(y)}{\sigma^2(y)} dy \right) \left(\int_{1}^z  \exp\left(\int_{1}^\eta \frac{2 b(y)}{\sigma^2(y)} dy \right){d \eta\over \sigma^2(\eta)} \right) dz.
 \een
 Then there is equivalence between the two following assertions:
 
 (a) For any $y>0$, $\P_{y}( T^Y_{e}=T^Y_{0}<+\infty) =1$. 
 
 (b) $\Lambda(+\infty)=+\infty\ $ and $\ \kappa(0^+)<+\infty$.
 
 \me In our case, straightforward computations allow us to show that (b) is satisfied by the solution of \eqref{lim-sto} as soon as $c\neq 0$. 
 \end{proof}
 
 \me 
Let us now prove that there exists a constant $C_{1,T}$ such that 
 \be
\label{inter}
\sup_{t\leq T}\sup_{K} \E((X^K_{t})^3) \leq C_{1,T},\ee 
where $C_{1,T}$ only depends on $T$.
Following the proof of Theorem \ref{BD-mart} $(iii)$, we note that  
 \ben
 &&(X^K_{t})^3 - (X^K_{0})^3  - \int_{0}^t \bigg\{\gamma K^2 X^K_{s} \bigg((
 X^K_{s} + {1\over K})^3 + (
 X^K_{s} - {1\over K})^3 - 2 (
 X^K_{s} )^3\bigg) \\
 &&+ \lambda K   X^K_{s} \bigg((
 X^K_{s} + {1\over K})^3 -  (
 X^K_{s} )^3\bigg) + (\mu K + c X^K_{s} ) X^K_{s} \bigg(
 ( X^K_{s} - {1\over K})^3-  (
 X^K_{s} )^3\bigg)\bigg\} ds
   \een 
   is a local martingale. 
 Therefore, using that $(x+{1\over K})^3
+(x-{1\over K})^3 -2 x^3 = {6\over K^2} x$, a localization argument and  Gronwall's inequality, we get \eqref{inter}.

\noindent  Hence, we may deduce a pathwise second order moment estimate:
\be
\label{borne} \sup_{K} \E\big(\sup_{t\leq T}(X^K_{t})^2\big) \leq C_{2,T},\ee where $C_{2,T}$ only depends on $T$. Indeed, we have
$$X^K_{t} = X^K_{0}+ M^K_{t}+ \int_{0}^t X^K_{s}(\lambda  - \mu  - c X^K_{s}) ds,$$
where $M^K$ is a martingale. Then, there exists $C'_{T}>0$ with
$$\E\big(\sup_{s\leq t }(X^K_{s})^2\big) \leq C'_{T}\left(\E((X^K_{0})^2) +   \sup_{s\leq t}\E((X^K_{s})^2) + \E\big(\sup_{s\leq t }(M^K_{s})^2\big) \right),$$
and by Doob's inequality,
\ben \E\big(\sup_{s\leq t }(M^K_{s})^2\big)&\leq& C \E(\langle M^K \rangle_{t}) =  C\E\left(\int_{0}^t \big(2 \gamma X^K_{s} + {X^K_{s}\over K} ( \lambda+ \mu + c X^K_{s}) \big)ds\right).\een

\noindent  Finally Gronwall's Lemma and  \eqref{inter}  allow to get \eqref{borne}. 
The proof of the tightness follows as in the proof of Theorem \ref{det-1}. 

 \me Let us now identify the limit. We consider a limiting value $Q$. 
Remark once again  that since the mapping $x\to \sup_{t\leq T}|\Delta x(t)|$ is continuous from $\mathbb{D}([0,T],\mathbb{R})$ into $\mathbb{R}_{+}$, then 
 $Q$  charges only the continuous paths. For any $t>0$ and $\phi\in C^2_{b}$, we define on $\mathbb{D}([0,T],\mathbb{R})$ the function
$$\psi_{t}^1(x) = \phi(x_{t})- \phi(x_{0}) - \int_{0}^t L\phi(x_{s})\  ds,$$ where
$\displaystyle{L\phi(x)=\gamma\, x\,\phi''(x) +((\lambda-\mu)x-cx^2)\, \phi'(x)}$.
Note that  $|\psi_{t}^1(x)| \leq C\, \int_{0}^T(1+x_{s}^2) ds$, which implies
 the uniform integrability of the sequence $(\psi_{t}^1(X^K))_{K}$ by \eqref{inter}. 

\noindent Let us  prove that the process $(\psi_{t}^1(X), t\geq 0)$ is a $Q$-martingale. That will be  right as soon as $\E_{Q}(H(X))=0$ for any function $H$ defined as follows:
$$H(X) =g_{1}(X_{s_{1}})\cdots g_{k}(X_{s_{k}})(\psi_{t}^1(X) - \psi_{s}^1(X)  ),$$ for  $0\leq s_{1}\leq \cdots\leq s_{k}\leq s<t$ and $g_{1},\cdots, g_{k}\in C_{b}(\mathbb{R}_{+})$. 

\noindent Now, $\phi(X^K_{t})$ is a semimartingale and 
$$\phi(X^K_{t})=\phi(X^K_{0}) + M^{K,\phi}_{t} + \int_{0}^t L_{K}\phi(X^K_{s}) ds.$$
Moreover, if $\phi \in C^3_b$,   
$\displaystyle{|L_{K}\phi(x) - L\phi(x)| \leq {C\over K} (1+x^2)}\ $
and  \be
\label{diffgen}\E\left(\int_0^T|L_{K}\phi(X^K_{s}) - L\phi(X^K_{s})| ds \right)\leq {C\over K} \E(\sup_{s\leq T} (1+|X^K_{s}|^2)).\ee
We denote by $ \psi_{t}^K(X) $  the similar function as $\psi_{t}^1(X)$ with $L$  replaced by $L_{K}$. 

\me
The function $H_{K}$ will denote the  function similar to $H$  with $\psi_{t}^1$ replaced by $\psi_{t}^K$. We write
 $$\E_{Q}(H(X))= \E_{Q}(H(X))-\E(H(X^K))  + \big(\E_{Q}(H(X^K)) - \E(H_{K}(X^K))\big) +  \E(H_{K}(X^K)).$$
The third term is zero since $\psi_{t}^K(X^K) $ is a martingale. It's easy to prove the convergence to $0$ of the second term using  \eqref{diffgen}. The first
term tends to $0$ by continuity and uniform integrability. 
Hence we have proved that under $Q$ the limiting process satisfies the following martingale problem: for any $\phi\in C^3_{b}$, 
the process $\phi(X_{t}) -\phi(X_{0}) - \int_{0}^t L\phi(X_{s}) ds$ is a martingale. We know further that for any $T>0$, $\E(\sup_{t\leq T}(X_{t})^2)<+\infty$. 
It remains to show that under $Q$, $(X_{t})$ is the unique solution of \eqref{lim-sto}.
 Such point is standard and  can be found for example in Karatzas-Shreve \cite{Karatzas:98} but we give a quick proof.
Applying the martingale problem to $\phi(x)=x$, then $\phi(x)=x^2$, we get that $X_{t}$ is a square integrable semimartingale
 and that $\ X_{t}=X_{0}+ M_{t}+ \int_{0}^t X_{s}(\lambda-\mu-cX_{s})\, ds$ and the martingale part $M_{t}$ has  quadratic variation $\int_{0}^t 2  \gamma X_{s} ds$. Then a representation theorem is used to conclude. Indeed, let us increase the probability space and consider an auxiliary space $(\Omega', {\cal A}', \P')$ and a Brownian motion $W$ defined  on the latter. On $\Omega \times \Omega'$, let us define 
$$B_{t}(\omega, \omega') = \int_{0}^t {1\over \sqrt{2 \gamma X_{s}(\omega)}} \One_{\{X_{s}(\omega)\neq 0\}}dM_{s}(\omega) + \int_{0}^t  \One_{\{X_{s}(\omega) = 0\}} dW_{s}(\omega').$$
It's obvious that the processes $B_{t}$ and $B_{t}^2 -t$ are continuous martingale on the product probability space. Then $B$ is a Brownian motion by the L\'evy's characterization. In addition, we compute
$$\E\left(\left(M_{t}- \int_{0}^t \sqrt{2 \gamma X_{s}}dB_{s}\right)^2\right) = \E\left(\int_{0}^t  \One_{\{X_{s}(\omega) = 0\}}d\langle M\rangle_{s}\right)=0.$$
Thus, $M_{t}= \int_{0}^t \sqrt{2 \gamma X_{s}}dB_{s}$, which ends the proof.
 \end{proof}

\subsection{Selection strategy in random environments}

\me In \eqref{lim-sto}, the stochastic term   is demographic in the sense that,  as seen in the previous section, it comes from a  very high number of births and deaths. Another stochastic term can be added to the deterministic equation to model a random environment. In Evans, Hening and Schreiber \cite{evans}, the authors consider the population abundance process $(Y_{t}, t\geq 0)$  governed by the stochastic differential equation
\be
\label{rand-env}dY_{t} = Y_{t}(r-c Y_{t}) + \sigma Y_{t} dW_{t}\ ,\ Y_{0}>0,\ee
where $(W_{t})_{t\geq 0}$ is a standard Brownian motion. 

\noindent   The growth rate has a stochastic component whose $\sigma^2$ is the infinitesimal variance.
The process is well defined
 and has an explicit form which can be checked using It\^o's formula:
$$Y_{t}=\frac{Y_{0}\exp\big((r-{\sigma^2\over 2})t + \sigma W_{t}\big)}{1+Y_{0} {r\over c}\,\int_{0}^t \exp\big((r-{\sigma^2\over 2})s + \sigma W_{s}\big)\,ds}.
$$
Then $Y_{t}\geq 0$ for all $t\geq 0$ almost surely. 

\me The authors deduce the long time behavior of the process depending on the sign of $r - {\sigma^2\over 2}$. We refer to  \cite{evans} for the proof.

\begin{Prop}
\begin{enumerate}
\item If $r - {\sigma^2\over 2}<0$, then $\lim_{t\to \infty} Y_{t} = 0$ almost surely.

\item If $r - {\sigma^2\over 2}=0$, then $\liminf_{t\to \infty} Y_{t} = 0$ almost surely, $\limsup_{t\to \infty} Y_{t} = \infty$ almost surely and $\lim_{t\to \infty} {1\over t}\int_{0}^t Y_{s}ds = 0$ almost surely.

\item If $r - {\sigma^2\over 2}>0$, then $(Y_{t})_{t}$ has a unique stationary distribution which is the  law $\Gamma({2 r\over \sigma^2}-1,{\sigma^2\over 2 c})=\Gamma(k,\theta)$, with density $x \longrightarrow {1\over \Gamma(k) \theta^k}x^{k-1}e^{-{x\over \theta}}$.
\end{enumerate}
\end{Prop}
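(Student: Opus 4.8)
The plan is to prove the three-part dichotomy for the one-dimensional SDE \eqref{rand-env} by exploiting the explicit solution formula already displayed in the excerpt, which reduces everything to the asymptotic behavior of the exponential martingale-type process $R_t := \exp\big((r-\frac{\sigma^2}{2})t + \sigma W_t\big)$ and its time-integral $I_t := \int_0^t R_s\,ds$. First I would record that, since $B_t := W_t/t \to 0$ almost surely by the strong law of large numbers for Brownian motion, one has $\frac{1}{t}\log R_t = (r-\frac{\sigma^2}{2}) + \sigma \frac{W_t}{t} \to r - \frac{\sigma^2}{2}$ a.s.; hence $R_t \to 0$ a.s. when $r - \frac{\sigma^2}{2} < 0$ and $R_t \to \infty$ a.s. when $r - \frac{\sigma^2}{2} > 0$, while in the critical case $R_t$ oscillates (by the law of the iterated logarithm, $\limsup R_t = \infty$, $\liminf R_t = 0$ a.s.). The solution is $Y_t = Y_0 R_t / (1 + Y_0 \frac{r}{c} I_t)$, so I would analyze each regime through $R_t$ and $I_t$.

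In the subcritical case $r - \frac{\sigma^2}{2} < 0$: since $R_t \to 0$ a.s. and $R_t$ is continuous and positive, the numerator $Y_0 R_t \to 0$, while the denominator is $\geq 1$; hence $Y_t \to 0$ a.s. In the supercritical case $r - \frac{\sigma^2}{2} > 0$: here $I_\infty = \int_0^\infty R_s\,ds = +\infty$ a.s. (the integrand grows exponentially), so $Y_t = R_t/(\frac{1}{Y_0} + \frac{r}{c} I_t)$, and one expects $Y_t$ not to converge but to be positive recurrent. For the stationary law I would apply the standard speed-measure/scale-function computation for one-dimensional diffusions: compute the scale function $s(x) = \int^x \exp\!\big(-\int^y \frac{2(r - c z)}{\sigma^2 z^2}\,dz\big)dy$ and the speed density $m(x) \propto \frac{1}{\sigma^2 x^2}\exp\!\big(\int^x \frac{2(r-cz)}{\sigma^2 z^2}\,dz\big)$, check that both boundaries $0$ and $\infty$ are natural/inaccessible under the condition $\frac{2r}{\sigma^2} - 1 > 0$ (i.e.\ $r > \frac{\sigma^2}{2}$), verify $\int_0^\infty m(x)\,dx < \infty$, and normalize; the resulting density is the Gamma$(\frac{2r}{\sigma^2}-1, \frac{\sigma^2}{2c})$ density stated. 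Uniqueness of the stationary distribution then follows from irreducibility of the diffusion on $(0,\infty)$.

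In the critical case $r = \frac{\sigma^2}{2}$: here $R_t = \exp(\sigma W_t)$. By the law of the iterated logarithm $\limsup_t \sigma W_t = +\infty$ and $\liminf_t \sigma W_t = -\infty$ a.s., so $\limsup R_t = \infty$ and $\liminf R_t = 0$ a.s. For $\limsup Y_t = \infty$: along a sequence $t_n \uparrow \infty$ where $W_{t_n}$ is large and $W$ has just made an excursion up from a much smaller value, the integral $I_{t_n}$ can be controlled so that the ratio is large — more cleanly, one uses that $I_\infty$ may or may not be finite but on the event $\{I_\infty < \infty\}$ one gets $\limsup Y_t = \infty$ directly, and on $\{I_\infty = \infty\}$ one argues via $Y_t = R_t/(\frac1{Y_0} + \frac{r}{c}I_t)$ together with the fact that $R$ makes arbitrarily high peaks on arbitrarily short timescales relative to the accumulated integral. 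For $\liminf Y_t = 0$: since $R_t \to 0$ along a subsequence and $I_t \geq 0$, the ratio goes to $0$ along that subsequence. For the Cesàro statement $\frac1t\int_0^t Y_s\,ds \to 0$ a.s.: I would use the identity $\frac{d}{dt}\log(1 + Y_0\frac{r}{c}I_t) = Y_0\frac{r}{c}R_t/(1+Y_0\frac{r}{c}I_t) = \frac{r}{c}Y_t$, so $\frac1t\int_0^t Y_s\,ds = \frac{c}{r t}\log\!\big(1 + Y_0\frac{r}{c}I_t\big)$, and then bound $\log I_t$: since $\frac1t\log R_t \to 0$, for any $\varepsilon>0$ eventually $R_s \leq e^{\varepsilon s}$, whence $I_t \leq \frac{1}{\varepsilon}e^{\varepsilon t}$ and $\frac1t \log(1 + Y_0\frac{r}{c}I_t) \leq \varepsilon + o(1)$; letting $\varepsilon \downarrow 0$ gives the claim. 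The main obstacle I anticipate is the critical-case $\limsup Y_t = \infty$ assertion, which genuinely requires a careful pathwise argument balancing the exponential peaks of $R$ against the growth of its running integral $I$ (a short-excursion argument for $W$), rather than the soft subsequence arguments that suffice for the other parts; everything else reduces to the LIL, the SLLN, and the classical scale-and-speed classification of one-dimensional diffusions. Since the paper defers to \cite{evans} for the full proof, I would present the above as the natural route and emphasize the explicit-solution shortcut.
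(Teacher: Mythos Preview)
The paper does not prove this proposition; it simply refers to \cite{evans}. So there is no proof to compare your approach against, and what you propose is essentially filling in what the paper omits. Your route via the explicit solution is sound, and your arguments for case~1, for the Ces\`aro limit in case~2 (the logarithmic identity $\int_0^t Y_s\,ds = \tfrac{c}{r}\log(1+Y_0\tfrac{r}{c}I_t)$ combined with $\tfrac{1}{t}\log R_t\to 0$ is a clean trick), and for the stationary law in case~3 are correct. Two remarks.

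First, a computational slip in case~3: the integrand in the scale and speed densities should be $\frac{2b(z)}{a(z)^2}=\frac{2z(r-cz)}{\sigma^2 z^2}=\frac{2(r-cz)}{\sigma^2 z}$, not $\frac{2(r-cz)}{\sigma^2 z^2}$ as you wrote. With the correct integrand one finds $m(x)\propto x^{2r/\sigma^2-2}e^{-2cx/\sigma^2}$, which is indeed the stated Gamma density; your version would yield an inverse-Gamma instead.

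Second, and more importantly, the gap you correctly flag in the critical case --- proving $\limsup_{t\to\infty}Y_t=\infty$ --- is best closed not by a pathwise excursion argument (which, as you sense, is delicate: one must control the running integral $I_t$ simultaneously with a peak of $R_t$) but by the very scale-function tool you already invoke in case~3. When $r=\sigma^2/2$ the scale density is $s'(x)=x^{-1}e^{2cx/\sigma^2}$, so
\[
s(0^+)=-\int_0^1 y^{-1}e^{2cy/\sigma^2}\,dy=-\infty,\qquad
s(\infty)=\int_1^\infty y^{-1}e^{2cy/\sigma^2}\,dy=+\infty.
\]
Both boundaries are therefore inaccessible and the diffusion is recurrent on $(0,\infty)$ (see e.g.\ \cite{Karatzas:98}, Chapter~5), which gives $\liminf Y_t=0$ and $\limsup Y_t=\infty$ simultaneously with no further work. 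The speed density $m(x)\propto x^{-1}e^{-2cx/\sigma^2}$ has infinite total mass (divergence at $0$), confirming null recurrence and consistency with your Ces\`aro result.
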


\me Of course, a challenge is to consider a mixed model with  demographic stochasticity and random environment, consisting in adding the term $\sqrt{Y_t}dB_t$ to the r.h.s. of \eqref{rand-env}. Some work has been developed in this sense in $\cite{BH}$ in the case without interaction $c=0$. Modeling branching processes in random environment will  be the aim of  Section \ref{BFDrc}.

\section{Continuous State Branching Processes}
\label{CSBPP}

\me In this part, we consider a new class of stochastic differential equations for monotype populations, taking into account exceptional events where an individual has a large number of offspring. We  generalize the Feller equation \eqref{Feller} obtained  in Subsection 3.2 by adding jumps  whose rates rates are proportional to the  population size.
The jumps are driven  by a Poisson point measure, as already done  in Subsection 2.4.
 This class of processes satisfies the branching property: the individuals of the underlying population evolve independently. Combining this property with the
 tools developed in the first part, we describe finely the processes, their  long time behavior and  the scaling limits they come from. \\

\subsection{Definition and examples}
\begin{Def}
\label{CSBP}
Let $r\in \R$, $\gamma\geq 0$ and $\mu$ be a $\sigma$-finite measure on $(0,\infty)$ such that $\int_{0}^{\infty}\big({h}\land {h}^2\big)\mu(\ud {h})$ is finite. 
Let $N_0(\ud s, \ud h, \ud u)$ be a  Poisson point  measure on $\R_{+}^3$ with intensity $\ud s\mu(\ud h)\ud u$  and  $\widetilde{N}_0$ its   compensated measure. Let 
$B$ be a standard Brownian motion  independent of $N_{0}$ and $Z_{0}$ an integrable non-negative random variable independent of $N_{0}$ and $B$. \\
The Continuous State Branching process (CSBP) $Z$ associated with $r$, $B$, $N_0$ and $Z_0$  is the unique  non-negative strong solution in $\mathbb D(\R_+,\R_+)$ of the following   stochastic differential equation  
\begin{equation}\label{defCSBP}
Z_t=Z_0+\int_0^t r Z_s \ud s +\int_0^t\sqrt{2\gamma Z_s} \ud B_s+\int_0^t\int_0^\infty\int_0^\infty \One_{\{u\leq Z_{s-}\}} h\,\widetilde{N}_0(\ud s, \ud h, \ud u).
\end{equation}
The triplet $(r, \gamma, \mu)$ is the characteristic triplet of the CSBP $Z$ and identifies its law.
\end{Def}
\noindent The difficulties in the proof of the existence and uniqueness come from the term $\sqrt{Z_t}$ (which is  non-Lipschitz for $Z$ close to $0$). 
We refer to    Fu and Li \cite{RePEc:eee:spapps:v:120:y:2010:i:3:p:306-330} for a  general framework on the existence and uniqueness of such equations. 
In particular, the definition can be extended to any initial non-negative random variable $X_0$. The fact that $Z_0$ has   a finite first moment and 
that $\int_{0}^{\infty}\big({h}\land {h}^2\big)\mu(\ud {h})$ is finite allows to focus here on conservative CSBP, i.e. for any $t\geq 0$,   $\E(Z_t)$ is finite.
We refer to the last part of this section for non-conservative CSBP : these latter  may blow up in finite time.  \\
In the forthcoming  Proposition \ref{lamperti} and Section  \ref{scaling}, the Lamperti representation  and scaling limits of discrete branching processes will be proved to actually provide   alternative ways to construct and identify CSBP. \\
We recall from the previous section that the term $\sqrt{2\gamma Z_s} \ud B_s$ corresponds to continuous fluctuations of the population size, with variance proportional to the number of individuals. The last term describes the
jumps of the population  size whose rate at time $s$ is proportional to the size $Z_{s-}$ and the distribution proportional to the measure $\mu$.
The jump term appears in the scaling 
limit when the individuals  reproduction law charges very large  numbers (the second moment has to be infinite, see Section  \ref{scaling}).  The case
 $\mu(\ud z)=c  z^{-(1+\alpha)}\ud z$ $(\alpha \in (1,2))$ plays a particular role (see below), since the corresponding CSBP is then a stable process.  We stress that in the definition given above the jumps appear only through the compensated Poisson measure, which uses the integrability assumption on $\mu$. Thus, the drift term $rZ_s$ can be seen as the sum of the drift term of a Feller diffusion and the drift term due to the mean effect of the jumps. 
%




\subsection{Characterization and properties}
\label{CP}

\me Let $Z$ be a CSBP with characteristic triplet $(r, \gamma, \mu)$.
It's a Markov process.
\begin{Prop} \label{MP}  The infinitesimal generator of $Z$
is given by: for every $f\in C^2_b(\R_+)$, 
\Bea
\mathcal{A}f(z) &=&r zf^\prime(z)+\gamma zf^{\prime\prime}(z)+\int_0^\infty \Big(f(z+h)-f(z)-hf^\prime(z)\Big)z\mu(\ud h).
\Eea
\end{Prop}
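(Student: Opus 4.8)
The plan is to obtain the expression for $\mathcal A$ by applying Itô's formula for semimartingales with jumps to $f(Z_t)$ for $f\in C^2_b(\R_+)$, where $Z$ solves \eqref{defCSBP}, and then to read off the finite variation (drift) part. From \eqref{defCSBP}, $Z$ is a semimartingale whose continuous finite variation part is $\int_0^t rZ_s\,\ud s$, whose continuous local martingale part $Z^c$ has bracket $\ud\langle Z^c\rangle_s=2\gamma Z_s\,\ud s$, and whose jumps are $\Delta Z_s=h$ at each atom $(s,h,u)$ of $N_0$ with $u\le Z_{s-}$. Itô's formula then produces: from the first order term applied to the drift and Brownian parts, $\int_0^t rZ_sf'(Z_s)\,\ud s+\int_0^t f'(Z_s)\sqrt{2\gamma Z_s}\,\ud B_s$; from the second order term, $\gamma\int_0^t Z_sf''(Z_s)\,\ud s$; from the first order term applied to the compensated jump integral of \eqref{defCSBP}, the term $\int_0^t\!\int_0^\infty\!\int_0^\infty \One_{\{u\le Z_{s-}\}}f'(Z_{s-})h\,\widetilde N_0(\ud s,\ud h,\ud u)$; and from the jump corrections $\sum_{s\le t}\big(f(Z_s)-f(Z_{s-})-f'(Z_{s-})\Delta Z_s\big)$, the integral $\int_0^t\!\int_0^\infty\!\int_0^\infty \One_{\{u\le Z_{s-}\}}\big(f(Z_{s-}+h)-f(Z_{s-})-hf'(Z_{s-})\big)\,N_0(\ud s,\ud h,\ud u)$.

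The decisive step is to compensate this last integral by writing $N_0=\widetilde N_0+\ud s\,\mu(\ud h)\,\ud u$ and integrating the $u$-variable over $[0,Z_{s-}]$: this splits it into a $\widetilde N_0$-integral and the drift $\int_0^t Z_s\int_0^\infty\big(f(Z_s+h)-f(Z_s)-hf'(Z_s)\big)\mu(\ud h)\,\ud s$. Collecting the two $\widetilde N_0$-integrals, the terms carrying $f'(Z_{s-})h$ cancel exactly, leaving the single jump martingale $\mathcal M^J_t=\int_0^t\!\int_0^\infty\!\int_0^\infty \One_{\{u\le Z_{s-}\}}\big(f(Z_{s-}+h)-f(Z_{s-})\big)\widetilde N_0(\ud s,\ud h,\ud u)$. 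Hence, with $\mathcal M_t=\int_0^t f'(Z_s)\sqrt{2\gamma Z_s}\,\ud B_s+\mathcal M^J_t$,
\[
f(Z_t)=f(Z_0)+\int_0^t \mathcal A f(Z_s)\,\ud s+\mathcal M_t,
\]
with $\mathcal A$ the claimed operator. To upgrade this into the generator statement I would then: (i) check $\mathcal Af$ is well defined and $|\mathcal Af(z)|\le C(1+z)$, using the two-sided Taylor bound $|f(z+h)-f(z)-hf'(z)|\le \tfrac12\|f''\|_\infty h^2\wedge(2\|f\|_\infty+\|f'\|_\infty h)$ against $\int_0^\infty(h\wedge h^2)\mu(\ud h)<\infty$; (ii) establish the a priori bound $\E(\sup_{s\le T}Z_s)<\infty$ by the localization argument of Proposition \ref{mom-1} with stopping times $\inf\{t:Z_t\ge n\}$, which makes $\mathcal M$ a true martingale (for $\mathcal M^J$ using $\int_{h\le1}h^2\mu(\ud h)<\infty$ and $\int_{h>1}\mu(\ud h)\le\int_{h>1}h\,\mu(\ud h)<\infty$ to bound its predictable quadratic variation); and (iii) take $\E_z$, divide by $t$ and let $t\downarrow0$, invoking dominated convergence (domination by $C(1+\sup_{s\le1}Z_s)\in L^1$) and right-continuity of $s\mapsto \mathcal Af(Z_s)$ to conclude $\lim_{t\downarrow0}t^{-1}\big(\E_z f(Z_t)-f(z)\big)=\mathcal Af(z)$.

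The main obstacle is entirely technical and concerns the integrability bookkeeping: controlling the jump integrand simultaneously near $h=0$ (where only $\int h^2\mu(\ud h)$ is finite) and near $h=\infty$ (where only $\int h\,\mu(\ud h)$ is finite) through the two-sided Taylor estimate, and securing the moment bound $\E(\sup_{s\le T}Z_s)<\infty$ that turns the local martingales into martingales and justifies the passage to the limit. By contrast, the algebraic heart of the computation — the exact cancellation of the $f'\cdot h$ contributions once the jump term is compensated — is immediate.
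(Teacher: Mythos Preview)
Your proposal is correct and follows essentially the same route as the paper: the paper does not prove Proposition~\ref{MP} in the main text but simply refers to the Appendix, where the general It\^o formula with jumps (formula~\eqref{ito-saut}) and its Corollary~\ref{GenEt} yield precisely the semimartingale decomposition you wrote down, with the drift part equal to $\int_0^t \mathcal Af(Z_s)\,\ud s$. Your extra care about the two-sided Taylor bound against $\int(h\wedge h^2)\mu(\ud h)$ and the localization to turn local martingales into martingales is exactly the integrability bookkeeping the paper sweeps under the phrase ``suitable integrability and regularity conditions''; the algebraic content is identical.
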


\noindent We refer to the Appendix for complements on the semimartingale decomposition. We note from  the expression of $\mathcal A$ that the function
$z\rightarrow z^2$ doesn't belong  (in general) to the domain of the generator. But one can prove that $C^2$ functions with two first derivative bounded
 belong to the domain. It can be achieved by monotone  convergence using   non-decreasing sequences of functions $g_n \in C^2_b$  such  that
$\parallel g_n' \parallel_{\infty} + \parallel g_n'' \parallel_{\infty}$ is bounded and   there exists $C>0$ such that
$$\vert \mathcal Af (z) \vert \leq C\left(\parallel f' \parallel_{\infty} + \parallel f''\parallel_{\infty}\right)z$$ 
for  every $z\geq 0$. 

\begin{exo} Prove that $\E_z(Z_t)=z\exp(rt)$ for any $t,z\geq 0$ and  that $(\exp(-rt)Z_t :  t\geq 0)$ is  a  martingale. What can you say about  the long time behavior of $Z_t$? What is the interpretation of $r$ ?\\
\emph{One can use the  two-dimensional It\^o formula.} \\%
\end{exo}

\me We give now the key property satisfied by our class of processes. If necessary, we denote by $Z^{(z)}$ a CSBP starting at $z$. 
\begin{Prop}
\label{branch}
 The process $Z$ satisfies the branching property, i.e. $$Z^{(z+\w{z})}\stackrel{d}{=} Z^{(z)}+\w{Z}^{(\w{z})}  \qquad (z,\w{z} \in \R_+),$$
where $Z$ et $\w{Z}$ are independent CSBP's with the same distribution.\\
Then the Laplace transform of $Z_t$ is of the form
\begin{equation*}
\mathbb{E}_z\Big[ \exp(-\lambda  Z_t)\Big]=\exp\{-z
u_t(\lambda)\},\qquad\textrm{with }\lambda\geq 0,
\label{CBut}
\end{equation*}
for some non-negative function $u_t$ and any $z\geq 0$. 
\end{Prop}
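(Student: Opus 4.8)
The plan is to establish the branching property first at the level of the defining stochastic differential equation \eqref{defCSBP}, and then deduce the exponential form of the Laplace transform from it together with the Markov property.

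\textbf{Step 1: The branching property.} I would use the pathwise representation. Take two independent copies of the driving noise: a Poisson point measure $N_0$ with an extra ``colour'' coordinate splitting it into two independent Poisson measures $N_0^{(1)},N_0^{(2)}$ on $\R_+^3$ with the same intensity, and two independent Brownian motions $B^{(1)},B^{(2)}$. Let $Z^{(z)}$ solve \eqref{defCSBP} driven by $B^{(1)},N_0^{(1)}$ from $z$, and $\w Z^{(\w z)}$ solve it driven by $B^{(2)},N_0^{(2)}$ from $\w z$. The key observation is that the ``level'' representation $\One_{\{u\le Z_{s-}\}}$ makes the jump mechanism additive in a stacking sense: if one feeds the sum process $S_t := Z^{(z)}_t+\w Z^{(\w z)}_t$ into the SDE using the Brownian motion $B_t:=\int_0^t \sqrt{Z^{(z)}_s/S_s}\,dB^{(1)}_s+\int_0^t\sqrt{\w Z^{(\w z)}_s/S_s}\,dB^{(2)}_s$ (a Brownian motion by L\'evy's characterisation, as in the proof of Theorem \ref{Thlimstoc}) and the superposed Poisson measure with the two level-windows $\{u\le Z^{(z)}_{s-}\}$ and $\{Z^{(z)}_{s-}<u\le S_{s-}\}$, one checks that $S$ satisfies exactly \eqref{defCSBP} started from $z+\w z$ with respect to a valid driving pair. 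By the strong uniqueness for \eqref{defCSBP} (Fu--Li \cite{RePEc:eee:spapps:v:120:y:2010:i:3:p:306-330}), $S$ has the law of $Z^{(z+\w z)}$, which is the branching property. I expect this superposition/uniqueness bookkeeping to be the main obstacle: one has to be careful that the rearranged noise is genuinely a Poisson point measure with the right intensity and adapted to the enlarged filtration, and that $S$ does not hit $0$ in a way that breaks the level decomposition.

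\textbf{Step 2: From branching to the Laplace form.} Fix $t\ge 0$ and set $\varphi_t(z):=\mathbb E_z[\exp(-\lambda Z_t)]$ for $\lambda\ge 0$; this is well-defined and lies in $(0,1]$. The branching property gives the functional equation
\begin{equation*}
\varphi_t(z+\w z)=\varphi_t(z)\,\varphi_t(\w z),\qquad z,\w z\in\R_+,
\end{equation*}
because $Z^{(z+\w z)}\stackrel{d}{=}Z^{(z)}+\w Z^{(\w z)}$ with the two summands independent, so the Laplace transform factorises. A non-negative, measurable (indeed, by stochastic continuity of $Z$ and dominated convergence, continuous in $z$) multiplicative function on $\R_+$ that is bounded by $1$ is necessarily of the form $z\mapsto \exp(-z\,u_t(\lambda))$ for some $u_t(\lambda)\in[0,+\infty]$; finiteness of $u_t(\lambda)$ follows since $\varphi_t(z)>0$ (e.g. because $\mathbb E_z Z_t=ze^{rt}<\infty$ forces $\P_z(Z_t<\infty)=1$, hence $\varphi_t(z)>0$). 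This is exactly the claimed representation $\mathbb E_z[\exp(-\lambda Z_t)]=\exp\{-z\,u_t(\lambda)\}$ with $u_t\ge 0$.

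\textbf{Remarks on alternatives.} One could instead bypass the SDE and argue purely at the semigroup level: superposition of independent CSBP's solves the same martingale problem (Proposition \ref{MP}) as a single CSBP started from the sum of the initial conditions, and uniqueness of that martingale problem yields the branching property; this is essentially equivalent but shifts the uniqueness input. For the present statement I would present the pathwise argument since the SDE \eqref{defCSBP} and the L\'evy-characterisation trick are already the working tools of the paper. The semigroup property $u_{t+s}=u_t\circ u_s$ and the identification of $u_t$ via a differential equation are not needed here and I would defer them.
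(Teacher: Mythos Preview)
Your proposal is correct and follows essentially the same route as the paper: both sum two independent solutions of \eqref{defCSBP}, build a new Brownian motion from the two via L\'evy's characterisation exactly as you wrote, superpose/shift the two Poisson measures into a single one with the right intensity, and invoke pathwise uniqueness to identify the sum as a CSBP started from $z+\w z$; the Laplace form is then obtained from the multiplicative relation $\varphi_t(z+\w z)=\varphi_t(z)\varphi_t(\w z)$. Your Step~2 is in fact slightly more explicit than the paper's (which simply says ``taking the logarithm''), since you justify continuity and positivity of $\varphi_t$ before invoking the Cauchy functional equation.
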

\begin{proof} To simplify the notation, we write $X_t=Z_t+Z'_t$, where $Z_0=z$, $Z_0'=\w{z}$ and  $Z$ and $Z'$ are two  independent CSBP's. The process $X$
 satisfies a.s. : 
\bea
\label{deccc}
X_t&=&X_0+\int_0^t rX_s \ud s +\int_0^t\sqrt{2\gamma Z_s} \ud B_s+\sqrt{2\gamma Z'_s} \ud B'_s \\
&& \qquad +\int_0^t\int_0^\infty\int_0^{Z_{s-}} h\widetilde{N}_0(\ud s, \ud h, \ud u)+\int_0^t\int_0^\infty \int_0^{Z'_{s-}} h\widetilde{N}'_0(\ud s, \ud h, \ud u) \nonumber
\eea
where $B$ and $B'$ are two independent Brownian motions and $N_0$ and $N'_0$ are two independent Poisson point  measures on $\R_{+}^3$ with intensity $\ud s\mu(\ud h)\ud u$.
We introduce  the real valued process $B''$  defined by
$$B''_t=\int_0^t \ind_{Z_s >0}\frac{\sqrt{2\gamma Z_s} \ud B_s+\sqrt{2\gamma Z'_s} \ud B'_s}{\sqrt{2\gamma X_s}}+\int_0^t \ind_{Z_s =0}\ud B_s$$
and note that $B''$ is a Brownian motion by L\'evy Theorem since it is a continuous local martingale with quadratic variation equal to $t$.
We also define  the random point measure $N_0''$ on $\R_+^3$  by
$$N''_0(\ud s, \ud h, \ud u)= N_0(\ud s, \ud h, \ud u) \One_{\{u \leq Z_{s-}\}} + \hat{N'_0}(\ud s, \ud h, \ud u),$$
where $\hat{N'_0}$ is the random point measure on $\R_{+}^3$ given by
$\hat{N'_0}(\ud s,  \ud h , [u_1,u_2])=N'_0(\ud s, \ud h , [u_1-Z_{s-},u_2-Z_{s-}]).$
The random measure $N''_0$ is also a  Poisson point measure with intensity $\ud s\mu(\ud h)\ud u$ since $N_0$ and $N_0'$ are independent. 
Adding that (\ref{deccc}) can be rewritten as
$$X_t= X_0+\int_0^t rX_s \ud s +\int_0^t\sqrt{2\gamma X_s}   \ud B''_s+\int_0^t\int_0^\infty\int_0^{X_{s-}} h\widetilde{N}''_0(\ud s, \ud h, \ud u),$$
the process $X$ is a CSBP  with initial condition $z+\w{z}$. \\
Furthermore, the branching property ensures that for $\lambda>0$,
$$\mathbb{E}_{z+z'}\Big[ \exp(-\lambda  Z_t)\Big]=\mathbb{E}_z\Big[ \exp(-\lambda  Z_t)\Big]\mathbb{E}_{z'}\Big[ \exp(-\lambda  Z_t)\Big]$$
which yields the linearity of the Laplace exponent (taking the logarithm).
\end{proof}

\noindent Combining Propositions \ref{MP} and \ref{branch}, we characterize the finite dimensional distributions of a CSBP.
\begin{Cor}[Silverstein \cite{MR0226734}]
\label{expo}
Let $\lambda>0$. The Laplace exponent $u_t(\lambda)$ is the unique solution of
\be
\label{eq-branch}
\frac{\partial}{\partial t} u_t(\lambda)=-\psi(u_t(\lambda)), \qquad u_0(\lambda)=\lambda,\ee
where $\psi$ is called the  branching mechanism associated with $Z$ and is defined by
\begin{equation}\label{defpsi}\psi(\lambda) 
= -r \lambda + \gamma \lambda^2 + \int_0^{\infty} \left( e^{-\lambda {h}}-1+\lambda {h}
\right) \mu(\ud {h}).\end{equation}
\end{Cor}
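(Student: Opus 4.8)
The plan is to read the claimed ODE off from three facts already at hand: the branching identity $\mathbb E_z[e^{-\lambda Z_t}]=e^{-zu_t(\lambda)}$ (Proposition~\ref{branch}), the explicit form of the generator $\mathcal A$ (Proposition~\ref{MP}), and the Markov (flow) property of $Z$. The starting observation is that $f_\lambda(z)=e^{-\lambda z}$ is of class $C^2$ with $\|f_\lambda'\|_\infty=\lambda$ and $\|f_\lambda''\|_\infty=\lambda^2$ finite, so by the remark following Proposition~\ref{MP} it belongs to the domain of $\mathcal A$, with $|\mathcal A f_\lambda(z)|\le C_\lambda\, z$; a direct computation, factoring out $e^{-\lambda z}$, gives
\[
\mathcal A f_\lambda(z)=\Big(-r\lambda+\gamma\lambda^2+\int_0^\infty\big(e^{-\lambda h}-1+\lambda h\big)\mu(\mathrm dh)\Big)\,z\,e^{-\lambda z}=\psi(\lambda)\,z\,e^{-\lambda z}.
\]

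First I would obtain the infinitesimal behaviour at $t=0$. Since $f_\lambda(Z_t)-f_\lambda(Z_0)-\int_0^t\mathcal A f_\lambda(Z_s)\,\mathrm ds$ is a martingale (Dynkin's formula, the integrability being granted by $|\mathcal A f_\lambda(z)|\le C_\lambda z$ and $\mathbb E_z[Z_s]\le z e^{|r|t}$ for $s\le t$), taking expectations under $\mathbb P_z$ yields
\[
\mathbb E_z\!\left[e^{-\lambda Z_t}\right]=e^{-\lambda z}+\psi(\lambda)\int_0^t\mathbb E_z\!\left[Z_s e^{-\lambda Z_s}\right]\mathrm ds .
\]
As $x\mapsto xe^{-\lambda x}$ is bounded and continuous and $Z$ is right-continuous with $Z_0=z$, the integrand is right-continuous at $0$ with value $z e^{-\lambda z}$; dividing by $t$, letting $t\downarrow0$, and inserting $\mathbb E_z[e^{-\lambda Z_t}]=e^{-zu_t(\lambda)}$ with $u_0(\lambda)=\lambda$ (take $z=1$ and apply $-\log$) shows that $t\mapsto u_t(\lambda)$ is right-differentiable at $0$ with $\left.\partial_t^+u_t(\lambda)\right|_{t=0}=-\psi(\lambda)$, for every $\lambda\ge0$.

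Next I would propagate this to all times via the flow property. The Markov property gives, for $s,t\ge0$, $\mathbb E_z[e^{-\lambda Z_{t+s}}]=\mathbb E_z\!\left[\mathbb E_{Z_t}[e^{-\lambda Z_s}]\right]=\mathbb E_z[e^{-Z_t u_s(\lambda)}]=e^{-zu_t(u_s(\lambda))}$, hence $u_{t+s}(\lambda)=u_t(u_s(\lambda))=u_s(u_t(\lambda))$. Fixing $t$ and differentiating $s\mapsto u_{t+s}(\lambda)=u_s(u_t(\lambda))$ at $s=0^+$, the previous step applied with $\lambda$ replaced by $u_t(\lambda)\ge0$ gives $\partial_t^+u_t(\lambda)=-\psi(u_t(\lambda))$. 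Moreover $t\mapsto u_t(\lambda)$ is continuous (since $\mathbb E_z[e^{-\lambda Z_{t'}}]\to\mathbb E_z[e^{-\lambda Z_t}]\in(0,1]$ as $t'\to t$ by dominated convergence, $Z$ having no fixed time of discontinuity) and $\psi$ is continuous on $[0,\infty)$, so the right-derivative $t\mapsto-\psi(u_t(\lambda))$ is continuous; therefore $u_\cdot(\lambda)\in C^1$ and solves \eqref{eq-branch} with $u_0(\lambda)=\lambda$. For uniqueness, $\psi$ is finite and convex on $[0,\infty)$, hence locally Lipschitz there, and $u_t(\lambda)=-z^{-1}\log\mathbb E_z[e^{-\lambda Z_t}]\in[0,\infty)$; the Cauchy--Lipschitz theorem then shows \eqref{eq-branch} has a unique solution, which is thus $u_t(\lambda)$.

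The delicate point is the passage from the integral (Dynkin) identity to a statement about the derivative of $u_t(\lambda)$: one must check carefully that $f_\lambda$ lies in the domain of $\mathcal A$ so that the martingale identity is legitimate, and that $s\mapsto\mathbb E_z[Z_s e^{-\lambda Z_s}]$ is regular enough (right-continuity at $0$, plus continuity in $t$ to upgrade the right-derivative to a genuine derivative). The remaining ingredients --- the generator computation, the semigroup identity $u_{t+s}=u_s\circ u_t$, and the ODE uniqueness --- are routine.
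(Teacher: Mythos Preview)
Your proof is correct and rests on the same two ingredients as the paper's---the generator computation $\mathcal A f_\lambda(z)=\psi(\lambda)\,z\,e^{-\lambda z}$ and the branching identity $P_tf_\lambda(z)=e^{-zu_t(\lambda)}$---but the organization differs slightly. The paper exploits that $P_tf_\lambda$ is itself an exponential, so the Kolmogorov equation $\partial_t P_tf_\lambda(1)=\mathcal A(P_tf_\lambda)(1)$ can be written down directly at every $t$: the left side is $-\partial_t u_t(\lambda)\,e^{-u_t(\lambda)}$ and the right side is $\psi(u_t(\lambda))\,e^{-u_t(\lambda)}$, giving \eqref{eq-branch} in one line. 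You instead compute the right derivative at $t=0$ via Dynkin and then propagate it to all $t$ through the flow identity $u_{t+s}=u_s\circ u_t$, which is a bit longer but makes the regularity bookkeeping (right-continuity, upgrade from right-derivative to $C^1$) explicit. Your inclusion of the uniqueness argument via local Lipschitz continuity of the convex function $\psi$ is a useful complement, as the paper's proof is silent on that point.
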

\begin{proof} 
Applying Propositions \ref{MP} and \ref{branch} and defining  $z\rightarrow f_{\lambda}(z):= \exp(-\lambda z) \in  C^2_b(\R_+)$, we  get  $P_t f_{\lambda}(z)=\exp(-zu_t(\lambda))$ and $$\frac{\partial}{\partial t} P_t f_{\lambda}(1)=\mathcal A P_tf_\lambda (1)=-\frac{\partial  u_t(\lambda)}{\partial t}\exp(-u_t(\lambda)).$$
Thus computing  the generator for the function $z\rightarrow \exp(-zu_t(\lambda))$ yields the result. 
\end{proof}

An alternative proof  of this result can be given by  using It\^o's formula to prove 
that $(\exp(-v_{T-t}(\lambda)Z_t) :  t\in[0,T])$ is a martingale if and only if  
$v$ is the solution of (\ref{eq-branch}). This idea will be extended to the random environment in the next section.
\begin{exo}
1) Check that  for any $\lambda>0$ and $t>0$,  $u_t(\lambda)$ is the unique solution of the
integral equation
\begin{equation*}
\int_{u_t(\lambda)}^\lambda \frac{1}{\psi(v)}\,{d} v=t.
\label{DEut}
\end{equation*}
2) Compute  $u_t(\lambda)$ for a  Feller diffusion   and deduce from it the extinction probability.
\end{exo}

\subsection{The Lamperti transform}

\me The following result is fundamental for the trajectorial and long time study of CSBP, since it allows to see these processes  (whose dynamics are  multiplicative) as the time change of some L\'evy processes (which are well known additive random processes). We recall that a L\'evy process is a c\`adl\`ag process with stationary independent increments.
\begin{Prop}[Lamperti \cite{MR0208685,MR0217893}] \label{lamperti} Let $Y$ be a L\'evy process defined by
$$Y_t:=y_0+r t+\gamma B_t+\int_0^t\int_0^{\infty} h \w{N}(\ud{s},\ud{h}),$$
where $r \in \R, \ \gamma \geq 0$,  $N$ is a Poisson point measure on $\R_+^2$ with intensity $\ud{s}\mu(\ud{h})$, $\w{N}$ its compensated measure and
 $\int_{0}^{\infty}\big({h}\land {h}^2\big)\mu(\ud {h})<\infty$. 
Writing $Y^+$ for the process $Y$ killed when it reaches $0$,  the equation
\be
\label{lamprepre}
Z_t=Y^+_{\int_0^t Z_s\ud{s}}
\ee
has  a unique solution $Z$ for $t\geq 0$. This process $Z$ is c\`adl\`ag and  distributed as the CSBP
 with characteristic $(r,\gamma, \mu)$ started at $y_0$.
\end{Prop}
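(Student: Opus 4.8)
\textbf{Proof plan for the Lamperti representation (Proposition \ref{lamperti}).}
The plan is to treat this as a change-of-time argument, splitting the statement into three tasks: (1) existence and uniqueness of the solution $Z$ of the time-change equation \eqref{lamprepre}; (2) identification of the law of $Z$ as the CSBP with characteristic $(r,\gamma,\mu)$; and (3) verification that the killing of $Y$ at $0$ matches the absorption of $Z$ at $0$. First I would set up the additive functional $\theta_t := \int_0^t Z_s\,\ud s$, which is continuous, nondecreasing, and strictly increasing as long as $Z$ stays positive. Its generalized inverse $C_t := \inf\{u\geq 0 : \int_0^u (Y^+_r)^{-1}\,\ud r > t\}$ (equivalently, the solution of $\ud C_t = Y^+_{C_t}\,\ud t$) is the natural candidate time change, and I would define $Z_t := Y^+_{C_t}$, then check that $t\mapsto \int_0^t Z_s\,\ud s$ is precisely the inverse of $C$, closing the fixed-point loop. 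Uniqueness follows because any solution $Z$ forces $\theta$ to solve the same ODE-with-random-coefficients $\ud\theta_t = Y^+_{\theta_t}\,\ud t$ driven by the fixed path of $Y^+$, and this has a unique solution up to the hitting time of $0$ by pathwise monotonicity; after $0$ is reached both sides are absorbed, so uniqueness is global. Right-continuity of $Z$ is inherited from that of $Y^+$ composed with the continuous increasing $C$.

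For the identification of the law, the cleanest route is via the martingale problem / generator. I would write $Y_t = y_0 + rt + \gamma B_t + \int_0^t\int_0^\infty h\,\widetilde N(\ud s,\ud h)$ in semimartingale form, so that for suitable $f$ the process $f(Y_t) - f(y_0) - \int_0^t \mathcal{L}f(Y_s)\,\ud s$ is a martingale, where $\mathcal{L}f(y) = r f'(y) + \gamma f''(y) + \int_0^\infty (f(y+h)-f(y)-hf'(y))\mu(\ud h)$. Then I would apply the optional-time-change theorem for martingales (Dambis--Dubins--Schwarz type, in the general semimartingale form, e.g. as in Ethier--Kurtz or Revuz--Yor): composing with the time change $C_t$ whose derivative is $Z_t = Y^+_{C_t}$ turns this martingale into $f(Z_t) - f(Z_0) - \int_0^t Z_s\,\mathcal{L}f(Z_s)\,\ud s$ being a (local, then true under the moment/boundedness control from Proposition \ref{MP}) martingale. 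Since $z\,\mathcal{L}f(z)$ is exactly the CSBP generator $\mathcal{A}f(z)$ of Proposition \ref{MP}, and since that martingale problem is well-posed (the CSBP being the unique strong solution of \eqref{defCSBP}, with uniqueness in law following from the Laplace-transform characterization of Corollary \ref{expo}), $Z$ must be the CSBP with characteristic $(r,\gamma,\mu)$ started at $y_0$. I would be slightly careful that killing $Y$ at $0$ and absorbing $Z$ at $0$ are compatible: on $\{Z_t = 0\}$ the time change $C$ stops moving ($\ud C_t = 0$), so $Z$ stays at $0$, matching the absorption of the CSBP at $0$ noted after Definition \ref{CSBP}, and conversely $Y^+$ is killed exactly when $C$ reaches the hitting time of $0$ for $Y$.

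The main obstacle I expect is the rigorous justification of the time change for the jump part: the Brownian term is handled by the classical Dambis--Dubins--Schwarz theorem, but controlling the compensated Poisson integral under a random, path-dependent time change requires verifying that $C_t$ is a family of stopping times for the filtration generated by $Y$ (which it is, being a hitting-time inverse of an adapted increasing process), that the compensator transforms correctly, and that no mass is created or lost at the absorption time. A secondary technical point is the non-Lipschitz behaviour near $0$ — but this is sidestepped here precisely because we are not solving \eqref{defCSBP} directly; the time-change equation \eqref{lamprepre} has monotone structure and its solvability does not need Lipschitz continuity of $\sqrt{\cdot}$. Alternatively, if one prefers to avoid the general semimartingale time-change machinery, one can bypass step (2) entirely by computing Laplace transforms: show directly from \eqref{lamprepre} and the L\'evy--Khintchine exponent of $Y$ that $t\mapsto \mathbb{E}_{y_0}[\exp(-\lambda Z_t)]$ satisfies the ODE \eqref{eq-branch} with branching mechanism $\psi$ as in \eqref{defpsi} — noting that $\psi$ is exactly the negative of the Laplace exponent of the L\'evy process $Y$ — and invoke Corollary \ref{expo} for uniqueness; I would mention this as the alternative but carry out the generator/time-change argument as the primary proof since it also yields the pathwise statement.
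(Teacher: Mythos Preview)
Your approach is essentially the same as the paper's: both construct $Z$ as a time change of $Y^+$ via the inverse of the additive functional $\int_0^\cdot (Y_s^+)^{-1}\,\ud s$, then identify its law through the time-changed martingale problem by recognizing that $\mathcal{A}f(z)=z\,\mathcal{L}f(z)$ and appealing to the Laplace-transform/branching characterization for uniqueness. The paper packages the time-change step via an auxiliary lemma (drawn from Ethier--Kurtz, Chapter~6) and spends more care on one technical point you pass over: verifying that the blow-up time of $\int_0^\cdot (Y_s^+)^{-1}\,\ud s$ coincides with the first hitting time of $0$ by $Y$, which it handles by a quasi-left-continuity argument showing $Y_T=0$ at $T=\lim_{\epsilon\downarrow 0}\inf\{t:Y_t\le\epsilon\}$.
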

\noindent 
In particular, when  $\mu(\ud z)=c  z^{-(1+\alpha)}\ud z$ ($r=0,\gamma=0$), we recover the stable L\'evy processes. 
A converse statement is given in the last part of this section. It relies
on the expression $Y_t=Z_{\gamma_t}$ with $\gamma_t=\inf\{ u : \int_0^u  \beta(Z_s)\ud{s} >t\}$.

\me
To prove  Proposition \ref{lamperti}, we use the following lemma, which we derive from Chapter 6 (random time changes) of Ethier \& Kurtz \cite{EK}.

\begin{Lem} \label{chgtimeEK1} Let $X$ be a c\`adl\`ag process from $\R_+$ to $\R_+$ and $\beta$ be a nonnegative 
continuous function on $\R_+$. We define $A_t:=\int_0^t 1/\beta(X_u) \ud{u}$ and assume that
$$\lim_{t\rightarrow \infty} A_t=+\infty, \qquad T:=\inf\{ t\geq 0 : A_t=+\infty \}=\inf\{ t\geq 0 :  \beta(X_t)=0 \} \quad \text{a.s}.$$

(i) There exists a unique function $\tau$ from $\R_+$ to $[0,T)$ which is solution  of the equation $A_{\tau_{t}}=t$.

(ii) The process $Z$ defined by $Z_t:=X_{\tau_t}$ for $t\geq 0$ is the unique solution of  $Z_t=X_{\int_0^t \beta(Z_s)\ud{s}}$ for $t\geq 0$.

(iii-Martingale problem)  If $(f(X_t)-\int_0^tg(X_s)\ud{s} : t\geq 0)$ is an $(\mathcal F^{X}_{t})_t$ martingale, then  $(f(Z_t)-\int_0^t\beta(Z_s)g(Z_s)\ud{s} :  t\geq 0)$ is an $(\mathcal F^{Z}_{t})_t$ martingale.
\end{Lem}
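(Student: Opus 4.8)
The plan is to prove Lemma \ref{chgtimeEK1} by appealing directly to the random time-change machinery in Chapter 6 of Ethier \& Kurtz \cite{EK}, translating their hypotheses into our setting. First I would treat (i). Since $\beta$ is continuous and nonnegative and $X$ is c\`adl\`ag, the integrand $1/\beta(X_u)$ is well-defined and positive on $\{u : \beta(X_u)>0\}$, so $t\mapsto A_t$ is continuous, strictly increasing, and finite on $[0,\zeta)$ where $\zeta:=\inf\{t\ge 0 : \beta(X_t)=0\}$ (using the assumed identity $\inf\{t : A_t=+\infty\}=\zeta$). By the assumption $\lim_{t\to\infty}A_t=+\infty$, the map $A : [0,\zeta)\to[0,\infty)$ is a continuous strictly increasing bijection, hence admits a unique continuous inverse $\tau : [0,\infty)\to[0,\zeta)$; this $\tau$ is by construction the unique solution of $A_{\tau_t}=t$, which is (i).

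Next, for (ii), set $Z_t:=X_{\tau_t}$. Differentiating (informally, or rigorously via the change-of-variables formula for Stieltjes integrals) the relation $A_{\tau_t}=t$ gives $\tau_t=\int_0^t \beta(X_{\tau_s})\,\ud s=\int_0^t\beta(Z_s)\,\ud s$, and therefore $Z_t=X_{\tau_t}=X_{\int_0^t\beta(Z_s)\,\ud s}$. Conversely, if $\widetilde Z$ solves $\widetilde Z_t=X_{\int_0^t\beta(\widetilde Z_s)\,\ud s}$, then writing $\sigma_t:=\int_0^t\beta(\widetilde Z_s)\,\ud s$ one checks $A_{\sigma_t}=t$ by differentiating, so $\sigma_t=\tau_t$ by the uniqueness in (i), whence $\widetilde Z_t=X_{\sigma_t}=X_{\tau_t}=Z_t$. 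This gives uniqueness. Part (iii) is then the standard optional-stopping/time-substitution statement for martingales under a continuous time change with an absolutely continuous, adapted clock: if $M_t:=f(X_t)-\int_0^t g(X_s)\,\ud s$ is an $(\mathcal F^X_t)_t$-martingale, then $M_{\tau_t}$ is a martingale with respect to the time-changed filtration $(\mathcal F^X_{\tau_t})_t=(\mathcal F^Z_t)_t$, and $M_{\tau_t}=f(Z_t)-\int_0^{\tau_t}g(X_s)\,\ud s=f(Z_t)-\int_0^t g(Z_u)\beta(Z_u)\,\ud u$ after the substitution $s=\tau_u$, $\ud s=\beta(Z_u)\,\ud u$; this is precisely the claimed $(\mathcal F^Z_t)_t$-martingale.

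The main obstacle, and the point that needs care rather than routine verification, is the behavior of the clock near the absorption time $\zeta$ and the global-in-time statement: one must ensure that $\tau_t$ stays strictly below $\zeta$ for all finite $t$ (so that $Z_t$ is genuinely defined for all $t\ge 0$), which is exactly what the hypothesis $\lim_{t\to\infty}A_t=+\infty$ together with $\inf\{t:A_t=+\infty\}=\zeta$ buys us, and to handle the case $\zeta<\infty$ (absorption in finite time) where $X$ is constant equal to $X_\zeta$ on $[\zeta,\infty)$ with $\beta(X_\zeta)=0$, so that $Z$ is absorbed at $X_{\zeta-}$ (or $X_\zeta$). The martingale-problem transfer in (iii) also requires checking integrability of $\int_0^t g(Z_u)\beta(Z_u)\,\ud u$ and that the time change is a family of stopping times for $(\mathcal F^X_t)_t$ — both are immediate here since $\tau$ is deterministic-given-$X$ and continuous, so $\{\tau_t\le s\}=\{A_s\ge t\}\in\mathcal F^X_s$. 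Once these points are in place, the statement follows from \cite[Chapter 6]{EK}; I would cite the relevant theorem there for (iii) rather than reproving the optional-sampling step.
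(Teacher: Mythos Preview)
Your proposal is correct and follows essentially the same route as the paper: the bijection argument for (i), the differentiation of $A_{\tau_t}=t$ giving $\tau_t=\int_0^t\beta(Z_s)\,\ud s$ for (ii), and the stopping-time identity $\{\tau_t\le s\}=\{A_s\ge t\}\in\mathcal F^X_s$ combined with optional sampling and the substitution $\ud s=\beta(Z_u)\,\ud u$ for (iii). Your additional remarks on absorption near $\zeta$ and integrability are reasonable elaborations but do not change the argument.
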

\begin{proof}
$(i)$  simply comes from the fact that $t \rightarrow A_t$ is  an increasing bijection from $[0,T)$ to $\R_+$. \\
$(ii)$ is deduced from $(i)$ by noticing that 
\be
\label{eqpp}
A_{\tau_{t}}=t \  \ (t\geq 0) \ \  \Longleftrightarrow  \ \  \tau'_t=\beta(Z_t)\ \text{a.e.} \ \Longleftrightarrow \  \ \tau_t=\int_0^t\beta(Z_s)\ud{s}  \ \  (t\geq 0) \ .
\ee
 (Take care of the regularity of the processes). \\
To prove $(iii)$, we first check that $\{\tau_s \leq t\}=\{A_t\geq s\}\in \mathcal F^{X}_{t}$. The optional sampling theorem ensures that if
$\ f(X_t)-\int_0^tg(X_s)\ud{s}$ is an $(\mathcal F^{X}_t)_t$ martingale, then $f(X_{\tau_t})-\int_0^{\tau_t}g(X_s)\ud{s}=f(Z_t)-\int_0^t
g(Z_s)\tau_s'\ud{s}\ $ is an $(\mathcal F^{Y}_{\tau(t)})_t$
martingale. Recalling from $(\ref{eqpp})$ that $\tau_s'=\beta(X_{\tau_s})=\beta(Z_s)$ a.e, we get the result. 
\end{proof}
\begin{proof}[Proof of Proposition \ref{lamperti}]
The existence and uniqueness of the problem (\ref{lamprepre}) come from Lemma \ref{chgtimeEK1}  $(i)$ and $(ii)$ with $X=Y^+$ and $\beta(x)=x$. Indeed,
 we first note that $\E(Y_1 ) \in (-\infty,\infty)$ and  the following law of large numbers holds: $Y_t/t\rightarrow \E(Y_1)$  a.s. 
 Then
  $\int_0^{\infty}1/Y_s^+\ud{s} =\infty$  a.s. Let us now check that the first time at which $A_t$ is   infinite is the first time at which $Y^+$   reaches $0$. The fact that
 $\inf\{ t\geq 0 :  Y_t^+=0 \} \leq  \inf\{ t\geq 0 : A_t=\infty \}$ is obvious. To get the converse inequality, we denote by $T$  the non-decreasing limit of the stopping times 
$T_{\epsilon}=\inf\{ t \geq 0 : Y_{t}\leq \epsilon\}$ for $\epsilon \rightarrow 0$ and prove  that
 $Y_{T}=0$ on the event $\{T <\infty\}$ (quasi-left continuity). For that purpose, we  use  
$$\E\left(f(Y_{T\wedge t})-f(Y_{T_{\epsilon}\wedge t}) -\int_{T_{\epsilon}\wedge t}^{T \wedge t} Qf(Y_s) ds  \big\vert \mathcal F_{T_{\epsilon}\wedge t} \right)= 0.$$
for  $f \in C^2_b(\R_+)$, where  we have denoted by  $Q$ the generator of $Y$:
$$Qf(y):=rf'(y)+\gamma f''(y)+ \int_0^\infty \Big(f(y+h)-f(y)-hf^\prime(y)\Big)\mu(\ud h).$$
Then a.s. on the event $\{T \leq t\}$, we have
$$\lim_{\epsilon \rightarrow 0} \E(f(Y_{T\wedge t}) \vert \mathcal F_{T_{\epsilon}\wedge t} )= f(0),$$
and using some non-negative function  $f\in C^2_b(\R_+)$  which coincides with $x^2$ in  a neighborhood of $0$, we obtain $Y_{T}=0$ on $\{ T <\infty\}$.

\noindent To check that the process given by (\ref{lamprepre}) is indeed a CSBP, we use again the generator $Q$ of $Y$. Let us first note that the generator of $Y^+$ is
given by $x\rightarrow Qf(x)\ind_{x>0}$ for $f\in C^2_b(\R_+)$. Then
  Lemma \ref{chgtimeEK1}   $(iii)$ ensures that for $f \in C^2_b(\R_+)$,
$$f(Z_t)-\int_0^t Z_sQf(Z_s)\ud{s}$$ 
is a  martingale. 
It identifies  the distribution of the c\`adl\`ag Markov process $Z$ via its generator  $\mathcal A f(z)=zQf(z)$. More precisely, the uniqueness of the martingale problem is required here to ensure the uniqueness of the distribution of the process and we refer to Ethier \& Kurtz \cite{EK}, Theorem 4.1 in Section 4 for a general statement. In our particular case, the proof can be made directly using the  set
 of functions $f_{\lambda}(z)=\exp(-\lambda z)$. Indeed,   the independence and stationarity of the increments of $Y$ ensure the branching property of $Z$. One can  then follow the proof of 
Corollary \ref{expo} to derive   $\E_z(\exp(-\lambda Z_t))$ from $\mathcal A$ and identify the finite dimensional distributions of $Z$. 
\end{proof}


\subsection{Long time behavior}

\me In this section, the characteristic triplet $(r,\gamma, \mu)$ is assumed to be non identical to $0$, to avoid the degenerate case where  $Z$ is  a.s. constant.
\begin{Prop} 
\label{fel-beh}
(i-unstability) With probability one,  $Z_t$ tends to  $0$ or to $\infty$ as $t\rightarrow \infty$. \\
(ii-extinction probability) Denoting by $\eta$  the largest  root of $\psi$, we have $$\P_z(\lim_{t\rightarrow \infty} Z_t=0)=\exp(-z\eta) \qquad (z\geq 0).$$ In particular,  
extinction occurs a.s. if and only if 
$r=-\psi'(0)\leq 0$. \\
(iii-absorption probability)  $\P( \exists t >0 : Z_t=0)>0$ if and only if
$\int^{\infty} 1/\psi(x) \ud{x} <\infty.$
\end{Prop}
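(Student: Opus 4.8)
\textbf{Proof proposal for Proposition \ref{fel-beh}.}

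The plan is to exploit the branching property together with the characterization of the Laplace exponent $u_t(\lambda)$ through $\partial_t u_t(\lambda)=-\psi(u_t(\lambda))$ obtained in Corollary \ref{expo}. The first step is to analyze the function $\psi$ qualitatively. Since the triplet is not identically $0$, $\psi$ is a smooth strictly convex function on $[0,\infty)$ with $\psi(0)=0$ and $\psi'(0)=-r$. Convexity forces $\psi$ to have at most one root in $(0,\infty)$; I would call $\eta$ the largest root of $\psi$, so that $\eta>0$ when $r=-\psi'(0)>0$ and $\eta=0$ when $r\leq 0$. I also record the sign of $\psi$ on $(0,\eta)$ (negative) and on $(\eta,\infty)$ (positive). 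From the ODE $\partial_t u_t(\lambda)=-\psi(u_t(\lambda))$ and $u_0(\lambda)=\lambda$ one reads off that $t\mapsto u_t(\lambda)$ is monotone, bounded, and converges as $t\to\infty$ to a zero of $\psi$; more precisely $u_t(\lambda)\downarrow \eta$ if $\lambda>\eta$ (or $\lambda=\eta$, trivially), and $u_t(\lambda)\uparrow \eta$ if $\lambda<\eta$. Hence $u_\infty(\lambda):=\lim_{t\to\infty}u_t(\lambda)=\eta$ for every $\lambda>0$.

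For (ii), I would start from $\mathbb E_z(e^{-\lambda Z_t})=e^{-zu_t(\lambda)}$. Fix $z\geq 0$. Taking $t\to\infty$ gives $\mathbb E_z(e^{-\lambda Z_t})\to e^{-z\eta}$, which does not depend on $\lambda>0$. To convert this into a statement about $Z_\infty$, I would first establish (i): the process $(e^{-rt}Z_t)_{t\geq0}$ is a nonnegative martingale (this is the content of the earlier exercise, proved via It\^o's formula applied to $(t,z)\mapsto e^{-rt}z$, the jump and Brownian terms being martingales because $\int(h\wedge h^2)\mu(dh)<\infty$), hence converges a.s.\ to a finite limit. When $r\leq 0$ this already shows $Z_t$ converges a.s.; when $r>0$ one argues that on the complement of extinction the process grows like $e^{rt}$. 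The cleanest route to (i) in all cases, though, is to use the Laplace transform directly: for $\lambda>0$, $e^{-zu_t(\lambda)}=\mathbb E_z(e^{-\lambda Z_t})$ and the convergence $u_t(\lambda)\to\eta$ shows that the one-dimensional marginals converge; combined with the Markov property and the monotonicity of $t\mapsto u_t(\lambda)$ one deduces that $Z_t$ converges in distribution to a $\{0,\infty\}$-valued limit, so $\lim_t Z_t\in\{0,\infty\}$ a.s. Then
\[
\mathbb P_z\big(\lim_{t\to\infty}Z_t=0\big)=\lim_{\lambda\to\infty}\lim_{t\to\infty}\mathbb E_z\big(e^{-\lambda Z_t}\,;\,\textstyle\lim_u Z_u<\infty\big)=\lim_{\lambda\to\infty}e^{-z\eta}=e^{-z\eta},
\]
using dominated convergence and the fact that on $\{\lim Z_u=\infty\}$ one has $e^{-\lambda Z_t}\to 0$; one must be a little careful interchanging the two limits, which I would justify by first letting $t\to\infty$ along the subsequence where $Z_t$ is near its limit, or more robustly by applying the optional-limit/Fatou argument to the bounded martingale $(\mathbb E_z(e^{-\eta Z_\infty}\mid \mathcal F_t))$. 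The criterion $r\leq0$ for a.s.\ extinction is then immediate since $\eta=0\iff r\leq 0$.

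For (iii), the absorption event $\{\exists t>0: Z_t=0\}$ is the event that $0$ is hit in finite time, which (since $0$ is absorbing) equals $\{\lim_t Z_t=0\}$ intersected with ``$Z$ reaches $0$ rather than merely tending to $0$.'' I would compute $\mathbb P_z(Z_t=0)=\lim_{\lambda\to\infty}\mathbb E_z(e^{-\lambda Z_t})=e^{-z\,\bar u_t}$ where $\bar u_t:=\lim_{\lambda\to\infty}u_t(\lambda)\in(0,\infty]$. From the integral form of Corollary \ref{expo}, namely $\int_{u_t(\lambda)}^{\lambda}\frac{dv}{\psi(v)}=t$, letting $\lambda\to\infty$ gives $\int_{\bar u_t}^{\infty}\frac{dv}{\psi(v)}=t$, which has a finite solution $\bar u_t<\infty$ for every $t>0$ precisely when $\int^{\infty}\frac{dv}{\psi(v)}<\infty$; in that case $\bar u_t\downarrow\eta$ as $t\to\infty$ and $\mathbb P_z(\exists t: Z_t=0)=\lim_t\mathbb P_z(Z_t=0)=e^{-z\eta}>0$. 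Conversely, if $\int^{\infty}\frac{dv}{\psi(v)}=+\infty$ then $\bar u_t=+\infty$ for all $t$, so $\mathbb P_z(Z_t=0)=0$ for all $t$ and absorption has probability $0$. I expect the main obstacle to be the rigorous interchange of the limits in $t$ and $\lambda$ in part (ii) (equivalently, showing $Z_t$ cannot oscillate and genuinely converges a.s.\ to its $\{0,\infty\}$-valued limit before identifying the probabilities); the convexity analysis of $\psi$ and the ODE/integral manipulations for $u_t$ and $\bar u_t$ are routine once set up carefully, and the case distinction $\eta>0$ versus $\eta=0$ must be carried consistently through all three parts.
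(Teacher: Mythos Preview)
Your treatment of (ii) and (iii) is essentially the paper's: both proceed via the Laplace exponent $u_t(\lambda)$, its ODE/integral characterization from Corollary \ref{expo}, and the convexity of $\psi$. The paper's argument for (ii) is slightly leaner --- it simply notes that once (i) is established, $e^{-Z_t}\to \One_{\{\lim Z_t=0\}}$ a.s., so by bounded convergence $\P_z(\lim Z_t=0)=\lim_t e^{-zu_t(1)}=e^{-z\eta}$; no interchange of limits in $\lambda$ and $t$ is needed.

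The real difference is in (i), and you correctly flag it as the obstacle. The paper does \emph{not} try to extract a.s.\ convergence from the Laplace transform; instead it invokes the Lamperti representation $Z_t=Y^+_{\int_0^t Z_s\,ds}$ (Proposition \ref{lamperti}) and the trichotomy for L\'evy processes ($Y_t\to+\infty$, $-\infty$, or oscillates), together with the killing at $0$, to conclude directly that $Z_t\to 0$ or $\infty$ a.s. Your route avoids Lamperti, which is legitimate, but as written it has a gap: ``$Z_t$ converges in distribution to a $\{0,\infty\}$-valued law, hence $\lim_t Z_t\in\{0,\infty\}$ a.s.'' is a non-sequitur, and the $e^{-rt}Z_t$ martingale only settles the case $r\le 0$ (when $r>0$ and that martingale limit is $0$, $Z_t$ could a priori still oscillate). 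The clean fix along your own lines is: for any fixed $\lambda\ge\eta$, the Markov property and $u_s(\lambda)\le\lambda$ give $\E(e^{-\lambda Z_{t+s}}\mid\mathcal F_t)=e^{-u_s(\lambda)Z_t}\ge e^{-\lambda Z_t}$, so $(e^{-\lambda Z_t})_t$ is a bounded submartingale and converges a.s.; hence $Z_t\to Z_\infty\in[0,\infty]$ a.s., and then $\E_z(e^{-\lambda Z_\infty})=e^{-z\eta}$ for all $\lambda>\eta$ forces $Z_\infty\in\{0,\infty\}$. The paper itself remarks, after Theorem \ref{extlignee}, that this submartingale method is a valid alternative to the Lamperti argument.
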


\me As an illustration, which is left as an exercise, check that the CSBPs with  characteristics $(r,0,0)$ and $(0,0,x^{-2}1_{[0,1]}(x)dx)$  have positive extinction probability 
but null absorption probability.
For stable CSBPs (including the  Feller diffusion), extinction and absorption coincide.

\begin{proof} $(i)$ is a consequence of the Lamperti representation given in  Proposition \ref{lamperti}. Indeed
a non-degenerate L\'evy process $Y$ either goes to $+\infty$, $-\infty$ or oscillates and we stress that  $Y$ is killed at $0$ in the Lamperti transform.  So
$$\left\{\int_0^{\infty} Z_s \ud{s}=\infty\right\} \subset \left\{ Z_t \stackrel{t\rightarrow \infty}{\longrightarrow} 0\right\} \cup 
\left\{ Z_t \stackrel{t\rightarrow \infty}{\longrightarrow} \infty\right\}.$$
Adding that 
$$\left\{\int_0^{\infty} Z_s \ud{s}<\infty\right\} \subset \left\{Z_t\stackrel{t\rightarrow \infty}{\longrightarrow} Y^+_{\int_0^{\infty} Z_s\ud{s}}  \ \emph{ and }  \ \int_0^{\infty} Z_s \ud{s}<\infty  \right\} \subset  \left\{Z_t \stackrel{t\rightarrow \infty}{\longrightarrow} 0\right\}$$
ends up the proof. 

\me Concerning  the extinction $(ii)$, we first use $(i)$ to write $\exp(- Z_t)\rightarrow \One_{\{\lim_{t\rightarrow \infty} Z_t=0\}}$ as $t\rightarrow \infty$. By the bounded convergence theorem, 
$\E_z(\exp(-Z_t))\rightarrow \P_z(\lim_{t\rightarrow \infty} Z_t=0)$. Moreover,  $\E_z(\exp(-Z_t))=\exp(-zu_t(1))$ by Proposition \ref{branch}.
Noting that the branching mechanism $\psi$ is convex (and non trivial), it is positive for $z>\eta$ and negative for $0<z<\eta$. Thus, $u_t(1)\rightarrow \eta$ as
$t\rightarrow \infty$ and  $\P_z(\lim_{t\rightarrow \infty} Z_t=0)=\exp(-z\eta)$.

\noindent Let us finally deal with the absorption $(iii)$. We note  that $\P_z(Z_t=0)=\lim_{\lambda \rightarrow \infty} \exp(-zu_t(\lambda))=\exp(-zu_{t}(\infty))$
and recall   from Proposition \ref{expo} that
$$\int^{\lambda}_{u_t(\lambda)} \frac{1}{\psi(u)} \ud{u}=t.$$
If $u_t(\lambda)$ is bounded for $\lambda >0$ (with some fixed $t$), then $\int^{\infty} 1/\psi<\infty$ (by letting $\lambda\rightarrow \infty$).
Conversely  the fact that $u_t(\infty)<\infty$ is bounded for $t\geq 0$ forces $ \int^{\infty} 1/\psi(x) \ud{x}=+\infty$ (by letting $\lambda$ and then $t$ go to $\infty$). 
\end{proof}

\subsection{Scaling limits}
\label{scaling}

\me In this section, we obtain the CSBP as a scaling limit of Galton-Watson processes.
We recall that a Galton-Watson process $X$ with reproduction law $\nu$ is defined by
$$X_{n+1}=\sum_{i=1}^{X_n} L_{i,n},$$
where $(L_{i,n} : i\geq 1, n\geq 0)$ are i.i.d random variables with common distribution $\nu$.\\
Let us associate a random walk to this process, denoted by  $S$. It is obtained by  summing  the number of offspring of each individual of the Galton-Watson tree as follows : 
$$S_0:=Z_0, \quad S_{k+1}:=S_k+  L_{k-A_n+1,n}-1$$
for each $k \in [A_n, A_{n+1})$ and $n\geq 0,$
where $A_n:=\sum_{j=0}^{n-1} X_j$. Thus the increments of the random walk $S$ are distributed as $\nu$ shifted by $-1$.
This random  walk $S$ satisfies $S_{A_{n+1}}-S_{A_n}=\sum_{i=1}^{X_n} (L_{i,n}-1)=X_{n+1}-X_n$, so that
\be
\label{dlt}
X_n=S_{A_n}=S_{\sum_{i=0}^{n-1} X_i}, 
\ee
which yields the discrete version of the  Lamperti time change. It both 
 enlightens the Lamperti transform in the continuous setting (Proposition \ref{lamprepre})
and allows us to prove the following scaling limit.

\begin{Thm} Let $X^K$ be a sequence of Galton-Watson processes with reproduction law $\nu_K$ and $[Kx]$ initial individuals. We consider the scaled 
process
$$Z^K_t=\frac{1}{K}X_{[v_Kt]}^K \qquad (t\geq 0),$$
where $(v_K)_{K}$ is a sequence tending to infinity. Denoting by $S^K$ the random walk associated with $Z^K$, we assume that
$$\frac{1}{K}S^K_{[Kv_K.]} \Rightarrow Y,$$
where $Y$ is a L\'evy process.
Then $Z^K \Rightarrow Z$, where $Z$ is the CSBP characterized by (\ref{lamprepre}).
\end{Thm}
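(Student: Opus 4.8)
The plan is to prove the scaling limit by combining the discrete Lamperti representation \eqref{dlt} with the continuity of the Lamperti time-change map, so that the convergence of the associated random walks $S^K$ transfers to the convergence of the branching processes $Z^K$. First I would set up the picture carefully: by \eqref{dlt} we have $X^K_n = S^K_{A^K_n}$ with $A^K_n = \sum_{i=0}^{n-1} X^K_i$, and after rescaling this becomes a relation of the form $Z^K_t = \w{S}^K_{\int_0^t Z^K_s \ud s + o(1)}$, where $\w{S}^K_u := \frac{1}{K} S^K_{[Kv_K u]}$ is the rescaled walk and the $o(1)$ accounts for the discretization of time (replacing the sum $A^K_{[v_K t]}$ by the integral $v_K \int_0^t Z^K_s \ud s$, with an error controlled by the jump sizes $1/K$ and the step $1/v_K$). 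In other words, $Z^K$ is, up to asymptotically negligible corrections, the image of $\w{S}^K$ under the deterministic functional $\Phi$ that sends a c\`adl\`ag path $y$ to the solution $z$ of $z_t = y^+_{\int_0^t z_s \ud s}$, killed at $0$; this functional is well-defined by Lemma \ref{chgtimeEK1}.

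The core of the argument is then a continuous-mapping step. I would show that $\Phi$ is continuous (on the Skorokhod space $\mathbb{D}(\R_+,\R_+)$, at least at paths $y$ that do not hit $0$ by a jump, which is a.s. the case for the limiting L\'evy process $Y$ away from degenerate situations), so that $\w{S}^K \Rightarrow Y$ implies $\Phi(\w{S}^K) \Rightarrow \Phi(Y) = Z$, the latter identification coming precisely from the Lamperti representation \eqref{lamprepre} of Proposition \ref{lamperti}. To pass from $\Phi(\w{S}^K)$ to $Z^K$ itself, I would use the Skorokhod representation theorem to realize the convergence $\w{S}^K \to Y$ almost surely on a common space, then argue pathwise that the discrete time-change $A^K$ and the continuous one $\int_0^\cdot Z^K_s\ud s$ differ by a vanishing amount, so that $\sup_{t\le T}|Z^K_t - \Phi(\w{S}^K)_t| \to 0$; the needed uniform control on $Z^K$ on compact time intervals (a tightness/no-explosion estimate) follows from the a.s. finiteness of the limit and the fact that $Y_t/t \to \E(Y_1)$, exactly as in the proof of Proposition \ref{lamperti}.

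The main obstacle, I expect, is the continuity of the Lamperti functional $\Phi$ in the Skorokhod topology and the careful handling of the killing at $0$: the time-change inverts the clock $A_t = \int_0^t 1/\beta$, and near the absorption time $T = \inf\{t : Z_t = 0\}$ this inversion is delicate — small perturbations of the driving path can in principle move the hitting time of $0$ discontinuously if the limit path approaches $0$ by a jump. This is why one restricts attention to the generic case (triplet $(r,\gamma,\mu)$ not identically zero, $Y$ non-degenerate), where quasi-left-continuity of $Y$ at its first passage below $\epsilon$ (the estimate already invoked in the proof of Proposition \ref{lamperti}) guarantees $Y^+$ reaches $0$ continuously; one then checks that $\int_0^\infty 1/Z^K_s \ud s \to \infty$ and that the first infinite-time of $A^K$ matches the extinction time, uniformly enough to close the argument. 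A secondary, more routine, point is verifying that the discretization error $A^K_{[v_K t]} - v_K\int_0^t Z^K_s\ud s$ is uniformly small on $[0,T]$; this is a direct Riemann-sum estimate using jumps of size $1/K$ and time-steps $1/v_K$, and the uniform moment bound on $Z^K$.
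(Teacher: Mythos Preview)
Your proposal is correct and follows essentially the same route as the paper: the paper does not give a full proof but sketches two approaches, and your argument fleshes out the second one --- deriving the result from the discrete Lamperti representation \eqref{dlt} together with the continuity of the time-change functional, which the paper packages as a lemma (adapted from Ethier--Kurtz) stating that if $Y^K\Rightarrow Y$ then the solutions of $Z^K_t=Y^K_{\int_0^t\beta(Z^K_s)\ud s}$ converge to $Z$. Your identification of the two genuine technical points (continuity of $\Phi$ near the absorption time, and the Riemann-sum control of $A^K_{[v_Kt]}-v_K\int_0^t Z^K_s\ud s$) is exactly what the cited Ethier--Kurtz argument handles, so your plan matches the paper's intended proof.
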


\me The Feller diffusion case  $(\mu=0$) is the only possible limit of Galton-Watson processes with bounded variance (see \cite{MR0362529}). It comes from the convergence of $(S_{[K^2t]}/K : t\geq 0)$  to a Brownian motion under second moment assumption. More generally, the stable case with drift  $\psi(\lambda)=-r\lambda+c\lambda^{\alpha+1}$ ($\alpha$ in $(0,1]$)   corresponds to the class of CSBPs which can be   
obtained by scaling limits of Galton-Watson processes with a fixed   reproduction law (i.e. $\nu_K=\nu$). \\ 
Several  proofs of this theorem can  be  found. One can use a tightness argument and  identify  the limit thanks to the Laplace exponent. Such a proof is in the same vein as the previous section and we refer
to \cite{MR0362529} for details.
As mentioned above, the proof can also be achieved using 
discrete Lamperti transform (\ref{dlt}) with an argument of continuity.   This argument  can  be adapted
from Theorem 1.5 chapter 6 in Ethier-Kurtz \cite{EK} :
\begin{Lem} Let $Y$ be a L\'evy process killed at $0$ and $\beta$ a continuous function.
Assume that $Y^K \Rightarrow Y$, where $Y^K$  is a c\`adl\`ag process from $\R_+$ to $\R_+$ and define the process $Z^K$ as the solution of 
$Z^K_t=Y^K_{\int_0^t \beta(Z^K_s)\ud{s}}$. Then  $Z^K \Rightarrow Z$.
\end{Lem}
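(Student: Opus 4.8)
This is a stability statement for the Lamperti time change of Lemma~\ref{chgtimeEK1}, so the plan is to realize the map $y\mapsto z$ sending a c\`adl\`ag path to the solution of $z_t=y_{\int_0^t\beta(z_s)\ud s}$ as a measurable functional on $\mathbb D(\R_+,\R_+)$ which is continuous at $\P_Y$-almost every path, and then apply the continuous mapping theorem. First I would invoke the Skorokhod representation theorem to put $Y^K$ and $Y$ on one probability space with $Y^K\to Y$ almost surely in the $J_1$ topology; since the laws of $Z^K$ and of $Z$ are the images of the laws of $Y^K$ and $Y$ under this functional, it is enough to prove $Z^K\to Z$ almost surely in $\mathbb D(\R_+,\R_+)$. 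This reduces the lemma to a deterministic statement about paths $y^K\to y$ ranging in the ($\P_Y$-full) set on which the hypotheses of Lemma~\ref{chgtimeEK1} hold for $y$, namely $A_t:=\int_0^t1/\beta(y_s)\ud s\to\infty$ and $\inf\{t:A_t=\infty\}=\inf\{t:\beta(y_t)=0\}=:T$.

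Then I would analyse the time change directly. Set $A^K_u:=\int_0^u\beta(y^K_v)^{-1}\ud v$ and let $\tau^K:=(A^K)^{-1}$, so that $Z^K=y^K\circ\tau^K$ and, by~$(\ref{eqpp})$, $\tau^K_t=\int_0^t\beta(Z^K_s)\ud s$; likewise $Z=y\circ\tau$ with $\tau=A^{-1}$. Because $\beta$ is continuous and $y$ has at most countably many discontinuities, $\beta(y^K_v)\to\beta(y_v)$ for Lebesgue--a.e.\ $v$, and on compact subsets of $[0,T)$, where $y$ is bounded away from the zero set of $\beta$, dominated convergence gives $A^K\to A$ locally uniformly; inverting, $\tau^K\to\tau$ locally uniformly. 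Since $\tau$ is continuous and strictly increasing up to its absorption value, a standard property of the $J_1$ topology (composition of a $J_1$-convergent sequence with locally uniformly convergent continuous strictly increasing time changes is $J_1$-continuous) yields $y^K\circ\tau^K\to y\circ\tau$, i.e.\ $Z^K\to Z$ on $[0,T)$; after $T$ both processes are absorbed at $0$ and the convergence extends to all of $\R_+$.

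The subtle point, and the one I expect to be the main obstacle, is exactly the behaviour near the zero set of $\beta$, i.e.\ near absorption at $0$, where $1/\beta$ blows up: one must show that the absorption time of $Z^K$ (the first time $A^K=+\infty$) converges to that of $Z$ (the first time $A=+\infty$, which is the first time $y$ meets a zero of $\beta$). This is the convergence-in-$K$ counterpart of the quasi-left-continuity argument used in the proof of Proposition~\ref{lamperti} to show that $Y_T=0$ on $\{T<\infty\}$, and it requires a control, uniform in $K$, ruling out that $Z^K$ spends a long time close to but above $0$ without being absorbed. For the case $\beta(x)=x$ relevant to the scaling theorem, this is where one uses the law of large numbers $Y_t/t\to\E(Y_1)$, the resulting a.s.\ identity $\int_0^\infty 1/Y^+_s\,\ud s=\infty$, and tightness of $(Y^K)$.

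Finally, the argument can be packaged in two ways. One may simply verify the hypotheses of Theorem~1.5, Chapter~6 of Ethier \& Kurtz~\cite{EK}, a general random-time-change stability theorem, so that the lemma follows once the nondegeneracy conditions above are in place. Alternatively, to sidestep the path-space continuity subtleties, one can identify the limit by a martingale argument: tightness of $(Z^K)$ follows from the representation $Z^K=y^K\circ\tau^K$ together with tightness of $(Y^K)$; any subsequential limit $\zeta$ inherits, by Lemma~\ref{chgtimeEK1}(iii) and passage to the limit in the (approximate) martingale relations for $Y^K$, the property that $f(\zeta_t)-\int_0^t\beta(\zeta_s)Qf(\zeta_s)\ud s$ is a martingale for $f\in C^2_b(\R_+)$, where $Q$ is the generator of $Y$; testing on $f_\lambda(z)=e^{-\lambda z}$ and using the branching/Laplace computation of Corollary~\ref{expo} shows every limit point has the law of the CSBP, whence $Z^K\Rightarrow Z$.
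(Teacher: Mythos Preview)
Your proposal is correct and aligns with the paper's treatment: the paper does not give a proof of this lemma at all but simply states that it ``can be adapted from Theorem 1.5 chapter 6 in Ethier-Kurtz \cite{EK}'', which is exactly the reference you invoke at the end. Your sketch of the Skorokhod-representation/continuous-mapping argument and the alternative tightness-plus-martingale-problem route are both sound elaborations of what lies behind that citation, and in fact go well beyond what the paper provides.
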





\subsection{On the  general case.}

\me What is the whole class of branching processes in the continuous setting?
What is the whole class of scaling limits of Galton-Watson processes?
These two classes actually coincide and extend the class of  CSBPs with finite first moment (called conservative CSBPs) we have considered above.

\begin{Thm} \cite{MR0217893, CLU}
The c\`adl\`ag Markov processes $Z$ which take values in $[0,\infty]$ and satisfy the branching property are  in one to one correspondence with  L\'evy  processes $Y$ with no negative jumps,  through the equation 
$$Z_t=Y^+_{\int_0^t Z_s}.$$
\end{Thm}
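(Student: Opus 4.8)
The plan is to prove both directions of the correspondence separately, establishing first that every L\'evy process with no negative jumps gives rise, via the Lamperti time change $Z_t=Y^+_{\int_0^t Z_s\,\ud s}$, to a $[0,\infty]$-valued c\`adl\`ag Markov process satisfying the branching property, and then conversely that every such branching process arises this way. For the forward direction I would reuse the machinery of Lemma \ref{chgtimeEK1} and the proof of Proposition \ref{lamperti}, now without the first-moment assumption $\int(h\wedge h^2)\mu(\ud h)<\infty$: one takes a spectrally positive L\'evy process $Y$ (possibly with infinite mean, possibly drifting to $+\infty$ fast enough that $\int_0^\infty 1/Y_s^+\,\ud s<\infty$, which is exactly the mechanism producing explosion to $\infty$ and hence the need for the state space $[0,\infty]$ rather than $[0,\infty)$), kills it at $0$, and solves $A_{\tau_t}=t$ with $A_t=\int_0^t 1/Y^+_u\,\ud u$. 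The time-changed process $Z_t=Y^+_{\tau_t}$ is c\`adl\`ag and Markov by the strong Markov property of $Y$ together with the fact that $\tau_t$ is a stopping time for the filtration of $Y$; the branching property follows from the independence and stationarity of the increments of $Y$ exactly as sketched at the end of the proof of Proposition \ref{lamperti}, using that the additive structure of $Y$ translates into the convolution identity $Z^{(z+\w z)}\stackrel{d}{=}Z^{(z)}+\w Z^{(\w z)}$ via concatenation of independent copies of $Y$ started at $z$ and at $\w z$.

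For the converse, I would start from a c\`adl\`ag $[0,\infty]$-valued Markov branching process $Z$ with $0$ and $\infty$ absorbing, and recover the driving L\'evy process by inverting the time change: set $\gamma_t=\inf\{u:\int_0^u Z_s^{-1}\,\ud s>t\}$ (the inverse of $t\mapsto\int_0^t Z_s^{-1}\,\ud s$ on the relevant interval, as indicated in the remark following Proposition \ref{lamperti}) and define $Y_t=Z_{\gamma_t}$, stopped appropriately at the times $Z$ hits $0$ or $\infty$. The branching property of $Z$, applied at rational times and then extended by right-continuity, forces the Laplace functional of $Z_t$ to be of the multiplicative form $\E_z[e^{-\lambda Z_t}]=e^{-z u_t(\lambda)}$ with $u_t$ satisfying a flow equation $\partial_t u_t(\lambda)=-\psi(u_t(\lambda))$; the classical structure theorem for such semigroups (the Silverstein/Grey classification) identifies $\psi$ as the Laplace exponent of a spectrally positive L\'evy process, now allowing $\psi'(0^+)=-\infty$ so that $\int_0^\infty(h\wedge h^2)\mu(\ud h)$ may be infinite. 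One then checks that $Y$ so constructed is precisely that L\'evy process and that the two time changes are mutually inverse, so that $Z_t=Y^+_{\int_0^t Z_s\,\ud s}$ holds, closing the loop.

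The main obstacle, and the point where I would spend the most care, is the bookkeeping around explosion and the state space $[0,\infty]$: when $Y$ drifts to $+\infty$ quickly the integral $\int_0^\infty 1/Y^+_s\,\ud s$ is finite, the time change $\tau_t$ hits a finite horizon, and $Z_t$ reaches $\infty$ in finite time, so one must verify that $\infty$ is a genuine (absorbing) state and that the martingale-problem / Laplace-transform characterization still pins down the law uniquely in this regime — the function $f_\lambda(z)=e^{-\lambda z}$ used in Corollary \ref{expo} extends continuously by $f_\lambda(\infty)=0$, which is what makes this work, but the argument that $C^2_b$ functions with bounded first two derivatives lie in the domain of the generator has to be revisited without the integrability hypothesis on $\mu$, using instead a truncation of small jumps and a monotone passage to the limit. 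A secondary technical point is the quasi-left-continuity argument at the first hitting time of $0$ (that $Y_T=0$ on $\{T<\infty\}$), which in the non-conservative case must be supplemented by the analogous statement at the boundary $\infty$. Once these boundary behaviors are under control, the uniqueness of the solution to the flow equation $\partial_t u_t=-\psi(u_t)$ and the uniqueness in Lemma \ref{chgtimeEK1} together with the general martingale-problem uniqueness from Ethier--Kurtz \cite{EK} deliver the one-to-one correspondence; for a complete treatment one would cite \cite{MR0217893} and \cite{CLU} as indicated.
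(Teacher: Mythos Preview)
The paper does not prove this theorem: it is stated with attribution to Lamperti \cite{MR0217893} and Caballero--Lambert--Uribe Bravo \cite{CLU}, and the surrounding text simply describes the characteristic triplet, the branching mechanism, and the SDE in the general case without giving any argument for the correspondence itself. So there is no ``paper's own proof'' to compare against.

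That said, your outline is a faithful extrapolation of the conservative case treated in Proposition \ref{lamperti} and Lemma \ref{chgtimeEK1}, and it matches the strategy of \cite{CLU}: establish the forward direction by time-changing a spectrally positive L\'evy process killed at $0$, with the new feature that $A_\infty=\int_0^\infty 1/Y^+_s\,\ud s$ may be finite and the resulting $Z$ explodes; establish the converse by inverting the time change and using the branching property to force the Laplace exponent into the L\'evy--Khintchine form. You have correctly flagged the two genuine technical issues (compactification at $\infty$ and the failure of the domain argument when $\int (h\wedge h^2)\mu(\ud h)=\infty$). One point you understate: in the converse direction, showing that the inverse-time-changed process $Y_t=Z_{\gamma_t}$ actually has independent stationary increments requires more than the multiplicative form of the Laplace transform of $Z_t$ --- one needs the full semigroup property $u_{t+s}=u_t\circ u_s$ together with a path-level argument (this is where \cite{CLU} does real work). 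Your sketch is otherwise sound as a plan, and appropriately defers the details to the cited references, which is exactly what the paper does.
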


\me Such a process $Z$  is still characterized by  a triplet $(r,\gamma, \mu)$, with $r \in \R, \gamma \in \R_+$ but the measure $\mu$ on $\R_+$ only satisfies  that $$\int_{0}^{\infty}\big({1}\land {z}^2\big)\mu(\ud {z})<\infty.$$ More specifically, the 
Laplace exponent $u_t$ of $Z$, which uniquely determines the finite dimensional distributions,  is the unique solution of
$$\frac{\partial u_t(\lambda)}{\partial t}=-\psi(u_t(\lambda)), \qquad u_0(\lambda)=\lambda,$$
where 
$$\psi(\lambda):=-r \lambda +\gamma \lambda^2+\int_0^{\infty} (e^{-\lambda h}-1+\lambda h1_{h\leq1}) \mu(\ud{h}).$$
More generally, the results given above can be extended.
Thus, the expression of the generator $\mathcal A$ remains valid and  $Z$ is given by the following SDE  (see Proposition 4 in \cite{CLU})
\Bea
\label{defCSBP}
Z_t&=&Z_0+\int_0^t r Z_s \ud s +\int_0^t\sqrt{2\gamma Z_s} \ud B_s+\int_0^t\int_0^1\int_0^{Z_{s-}} h\widetilde{N}_0(\ud s, \ud h, \ud u) \\
&& \qquad \qquad \qquad \qquad \qquad \qquad \qquad+\int_0^t\int_1^\infty\int_0^{Z_{s-}} hN_0(\ud s, \ud h, \ud u),
\Eea
where $B$ is a standard Brownian motion, $N_0(\ud s, \ud z, \ud u)$ is a Poisson random measure on $\R_+^3$ with intensity $\ud s\mu(\ud z)\ud u$ independent of $B$,  and  $\widetilde{N}_0$ is the  compensated measure of $N_0$. \\

\me We stress that the class of CSBPs  obtained now yields the (only possible) scaling limits of Galton-Watson processes (or more generally
  discrete space continuous time branching processes) \cite{MR0208685,MR0217893, CLU}. \\
A new phenomenon appears in the non-conservative case : the process may explode in finite time. 

\begin{Prop} The CSBP $Z$ blows up with positive probability, which means that $\P_1(Z_t =\infty)>0$ for some $t\geq 0$, if and only if
$$\psi'(0+)=-\infty \quad \emph{and} \quad \int_0 \frac{\ud s}{\psi(s)}>-\infty.$$
\end{Prop}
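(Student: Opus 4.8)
The plan is to compute the explosion probability from the Laplace exponent of $Z$, in exact parallel with the absorption criterion of Proposition~\ref{fel-beh}(iii), and then to recognise the answer as a Grey-type integral test on $1/\psi$ near the origin. Recall that the Laplace exponent $u_t$ (Proposition~\ref{branch}, Corollary~\ref{expo}, in the form extended above to the non-conservative case) is the unique solution of $\partial_t u_t(\lambda)=-\psi(u_t(\lambda))$, $u_0(\lambda)=\lambda$, that $\lambda\mapsto u_t(\lambda)$ is nondecreasing, and that $\psi$ is convex with $\psi(0)=0$. Set $u_t(0+):=\lim_{\lambda\downarrow 0}u_t(\lambda)$. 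With the convention $e^{-\lambda\cdot\infty}=0$ one has $e^{-\lambda Z_t}\uparrow \mathbf{1}_{\{Z_t<\infty\}}$ as $\lambda\downarrow 0$, so monotone convergence together with $\E_z[e^{-\lambda Z_t}]=e^{-zu_t(\lambda)}$ gives
$$\P_z(Z_t<\infty)=\exp\bigl(-z\,u_t(0+)\bigr),\qquad\text{hence}\qquad \P_1(Z_t=\infty)>0\iff u_t(0+)>0\ \text{for some }t>0 .$$
It thus suffices to determine for which branching mechanisms $\psi$ such a $t$ exists.

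Next I would localise the analysis near $0$. Since $\psi$ is convex and $\psi(0)=0$, the slope $\psi(\lambda)/\lambda$ is nondecreasing with limit $\psi'(0+)$; hence $\psi'(0+)=-\infty$ is equivalent to $\psi<0$ on a maximal interval $(0,\lambda_0)$, where $\lambda_0\in(0,\infty]$ is the next root of $\psi$. If $\psi'(0+)$ is finite, $\psi$ has a constant sign on a neighbourhood of $0$: when $\psi\geq 0$ there, $\partial_t u_t(\lambda)=-\psi(u_t(\lambda))\leq 0$ forces $u_t(0+)=0$ and there is no explosion; when $\psi<0$ there, $1/|\psi(v)|\sim 1/(|\psi'(0+)|\,v)$ is non-integrable at $0$ and the computation below again gives $u_t(0+)=0$. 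So assume $\psi'(0+)=-\infty$. For $0<\lambda<\lambda_0$ the map $t\mapsto u_t(\lambda)$ is increasing (because $\partial_t u=-\psi(u)>0$ as long as $u\in(0,\lambda_0)$) and it stays in $(0,\lambda_0)$ for all $t$, since $1/|\psi|$ is non-integrable at an interior zero of $\psi$. Separating variables in the ODE yields
$$t=\int_\lambda^{\,u_t(\lambda)}\frac{\ud v}{|\psi(v)|}\qquad(0<\lambda<\lambda_0),$$
and letting $\lambda\downarrow 0$, with $u_t(\lambda)\downarrow u_t(0+)$, monotone convergence gives
$$t=\int_0^{\,u_t(0+)}\frac{\ud v}{|\psi(v)|}\qquad(t\geq 0),$$
the right-hand side being read as $\int_0^\infty \ud v/|\psi(v)|$ when $u_t(0+)=\infty$.

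Now I read off the dichotomy. If $\int_0 \ud v/\psi(v)=-\infty$ --- equivalently $\int_0\ud v/|\psi(v)|=+\infty$, since the integrand is negative near $0$ --- then the last identity with finite $t$ forces $u_t(0+)=0$ for every $t$, so $Z$ does not blow up; combined with the case $\psi'(0+)$ finite, this shows that explosion requires both $\psi'(0+)=-\infty$ and $\int_0 \ud v/\psi(v)>-\infty$. Conversely, if $\psi'(0+)=-\infty$ and $\int_0\ud v/|\psi(v)|<\infty$, then $a\mapsto\int_0^a \ud v/|\psi(v)|$ is continuous, strictly increasing and vanishes at $0$, so for small $t>0$ the equation $\int_0^{a_t}\ud v/|\psi(v)|=t$ has a solution $a_t\in(0,\lambda_0)$; then $u_t(0+)=a_t>0$, i.e.\ $\P_1(Z_t=\infty)>0$.

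The step I expect to be the main obstacle is the passage $\lambda\downarrow 0$ in the integral equation: one must exchange the limit with the integral while ensuring the trajectory $u_t(\lambda)$ never leaves the region $\{\psi<0\}$. This is controlled by the two monotonicities (in $\lambda$ and in $t$) of $u_t(\lambda)$ and by the fact that $1/|\psi|$ is non-integrable at any interior zero of $\psi$, so $u_t(\lambda)$ cannot reach $\lambda_0$ in finite time; the remaining estimates are the same bounded and monotone-convergence bookkeeping already used for the absorption criterion in Proposition~\ref{fel-beh}(iii).
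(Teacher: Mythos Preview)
The paper does not prove this proposition; it is stated without proof as a classical result (Grey \cite{MR0408016}). Your argument is the standard one and is essentially correct: express $\P_z(Z_t<\infty)$ via the Laplace exponent, reduce to deciding when $u_t(0+)>0$, and read this off from the integral form of the ODE $\partial_t u=-\psi(u)$ near $0$, exactly paralleling the absorption criterion in Proposition~\ref{fel-beh}(iii).

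Two small points deserve tightening. First, the clause ``$\psi'(0+)=-\infty$ is equivalent to $\psi<0$ on a maximal interval $(0,\lambda_0)$'' is only an implication (the case $\psi'(0+)\in(-\infty,0)$ also gives $\psi<0$ near $0$), though you do treat that case correctly in the next sentence. Second, the ``monotone convergence'' you invoke when letting $\lambda\downarrow 0$ in $t=\int_\lambda^{u_t(\lambda)}\ud v/|\psi(v)|$ is not literal monotone convergence, since both endpoints move and the intervals $[\lambda,u_t(\lambda)]$ are not nested. The clean fix is: if $u_t(0+)>0$, split as $\int_\lambda^{u_t(0+)}+\int_{u_t(0+)}^{u_t(\lambda)}$ for small $\lambda$; the first term is genuinely monotone in $\lambda$, and the second tends to $0$ because $1/|\psi|$ is bounded near $u_t(0+)\in(0,\lambda_0)$. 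If $u_t(0+)=0$, then for any $\delta>0$ eventually $[\lambda,u_t(\lambda)]\subset(0,\delta)$, so $t\leq\int_0^\delta\ud v/|\psi(v)|$ for all $\delta>0$, forcing $\int_0\ud v/|\psi(v)|=+\infty$. With these adjustments the proof is complete.
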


\me In this section, we have  focused on the size of the population $Z_t$. The scaling limits actually provide a natural notion of genealogy for the limiting object, see \cite{DL}. An other point of view, using the flow of subordinators which comes by letting the initial size vary, has been exhibited recently by Bertoin and Le Gall  \cite{MR1771663}.
Finally,  several extensions of CBSPs have been considered. In particular, for
CSBP with immigration, we refer  to \cite{RePEc:eee:spapps:v:120:y:2010:i:3:p:306-330} for the SDE characterization and to \cite{CGU} for the Lamperti transform.

\section{Feller Diffusion with Random Catastrophes}
\label{BFDrc}

\me We deal now with a new family of branching processes taking into account the effects of the environment
on the population dynamics. It may cause random fluctuations of the growth rate \cite{BH, EvansSchreiber}
or   catastrophes which kill a random fraction of the population \cite{BPS}.  \\
Here, we are focusing on a  Feller diffusion with catastrophes, in order to simplify the study and fit with the motivations given in Section \ref{div}. We aim at 
enlightening new behaviors due to the random environment and we refer to Subsection \ref{complements} for extensions and comments on (more general) CSBP's in random
environment.

\subsection{Definition and scaling limit}
\label{scaling2}

\me We consider the  Feller diffusion  \eqref{Feller} and add some random catastrophes, whose rates are given by a function $\tau$ and whose effect on the population size are multiplicative and given by some random variable $F$ taking values in $[0,1]$.
This process $Y$ is defined as  the solution of the following SDE :
\begin{multline*}
Y_t =
y_0 +\int_0^t rY_s ds
 +  \int_0^t \sqrt{2\gamma Y_s}dB_s
 -  \int_0^t \int_{0}^{\infty}\int_0^1
\ind_{u\leq \tau(Y_{s_-})} \big(1-\theta\big) Y_{s_-}  N_1(\ud s,\ud u,\ud \theta)
\end{multline*}
where $B$ and $N_1$ are respectively a Brownian motion  and  a Poisson point measure on  $\R_+\times \R_+\times (0,1]$ with intensity  $ds\,du\,\P(F\in  \ud \theta)$, which are independent. \\
Thus, the random variable $F$  is the  intensity of the catastrophe.   We assume that $\P(F>0)=1$   and $\P(F\in (0,1))>0$ to avoid absorption by a single catastrophe. Similarly, we also assume that 
$$\E(\log(F))>-\infty.$$
The rate $\tau$ at which the catastrophes occur may depend on the population size. We refer
to Section \ref{div} for motivations for cell division. More generally, the fact that $\tau$ is increasing is relevant when the catastrophe is actually a regulation of the population dynamics. We may think about the effect of a treatment for an infected population or invasive species or protocols for web treatment of tasks  such as TCP... 

\bi
Following Section  \ref{ScalingDim1}, the process can be constructed as a scaling limit, which enlightens the two time scales involved in the dynamics, namely the time scale of
the demography of the  population and that of the catastrophes. It is achieved  by considering a linear birth and death process $Y_t^K$ starting from $[Ky]$ individuals. Its individual birth and death rates are given by $\lambda  + K\gamma$ and $\mu  + K\gamma$.
 Moreover the process $Y^K$ jumps with rate $\tau(n/K)$ from $n$ to $[\theta n]$ where $\theta$ is chosen randomly following the distribution $F$. More precisely
\Bea
Y_t^K&=&[Ky_0]+\int_0^t \int_0^{\infty} \left(\ind_{u\leq Y_{s-}^K(\lambda+K\gamma)} -\ind_{ Y_{s-}^K(\lambda+K\gamma)\leq u\leq Y_{s-}^K(\lambda+K\gamma+\mu+K\gamma)}\right)N_0(ds,du)\\
&& \qquad \qquad - \int_0^t \int_0^{\infty} \ind_{u\leq \tau(Y_{s-}^K/K)} (Y_{s-}-[\theta Y_{s-}])N_1(\ud s,\ud u,\ud \theta).
\Eea
Then $(Y_t^K/K : t \geq 0)$ converges weakly to $(Y_t : t \geq 0)$ as $K\rightarrow \infty$, see  \cite{MR2754402} for more details.
 Taking the integer part of the population size   after a catastrophe is  convenient when proving the convergence of the scaling limit via tightness and limiting
 martingale  problem. Other models in discrete space  would yield the same limiting object.
For example, each individual could be killed independently with probability $F$ when a catastrophe occurs.
We also recall from the previous sections that  scaling limits of other processes, such as Galton Watson processes, lead to the  Feller diffusion.

\subsection{Long time behavior when  catastrophes occur at a  constant rate}
\label{cata}

\me In this section, we assume that $\tau(.)=\tau$ is constant and
 the successive catastrophes can be represented by   a Poisson point process  $\{(T_k,F_k)  : k \geq 0\}$     on $\R_+\times [0,1]$ with intensity $\tau\ud t  \P(F \in \theta)$. 
The long time behavior of $Y$ can be guessed  thanks to the following heuristic:
$$Y_t \approx Z_t.\Pi_{T_k \leq t} F_k$$
where $Z$ is a  Feller diffusion with parameters $(r,\gamma)$ and   all the catastrophes during the time interval $[0,t]$  have been postponed at the final time.
We recall from Section \ref{CP}  that $Z_t$ is equal to $e^{rt}M_t$ where $M$ is a  martingale.  We prove in this section (see  the two forthcoming theorems) that
$Y_t$ behaves as $\ \exp(rt)\Pi_{T_k \leq t} F_k=\exp(K_t)\,$ 
as $t$ goes to infinity, where 
$$K_t:=rt+ \sum_{T_k\leq t} \log F_k=rt +\int_0^t\int_0^1 \log(\theta) N_1(\ud s,[0,\tau], \ud \theta),$$
with $N_1(\ud s,[0,\tau], \ud \theta)$  a Poisson point measure with intensity $\tau \ud s \P(F\in \ud \theta)$
and  $K$ a L\'evy process. 
It turns out that   the asymptotic behavior of  Feller diffusions with catastrophes will  be inherited from the classification of 
long time behavior of L\'evy processes. First, we  check that 
$$\bar{Y}_t:=\exp(-K_t)Y_t$$ is a continuous local martingale, which extends the result of Section  \ref{CP} to random environment.
\begin{Lem}\label{propcorrectionsauts} The process $(\bar{Y}_t : t\geq 0)$ satisfies the SDE 
\begin{equation}\label{sde}
\bar{Y}_t=y_0+ \int_0^t e^{-K_s/2} \sqrt{2\gamma \bar{Y_s}}dB_s.
\end{equation}
\end{Lem}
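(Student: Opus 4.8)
The plan is to apply Itô's formula for jump processes to the product $\bar Y_t = e^{-K_t} Y_t$ and to check that the finite-variation and jump contributions cancel exactly, leaving only the continuous martingale term. First I would write down the dynamics of the two factors separately. The process $Y$ solves its defining SDE with drift $rY_s\,ds$, diffusion $\sqrt{2\gamma Y_s}\,dB_s$, and jumps $-(1-\theta)Y_{s_-}$ at the atoms of $N_1$ that fall in $\{u\le\tau\}$ (here $\tau$ is constant). The process $K_t = rt + \int_0^t\int_0^1 \log(\theta)\,N_1(ds,[0,\tau],d\theta)$ is a Lévy process with drift $rt$ and jumps $\log\theta$ driven by the same Poisson measure $N_1$; in particular $e^{-K_t}$ has drift $-r\,e^{-K_{s}}\,ds$ from the $rt$ part and, at a jump time $T_k$ with mark $F_k$, jumps from $e^{-K_{s_-}}$ to $e^{-K_{s_-}}F_k^{-1}$.

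Next I would apply the two-dimensional Itô formula (or directly the product rule for semimartingales with jumps) to $\bar Y_t = e^{-K_t}Y_t$. The continuous part of the product rule gives $d\bar Y_t^{(c)} = e^{-K_t}\,dM_t^{Y} + Y_t\,d(e^{-K_t})^{(c)}$, where $M^Y$ is the martingale part of $Y$; the drift pieces are $e^{-K_s}\,rY_s\,ds$ coming from $Y$ and $-rY_s\,e^{-K_s}\,ds$ coming from $e^{-K}$, and these cancel. Since $K$ has no Brownian component, there is no cross-variation term between the continuous martingale parts, so the only surviving continuous contribution is $\int_0^t e^{-K_s}\sqrt{2\gamma Y_s}\,dB_s = \int_0^t e^{-K_s/2}\sqrt{2\gamma \bar Y_s}\,dB_s$, using $Y_s = e^{K_s}\bar Y_s$. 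It then remains to treat the jumps. At a jump time $T_k$ driven by $N_1$ (with $u\le \tau$ and mark $F_k$), the process $Y$ is multiplied by $F_k$ and $e^{-K}$ is multiplied by $F_k^{-1}$, so the product $\bar Y$ is multiplied by $F_k \cdot F_k^{-1} = 1$, i.e. $\bar Y$ has no jump at all. Hence $\bar Y$ is continuous and satisfies exactly \eqref{sde}.

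Finally I would verify the martingale claim and well-posedness: the integrand $e^{-K_s/2}\sqrt{2\gamma\bar Y_s}$ is adapted and, using the moment bound $\E(\sup_{s\le T} Y_s)<\infty$ inherited from the Feller/branching estimates of Section \ref{ScalingDim1} together with $e^{-K_s}\le$ (a process with finite expectation, since $K$ is a Lévy process with $\E(\log F)>-\infty$), one checks the local integrability needed for $\bar Y$ to be a genuine (or at least local) martingale, and the stochastic integral in \eqref{sde} is well-defined.

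I expect the main obstacle to be bookkeeping rather than conceptual: one must be careful that the Poisson measure $N_1$ appears in \emph{both} factors with reciprocal multiplicative effects, so that the jump cancellation is exact and no compensator term is left over; and one should confirm that $K$ genuinely has no Brownian part (it does not, by construction), so that no Itô cross-term $d\langle K^c, Y^c\rangle$ arises. A minor technical point is justifying that $\bar Y$ stays nonnegative and that the integrability is strong enough to pass from "local martingale" to the stated identity; this follows from the exponential-moment control on $K$ and the polynomial-moment control on $Y$.
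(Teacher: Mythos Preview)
Your proposal is correct and follows essentially the same route as the paper: apply the two-dimensional It\^o formula with jumps to $e^{-K_t}Y_t$ (the paper writes it as $F(K_t,Y_t)$ with $F(x,y)=e^{-x}y$, you phrase it as the semimartingale product rule), observe that the $rY_se^{-K_s}\,ds$ drift terms cancel, that there is no Brownian cross-variation since $K$ has none, and that at each jump the multiplicative effects $\theta$ and $\theta^{-1}$ annihilate each other. The paper handles integrability in one line by dominating $Y$ by the Feller diffusion $X$ obtained by removing the catastrophes; this is a bit cleaner than the moment/exponential-moment argument you sketch, but either works.
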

\begin{proof}Since for every $t\in \R_+,\, 0\leq Y_t\leq X_t$, where $X_t$ is a  Feller diffusion,
all the stochastic integrals that we write are well defined.
Applying the two dimensional It\^o's formula with jumps to $F(K_t,Y_t)=\bar{Y}_t$, with $F(x,y)=\exp(-x)y$, we get 
\begin{align*}
\bar{Y}_t= & y_0+\int_0^t e^{-K_s}\left[rY_s ds+\sqrt{2\gamma Y_s} dB_s\right]-  \int_0^t rY_s e^{-K_s} \ud s
\nonumber\\
& \quad +  \int_0^t \int_{0}^{\infty}\int_{0}^1 \left(Y_s e^{-K_s}- Y_{s_-}e^{-K_{s_-}}\right)\ind_{u\leq \tau}N_1(\ud s,\ud u,\ud \theta)\nonumber\\
= & y_0+ \int_0^t e^{-K_s}\sqrt{2\gamma Y_s}\ud B_s+\int_0^t \int_{0}^{\infty}\int_0^1 \bar{Y}_{s_-}\left(\theta e^{-\log(\theta)}-1\right)
\ind_{u\leq \tau}N(\ud s,\ud u,\ud \theta)\nonumber\\
= & y_0+ \int_0^t e^{-K_s}\sqrt{2\gamma Y_s}\ud B_s.
\end{align*}
Then  $(\bar{Y}_t : t\geq 0)$  satisfies the SDE (\ref{sde}).
\end{proof}

\bi We  now state the absorption  criterion.  
\begin{Thm}\label{extlignee} 
(i) If $r\leq \E(\log(1/F))\tau$, then  $\P(\exists t >0 : Y_t=0)=1$. \\
(ii) Otherwise, 
$\mathbb{P}(\forall t\geq 0 : \  Y_t>0)>0$.
\end{Thm}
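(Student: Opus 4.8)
The plan is to exploit the reduction, carried out in Lemma \ref{propcorrectionsauts}, to the continuous nonnegative local martingale $\bar Y_t = \exp(-K_t) Y_t$, which is absorbed at $0$ at exactly the same time as $Y$ (since $\exp(-K_t)>0$ for all $t$). First I would observe that $\bar Y$, being a nonnegative local martingale, is a supermartingale and hence converges almost surely to a finite limit $\bar Y_\infty \geq 0$ as $t\to\infty$. The event $\{Y_t = 0 \text{ for some } t\}$ coincides with $\{\bar Y_t = 0 \text{ for some } t\}$, and by continuity and the absorption at $0$ this in turn coincides (up to a null set) with $\{\bar Y_\infty = 0\}$. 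So the dichotomy becomes: decide whether $\bar Y_\infty = 0$ almost surely or with probability strictly less than one.

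For part (i), the strategy is to show $\bar Y_\infty = 0$ a.s.\ when $r \leq \E(\log(1/F))\tau$. The quadratic variation of $\bar Y$ from \eqref{sde} is $\langle \bar Y\rangle_t = \int_0^t 2\gamma e^{-K_s}\bar Y_s\, ds$. On the event $\{\bar Y_\infty > 0\}$, for large times $\bar Y_s$ is bounded below by a positive constant, so $\langle \bar Y\rangle_\infty = \infty$ would follow \emph{unless} $\int_0^\infty e^{-K_s}\,ds < \infty$; but a convergent quadratic variation is forced on the event where a continuous local martingale converges to a finite nonzero limit only through $\int_0^\infty e^{-K_s} ds<\infty$. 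Now $K$ is the L\'evy process $K_t = rt + \sum_{T_k\le t}\log F_k$ with $\E(K_1) = r - \tau\E(\log(1/F)) \le 0$; by the law of large numbers for L\'evy processes, $K_t/t \to \E(K_1) \le 0$, so $K$ does not drift to $+\infty$, and in the critical case $\E(K_1)=0$ it oscillates. In either case $\int_0^\infty e^{-K_s}\,ds = \infty$ a.s.\ (the integrand does not decay, and in the oscillating case $\liminf K_s = -\infty$ makes the integrand blow up along a sequence). Hence on $\{\bar Y_\infty>0\}$ we would have $\langle \bar Y\rangle_\infty = \infty$, which is impossible for a convergent $L^1$-bounded martingale; therefore $\P(\bar Y_\infty > 0) = 0$, giving (i).

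For part (ii), when $r > \tau\E(\log(1/F))$ we have $\E(K_1) > 0$, so $K_t \to +\infty$ a.s.\ and in fact $\int_0^\infty e^{-K_s}\,ds < \infty$ a.s. The idea is then to show $\bar Y$ is not only a local martingale but a genuine martingale bounded in $L^2$, so it cannot be identically absorbed at $0$. Indeed, $\E(\langle \bar Y\rangle_\infty) = 2\gamma\,\E\!\big(\int_0^\infty e^{-K_s}\bar Y_s\,ds\big)$; using $\E(\bar Y_s)\le y_0$ (supermartingale property) together with the independence of $K$ and $B$, one bounds this by $2\gamma y_0\, \E\!\big(\int_0^\infty e^{-K_s}\,ds\big)$, and the latter expectation is finite precisely because $\E(e^{-K_s}) = e^{-s\psi_K(1)}$ decays exponentially when the relevant Laplace exponent $\psi_K(1) = -r + \tau\E(1-F) $ evaluated appropriately is positive — more robustly, one localizes and uses that $K_s \ge c s$ eventually to get an integrable dominating function. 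Thus $\bar Y$ is a true square-integrable martingale, so $\E(\bar Y_\infty) = y_0 > 0$, which forces $\P(\bar Y_\infty > 0) > 0$, i.e.\ $\P(\forall t\ge 0: Y_t > 0) > 0$.

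The main obstacle I anticipate is the careful justification that $\int_0^\infty e^{-K_s}\,ds$ is infinite in the critical boundary case $\E(K_1) = 0$ (where $K$ merely oscillates rather than drifting to $-\infty$), and dually that $\bar Y$ is a genuine (not merely local) martingale in the supercritical case; both require a localization argument and a quantitative control on the growth/oscillation of the L\'evy process $K$ (for instance via $\liminf_t K_t = -\infty$ a.s.\ when $\E(K_1)=0$, and $K_t/t\to\E(K_1)$ otherwise). Connecting $\{\bar Y_\infty = 0\}$ with actual absorption $\{\exists t: \bar Y_t = 0\}$ rather than mere asymptotic extinction also needs the remark that $0$ is an absorbing boundary for the diffusion \eqref{sde} and that a continuous nonnegative martingale hitting a neighborhood of $0$ infinitely often must hit $0$ — this is where the precise structure of \eqref{sde}, with diffusion coefficient vanishing at $0$, is used.
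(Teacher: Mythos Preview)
Your quadratic-variation argument for (i) is correct and is a pleasant alternative to the paper's route, which instead computes the explicit quenched Laplace transform of $\bar Y_t$ (Lemma~\ref{quenched}) and lets $t\to\infty$, $\lambda\to\infty$ to identify $\P(\bar Y_\infty=0)$. For (ii), however, there is a genuine gap. The $L^2$-boundedness you want amounts to $\int_0^\infty \E(e^{-K_s})\,ds<\infty$, and since $\E(e^{-K_s})=\exp\!\big(s[-r+\tau(\E(F^{-1})-1)]\big)$ (not $-r+\tau\E(1-F)$ as you wrote), this requires $r>\tau(\E(F^{-1})-1)$. From $e^x\ge 1+x$ one gets $\E(F^{-1})-1\ge \E(\log(1/F))$, with strict inequality under the standing assumption $\P(F\in(0,1))>0$; moreover $\E(F^{-1})$ may be infinite, since only $\E(\log F)>-\infty$ is assumed. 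So there is a nontrivial range of parameters where the hypothesis $r>\tau\E(\log(1/F))$ holds but your $L^2$ bound fails. Your fallback ``$K_s\ge cs$ eventually'' only yields $\langle\bar Y\rangle_\infty<\infty$ a.s., which gives a.s.\ convergence of $\bar Y$ (already known from the supermartingale property) but not uniform integrability, and therefore does not deliver $\E(\bar Y_\infty)=y_0$. Note also that $e^{-K_s}$ and $\bar Y_s$ are \emph{not} independent even though $K$ and $B$ are; the factoring you seek goes through conditioning on $\mathcal F^{N_1}$, but this does not rescue the missing integrability.

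The paper sidesteps all of this by using Lemma~\ref{quenched}: passing to the limit one gets $\P(\bar Y_\infty=0)=\E\!\big(\exp(-y_0/(\gamma\int_0^\infty e^{-K_s}\,ds))\big)$, which is automatically $<1$ as soon as $\int_0^\infty e^{-K_s}\,ds<\infty$ a.s.\ (Lemma~\ref{intexp}), with no moment condition on $F^{-1}$. Finally, your identification $\{\bar Y_\infty=0\}=\{\exists t:Y_t=0\}$ is asserted early but argued only vaguely at the end; the paper instead shows $\liminf_t Y_t=0$ a.s.\ in case (i) (using $\liminf_t K_t=-\infty$ from Lemma~\ref{intexp} together with boundedness of $\bar Y$) and then invokes the strong Markov property of $Y$ and the positive absorption probability of the Feller diffusion to force actual absorption.
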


\me For the proof, we first  consider the quenched process, conditioned by the environment $\mathcal F^{N_1}$. Indeed   the transformation $Y_{t-} \rightarrow xY_{t-}=Y_t$ preserves  the  branching property of the  Feller diffusion.  The  Feller diffusion then undergoes deterministic  catastrophes given by the conditional Poisson point measure $N_1$.
 \begin{Lem}[Quenched characterization] \label{quenched} (i) Conditionally on $\mathcal F^{N_1}$ and setting for $t_0, \lambda \geq0$ and $t\in [0,t_0]$,
$$u(t,y)=\exp\left(-\frac{\lambda y}{\gamma \lambda \int_{t}^{t_0} e^{-K_s}ds+1}\right),$$
The process $(u(t,\bar{Y}_t) :  0\leq t\leq t_0)$ is a bounded martingale. \\
(ii) For all  $t,\lambda,y_0\geq 0$,
 \be \label{laplace}
\mathbb{E}_{y_0}\left(\exp(-\lambda \bar{Y}_t)  \ \vert \ {\mathcal F^{N_1}}\right)=  \exp\left(-\frac{\lambda y_0}{\gamma \lambda \int_0^t e^{-K_s}\ud s+1}\right).
\ee
\end{Lem}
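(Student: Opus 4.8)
The plan is to work under the conditional law $\mathbb{P}(\,\cdot\mid\mathcal F^{N_1})$, for which the environment is frozen and $\bar Y$ becomes a time-inhomogeneous Feller-type diffusion, and then to exploit that $u$ is precisely the functional that kills the drift in It\^o's formula. First I would observe that, since $B$ and $N_1$ are independent, $B$ is still a Brownian motion under $\mathbb{P}(\,\cdot\mid\mathcal F^{N_1})$, while $K$ is now a fixed deterministic function of time; by Lemma \ref{propcorrectionsauts}, $\bar Y$ then solves the SDE \eqref{sde}, so it is driven only by $B$ with $\mathcal F^{N_1}$-measurable coefficients and has quadratic variation $\ud\langle\bar Y\rangle_s=2\gamma e^{-K_s}\bar Y_s\,\ud s$; conditionally it is thus a diffusion with generator $\mathcal L_s f(y)=\gamma e^{-K_s}\,y\,f''(y)$. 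The strategy is: (a) verify that $u$ solves the backward equation $\partial_t u+\mathcal L_t u=0$ on $[0,t_0]$; (b) deduce via It\^o that $u(t,\bar Y_t)$ is a conditional local martingale; (c) upgrade it to a true bounded martingale; (d) obtain (ii) by choosing $t_0=t$.

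For step (a), write $h(t):=\gamma\lambda\int_t^{t_0}e^{-K_s}\,\ud s+1$, so $u(t,y)=\exp(-\lambda y/h(t))$ and $h'(t)=-\gamma\lambda e^{-K_t}$. An elementary computation gives $\partial_y u=-(\lambda/h(t))u$, $\partial_{yy}u=(\lambda/h(t))^2 u$ and $\partial_t u=\lambda y\,h'(t)h(t)^{-2}u=-\gamma\lambda^2 y e^{-K_t}h(t)^{-2}u$, whence $\partial_t u+\gamma e^{-K_t}y\,\partial_{yy}u=0$. (This $u$ is exactly the candidate dictated by the Riccati equation $v'(t)=-\gamma e^{-K_t}v(t)^2$, $v(0)=\lambda$, which governs the quenched Laplace exponent of a Feller diffusion run with the time change $\int_0^{\cdot}e^{-K_s}\,\ud s$; one could alternatively derive (ii) directly by solving this ODE.) For step (b), apply the two-dimensional It\^o formula to $u(t,\bar Y_t)$ using \eqref{sde}: the finite-variation part equals $\int_0^t\big[\partial_t u(s,\bar Y_s)+\gamma e^{-K_s}\bar Y_s\,\partial_{yy}u(s,\bar Y_s)\big]\,\ud s=0$, leaving the conditional local martingale $\int_0^t\partial_y u(s,\bar Y_s)\,e^{-K_s/2}\sqrt{2\gamma\bar Y_s}\,\ud B_s$. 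For step (c), since $0\le u\le 1$ the process $u(t,\bar Y_t)$ is bounded on $[0,t_0]$, hence a genuine $\mathbb{P}(\,\cdot\mid\mathcal F^{N_1})$-martingale, which is assertion (i).

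For (ii), take $t_0=t$: then $h(t)=1$, so $u(t,\cdot)=e^{-\lambda\cdot}$, while $h(0)=\gamma\lambda\int_0^t e^{-K_s}\,\ud s+1$. Applying the martingale property between times $0$ and $t$, and using that $\bar Y_0=y_0$ is deterministic,
$$\mathbb{E}_{y_0}\big(e^{-\lambda\bar Y_t}\mid\mathcal F^{N_1}\big)=\mathbb{E}_{y_0}\big(u(t,\bar Y_t)\mid\mathcal F^{N_1}\big)=u(0,y_0)=\exp\left(-\frac{\lambda y_0}{\gamma\lambda\int_0^t e^{-K_s}\,\ud s+1}\right),$$
which is \eqref{laplace}. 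The only genuinely substantive step is (a) — recognizing the right functional $u$, equivalently the Riccati structure of the quenched Laplace exponent; the remaining points (the conditional Brownian property and the passage from local to true martingale) are routine, the former because $B$ and $N_1$ are independent and the latter because $u$ is bounded.
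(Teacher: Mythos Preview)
Your proof is correct and follows essentially the same route as the paper: verify that $u$ satisfies $\partial_t u+\gamma e^{-K_t}y\,\partial_{yy}u=0$, apply It\^o's formula (via \eqref{sde}) so that the finite-variation part vanishes, use boundedness to pass from local to true martingale, and then evaluate the martingale at the endpoints $0$ and $t_0=t$ to obtain \eqref{laplace}. You simply make the PDE verification and the Riccati interpretation more explicit than the paper does.
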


\begin{exo} Prove that conditionally on $\mathcal{F}^{N^1}$,   $Y$ satisfies the branching property. \\
\emph{One may write a direct proof following Proposition \ref{branch} or use Lemma \ref{quenched}  (ii).}
\end{exo}

\me We stress that the non conditional  (\emph{annealed}) branching property does not hold.
Formally, the quenched process can be defined  on the probability space $(\Omega, \mathcal F, \P):=(\Omega_e\times \Omega_d, \mathcal F_e \otimes \mathcal F_d,\P_{e}\otimes \P_d)$ by
 using a Poisson Point process $N_1(w)=N_1(w_e,w_d):=N_1(w_e)$ for catastrophes and a Brownian motion $B_t(w)=B_t(w_e,w_d):=B_t(w_d)$ for the demographic stochasticity. Thus, the process $Y$   
 conditioned  on the environment $\mathcal{F}^{N_1} =\sigma(K_s)=\sigma(\mathcal{F}_e)$ (quenched process) is  given by $Y(w_e,.)$ $\P_{e}$ a.s. 
\begin{proof}
Let us work conditionally on $\mathcal{F}^{N^1}$. 
 Using It\^o's formula for a function $u(t,y)$ which is differentiable by parts with respect to $t$ and infinitely differentiable with respect to $y$, we get
\begin{align*}
 u(t,\bar{Y}_t)= u(0,y_0)+&  \int_0^t \left[\frac{\partial u}{\partial s}(s,\bar{Y}_s)+\frac{\partial^2 u}{\partial y^2}(s,\bar{Y}_s)\gamma \bar{Y}_s e^{-K_s}\right] \ud s \\
 +& \int_0^t \frac{\partial u}{\partial y}(s,\bar{Y}_s)e^{-K_s/2}\sqrt{2\gamma \bar{Y}_s}\ud B_s.
\end{align*}
The function $u$ has been chosen to cancel the finite variation part, i.e. it satisfies
$$\frac{\partial u}{\partial s}(s,y)+\frac{\partial^2 u}{\partial y^2}(s,y)\gamma y e^{-K_s} =0 \qquad (s,y\geq 0).$$
Then the process $(u(t,\bar{Y}_t) : 0\leq t\leq t_0)$ is a local martingale bounded by $1$ and thus a real martingale and $(i)$ holds. We  deduce that
$$\mathbb{E}_{y_0}\left(u(t_0,\bar{Y}_{t_0})\, |\, \mathcal{F}^{N_1}\right)=u(0,\bar{Y}_{0}),$$
which gives $(ii)$. 
\end{proof}

\me Before proving Theorem  \ref{extlignee}, we deal with the functional of the L\'evy process involved in the extinction.
\begin{Lem}\label{intexp}
If $r \leq \tau \E(\log(1/F))$, then  $\liminf_{t\rightarrow\infty} K_t=-\infty$ and $\int_0^{\infty} \exp(-K_s)\ud{s}=+\infty$ a.s. \\
Otherwise, 
$\lim_{t\rightarrow\infty} K_t=+\infty$ and $\int_0^{\infty} \exp(-K_s)\ud{s}<+\infty$ a.s. 
\end{Lem}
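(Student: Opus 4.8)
The plan is to deduce everything from the structure of $K$ as a Lévy process. First I would observe that $K_t=rt+\sum_{T_k\le t}\log F_k$ is the deterministic drift $rt$ plus a compound Poisson process with rate $\tau$ and jumps distributed as $\log F\le 0$; since $\E(\log F)=-\E(\log(1/F))\in(-\infty,0)$ by hypothesis, $K$ has finite mean $\E(K_1)=r+\tau\,\E(\log F)=r-\tau\,\E(\log(1/F))$. By the strong law of large numbers for Lévy processes, $K_t/t\to\E(K_1)$ a.s., so: $r>\tau\,\E(\log(1/F))$ gives $\E(K_1)>0$ and $K_t\to+\infty$; $r<\tau\,\E(\log(1/F))$ gives $K_t\to-\infty$; and $r=\tau\,\E(\log(1/F))$ gives $\E(K_1)=0$, in which case the non-degenerate centred Lévy process $K$ oscillates, i.e.\ $\limsup_tK_t=+\infty$ and $\liminf_tK_t=-\infty$ a.s. This already settles the assertions on $\liminf_tK_t$, so only the integral $\int_0^\infty e^{-K_s}\,\ud s$ remains.

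Two of the three regimes for this integral are easy. If $\E(K_1)<0$ then $K_t\to-\infty$, hence $e^{-K_s}\to+\infty$, the integrand is eventually $\ge 1$, and $\int_0^\infty e^{-K_s}\,\ud s=+\infty$ a.s. If $\E(K_1)>0$, I would fix an a.s.\ finite random time $s_0$ with $K_s\ge\tfrac12\E(K_1)s$ for $s\ge s_0$; on $[0,s_0]$ the process has only finitely many jumps and is piecewise linear, so $\inf_{[0,s_0]}K>-\infty$ a.s.\ and $\int_0^{s_0}e^{-K_s}\,\ud s<\infty$, while $\int_{s_0}^\infty e^{-K_s}\,\ud s\le\int_{s_0}^\infty e^{-\frac12\E(K_1)s}\,\ud s<\infty$; hence $\int_0^\infty e^{-K_s}\,\ud s<\infty$ a.s.

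The real work is the critical case $\E(K_1)=0$, i.e.\ $r=\tau\,\E(\log(1/F))$, which I would treat directly. Here $r>0$ (since $\E(\log(1/F))>0$), so between two catastrophes $K$ increases linearly with slope $r$; the idea is that right after each new record low of $K$ the process spends an exponential time simply drifting upward, during which $e^{-K}$ is huge. Concretely, set $\sigma_m=\inf\{t:K_t\le -m\}$ for $m\ge 1$: by $\liminf_tK_t=-\infty$ these are a.s.\ finite, and since $K$ can cross down through $-m$ only by a jump, $\sigma_m$ is one of the catastrophe times $T_k$ and $K_{\sigma_m}\le -m$. Removing repetitions from the non-decreasing sequence $(\sigma_m)_m$ yields a strictly increasing sequence $v_1<v_2<\cdots$ of catastrophe times with $v_j\to\infty$ and $K_{v_j}\le -j$. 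Let $\varepsilon_j$ be the elapsed time from $v_j$ to the next catastrophe. The intervals $[v_j,v_j+\varepsilon_j)$ are pairwise disjoint (as $v_{j+1}$ is a catastrophe time strictly after $v_j$, whence $v_{j+1}\ge v_j+\varepsilon_j$) and contain no jump of $K$, so on $[v_j,v_j+\varepsilon_j)$ we have $K_s=K_{v_j}+r(s-v_j)\le -j+r(s-v_j)$, giving
\[
\int_{v_j}^{v_j+\varepsilon_j}e^{-K_s}\,\ud s\ \ge\ e^{j}\,\frac{1-e^{-r\varepsilon_j}}{r}.
\]
Each $v_j$ is a stopping time, and the $\varepsilon_j$ are a subsequence — picked out by increasing stopping indices — of the i.i.d.\ inter-arrival times of the Poisson catastrophe process, hence are themselves i.i.d.\ exponential with parameter $\tau$; by the second Borel--Cantelli lemma $\varepsilon_j\ge 1$ for infinitely many $j$ a.s., so $\int_0^\infty e^{-K_s}\,\ud s\ge\sum_{j:\varepsilon_j\ge1}e^{j}(1-e^{-r})/r=+\infty$ a.s. The hard part is exactly this critical regime: the qualitative oscillation of $K$ does not by itself give divergence, and one must show the excursions of $K$ to very low levels last long enough — the mechanism being that $r>0$ forces a deterministic upward stretch of i.i.d.\ exponential length right after each record low. (Alternatively this case follows from the classical theory of exponential functionals of Lévy processes, by which $\int_0^\infty e^{-K_s}\,\ud s<\infty$ a.s.\ precisely when $K_t\to+\infty$ a.s.)
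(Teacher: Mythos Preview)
Your proof is correct, but in the case $\E(K_1)\le 0$ you work harder than the paper does. You split into the strict-subcritical case (trivial) and the critical case (your record-low argument with the $v_j$ and the exponential waits $\varepsilon_j$), whereas the paper treats both at once with a single, shorter device: since $\liminf_t K_t=-\infty$, the stopping times $T_0=0$, $T_k=\inf\{t\ge T_{k-1}+1:K_t\le 0\}$ are all finite, and using $K_{T_k}\le 0$ one bounds $\int_{T_k}^{T_k+1}e^{-K_s}\,\ud s\ge \int_0^1 e^{-(K_{T_k+s}-K_{T_k})}\,\ud s=:X_k$. By the strong Markov property the $X_k$ are i.i.d., non-negative and not identically zero, so $\sum_k X_k=+\infty$ a.s. This avoids the bookkeeping with record lows, the reindexing $(\sigma_m)\to(v_j)$, and the Borel--Cantelli step; it also does not rely on $r>0$ (which you use to say $K$ can only cross down at a jump), so it would survive if the drift were replaced by something non-monotone. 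Your approach, on the other hand, extracts more: it shows the contributions grow like $e^{j}$, which is sharper information than mere divergence. For the supercritical case your argument and the paper's are essentially identical (bound $K_s$ below by a positive linear function plus a finite infimum).
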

\begin{proof}
If  $r \leq \tau \E(\log(1/F))$, then $\E(K_1)\leq 0$ and the L\'evy process $K_t$ either goes to $-\infty$ or oscillates. In any case, the sequence of stopping times
$$T_0:=0, \qquad T_k:=\inf\{ t \geq T_{k-1}+1 :  K_t \leq 0\} <+\infty$$
is finite for $k \geq 0$.
Then,
\Bea
\int_0^{\infty} \exp(-K_s)\ud s &\geq & \sum_{k\geq 1} \int_{T_k}^{T_{k}+1} \exp(-K_s)\ud s \\
&\geq& \sum_{k\geq 1} \int_{0}^{1} \exp(-(K_{T_k+s}-K_{T_k}))\ud s 
=: \sum_{k\geq 1} X_k,
\Eea
where $X_k$ are non-negative (non identically zero) i.i.d.  random variables. Then $\int_0^{\infty} \exp(K_s) \ud s=+\infty$ a.s.
Conversely if  $r > \tau \E(\log(1/F))$, there exists $\epsilon>0$ such that $\E(K_1) -\epsilon>0$ and $K_t-\epsilon t$ goes to $+\infty$ a.s. Then
$$L:=\inf\{ K_s-\epsilon s : s\geq 0\}>-\infty \quad \text{a.s.}$$
and  $\int_0^{\infty} \exp(\epsilon s)\ud s=+\infty$ yields the result.
\end{proof}

\begin{proof}[Proof of Theorem \ref{extlignee}]
Integrating  (\ref{laplace}), we get by bounded convergence that 
$$ \lim_{t\rightarrow \infty}\E_{y_0}( \exp(-\lambda \bar{Y}_t)) =\E_{y_0}\left(\exp\left(-\frac{\lambda y_0}{\gamma \lambda\int_0^{\infty} \exp(-K_s)\ud{s}+1}\right)\right).$$ 
The process $(\bar{Y}_t : t\geq 0)$  converges  in distribution as $t\rightarrow + \infty$  to  $\bar{Y}_{\infty}$  whose distribution is specified by the right hand side of the above limit.
Letting  $\lambda\rightarrow +\infty$, we get by bounded convergence:
\begin{align}
\label{pext}
\P_{y_0}(\bar{Y}_{\infty}=0)& 
=  \mathbb{E}_{y_0}\left(\exp\left(-\frac{y_0}{\gamma \int_0^{\infty} \exp(-K_s)\ud{s}}\right)\right).
\end{align}
Recalling from Lemma \ref{propcorrectionsauts} that $(\bar{Y}_t : t\geq 0)$ is a non-negative local martingale, we obtain by Jensen's inequality that $(\exp(-\bar{Y}_t) : t\geq 0)$ is a positive sub-martingale bounded by $1$. We deduce that the convergence towards $\bar{Y}_\infty$, which is possibly infinite, also holds a.s.\\
Using Lemma \ref{intexp}, we obtain that the probability of the event 
$$\{\liminf_{t\rightarrow \infty} Y_t=0\}=\{\liminf_{t\rightarrow \infty} e^{K_t} \bar{Y}_t=0\}$$ is either one or less than one depending on $r\leq \tau\E(\log(1/F))$ or $r>\tau\E(\log(1/F))$. Moreover, the absorption probability of the  Feller diffusion  is positive (see Section \ref{CP}) and the Markov property ensures that $Y$ is a.s. absorbed on the event 
$\{\liminf_{t\rightarrow \infty} Y_t=0\}$. 
\end{proof}

\me Let us note that the a.s. convergence relies here on the fact that  $(\exp(-\bar{Y}_t) : t\geq 0)$ is a  bounded sub-martingale. This method can
 be adapted to  study the long time behavior of a conservative CSBP instead of using the Lamperti transform (see Section \ref{CSBPP}).
We describe now the speed at which the absorption occurs. The random environment makes three asymptotic  regimes appear  in the subcritical case.

\begin{Thm}[Growth and speed of extinction,  \cite{BPS}]
 \label{equiv}
We assume that $\E((\log F)^2)<\infty$.
\begin{enumerate}
\item[a/]
 If $r<\tau\E(\log (1/F))$ \emph{(subcritical case)}, then 
\begin{enumerate}
 \item[(i)] If $\tau\E(F\log F)+r<0$  \emph{(strongly subcritical regime)}, then there exists $c_1>0$ such that for every $y_0>0$,
$$ \P_{y_0}(Y_t>0) \sim c_1  y_0 e^{t(r+\tau[\E( F)-1])}, \qquad \textrm{ as }\quad t\rightarrow \infty.$$

 \item[(ii)] If $\tau\E(F\log F)+r=0$ \emph{(intermediate subcritical regime)}, then there exists $c_2>0$ such that for every $y_0>0$,
$$\P_{y_0}(Y_t>0) \sim c_2 y_0 t^{-1/2}e^{t(r+\tau[\E(F)-1])}, \qquad \textrm{ as }\quad t\rightarrow \infty.$$

 \item[(iii)] If $\tau\E(F\log F)+r>0$   \emph{(weakly subcritical regime)}, 
then for every $y_0>0$, there exists $c_3>0$ such that
$$\P_{y_0}(Y_t>0) \sim c_3t^{-3/2}e^{t(r+\tau[\E(F^\chi)-1])}, \qquad \textrm{ as }\quad t\rightarrow \infty, $$
where $\chi$ is the root of $\E(F^{\chi} \log F)+r$ on $(0,1)$. 
\end{enumerate}
\item[b/] If $r=\tau\E(\log (1/F))$
 \emph{(critical case)}, then for every $y_0>0$, there exists $c_4>0$ such that 
$$\P_{y_0}(Y_t>0) \sim c_4t^{-1/2}, \qquad \textrm{ as }\quad t\rightarrow \infty. $$
\item[c/] If $r>\tau\E(\log (1/F))$ \emph{(supercritical case)}, then there exists a finite r.v. $W$ such that
$$e^{-K_t}Y_t\xrightarrow[t\rightarrow \infty]{}W \quad a.s., \qquad \{W=0\}=\Big\{\lim_{t\to\infty}Y_t =0\Big\}.$$
\end{enumerate}
\end{Thm}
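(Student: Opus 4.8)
Everything rests on the quenched Laplace transform of Lemma~\ref{quenched}. Setting $A_t:=\int_0^t e^{-K_s}\,\ud s$ and using that $Y_t>0$ if and only if $\bar Y_t>0$ (because $e^{K_t}>0$), one lets $\lambda\to\infty$ in \eqref{laplace} and obtains
$$
\P_{y_0}(Y_t>0)=\E\!\left[1-\exp\!\Big(-\frac{y_0}{\gamma A_t}\Big)\right].
$$
Hence the entire theorem is a statement about the exponential functional $A_t$ of the L\'evy process $K$, whose Laplace exponent is the convex function $\psi_K(\theta)=r\theta+\tau(\E[F^\theta]-1)$, with $\psi_K(0)=0$, $\psi_K'(0)=r+\tau\E[\log F]$, $\psi_K'(1)=r+\tau\E[F\log F]$. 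The four regimes match exactly the position over $[0,1]$ of the minimiser $\theta^\star$ of $\psi_K$: $\theta^\star>1$ is a(i), $\theta^\star=1$ is a(ii), $\theta^\star=\chi\in(0,1)$ (the root of $\psi_K'$) is a(iii), and $\theta^\star=0$ (i.e.\ $\psi_K'(0)=0$) is the critical case b; the announced exponential rates are $e^{t\psi_K(\theta^\star)}$ in every case ($e^{t\psi_K(1)}=e^{t(r+\tau(\E(F)-1))}$ in a(i)--a(ii), $e^{t\psi_K(\chi)}$ in a(iii), $e^{0}=1$ in b). This is the continuous-state analogue of the classification of survival probabilities for subcritical branching processes in random environment, and the plan is to follow that route: time reversal of $K$, an Esscher transform of the environment at $\theta^\star$, and fluctuation theory for the tilted process.

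\textbf{Supercritical case (c).} I would dispatch this case first, since it is independent of the others. By Lemma~\ref{propcorrectionsauts}, $\bar Y_t=e^{-K_t}Y_t$ solves \eqref{sde} and is therefore a non-negative local martingale, hence a supermartingale, hence convergent a.s.\ to a finite limit $W$. Conditionally on $\mathcal F^{N_1}$, equation \eqref{sde} displays $\bar Y$ as a critical Feller diffusion (the $r=c=0$ instance of \eqref{Feller}) run along the clock $A_t$; such a diffusion is absorbed at $0$ at an a.s.\ finite time $\zeta$ depending only on the driving Brownian motion, hence independent of $\mathcal F^{N_1}$, whereas by Lemma~\ref{intexp} the clock is finite, $A_\infty<\infty$ a.s., in the supercritical regime. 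Thus $\{W=0\}=\{\zeta\le A_\infty\}=\{\exists t:Y_t=0\}$, and on $\{W>0\}$ one has $Y_t=e^{K_t}\bar Y_t\to\infty$ because $K_t\to+\infty$ (Lemma~\ref{intexp}); this is precisely (c).

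\textbf{Subcritical and critical cases: reduction.} Here $K_t\not\to+\infty$, so $A_t\to\infty$ a.s.\ (Lemma~\ref{intexp}) and $1-e^{-y_0/(\gamma A_t)}\to0$; the point is the decay rate. I would combine two devices: the time-reversal identity $A_t\overset{(d)}{=}e^{-K_t}\int_0^t e^{K_s}\,\ud s$ (L\'evy duality), and the exponential change of measure $\ud\P^{(\theta)}/\ud\P\,\big|_{\mathcal F_t}=e^{\theta K_t-t\psi_K(\theta)}$, under which $K$ remains a L\'evy process, now with drift $\psi_K'(\theta)$. Tilting at $\theta=\theta^\star$ pulls out the factor $e^{t\psi_K(\theta^\star)}$ and replaces $A_t$ by the functional $e^{-K_t}\int_0^t e^{K_s}\ud s$ of a process whose tilted drift $\psi_K'(\theta^\star)$ is negative exactly in the strongly subcritical regime a(i) and zero in a(ii), a(iii) and b. In a(i) the tilted $K$ still drifts to $-\infty$, so $\int_0^t e^{K_s}\ud s\uparrow\int_0^\infty e^{K_s}\ud s<\infty$ a.s.\ under $\P^{(1)}$, dominated convergence yields $\E[A_t^{-1}]\sim c\,e^{t\psi_K(1)}$, and the linearisation $1-e^{-x}\sim x$ turns this into $\P_{y_0}(Y_t>0)\sim c_1 y_0 e^{t\psi_K(1)}$ with $c_1=\gamma^{-1}\E^{(1)}\big[(\int_0^\infty e^{K_s}\ud s)^{-1}\big]$, the error from the range $\{A_t=O(1)\}$ where $1-e^{-x}$ is not linear being of strictly smaller exponential order by a truncation/second-moment bound on the L\'evy functional.

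\textbf{Polynomial corrections and the main obstacle.} When the tilted process is centred (a(ii), a(iii), b), the residual factor is genuinely subexponential and comes from fluctuation theory for $K$ under $\P^{(\theta^\star)}$; this is where the hypothesis $\E((\log F)^2)<\infty$ enters, supplying the needed invariance-principle and local-limit estimates. For a(ii) and the critical case b the relevant object is essentially $\E^{(\theta^\star)}[e^{-\sup_{s\le t}K_s}]$, equivalently the persistence probability $\P^{(\theta^\star)}(\inf_{s\le t}K_s\ge -x)\sim c_x\,t^{-1/2}$ of a centred L\'evy process, which yields the $t^{-1/2}$ corrections. The weakly subcritical regime a(iii) is the delicate one, and I expect it to be the main obstacle: there the linearisation $1-e^{-x}\approx x$ is \emph{lossy}, for the dominant environments are no longer those with $A_t$ large but those with $A_t=O(1)$, i.e.\ those along which $K$ makes an excursion keeping $\int_0^t e^{-K_s}\ud s$ bounded all the way to time $t$; under the $\chi$-tilt the probability of such an excursion of length $\asymp t$ carries the two-sided exponent $t^{-3/2}$ rather than the one-sided $t^{-1/2}$, just as for weakly subcritical branching processes in random environment. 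Concretely I would isolate $\{A_t\le M\}$, perform the $\chi$-tilt, decompose $K$ at its running infimum via the Wiener--Hopf/ladder-height factorisation, and combine the excursion asymptotics with $\E((\log F)^2)<\infty$ to produce the exponent $-3/2$ and the constant $c_3$; estimating the complementary event $\{A_t>M\}$ and letting $M\to\infty$ would finish the proof.
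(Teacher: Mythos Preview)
Your outline is sound and the classification via the minimiser of $\psi_K$ is the right organising principle. The paper, however, does not prove this theorem in detail: it records the identity $\P_{y_0}(Y_t>0)=\E[f(\int_0^t e^{-K_s}\ud s)]$ with $f(x)=1-e^{-y_0/(\gamma x)}$ and refers to \cite{BPS}, whose method is \emph{different} from yours. There, the exponential functional $\int_0^t e^{-K_s}\ud s$ is discretised at integer times into a sum of the form $\sum_{i=0}^{n}\prod_{j=0}^{i} A_j$ with i.i.d.\ factors, and the sharp asymptotics in all four regimes are read off from the local limit theorems for random walks on the affine group (semi-direct products) of Le~Page and Peign\'e \cite{LP}. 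Your route---Esscher tilt at the minimiser together with direct L\'evy fluctuation/Wiener--Hopf estimates---is the continuous-time analogue of the Afanasyev--Geiger--Kersting analysis of subcritical BPRE and would also work; it trades the detour through discrete affine recursions for the need to redevelop the $t^{-1/2}$ and $t^{-3/2}$ local estimates directly for the centred tilted L\'evy process (this is exactly where $\E[(\log F)^2]<\infty$ enters, as you note).

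Two small corrections. First, the sentence ``the announced exponential rates are $e^{t\psi_K(\theta^\star)}$ in every case'' is inexact in regime a(i): there $\theta^\star>1$ but the rate is $\psi_K(1)$, not $\psi_K(\theta^\star)$; the effective tilt is at $\theta=1$ (as you correctly use two paragraphs later), and what distinguishes a(i) from a(ii)--a(iii) is precisely that the minimiser of $\psi_K$ falls outside $[0,1]$. Second, in case~(c) the claim that the absorption time $\zeta$ ``depends only on the driving Brownian motion, hence independent of $\mathcal F^{N_1}$'' is not literally true: the Dubins--Schwarz Brownian motion driving the time-changed critical Feller diffusion is built from \emph{both} $B$ and the clock $A$, hence from $N_1$. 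The clean statement is that, conditionally on $\mathcal F^{N_1}$, the process $(\bar Y_{A^{-1}(u)})_{u<A_\infty}$ is a critical Feller diffusion whose \emph{law} does not depend on the realisation of $N_1$; extending it past $A_\infty$, its absorption time $\zeta$ therefore has conditional law not depending on $\mathcal F^{N_1}$, hence is independent of $A_\infty$, which is what your identity $\{W=0\}=\{\zeta\le A_\infty\}$ actually requires.
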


\me The  asymptotic results $a/-b/$ rely on the study of
$\ \P(Y_t>0)=\mathbb{E}\left(f\left(\int_0^t e^{-\beta K_s}\ud s\right)\right)$,
for $t\rightarrow \infty$, where $f$ has a polynomial decay when $x\rightarrow \infty$ and here $\beta=1$. It is linked to the asymptotic  distribution of
$I_t=\inf\{ K_s : s\leq t\}$ and the different asymptotics  appear for $\P(I_t \geq x)$ when $t\rightarrow \infty$. The proof in \cite{BPS} uses a discretization
of $\ \int_0^t \exp(-\beta K_s)\ud s\ $ of the form $\ \sum_{i=0}^n \Pi_{j=0}^i \,A_i\ $
and the different regimes are inherited from local limit theorems for semi-direct products \cite{LP}.

\subsection{Monotone rate of catastrophes}

\me We first deal with increasing rates, which are relevent for the applications on cell infection developped in the second part.
\label{catam}
\begin{Prop}\label{linerincre} We assume that $\tau$ is a non-decreasing function. \\
(i) If there exists  $y_1\geq 0$ such that  $r\leq \E(\log(1/F))\tau(y_1)$, then
$$\P\Big(\exists  t>0, \ Y_t=0\Big)=1.$$
(ii) If $r>\E(\log(1/F))\sup_{x\geq 0} \tau(x)$,   then $\ \P(\forall t\geq 0: \  Y_t>0)>0$.
\end{Prop}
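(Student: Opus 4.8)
The plan is to use a stochastic monotonicity argument relative to the constant-rate case already treated in Theorem \ref{extlignee}, exploiting that $\tau$ is monotone and bounded between two constants along the relevant part of the trajectory. For part $(i)$, the idea is that once the population is above $y_1$ the catastrophe rate is at least $\tau(y_1)$, which is in the ``extinction'' regime of the constant-rate model; and when the population is below $y_1$ the Feller diffusion part alone already gives a positive probability of hitting $0$ before climbing back above $y_1$. For part $(ii)$, the point is the reverse: the catastrophe rate never exceeds $\sup_x\tau(x)$, so $Y$ dominates (in an appropriate pathwise sense) a Feller diffusion with catastrophes run at the constant rate $\sup_x\tau(x)$, which is supercritical and hence survives with positive probability.

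First I would make the coupling precise. Using the Poisson-measure representation from Subsection \ref{scaling2}, construct on one probability space the process $Y$ with rate $\tau$ and an auxiliary process $\widetilde Y$ with the constant rate $\bar\tau := \sup_{x\ge 0}\tau(x)$, driven by the same Brownian motion $B$ and the same Poisson measure $N_1$, where $\widetilde Y$ uses the atoms with $u\le \bar\tau$ and $Y$ uses only those with $u\le\tau(Y_{s-})\le\bar\tau$. Since a catastrophe multiplies the size by $\theta\in[0,1]$, a standard inductive comparison between jump times shows $Y_t\ge \widetilde Y_t$ for all $t$ as long as both are positive (the drift and diffusion coefficients $ry$, $\sqrt{2\gamma y}$ are monotone in $y$, and each extra catastrophe experienced by $\widetilde Y$ only decreases it). In the regime $r>\E(\log(1/F))\bar\tau$, Theorem \ref{extlignee}$(ii)$ gives $\P(\forall t: \widetilde Y_t>0)>0$, and on that event $Y_t\ge\widetilde Y_t>0$ for all $t$, proving $(ii)$.

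For $(i)$ I would argue by contradiction: suppose $\P(\forall t>0:\ Y_t>0)>0$. On the event of non-absorption, by Proposition \ref{fel-beh}$(i)$-type unstability (the Feller diffusion with catastrophes shares the $0$/$\infty$ dichotomy, which one can re-derive from Lemma \ref{propcorrectionsauts} and Lemma \ref{intexp}, or simply note $Y$ is dominated by a Feller diffusion hence cannot stay in a bounded strip forever without hitting $0$), on $\{Y_t\not\to 0\}$ we have $\liminf_t Y_t>0$, and in fact $Y_t$ spends all sufficiently large times above the level $y_1$ up to excursions — more carefully, consider the last time $Y$ is below $y_1$; after it, $Y_s\ge y_1$ so the catastrophe rate is $\ge\tau(y_1)$. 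Comparing on this time interval with the constant-rate process at rate $\tau(y_1)$, which by Theorem \ref{extlignee}$(i)$ (since $r\le\E(\log(1/F))\tau(y_1)$) is absorbed at $0$ a.s., forces $Y$ down, contradicting that it stays above $y_1$. To handle the excursions below $y_1$ rigorously I would instead use the strong Markov property at successive return times to $y_1$: either $Y$ hits $0$ during an excursion below $y_1$ (positive probability, since below $y_1$ the process dominates from below nothing but is itself bounded above and a Feller-type diffusion has positive probability of absorption before exiting upward — here one uses that $Y$ killed at $y_1$ and at $0$ is absorbed at one of the two, with positive mass at $0$), or it returns to $y_1$; from $y_1$, using the domination above, the conditional-on-environment Laplace formula \eqref{laplace} with $\int_0^\infty e^{-K_s^{(1)}}ds=+\infty$ a.s. (Lemma \ref{intexp} applied with the constant rate $\tau(y_1)$) shows $\bar Y_\infty=0$, i.e. absorption. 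A Borel--Cantelli / renewal argument on the sequence of excursions then gives absorption with probability one.

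The main obstacle is the bookkeeping in part $(i)$: the catastrophe rate is state-dependent, so the clean quenched computation of Lemma \ref{quenched} does not apply directly, and one must patch together the sub-$y_1$ behavior (where only the lower bound on the rate, namely $0$, is available, so absorption must come from the diffusion hitting $0$ before the process escapes upward past $y_1$) with the super-$y_1$ behavior (where the rate is $\ge\tau(y_1)$ and the constant-rate extinction result applies via coupling). Making the excursion decomposition and the associated ``infinitely many independent chances to be absorbed'' estimate precise — in particular showing the probability of absorption during a single sub-$y_1$ excursion or the following super-$y_1$ segment is bounded below uniformly — is the technical heart of the proof; everything else reduces to the monotone coupling plus the already-established Theorem \ref{extlignee} and Lemmas \ref{propcorrectionsauts}--\ref{intexp}.
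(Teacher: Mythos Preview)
Your proposal is correct and follows essentially the same route as the paper, which itself only gives a heuristic sketch: coupling with constant-rate processes (rate $\sup_x\tau(x)$ for part~(ii), rate $\tau(y_1)$ on excursions above $y_1$ for part~(i)) together with the strong Markov property and a uniform lower bound on the absorption probability during each sub-$y_1$ visit. The paper phrases the below-$y_1$ step as ``positive probability to reach $0$ before time $1$'' rather than ``before returning to $y_1$'', but this is the same Borel--Cantelli renewal mechanism you describe; the technical difficulty you flag (patching the coupling across the level $y_1$, since the global inequality $Y\le \hat Y$ fails when $Y<y_1$) is exactly the reason the paper restricts the comparison to the region $\{Y\ge y_1\}$ and defers the details to the reference.
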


 \me Heuristically, if $r\leq \E(\log(1/F))\tau(y_1)$, as soon as $Y\geq y_1$, the division rate is larger than $r(y_1)$ and
Theorem \ref{extlignee} $(i)$ ensures that the process is pushed back to $x_1$. Eventually, it reaches zero by the strong Markov property, since each time it goes below $y_1$, it has a 
 positive probability to reach $0$ before time $1$. \\
The proof can be made rigorous by using a coupling with a  Feller diffusion with catastrophes  occurring at constant rate $\tau(y_1)$. Finally, we   note that  the 
case $$r>\E(\log(1/F))\tau(x) \ \ \emph{ for every } \ x\geq 0; \qquad r=\E(\log(1/F))\sup_{x\geq 0} \tau(x)$$ remains open. 

\me Let us now consider the case when $\tau$ is non-increasing.
The asymptotic behavior now depends on
\begin{equation}\tau_*=\inf_{x\geq 0}\tau(x).\label{defr_star(inf)}\end{equation}
\begin{Prop}\label{extrdecr} We assume that $r$ is a non-increasing function. \\
(i) If $r\leq \E(\log(1/F))\tau_*$, then
$\P\Big(\exists  t>0, \ Y_t=0\Big)=1.$\\
(ii) Else, $\P(\forall t>0, \ Y_t>0)>0$.
\end{Prop}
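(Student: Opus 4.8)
The plan is to imitate the proof of Proposition~\ref{linerincre}, with the monotonicity of the catastrophe‑rate function $\tau$ running the other way, so that the relevant quantity is the infimum $\tau_*$ instead of a value at a fixed level or the supremum. I would realise $Y$ together with auxiliary Feller diffusions subject to catastrophes at a \emph{constant} rate on one probability space, driven by the same Brownian motion $B$ and the same Poisson measure $N_1$, so that an atom of $N_1$ with $u\le a$ is simultaneously a catastrophe, with the same factor $\theta$, of any process whose instantaneous catastrophe rate at that time is $\ge a$; between catastrophes the dynamics is the Feller SDE $dX=rX\,ds+\sqrt{2\gamma X}\,dB_s$ driven by $B$, which preserves the order of two solutions (one‑dimensional comparison, Yamada--Watanabe type), and, the catastrophe rate being bounded by $\tau(0)$, a routine induction over the locally finitely many catastrophe times then gives the pathwise comparisons used below; Theorem~\ref{extlignee} is applied to the constant‑rate models. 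For (i): since $\tau$ is non‑increasing, $\tau(x)\ge\tau_*$ for all $x\ge0$, so letting $\widetilde Y$ be the Feller diffusion with catastrophes at the constant rate $\tau_*$, started at $y_0$ and coupled to $Y$, every catastrophe of $\widetilde Y$ is a catastrophe of $Y$ with the same factor while $Y$ may undergo extra (downward) catastrophes, whence $0\le Y_t\le\widetilde Y_t$ for all $t$. As $r\le\E(\log(1/F))\tau_*$, Theorem~\ref{extlignee}(i) yields $\P(\exists t>0:\widetilde Y_t=0)=1$, and at such a time $Y_t=0$; hence $\P(\exists t>0:Y_t=0)=1$.

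\textbf{Part (ii): the coupling.} Assume $r>\E(\log(1/F))\tau_*$, and recall $\E(\log(1/F))\in(0,\infty)$. Since $\tau$ is non‑increasing, $\tau(x)\downarrow\tau_*$ as $x\to\infty$, so I would fix $y_1>0$ with $r>\E(\log(1/F))\tau(y_1)$, let $\widetilde Y$ be the Feller diffusion with catastrophes at the constant rate $\tau(y_1)$, started at some $y_0>y_1$ and coupled to $Y$, and set $\sigma=\inf\{t\ge0:Y_t\le y_1\}$. For $t<\sigma$ one has $Y_{t-}>y_1$, hence $\tau(Y_{t-})\le\tau(y_1)$, so every catastrophe of $Y$ before $\sigma$ is a catastrophe of $\widetilde Y$ with the same factor, and therefore $Y_t\ge\widetilde Y_t$ on $[0,\sigma)$. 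On the event $\{\widetilde Y_t>y_1\ \forall t\ge0\}$ the time $\sigma$ cannot be finite: $\widetilde Y$ has only downward jumps, so $\widetilde Y_{t-}>y_1$ for all $t$, hence $Y_{\sigma-}\ge\widetilde Y_{\sigma-}>y_1$; as the diffusive part is continuous, $\sigma$ would then be a catastrophe time, hence one of $\widetilde Y$, giving $Y_\sigma=\theta Y_{\sigma-}\ge\theta\widetilde Y_{\sigma-}=\widetilde Y_\sigma>y_1$, contradicting $Y_\sigma\le y_1$. So on that event $Y_t\ge\widetilde Y_t>y_1>0$ for all $t$, whence
$$\P_{y_0}(\forall t\ge0:Y_t>0)\ \ge\ \P_{y_0}\big(\forall t\ge0:\widetilde Y_t>y_1\big).$$

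\textbf{Part (ii): positivity of the lower bound, and conclusion.} It remains to show the right‑hand side is positive for $y_0$ large. Since $r>\E(\log(1/F))\tau(y_1)$, the process $\widetilde Y$ is supercritical: by Lemma~\ref{intexp}, $K_t\to+\infty$ and $\int_0^\infty e^{-K_s}\,ds<\infty$ a.s.; Theorem~\ref{equiv}(c) gives $e^{-K_t}\widetilde Y_t\to W$ a.s., and Lemma~\ref{quenched}(ii) (letting $t,\lambda\to\infty$) gives $\P_{y_1}(W=0\mid\mathcal F^{N_1})=\exp\!\big(-y_1/(\gamma\int_0^\infty e^{-K_s}\,ds)\big)<1$ a.s.; on $\{W>0\}$ one gets $\widetilde Y_t=e^{K_t}\cdot e^{-K_t}\widetilde Y_t\to+\infty$, and since $\widetilde Y$ is then never absorbed at $0$ and its diffusive part is continuous, $J:=\inf_{t\ge0}\widetilde Y_t>0$. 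Taking $y_0=ny_1$ and using the quenched branching property of the constant‑rate model, $\widetilde Y^{(ny_1)}$ is, conditionally on $\mathcal F^{N_1}$, the sum of $n$ i.i.d.\ copies of $\widetilde Y^{(y_1)}$, so $\inf_t\widetilde Y^{(ny_1)}_t\ge\sum_{i=1}^n J_i$ with $(J_i)$ i.i.d.\ given the environment, non‑negative and with $\P(J_1>0\mid\mathcal F^{N_1})>0$ a.s.; hence $\sum_{i=1}^n J_i\to+\infty$ a.s., and by dominated convergence
$$\P\big(\forall t\ge0:\widetilde Y^{(ny_1)}_t>y_1\big)\ \ge\ \P\Big(\sum_{i=1}^n J_i>y_1\Big)\ \linf{n}\ 1 .$$
Thus $\P_{ny_1}(\forall t>0:Y_t>0)>0$ for $n$ large; and from an arbitrary $y>0$, the Feller‑diffusion part started at $y$ reaches $ny_1$ before hitting $0$ and before the first catastrophe with positive probability, so the strong Markov property yields $\P_y(\forall t>0:Y_t>0)>0$ for every $y>0$.

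\textbf{Main obstacle.} The only non‑mechanical point is the positivity of $\P_{y_0}(\forall t\ge0:\widetilde Y_t>y_1)$ for $y_0$ large: a supercritical path started exactly at $y_1$ immediately fluctuates below $y_1$, so this cannot be read off the one‑dimensional survival statement alone and genuinely requires starting far above $y_1$ and combining the (quenched) branching structure with the almost‑sure blow‑up $\widetilde Y_t\to+\infty$ on the survival event. Everything else is routine coupling together with the already established results on the constant‑rate Feller diffusion with catastrophes.
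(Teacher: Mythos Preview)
The paper does not actually prove this proposition: it only asserts that ``the proof is easier in that case'' and refers to \cite{MR2754402} for details. So there is no line-by-line comparison to make; your argument has to stand on its own, and it does.

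Your approach is the natural one and mirrors the coupling philosophy sketched for Proposition~\ref{linerincre}. Part~(i) is exactly the ``easy'' direction the paper alludes to: since $\tau(x)\ge\tau_*$ for every $x$ (regardless of monotonicity), a global pathwise coupling with the constant-rate-$\tau_*$ process dominates $Y$ from above, and Theorem~\ref{extlignee}(i) finishes. This is immediate and is presumably what the authors had in mind.

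Part~(ii) is where the real content lies, and your treatment is correct. The choice of $y_1$ with $r>\E(\log(1/F))\tau(y_1)$ uses that $\tau$ is non-increasing so that $\tau(x)\downarrow\tau_*$; the local coupling $Y\ge\widetilde Y$ on $[0,\sigma)$ is sound because on that interval $\tau(Y_{s-})\le\tau(y_1)$; and your contradiction argument showing $\sigma=\infty$ on $\{\widetilde Y_t>y_1\ \forall t\}$ correctly handles the jump at $\sigma$. The step you flag as the main obstacle---getting $\P_{y_0}(\inf_t\widetilde Y_t>y_1)>0$ for $y_0$ large---is indeed the non-mechanical part, and your solution via the quenched branching decomposition $\widetilde Y^{(ny_1)}=\sum_{i=1}^n\widetilde Y^{(y_1),i}$ together with $J_i:=\inf_t\widetilde Y^{(y_1),i}_t>0$ on $\{W_i>0\}$ is clean. (The positivity of $J_i$ on the survival event follows from $\widetilde Y_t\to\infty$ and the c\`adl\`ag property on compacts, as you use.) The final transfer from large $y_0$ to arbitrary $y>0$ by the strong Markov property is standard.

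One incidental remark: the statement as printed in the paper has a typo (``$r$ is a non-increasing function'' should read ``$\tau$ is a non-increasing function''); you have correctly interpreted and proved the intended statement.
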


\me The proof is easier in that case and we   refer to  \cite{MR2754402} for details.

\subsection{Further comments : CSBPs in random environment}

\me We have studied  a particular case of continuous state branching processes in random environment.  It can be extended in several ways.
A general construction of  Feller diffusions in random environment  has been provided by Kurtz \cite{Kurtz78}. It relies on the scaling and  time change 
$$Y_t=\exp(M_t)Z_{\tau_t}, \qquad \emph{where} \ \tau_t=\int_0^t \exp(-M_s)A_s \ud s,$$
$M$ is a c\`adl\`ag process and   $Z$ is a  Feller diffusion and $A$ is non-decreasing and absolutely continuous w.r.t. Lebesgue measure. \\
In the case where $M_t=K_t$ and $A_t=\gamma t$, this construction    leads to  the  Feller diffusion with catastrophes. 
   When  $M_t$ is a Brownian motion and  $A_t=\gamma t$, we obtain a  Feller diffusion in a Brownian environment. Its asymptotic behavior is close to the one of  Feller diffusion  with catastrophes and can 
 be found in \cite{BH}. It uses the explicit expression  of the density of some functional of Brownian motion involved in the Laplace exponent of the process.

\me The construction of Kurtz  has been extended by Borovkov \cite{Bor} to the case where $A$ is no more absolutely continuous.  The time change can also be generalized
 to the stable case but does not hold for general CSBP in random environment.
We refer to \cite{bansim} for the quenched construction without second moment assumptions, as a scaling limit, when $M$ has finite variations.
Let us mention \cite{BPS} for the asymptotic study of CSBP with catastrophes, which extends the asymptotic results given in Section \ref{cata}.
New asymptotic behaviors appear for the process, which may oscillate in $(0,\infty)$ in  the critical regime. 

\me A possible generalization of our results is the combination of  Brownian fluctuations  of the drift and  catastrophes
\label{complements}
$$
Y_t =
y_0 +\int_0^t Y_{s-} \ud M_s
 +  \int_0^t \sqrt{2\gamma_s Y_s}\ud B_s
 -  \int_0^t \int_{\R_+\times [0,1]}
\ind_{u\leq \tau(Y_{s_-})} \big(1-\theta\big) Y_{s_-}  N_1(\ud s,\ud u,\ud \theta),$$
where $\ud M_s=r_s\ud s+\sigma_e\ud B^e_s$ and  $N_1$ is a Poisson Point Process with intensity $ds\,du\,\P(F\in  \ud \theta)$ and both are independent of $B$.
 Such stochastic differential equations  both integrate small fluctuations and dramatic random events due to the environment. Finding a 
 general approach to deal with the long time behavior of CSBP in random environment is an open and delicate question.

\part{Structured Populations and Measure-valued Stochastic Differential Equations}

In this chapter the individuals are characterized by some quantitative traits. Therefore the population  is described by  a random point measure with support on the trait space. We first study the measure-valued process modeling the population dynamics including competition and mutation events.  We give its martingale properties and determine some scaling limits. Then we consider the particular case of cell division with parasite infection and the Markov processes indexed by continuous time Galton-Watson trees. 

\section{Population Point Measure Processes}
\label{sec:ppp}

\subsection{Multitype  models}
\me In the previous sections, the models that we considered described a homogeneous population and could be considered as toy models. A first generalization consists in considering multitype population dynamics. The demographic rates of a  subpopulation   depend on its own type. The ecological parameters  are functions of the different types of the individuals competiting  with each other. Indeed, we assume that the type has an influence on the reproduction  or survival abilities, but also  on the access to resources. Some subpopulations can be more adapted than others to the environment.

\bi For simplicity, the examples that we consider now deal with only two types of individuals.
Let us consider   two sub-populations  characterized by two different  types $1$ and $2$. For $i=1, 2$, the growth rates of these
populations are $r_{1}$ and $r_{2}$. Individuals compete for resources either inside the same species (intra-specific competition) or 
with individuals of the other species (inter-specific competition).
As before, let $K$ be the scaling parameter describing  the  capacity of the environment. The  competition pressure exerted by an individual of type $1$ on an individual of type 1 
(resp. type 2)  is given by $c_{11}/K$ (resp.   $c_{21}/K$). The competition pressure exerted by an individual of type $2$ is respectively given 
by $c_{12}/ K$ and $c_{22}/ K$. The parameters $c_{ij}$ are assumed to be positive.

\me  By similar arguments as in Subsection 3.1, the large $K$-approximation of the population dynamics is described by the well known competitive Lotka-Volterra dynamical system. Let $x_{1}(t)$ (resp. $x_{2}(t)$) be the  limiting renormalized type  $1$ population size (resp. type $2$ population size). We get
 \be
 \label{LV}
\begin{cases}
& x'_{1}(t) = x_{1}(t)\,(r_{1}- c_{11}\,x_{1}(t) - c_{12}\,x_{2}(t));\\
 & x'_{2}(t) = x_{2}(t)\,(r_{2}- c_{21}\,x_{1}(t) - c_{22}\,x_{2}(t)).
 \end{cases}
 \ee
 This system has been extensively studied and its  long time behavior is well known. There are 4 possible equilibria: the unstable equilibrium $(0,0)$ 
 and three stable ones:  $({r_{1}\over c_{11}},0)$, $(0, {r_{2}\over c_{22}})$ and a non-trivial equilibrium $(x_{1}^*, x_{2}^*)$ given by
 $$x_{1}^* = {r_{1}  c_{22} - r_{2}c_{12}\over c_{11}c_{22}-c_{12}c_{21} }\ ;\ x_{2}^*= {r_{2}  c_{11} - r_{1}c_{21}\over c_{11}c_{22}-c_{12}c_{21} }.$$
 Of course, the latter is possible if the two coordinates are positive.
  The (unique) solution of \eqref{LV} converges to one of the stable equilibria, describing either the fixation of one species or the co-existence of both species. 
  The choice of the limit depends on the signs of the quantities  $r_{2}  c_{11} - r_{1}c_{21}$ and $r_{1}  c_{22} - r_{2}c_{12}$  which respectively quantify 
  the invasion ability of the subpopulation $2$ (resp. $1$) in  a  type $1$ (resp. type $2$) monomorphic resident population.
    
  \me One could extend \eqref{LV}  to negative coefficients $c_{ij}$, describing a cooperation effect of species $j$ on the growth of species $i$. The long time behavior can be totally different.
 For example, the prey-predator models have  been extensively studied  in ecology (see \cite{Hofbauer}, Part 1). The simplest prey-predator system
 \be
 \label{PP}
\begin{cases}
& x'_{1}(t) = x_{1}(t)\,(r_{1}-  c_{12}\,x_{2}(t));\\
 & x'_{2}(t) = x_{2}(t)\,(c_{21}\,x_{1}(t) - r_{2}),
 \end{cases}
 \ee
 with $r_{1}, r_{2}, c_{12}, c_{21}>0$,
  has   periodic solutions.    
  
  \bi Stochastic models have also been developed, based on this two type-population model. Following the previous sections,
  a first  point of view consists in generalizing the logistic Feller stochastic differential equation to this two-dimensional framework. The stochastic logistic Lotka-Volterra process is then defined by
 \be
 \label{SLVP}
 \begin{cases}
& d X_{1}(t) = X_{1}(t)\,(r_{1}- c_{11}\,X_{1}(t) - c_{12}\,X_{2}(t))\, dt + \sqrt{\gamma_{1} X_{1}(t) } d B_{t}^1;\nonumber \\
 & d X_{2}(t) = X_{2}(t)\,(r_{2}- c_{21}\,X_{1}(t) - c_{22}\,X_{2}(t)) \, dt + \sqrt{\gamma_{2} X_{2}(t) } d B_{t}^2,
 \end{cases}
 \ee
 where the Brownian motions $B^{1}$ and $B^{2}$ are independent  and give rise to  the demographic stochasticity (see Cattiaux-M\'el\'eard \cite{cattiaux}).
 Another point of view consists in taking account of environmental stochasticity (see Evans, Hening, Schreiber \cite{evans}).

 \me Of course, we could also study multi-dimensional systems corresponding to multi-type population models. In what follows we are more interested in modeling  the case where  the types of the individuals belong to a continuum. That will allow us to add mutation events where the offspring of an individual may randomly mutate and  create a new type. 

 \subsection{Continuum of types and measure-valued processes}

\noindent Even if the evolution appears as a global change in the
state of a population, its basic mechanisms, mutation and
selection, operate at the level of individuals. Consequently, we
model the evolving population as a stochastic 
 system of interacting individuals, where each individual is characterized by a
vector of phenotypic trait values. The trait space ${\cal X}$ is
assumed to be a closed subset of $\rit^d$, for some $d\geq 1$.

\me We will denote by $M_F({\cal X})$ the set of all finite non-negative
measures on ${\cal X}$. Let ${\cal M}$ be the subset of $M_F({\cal
X})$ consisting of all finite point measures:
\begin{equation*}
  {\cal M} = \left\{ \sum_{i=1}^n \delta_{x_i} , \; n \geq 0,
    x_1,...,x_n \in {\cal X} \right\}.
\end{equation*}
Here and below, $\delta_x$ denotes the Dirac mass at $x$. For any
$\mu\in M_F({\cal X})$ and any measurable function $f$ on ${\cal X}$,
we set $\left< \mu, f \right> = \int_{{\cal X}} f d\mu$.

\me We wish to study the stochastic process $(Y_t, t\geq 0)$, taking its values
in ${\cal M}$, and describing the distribution of individuals and
traits at time $t$. We define
\begin{equation}
  \label{pop}
  Y_t = \sum_{i=1}^{N_{t}} \delta_{X^i_t},
\end{equation}
$N_{t}  = \langle Y_{t}, 1 \rangle \in {\mathbb{N}}$ standing for the number of individuals
alive at time $t$, and $X^1_t,...,X^{N_{t}}_t$ describing the
individuals' traits (in ${\cal X}$).

  \me We assume that the birth rate of an individual with trait $x$ is $b(x)$ and that for a population $\nu=\sum_{i=1}^{N}\delta_{x^i}$, its death rate is given by 
      $d(x,C*\nu(x))=d(x,\sum_{i=1}^{N}C(x-x^i))$. This death rate takes into account the intrinsic death rate of the individual, depending on its  phenotypic trait
      $x$ but also on the competition pressure exerted by the other individuals alive, modeled by the competition kernel $C$. Let $p(x)$ and $m(x,z)dz$ be respectively the 
      probability that an offspring produced by
    an individual with trait $x$ carries a mutated trait and the law of this mutant
    trait.

\noindent Thus, the population dynamics can be roughly summarized as
follows. The initial population is characterized by a (possibly
  random) counting measure $\nu_0\in {\cal M}$ at time $0$, and
  any
 individual with trait $x$ at time $t$ has two independent random exponentially
  distributed ``clocks'': a birth clock with parameter
  $b(x)$, and a death clock with parameter
  $d(x,C*Y_t(x))$.
If the death clock of an individual rings, this individual dies
  and disappears.
If the birth clock of an individual with trait $x$ rings, this
  individual produces an offspring. With probability $1-p(x)$ the
  offspring carries the same trait $x$; with probability $p(x)$ the
  trait is mutated.
 If a mutation occurs, the mutated offspring instantly acquires a
  new trait $z$, picked randomly according to the mutation step
  measure $m(x,z)dz$. When one of these events occurs, all
  individual's clocks are reset to 0.

\me  We are looking for a ${\cal M}$-valued Markov process
$(Y_t)_{t\geq 0}$ with infinitesimal generator $L$, defined for all
real bounded functions $\phi$ and $\nu \in {\cal M}$ by
\begin{align}
\label{generator}
 L\phi(\nu)  & = \sum_{i=1}^{N}b(x^i)(1-p(x^i))
  (\phi(\nu+\delta_{x^i})-\phi(\nu)) \notag \\
  & +\sum_{i=1}^{N} b(x^i)p(x^i)
 \int_{\cal X}(\phi(\nu+\delta_{z})-\phi(\nu))m(x^i,z)dz\notag \\
  & +\sum_{i=1}^{N}d(x^i,C*\nu(x^i))
  (\phi(\nu-\delta_{x^i})-\phi(\nu)).
\end{align}
The first term in~(\ref{generator}) captures the effect  of births without mutation, the second term the effect of
births with mutation and the last term the effect  of deaths. The
density-dependence makes the third term nonlinear.

\subsection{Pathwise construction of the process}
\label{sec:constr}

\me Let us justify the existence of a Markov process admitting $L$ as
infinitesimal generator. The explicit construction of
$(Y_t)_{t\geq
  0}$ also yields two side benefits: providing a rigorous and
efficient algorithm for numerical simulations (given hereafter) and establishing a general
method that will be used to derive some large population limits
(Section~\ref{sec:large-popu}).

\me 
We make the biologically natural assumption that the trait
dependency of birth parameters is ``bounded'', and at most linear
for the death rate. Specifically, we assume

\begin{ann}
\label{hyp1}

There exist constants $\bar{b}$, $\bar{d}$, $\bar{C}$, 
and $\alpha$ and a probability density function $\bar{m}$ on $\rit^d$
such that for each 
$x,z\in {\cal X}$, $\zeta\in \mathbb{R}^+$,

\begin{gather*}
  b(x)\leq \bar{b},\quad d(x,\zeta)\leq
  \bar{d}(1+\zeta), \\
  C(x)\leq \bar{C}, \\
  m(x,z)\leq \alpha\, \bar{m}(z-x).
\end{gather*}
\end{ann}

\me These assumptions ensure that there exists a constant $\widehat{C}$,
such that  for a population  measure
$\nu=\sum_{i=1}^{N}\delta_{x^i}$, the total event rate,  obtained as the sum of all event
rates, is bounded by $\ \widehat{C}N(1+N)$.

\me Let us now give a pathwise description of the population process
$(Y_t)_{t\geq 0}$. We introduce the following notation.

\begin{Notation}
  \label{defh}
  Let $\mathbb{N}^*=\mathbb{N}\backslash \{0\}$. Let
  $H=(H^1,...,H^k,...): {\cal M} \mapsto
  (\mathbb{R}^d)^{\mathbb{N}^*}$ be defined by
$ H\left(\textstyle\sum_{i=1}^n \delta_{x_i}\right) =
    (x_{\sigma(1)},...,x_{\sigma(n)},0,...,0,...)$,
  where $\sigma$ is a permutation such that $x_{\sigma(1)}\curlyeqprec ... \curlyeqprec x_{\sigma(n)}$,
  for some arbitrary order $\curlyeqprec$ on $\mathbb{R}^d$ (for example the lexicographic order).
\end{Notation}
\noindent This function $H$ allows us to overcome the following (purely
notational) problem. Choosing a trait uniformly among all traits
in a population $\nu \in {\cal M}$ consists in choosing $i$
uniformly in $\{1,...,\left<
  \nu,1\right>\}$, and then in choosing the individual {\it number}
$i$ (from the arbitrary order point of view). The trait value of
such an individual is thus $H^i(\nu)$.

\me We now introduce the probabilistic objects we will need.

\begin{Def}
  \label{poisson}
  Let $(\Omega, {\cal F}, P)$ be a (sufficiently large) probability
  space. On this space, we consider
  the following four independent random elements:
  \begin{description}
  \item[\textmd{(i)}] an ${\cal M}$-valued random variable $Y_0$ (the
    initial distribution),
  \item[\textmd{(ii)}] independent Poisson point measures
    $N_1(ds,di,d\theta)$, and $N_3(ds,di,d\theta)$ on $\R_+
    \times \mathbb{N}^*\times\rit^+$, with the same intensity measure
    $\:  ds \left(\sum_{k\geq 1} \delta_k (di) \right) d\theta\:$ (the
    "clonal" birth and the death Poisson measures),
  \item[\textmd{(iii)}] a Poisson point measure
    $N_2(ds,di,dz,d\theta)$ on $\R_+ \times \mathbb{N}^* \times
    {\cal X} \times \rit^+$, with intensity measure $\: ds
    \left(\sum_{k\geq 1} \delta_k (di) \right) dz d\theta\:$ (the
    mutation Poisson point measure).
  \end{description}
  Let us denote by $( {\cal F}_t)_{t\geq 0}$ the canonical filtration
  generated by these processes.
\end{Def}

\me We finally define the population process in terms of these
stochastic objects.

\begin{Def}
  \label{dbpe}
  A $( {\cal F}_t)_{t\geq 0}$-adapted stochastic process
  $Y=(Y_t)_{t\geq 0}$  is called
  a population process if a.s., for all $t\geq 0$,
  \begin{align}
    Y_t &= Y_0 + \int_{[0,t] \times \mathbb{N}^*
      \times\rit^+}\delta_{H^i(Y_\sm)} \One_{\{i \leq \left<
        Y_\sm,1 \right>\}} \  \indiq_{\left\{\theta \leq
        b(H^i(Y_\sm))(1-p(H^i(Y_\sm)))\right\}}
    N_1(ds,di,d\theta) \notag \\ &+ \int_{[0,t] \times \mathbb{N}^* \times
      {\cal X}\times\rit^+} \delta_{z } \indiq_{\{i \leq \left<
        Y_\sm,1 \right>\}} \  \One_{\left\{\theta \leq
        b(H^i(Y_\sm))
        p(H^i(Y_\sm))\,m(H^i(Y_\sm),z)
      \right\}} N_2(ds,di,dz,d\theta) \notag \\ &- \int_{[0,t] \times
      \mathbb{N}^* \times\rit^+} \delta_{H^i(Y_\sm)} \indiq_{\{i \leq
      \left< Y_\sm,1 \right>\}} \indiq_{\left\{\theta \leq
        d(H^i(Y_\sm), C*Y_\sm(H^i(Y_\sm))) \right\}}
    N_3(ds,di,d\theta) \label{bpe}
  \end{align}
\end{Def}

\me Let us now show that if $Y$ solves~(\ref{bpe}), then $Y$
follows the Markovian dynamics we are interested in.

\begin{Prop}
  \label{gi}
  Assume Assumption \ref{hyp1} holds and consider a solution $\, (Y_t)_{t\geq 0}\, $ of
  (\ref{bpe}) such that $\,\E(\sup_{t\leq
    T}\langle Y_t,\mathbf{1}\rangle^2)<+\infty,\ \forall T>0$. Then
  $(Y_t)_{t\geq 0}$ is a Markov process. Its infinitesimal generator
  $L$ is defined 
  by~(\ref{generator}). In particular, the law of $(Y_t)_{t\geq 0}$
  does not depend on the chosen order $\curlyeqprec$.
\end{Prop}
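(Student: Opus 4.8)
The plan is to obtain both the generator and the Markov property from one computation: applying the change-of-variables (It\^o) formula for semimartingales driven by Poisson point measures to $\phi(Y_t)$, for an arbitrary bounded measurable $\phi:{\cal M}\to\R$, and identifying the predictable compensator of the resulting jump part with $\int_0^t L\phi(Y_s)\,ds$. By Assumption \ref{hyp1} the total event rate at a configuration $\nu$ is bounded by $\widehat C\,\langle\nu,1\rangle(1+\langle\nu,1\rangle)$, so, together with the hypothesis $\E(\sup_{t\le T}\langle Y_t,1\rangle^2)<\infty$, on every bounded time interval $Y$ performs only finitely many jumps and all the stochastic integrals written below are well defined.

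First I would use that $Y$ solves (\ref{bpe}) and is constant between jumps to write $\phi(Y_t)=\phi(Y_0)+A^1_t+A^2_t+A^3_t$, where $A^j_t$ is the integral against $N_j$ of the corresponding increment of $\phi$ times the acceptance indicators appearing in (\ref{bpe}): for $N_1$ the increment is $\phi(Y_{s-}+\delta_{H^i(Y_{s-})})-\phi(Y_{s-})$, for $N_2$ it is $\phi(Y_{s-}+\delta_z)-\phi(Y_{s-})$, and for $N_3$ it is $\phi(Y_{s-}-\delta_{H^i(Y_{s-})})-\phi(Y_{s-})$. Replacing each Poisson measure by its intensity (Definition \ref{poisson}), integrating out the auxiliary variable $\theta$ — which produces precisely the factors $b(1-p)$, $b\,p\,m(\cdot,z)$ and $d(\cdot,C*Y_{s-}(\cdot))$ — and summing over $i$ from $1$ to $\langle Y_{s-},1\rangle$, the compensator of $A^1_t+A^2_t+A^3_t$ becomes $\int_0^t L\phi(Y_{s-})\,ds=\int_0^t L\phi(Y_s)\,ds$. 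The one point needing care here is that for $\nu=\sum_{j=1}^N\delta_{x^j}$ one has $\sum_{i=1}^N g\big(H^i(\nu)\big)=\sum_{j=1}^N g(x^j)$ for any $g$, since $H$ only enumerates the atoms of $\nu$ (with multiplicity); this is what makes the compensator equal to $L\phi(\nu)$ as written in (\ref{generator}) and, simultaneously, makes it manifestly independent of the order $\curlyeqprec$.

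To turn this identity into the statement that $M^\phi_t:=\phi(Y_t)-\phi(Y_0)-\int_0^t L\phi(Y_s)\,ds$ is a true $({\cal F}_t)$-martingale, I would note that each increment term is bounded by $2\|\phi\|_\infty$, so the integrand against the intensity is dominated by $2\|\phi\|_\infty\,\widehat C\,\langle Y_{s-},1\rangle(1+\langle Y_{s-},1\rangle)$, which is integrable on $[0,T]\times\Omega$ by the second-moment assumption. Hence $M^\phi$ is a martingale for every bounded measurable $\phi$, i.e. $Y$ solves the martingale problem attached to $L$.

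Finally, for the Markov property I would use that (\ref{bpe}) has a pathwise unique solution: between jumps $Y$ is frozen, the next jump time and mark are determined by $Y_{s-}$ together with $N_1,N_2,N_3$, and one constructs and identifies the solution by induction on jump times, the moment bound guaranteeing non-accumulation of jumps. Consequently $Y_{t+\cdot}$ is a measurable functional of $Y_t$ and of $N_1,N_2,N_3$ restricted to $(t,\infty)$, which is independent of ${\cal F}_t$ and equal in law to the shifted measures; this gives the time-homogeneous Markov property, and since $M^\phi$ is a martingale for all bounded $\phi$, $L$ is its generator. Equivalently, one may invoke well-posedness of the $L$-martingale problem (existence from the construction, uniqueness in law from pathwise uniqueness) and the standard equivalence between well-posed martingale problems and Markov processes, as in Ethier--Kurtz \cite{EK}; uniqueness in law together with the order-independence of $L$ noted above then shows that the law of $(Y_t)_{t\ge0}$ does not depend on $\curlyeqprec$. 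I expect the only real obstacle to be bookkeeping: carefully matching the ``individual number $i$'' description in (\ref{bpe}) to the symmetric sum over atoms in (\ref{generator}) via $H$, and verifying the integrability needed for the local-to-true-martingale passage — everything else being routine.
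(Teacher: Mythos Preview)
Your proposal is correct and follows essentially the same route as the paper: decompose $\phi(Y_t)-\phi(Y_0)$ as a sum of integrals against $N_1,N_2,N_3$, integrate out $\theta$ (and $z$) against the intensities to recover the three terms of $L\phi$, and use the moment bound to justify the computations. The only cosmetic difference is that the paper takes expectations and differentiates at $t=0$ to read off $L$, whereas you compensate to obtain the martingale $M^\phi$ directly (which the paper defers to the next theorem), and you spell out the Markov property via pathwise uniqueness where the paper simply declares it ``classical''.
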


\begin{proof}
  The fact that $(Y_t)_{t\geq 0}$ is a Markov process is classical.
  Let us now consider a measurable  bounded function $\phi$.
   With our notation, $Y_0 = \sum_{i=1}^{\left< Y_0, 1\right>}
  \delta_{H^i(Y_0)}$.  A simple computation, using the
  fact that a.s., $\phi(Y_t) = \phi(Y_0) + \sum_{s\leq t}
  (\phi(Y_\sm + (Y_s-Y_\sm))- \phi(Y_\sm))$, shows that
  \begin{align*}
    \phi(Y_t) & = \phi(Y_0)+\int_{[0,t] \times
      \mathbb{N}^* \times\rit^+}\left(\phi(Y_\sm +
      \delta_{H^i(Y_\sm)}) - \phi(Y_\sm) \right)\indiq_{\{i \leq
      \left< Y_\sm,1 \right>\}} \\ &\hskip 2cm\indiq_{\left\{\theta
        \leq
        b(H^i(Y_\sm))(1-p(H^i(Y_\sm)))\right\}}
    N_1(ds,di,d\theta) \\ &+ \int_{[0,t] \times \mathbb{N}^* \times
      {\cal X}\times\rit^+} \left(\phi(Y_\sm + \delta_{z}) -
      \phi(Y_\sm) \right) \indiq_{\{i \leq \left< Y_\sm,1
      \right>\}} \\ &\hskip 2cm \indiq_{\left\{\theta \leq
        b(H^i(Y_\sm))p(H^i(Y_\sm))
        \,m(H^i(Y_\sm),z)
      \right\}} N_2(ds,di,dz,d\theta) \\ &+ \int_{[0,t] \times
      \mathbb{N}^* \times\rit^+} \left(\phi(Y_\sm -
      \delta_{H^i(Y_\sm)}) - \phi(Y_\sm) \right) \indiq_{\{i \leq
      \left< Y_\sm,1 \right>\}} \\ &\hskip 2cm \indiq_{\left\{\theta
        \leq d(H^i(Y_\sm), C*Y_\sm(H^i(Y_\sm))) \right\}}
    N_3(ds,di,d\theta) .
  \end{align*}
  Taking expectations, we obtain
  \begin{align*}
    \E(\phi & (Y_t))=\E(\phi(Y_0)) \\ &+\intot
    \E\Big(\sum_{i=1}^{\left< Y_s, 1\right>}\bigg\{\left(\phi(Y_s +
      \delta_{H^i(Y_s)}) - \phi(Y_s)
    \right)b(H^i(Y_s))(1-p(H^i(Y_s))) \\
    &+\int_{{\cal X}} \left(\phi(Y_s + \delta_{z}) - \phi(Y_s)
    \right)b(H^i(Y_s))p(H^i(Y_s))\,m(H^i(Y_s),z)dz
    \\ &+ \left(\phi(Y_s - \delta_{H^i(Y_s)}) - \phi(Y_s)
    \right)d(H^i(Y_s), C*Y_s(H^i(Y_s)))\bigg\}\Big)ds 
  \end{align*}
  Differentiating this expression at $t=0$ leads to~(\ref{generator}).
\end{proof}

\me Let us show the existence and some moment properties for the population
process.

\begin{Thm}\label{existence}
  \begin{description}
  \item[\textmd{(i)}] Assume Assumption \ref{hyp1} holds and that $\E\left( \left<Y_0,1
      \right> \right) <\infty$. Then the process $(Y_t)_{t\geq 0}$
    defined in Definition \ref{dbpe} is well defined on $\rit_+$.
  \item[\textmd{(ii)}] If furthermore for some $p \geq 1$, $\E\left(
      \left<Y_0,1 \right>^p \right) <\infty$, then for any
    $T<\infty$,
    \begin{equation}
      \label{lp}
      \E(\sup_{t\in[0,T]} \left< Y_t,1\right>^p ) < +\infty.
    \end{equation}
  \end{description}
\end{Thm}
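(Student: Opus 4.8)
The plan is to follow the pattern of Proposition~\ref{mom-1}: build $(Y_t)_{t\ge0}$ explicitly between its successive jumps, establish an a~priori bound on the moments of the total mass up to a truncation level, rule out any accumulation of jumps by a level-crossing argument, and then let the truncation level tend to infinity. The case $p=1$ of the moment bound yields global existence, i.e.~(i), and the general case yields~\eqref{lp}, i.e.~(ii).

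\emph{Pathwise construction.} Given the configuration $Y$ at a jump time, with $\langle Y,1\rangle=m$ individuals, only atoms $(s,i,\cdot)$ of $N_1,N_3$ and $N_2$ with index $i\le m$ and with $\theta$ below the corresponding event rate can trigger an event, and by Assumption~\ref{hyp1} the intensity of such relevant atoms over a bounded time interval is finite (of order $m$ per unit time, using $b\le\bar b$, $\int_{\mathcal{X}}m(x,z)\,dz=1$, and the at most linear growth of $d$). Hence there is a.s. a well-defined first relevant atom, which determines the first jump and the new configuration; iterating yields a càdlàg $\mathcal{M}$-valued process that solves~\eqref{bpe} on $[0,T_\infty)$, where $T_\infty$ is the (possibly finite) accumulation time of the jump times. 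On $[0,T_\infty)$ the number of individuals is finite, and the content of~(i) is precisely that $T_\infty=+\infty$ almost surely.

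\emph{A~priori moment estimate.} Write $N_t=\langle Y_t,1\rangle$ and set $\tau_n=\inf\{t\ge0:N_t\ge n\}$; before $\tau_n$ one has $N_t<n$, and $N_{\tau_n}\ge n$ since jumps have size one. Applying $\phi=\langle\cdot,1\rangle^p$ in~\eqref{bpe}: every death event lowers $N_t^p$, while every birth event (clonal or mutant) raises $N_t$ by $1$, the total birth rate at configuration $Y_{s-}$ being $\sum_i b(H^i(Y_{s-}))\le\bar b\,N_{s-}$. Since the birth part of the resulting jump semimartingale is nondecreasing in time, dropping the nonpositive death part and taking expectations (the compensated jump integrals being martingales up to $\tau_n$) gives
\[
\E\Big(\sup_{s\le t\wedge\tau_n}N_s^{\,p}\Big)\ \le\ \E\big(N_0^{\,p}\big)+\E\Big(\int_0^{t\wedge\tau_n}\bar b\,N_s\big((N_s+1)^p-N_s^p\big)\,ds\Big).
\]
Using $(x+1)^p-x^p\le C_p(1+x^{p-1})$ and $x(1+x^{p-1})\le 1+2x^p$ for $x\ge0$, the right-hand side is bounded by $\E(N_0^{\,p})+\bar b\,C_p\big(t+2\int_0^t\E(\sup_{u\le s\wedge\tau_n}N_u^{\,p})\,ds\big)$, while the left-hand side is finite since $N\le n$ on $[0,\tau_n)$. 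Gronwall's lemma then yields, for every $T>0$, a constant $C_{T,p}$ \emph{independent of $n$} with $\E\big(\sup_{t\le T\wedge\tau_n}N_t^{\,p}\big)\le C_{T,p}$.

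\emph{Non-explosion and conclusion.} If $\P(T_\infty<\infty)>0$, there is $T_0>0$ with $\P(\sup_n\tau_n<T_0)>0$; on that event $\tau_n<T_0$ for every $n$, whence $\sup_{t\le T_0\wedge\tau_n}N_t\ge N_{\tau_n}\ge n$ and therefore $\E\big(\sup_{t\le T_0\wedge\tau_n}N_t^{\,p}\big)\ge n^p\,\P(\sup_n\tau_n<T_0)$, contradicting the uniform bound once $n$ is large. Hence $T_\infty=+\infty$ a.s.; taking $p=1$ (which uses only $\E\langle Y_0,1\rangle<\infty$) proves~(i). Under the hypothesis of~(ii), letting $n\to\infty$ in the uniform bound and applying Fatou's lemma gives $\E(\sup_{t\le T}N_t^{\,p})\le C_{T,p}<\infty$, which is~\eqref{lp}. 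The one genuinely delicate point is the a~priori estimate: one must throw away the nonlinear competitive death term (harmless because it can only decrease $N_t$) and retain only the birth dynamics, whose rate is sub-linear in $N_t$, while taking care of the time-supremum before closing the Gronwall loop; the construction step and the level-crossing contradiction are then routine adaptations of Proposition~\ref{mom-1}.
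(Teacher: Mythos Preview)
Your proof is correct and follows essentially the same route as the paper: localize by $\tau_n=\inf\{t:\langle Y_t,1\rangle\ge n\}$, drop the nonpositive death contribution, bound the birth integrand via $(x+1)^p-x^p\le C_p(1+x^{p-1})$, close a Gronwall loop uniform in $n$, derive $\tau_n\to\infty$ by the same $n^p\,\P(\sup_n\tau_n<T_0)$ contradiction, and conclude with Fatou. The only cosmetic difference is that you spell out the step-by-step pathwise construction and invoke compensated martingales when taking expectations, whereas the paper takes expectations directly of the Poisson integrals; the substance is the same.
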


\begin{proof}
  We first prove~(ii). Consider the process $(Y_t)_{t\geq 0}$. We
  introduce for each $n$ the stopping time $\tau_n = \inf \left\{ t
    \geq 0 , \; \left<Y_t,1 \right> \geq n \right\}$. Then a simple
  computation using Assumption \ref{hyp1} shows that, dropping the
  non-positive death terms,
  \begin{align*}
    \sup_{s\in[0,t\land \tau_n]} & \left< Y_s,1\right>^p\leq \left<
      Y_0,1\right>^p + \int_{[0,t\land \tau_n] \times
      \mathbb{N}^* \times\rit^+} \left((\left< Y_\sm,1\right> +1)^p-
      \left< Y_\sm,1\right>^p\right) \indiq_{\{i \leq \left< Y_\sm,1
      \right>\}} \\ &\hskip 2cm\indiq_{\left\{\theta \leq
        b(H^i(Y_\sm))(1-p(H^i(Y_\sm)))\right\}}
    N_1(ds,di,d\theta) \\ &+ \int_{[0,t\land \tau_n] \times \mathbb{N}^* \times
      {\cal X}\times\rit^+} \left((\left< Y_\sm,1\right> +1)^p- \left<
        Y_\sm,1\right>^p\right)\indiq_{\{i \leq \left< Y_\sm,1
      \right>\}} \\ &\hskip 2cm\indiq_{\left\{\theta \leq
        b(H^i(Y_\sm))p(H^i(Y_\sm))
        \,m(H^i(Y_\sm),z)\right\}} N_2(ds,di,dz,d\theta).
  \end{align*}
  Using the inequality $(1+x)^p-x^p\leq C_p(1+x^{p-1})$ and taking
  expectations, we thus obtain, the value of $C_p$ changing from one line
  to the other,
  \begin{align*}
    \E(\sup_{s \in [0,t\land \tau_n]}\left< Y_s,1\right>^p)
     &\leq C_p\E \left( \int_0^{t\land \tau_n} 
        \left(\left< Y_\sm,1\right>+ \left< Y_\sm,1\right>^p
        \right) ds \right)\\ &\leq 
        C_p\left(1+ \E\left( \intot
        \left< Y_{s\land \tau_n},1\right>^p 
        ds\right)\right).
  \end{align*}
  The Gronwall Lemma allows us to conclude that for any $T<\infty$,
  there exists a constant $C_{p,T}$, not depending on $n$, such that
  \begin{equation}
    \label{xxxx}
    \E( \sup_{t \in [0,T\land \tau_n]} \left< Y_t,1\right>^p)\leq C_{p,T}.
  \end{equation}
  First, we deduce that $\tau_n$ tends a.s. to infinity. Indeed, if
  not, one may find  $T_0<\infty$ such that $\e_{T_0}=P\left( \sup_n
    \tau_n <T_0 \right) >0$. This would imply that $\,\E\left( \sup_{t
      \in [0,T_0\land \tau_n]}\left< Y_t,1\right>^p \right) \geq
  \e_{T_0} n^p\ $ for all $n$, which contradicts~(\ref{xxxx}).
We may let $n$ go to infinity in~(\ref{xxxx}) thanks to the Fatou
  Lemma. This leads to~(\ref{lp}).

 \me  Point~(i) is a consequence of point~(ii). Indeed, one builds the
  solution $(Y_t)_{t\geq 0}$ step by step. One only has to check
  that the sequence of jump instants $T_n$ goes a.s.\ to infinity as
  $n$ tends to infinity. But this follows from~(\ref{lp}) with $p=1$.
\end{proof}

\subsection{Examples and simulations}
\label{sec:simul}

\me Let us remark that Assumption \ref{hyp1} is satisfied in the case where
\begin{equation}
  d(x,C*\nu(x))=d(x)+\alpha(x)\int_{{\cal
  X}}C(x-y)\nu(dy),
\end{equation}
and $b$, $d$ and $\alpha$ are bounded functions.

\me In the case where moreover, $p\equiv 1$, this individual-based
model can also be interpreted as a model of ``spatially structured
population'', where the trait is viewed as a spatial location and
the mutation at each birth event is viewed as dispersal. This kind
of models has been introduced by Bolker and
Pacala~(\cite{BP97,BP99}) and Law et al.~(\cite{LMD03}), and
mathematically studied by Fournier and M\'el\'eard~\cite{FM04}.
The  case $\ C\equiv 1\ $ corresponds to a density-dependence in
the
total population size.

\me Later, we will consider the particular set of parameters leading to the
logistic interaction model, taken from Kisdi~\cite{Ki99} and
corresponding to a model of asymmetric competition:
\begin{gather}
{\cal X}=[0,4],\quad d(x)=0,\quad\alpha(x)=1,\quad p(x)=p,
  \notag \\ b(x)=4-x,\quad C(x-y)=\frac{2}{K}\bigg(1-{1\over 1+1.2
  \exp(-4(x-y))}\bigg)\label{eq:ex}
\end{gather}
and $m(x,z)dz$ is a Gaussian law with mean $x$ and variance
$\sigma^2$ conditioned to stay in
$[0,4]$. As we will see in Section~\ref{sec:large-popu}, the
constant $K$ scaling the strength of competition also scales the
population size (when the initial population size is proportional
to $K$). In this model, the trait $x$ can be interpreted as body
size. Equation~(\ref{eq:ex}) means that body size influences the
birth rate negatively, and creates asymmetrical competition
reflected in the sigmoid shape of $C$ (being
larger is competitively advantageous).

\bi Let us give now  an algorithmic construction of the population process
(in the general case), giving the size $N_{t}$ of the
population and the trait vector $\mathbf{X}_t$ of all individuals
alive at time $t$.

\me At time $t=0$, the initial population $Y_0$ contains $N_{0}$
individuals and the corresponding trait vector is
$\mathbf{X}_0=(X_0^i)_{1\leq i\leq N_{0}}$. We introduce the
following sequences of independent random variables, which will
drive the algorithm.
\begin{itemize}
\item The type of birth or death events will be selected according
to
  the values of a sequence of random variables $(W_k)_{k\in\nit^*}$
  with uniform law on $[0,1]$.
\item The times at which events may be realized will be described
  using a sequence of random variables $(\tau_k)_{k\in\nit}$ with
  exponential law with parameter $\widehat{C}$.
\item The mutation steps will be driven by a sequence of random
variables $(Z_k)_{k\in\nit}$ with law $\bar{m}(z)dz$.
\end{itemize}
We set $T_0=0$ and construct the process inductively for $k\geq 1$
as follows.

\noindent At step $k-1$, the number of individuals is $N_{k-1}$, and the
trait vector of these individuals is $\mathbf{X}_{T_{k-1}}$.

\noindent Let
$\displaystyle{T_{k}=T_{k-1}+\frac{\tau_k}{N_{k-1}(N_{k-1}+1)}}$.
Notice that $\displaystyle{\frac{\tau_k}{N_{k-1}(N_{k-1}+1)}}$
represents the time between jumps for $N_{k-1}$ individuals, and
$\widehat{C}(N_{k-1}+1)$ gives an upper bound of the total rate of events affecting 
each individual.

\me At time $T_k$, one chooses an individual $i_k=i$ uniformly at
random among the $N_{k-1}$ alive in the time interval
$[T_{k-1},T_k)$; its trait is $X^i_{T_{k-1}}$.  (If $N_{k-1}=0$
then $Y_t=0$ for all $t\geq T_{k-1}$.)
\begin{itemize}
\item If $\displaystyle{0\leq W_k\leq
    \frac{d(X^{i}_{T_{k-1}},\sum_{j=1}^{I_{k-1}}
      C(X^{i}_{T_{k-1}}-X^{j}_{T_{k-1}}))}{\widehat{C}(N_{k-1}+1)}=
    W_1^i(\mathbf{X}_{T_{k-1}})}$, then the chosen individual dies,
  and $N_k=N_{k-1}-1$.
\item If $\displaystyle{W_1^i(\mathbf{X}_{T_{k-1}})< W_k \leq
    W_2^i(\mathbf{X}_{T_{k-1}})}$, where
  \begin{equation*}
    W_2^i(\mathbf{X}_{T_{k-1}})=
    W_1^i(\mathbf{X}_{T_{k-1}})+{[1-p(X^{i}_{T_{k-1}})]b(X^{i}_{T_{k-1}}) \over
      \hat{C}(N_{k-1}+1)},
  \end{equation*}
  then the chosen individual gives birth to an offspring with
  trait $X^i_{T_{k-1}}$, and $N_k=N_{k-1}+1$.
\item If $\displaystyle{W_2^i(\mathbf{X}_{T_{k-1}})< W_k \leq
    W_3^i(\mathbf{X}_{T_{k-1}}, Z_k)}$, where
  \begin{multline*}
    W_3^i(\mathbf{X}_{T_{k-1}}, Z_k)=W_2^i(\mathbf{X}_{T_{k-1}})+ 
    {p(X^{i}_{T_{k-1}})b(X^{i}_{T_{k-1}})\, m(X^{i}_{T_{k-1}},
      X^{i}_{T_{k-1}}+Z_k)\over \widehat{C}\bar{m}(Z_k)(N_{k-1}+1)},
  \end{multline*}
  then the chosen individual gives birth to a mutant offspring with
  trait $X^i_{T_{k-1}}+Z_k$, and $N_k=N_{k-1}+1$.
\item If $W_k>W_3^i(\mathbf{X}_{T_{k-1}}, Z_k)$, nothing happens,
and
  $N_k=N_{k-1}$.
\end{itemize}


\me Then, at any time $t\geq 0$, the number of individuals  and the population process are defined by
$$N_{t}=\sum_{k\geq
  0}1_{\{T_k\leq t< T_{k+1}\}}N_k, \qquad 
  Y_t=\sum_{k\geq
  0}1_{\{T_k\leq t<
  T_{k+1}\}}\sum_{i=1}^{N_k}\delta_{X^i_{T_k}}.$$

\begin{figure}
  \centering
  \mbox{\subfigure[$p=0.03$, $K=100$, $\sigma=0.1$.]%
{\epsfig{bbllx=0pt,bblly=0pt,bburx=17.99cm,bbury=22.23cm,%
figure=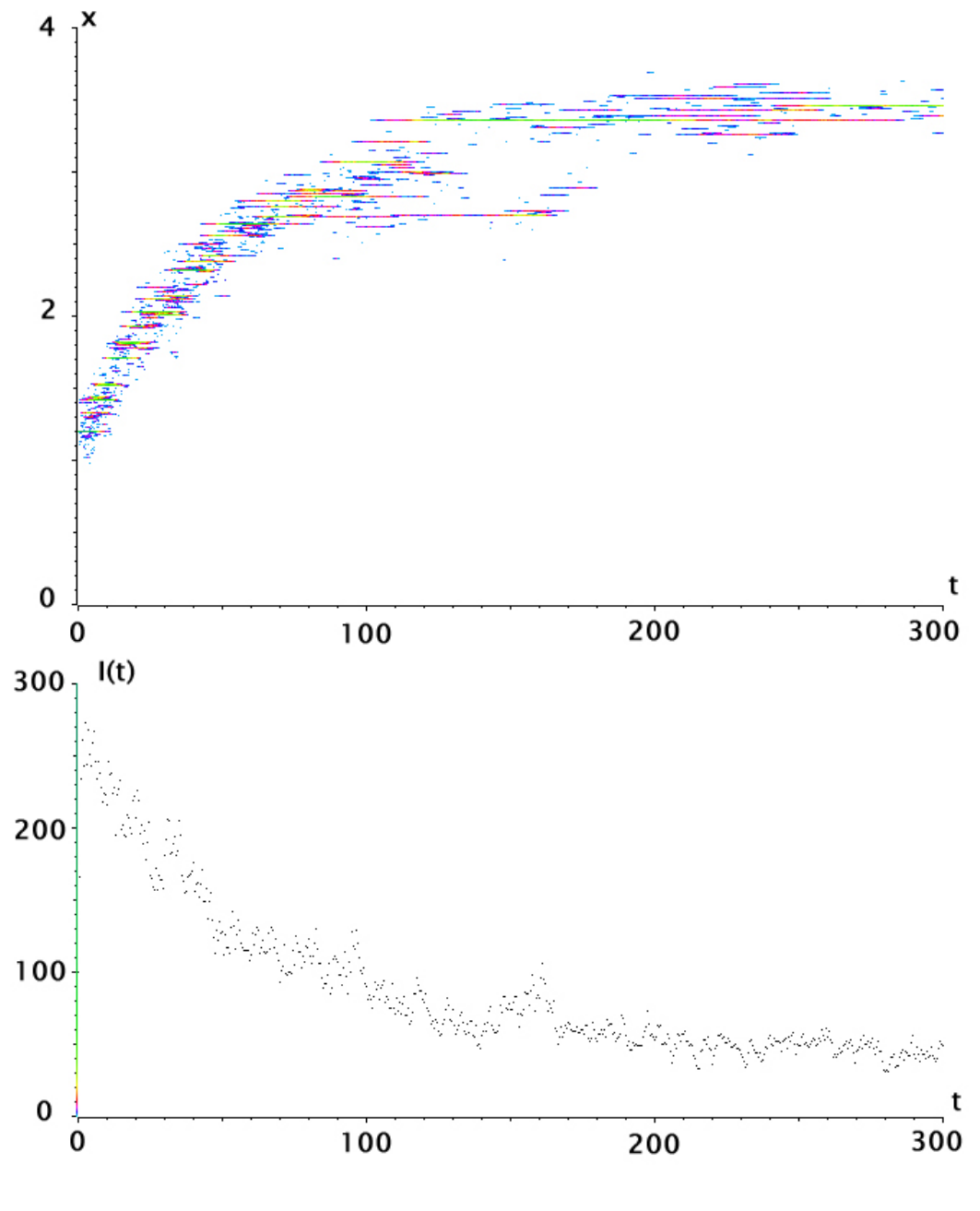, width=.47\textwidth}}\quad
   \subfigure[$p=0.03$, $K=3000$, $\sigma=0.1$.]%
{\epsfig{bbllx=0pt,bblly=0pt,bburx=17.99cm,bbury=22.23cm,%
figure=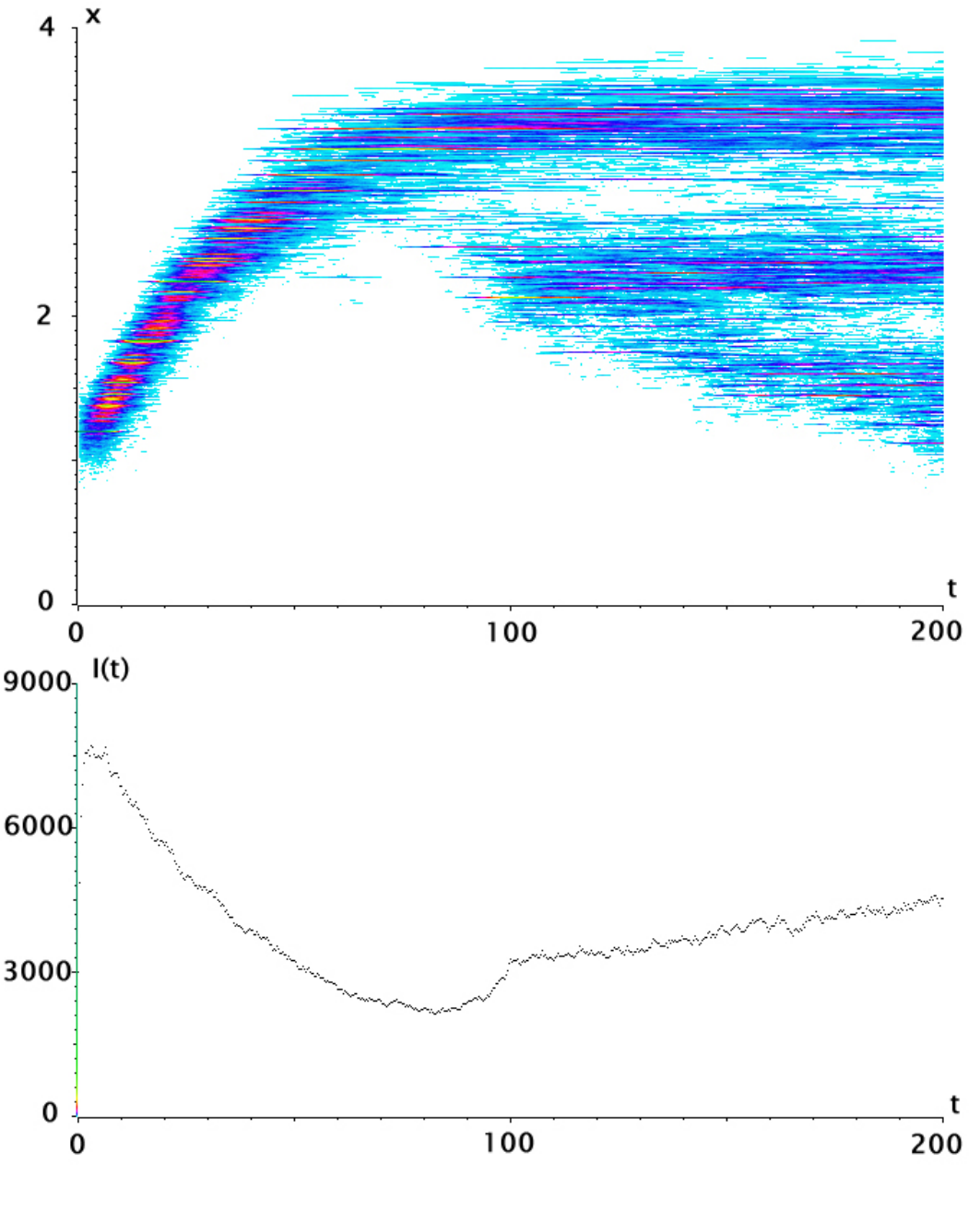, width=.47\textwidth}}} \\
  \mbox{\subfigure[$p=0.03$, $K=100000$, $\sigma=0.1$.]%
{\epsfig{bbllx=0pt,bblly=0pt,bburx=17.99cm,bbury=22.23cm,%
figure=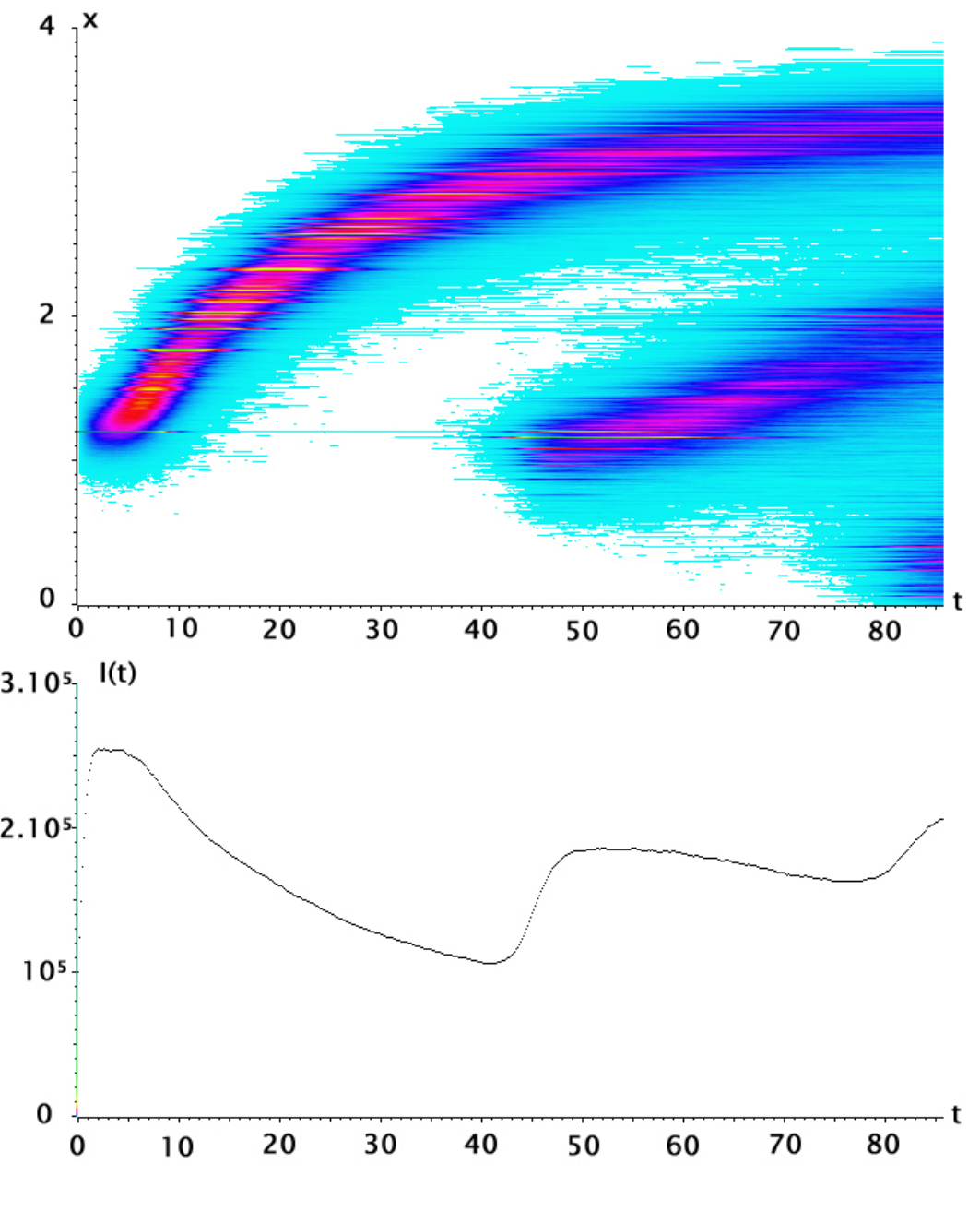, width=.47\textwidth}}}
  \caption{Numerical simulations of trait distributions (upper panels,
    darker means higher frequency) and population size (lower panels).
    The initial population is monomorphic with trait value $1.2$ and
    contains $K$ individuals. (a--c) Effect of increasing
    the system size (measured by the parameter $K$).}
  \label{fig:IBM-1}
\end{figure}

\me The simulation of Kisdi's example~(\ref{eq:ex}) can be carried out
following this algorithm. We can show a very wide variety of
qualitative behaviors depending on the value of the parameters
$\sigma$, $p$ and $K$.

\noindent In the following figures (cf. Champagnat-Ferri\`ere-M\'el\'eard \cite{CFM06}), the upper part gives the distribution of
the traits in the population at any time, using a grey scale code
for the number of individuals holding a given trait. The lower
part of the simulation represents the dynamics of the total population size
$N_{t}$.

\noindent These simulations will serve to illustrate the different
mathematical scalings described in Section~\ref{sec:large-popu}. In Fig.~\ref{fig:IBM-1}~(a)--( c),  we see the qualitative and quantitative effects of increasing scalings $K$, from a finite trait support  process for small $K$ to  a wide population density
for large $K$. The simulations of Fig.~\ref{fig:IBM-2} involve  birth and death processes with large rates (see Section~\ref{sec:accel}) given by
\begin{equation*}
  b(x)=K^{\eta}+b(x)\quad\mbox{and}\quad
  d(x,\zeta)=K^{\eta}+d(x)+\alpha(x)\zeta
\end{equation*}
and small mutation step $\sigma_{K}$.
There is a noticeable qualitative difference between
Fig.\ref{fig:IBM-2}~(a)  where $\eta=1/2$, and
Fig.\ref{fig:IBM-2}~(b)  where $\eta=1$. In the latter,
we observe strong fluctuations in the population size and a finely
branched structure of the evolutionary pattern, revealing a new
form of stochasticity in the large population approximation.

\begin{figure}
\label{fig:IBM-2}
  \centering
\mbox{\subfigure[$p=0.3$, $K=10000$, $\sigma=0.3/K^{\eta/2}$, $\eta=0.5$.]%
{\epsfig{bbllx=0pt,bblly=0pt,bburx=17.99cm,bbury=22.23cm,%
figure=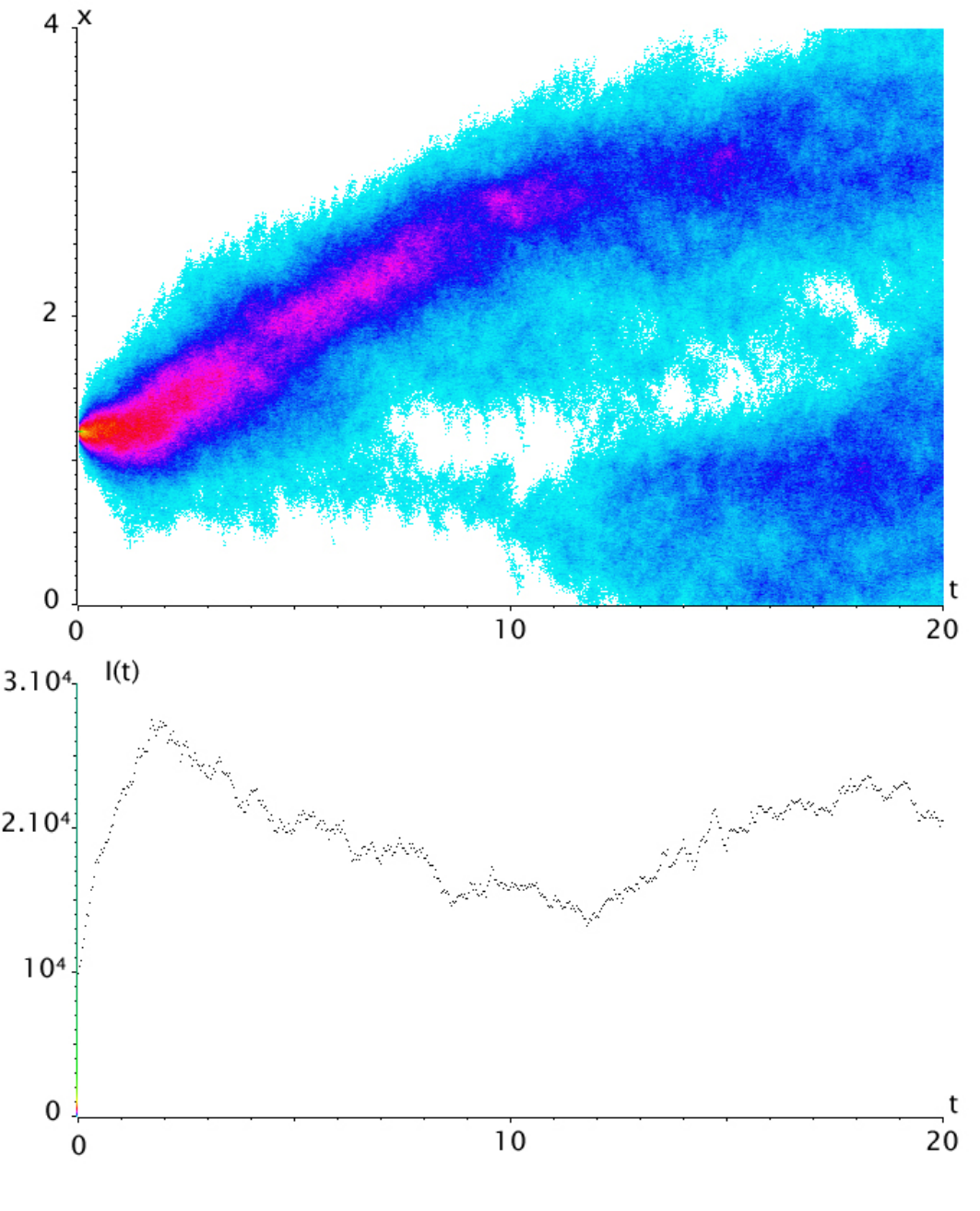, width=.47\textwidth}}\quad
   \subfigure[$p=0.3$, $K=10000$, $\sigma=0.3/K^{\eta/2}$, $\eta=1$.]%
{\epsfig{bbllx=0pt,bblly=0pt,bburx=17.99cm,bbury=22.23cm,%
figure=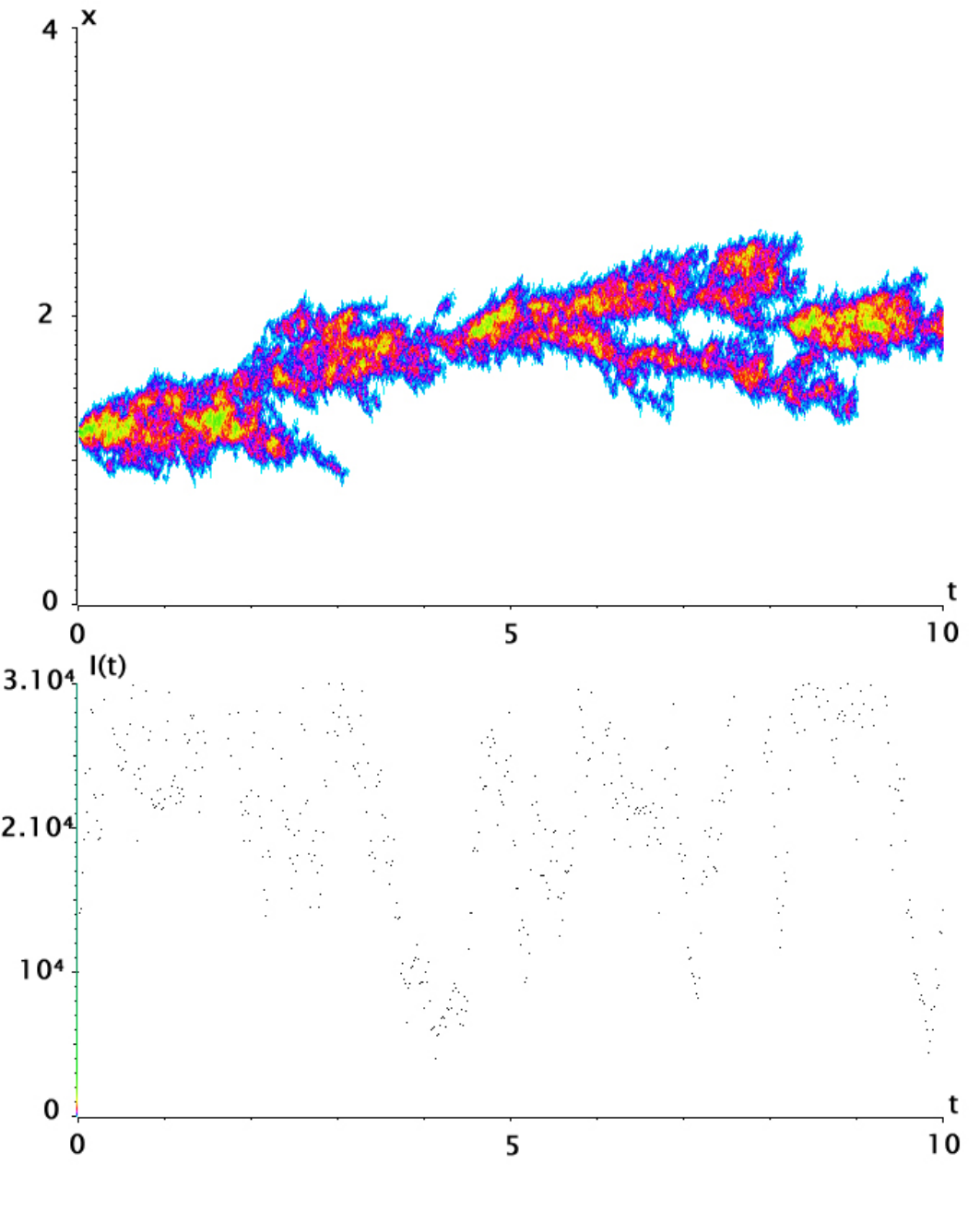, width=.47\textwidth}}}
\caption{Numerical simulations of trait distributions (upper panels,
  darker means higher frequency) and population size (lower panels) for
  accelerated birth and death and concurrently increased parameter
 $K$. The parameter $\eta$ (between $0$ and $1$) relates the acceleration of
 demographic turnover to the increase of system size $K$. (a) Case $\eta=0.5$. (b)  Case $\eta=1$. The initial population is monomorphic with trait value
  $1.2$ and contains $K$ individuals.} \label{fig:IBM-2}
\end{figure}

\subsection{Martingale Properties}\label{secexist}

\me We  give some martingale properties of the process
$(Y_t)_{t\geq 0}$, which  are the key point of our approach.

\begin{Thm}
  \label{martingales}
  Suppose Assumption \ref{hyp1} holds and that for some $p\geq 2$, $E \left( \left<
      Y_0,1 \right>^p \right) <\infty$.
  \begin{description}
  \item[\textmd{(i)}] For all measurable functions $\phi$ from ${\cal
      M}$ into $\mathbb{R}$ such that for some constant $C$, for all
    $\nu \in {\cal M}$, $\vert \phi (\nu) \vert + \vert L \phi (\nu)
    \vert \leq C (1+\left< \nu,1 \right>^p)$, the process
    \begin{equation}
      \label{pbm2}
      \phi(Y_t) - \phi(Y_0) - \intot  L\phi (Y_s) ds
    \end{equation}
    is a c\`adl\`ag $({\cal F}_t)_{t\geq 0}$-martingale starting from
    $0$.
  \item[\textmd{(ii)}] Point~(i) applies to any function $\phi(\nu)=
    \left< \nu, f \right>^q$, with $0\leq q\leq p-1$ and with $f$
    bounded and measurable on ${\cal X}$.
  \item[\textmd{(iii)}] For such a function $f$, the process
    \begin{align}
      M^f_t &= \langle Y_t, f\rangle - \langle Y_0,f\rangle -
      \intot \int_{\cal X}\bigg\{\bigg((1-p(x))b(x)
      -d(x,C*Y_s(x))\bigg)f(x) \notag \\
      &+p(x)b(x)\int_{\cal X}f(z)\,m(x,z)dz \bigg\}
      Y_s(dx) ds \label{eq:mart}
    \end{align}
    is a c\`adl\`ag square integrable martingale starting from $0$
    with quadratic variation
    \begin{align}
      \langle M^f\rangle_t &= \intot \int_{\cal
        X}\bigg\{\bigg((1-p(x))b(x)+d(x,C*Y_s(x))\bigg)f^2(x)
      \notag \\ &+p(x)b(x)\int_{\cal X}f^2(z)\,m(x,z)dz
      \bigg\} Y_s(dx) ds. \label{qv1}
    \end{align}
  \end{description}
\end{Thm}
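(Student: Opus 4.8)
The plan is to derive all three statements from the pathwise representation~(\ref{bpe}) together with the moment bound~(\ref{lp}), exactly as in the proof of Theorem~\ref{BD-mart} for one-dimensional birth and death processes. The only new ingredient is bookkeeping with the three Poisson measures $N_1,N_2,N_3$ instead of one.

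First I would prove~(i). Given $\phi$ with $|\phi(\nu)|+|L\phi(\nu)|\le C(1+\langle\nu,1\rangle^p)$, I apply the computation already carried out in the proof of Proposition~\ref{gi}: writing $\phi(Y_t)=\phi(Y_0)+\sum_{s\le t}(\phi(Y_{s-}+(Y_s-Y_{s-}))-\phi(Y_{s-}))$ and expressing each jump through the appropriate Poisson measure, one gets
\begin{align*}
\phi(Y_t)&=\phi(Y_0)+\int_{[0,t]\times\mathbb{N}^*\times\R^+}\!\big(\phi(Y_{s-}+\delta_{H^i(Y_{s-})})-\phi(Y_{s-})\big)\One_{\{i\le\langle Y_{s-},1\rangle\}}\One_{\{\theta\le b(H^i(Y_{s-}))(1-p(H^i(Y_{s-})))\}}N_1(ds,di,d\theta)\\
&\quad+\int_{[0,t]\times\mathbb{N}^*\times{\cal X}\times\R^+}\!\big(\phi(Y_{s-}+\delta_z)-\phi(Y_{s-})\big)\One_{\{i\le\langle Y_{s-},1\rangle\}}\One_{\{\theta\le b(H^i(Y_{s-}))p(H^i(Y_{s-}))m(H^i(Y_{s-}),z)\}}N_2(ds,di,dz,d\theta)\\
&\quad+\int_{[0,t]\times\mathbb{N}^*\times\R^+}\!\big(\phi(Y_{s-}-\delta_{H^i(Y_{s-})})-\phi(Y_{s-})\big)\One_{\{i\le\langle Y_{s-},1\rangle\}}\One_{\{\theta\le d(H^i(Y_{s-}),C*Y_{s-}(H^i(Y_{s-})))\}}N_3(ds,di,d\theta).
\end{align*}
Subtracting the compensators $\,ds\,(\sum_k\delta_k(di))\,d\theta\,$ (and $dz$ for $N_2$) turns the right-hand side into $\phi(Y_0)+\int_0^t L\phi(Y_s)\,ds$ plus a sum of three integrals against the compensated measures $\widetilde N_1,\widetilde N_2,\widetilde N_3$. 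Thus~(\ref{pbm2}) equals that sum of compensated-Poisson integrals, hence is a local martingale (Dynkin). To upgrade it to a true martingale I bound the predictable quadratic variation: the integrands are controlled by $C(1+\langle Y_{s-},1\rangle^{p})$ and the total jump rate is $\le\widehat C\langle Y_{s-},1\rangle(1+\langle Y_{s-},1\rangle)$, so the bracket is integrable by~(\ref{lp}) applied with exponent $2p+1$ (which holds as soon as $\E\langle Y_0,1\rangle^{2p+1}<\infty$; if only the $p$-th moment is assumed one localizes via $\tau_n$ and passes to the limit using~(\ref{xxxx})). A standard localization then gives the martingale property and finishes~(i).

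For~(ii), I simply check that $\phi(\nu)=\langle\nu,f\rangle^q$ with $0\le q\le p-1$ and $f$ bounded satisfies the hypothesis of~(i): clearly $|\phi(\nu)|\le\|f\|_\infty^q\langle\nu,1\rangle^q$, and computing $L\phi$ from~(\ref{generator}) one sees each of the three sums over $i$ produces at most $\langle\nu,1\rangle$ terms, each of which is a difference $(\langle\nu,f\rangle\pm f(x))^q-\langle\nu,f\rangle^q$ bounded, via $|(a+h)^q-a^q|\le C_q|h|(1+|a|^{q-1})$ and Assumption~\ref{hyp1} (including $d(x,C*\nu(x))\le\bar d(1+\bar C\langle\nu,1\rangle)$), by $C(1+\langle\nu,1\rangle^{q})$; multiplying by the $\le\widehat C\langle\nu,1\rangle(1+\langle\nu,1\rangle)$ rate-type prefactor yields $|L\phi(\nu)|\le C(1+\langle\nu,1\rangle^{q+1})\le C(1+\langle\nu,1\rangle^{p})$. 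So~(i) applies.

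For~(iii), apply~(ii) with $q=1$: the drift term $\int_0^t L\langle\cdot,f\rangle(Y_s)\,ds$ computed from~(\ref{generator}) is exactly the integral displayed in~(\ref{eq:mart}), so $M^f_t$ as defined there is a martingale, and it is square integrable because, as above, its bracket — which I compute next — is integrable under $\E\langle Y_0,1\rangle^2<\infty$. To obtain~(\ref{qv1}) I use the Doob–Meyer decomposition: apply~(ii) with $q=2$ to get that $\langle Y_t,f\rangle^2-\langle Y_0,f\rangle^2-\int_0^t L\langle\cdot,f\rangle^2(Y_s)\,ds$ is a martingale, then apply It\^o's formula to $(\langle Y_t,f\rangle)^2=(\langle Y_0,f\rangle+M^f_t+A^f_t)^2$ where $A^f$ is the finite-variation (drift) part from~(\ref{eq:mart}); matching the two semimartingale decompositions and using uniqueness of Doob–Meyer identifies $\langle M^f\rangle_t$ with $\int_0^t\big(L\langle\cdot,f\rangle^2(Y_s)-2\langle Y_s,f\rangle\,L\langle\cdot,f\rangle(Y_s)\big)ds$; expanding $L\langle\cdot,f\rangle^2$ and simplifying (the cross terms $2\langle\nu,f\rangle f(x)$ cancel) leaves precisely the $f^2$-expression in~(\ref{qv1}). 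Equivalently, and more directly, one reads $\langle M^f\rangle_t$ off the jump representation: each jump of $\langle Y,f\rangle$ has size $\pm f(x)$ or $f(z)$, so the bracket of the compensated integral is $\int_0^t\int\{((1-p(x))b(x)-d(x,C*Y_s(x)))f^2(x)+p(x)b(x)\int f^2(z)m(x,z)dz\}Y_s(dx)\,ds$. The integrability of this quantity under the second-moment assumption closes the square-integrability claim.

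\textbf{Main obstacle.} The only genuinely delicate point is the integrability/localization needed to pass from local martingale to martingale: one must either strengthen the moment hypothesis on $Y_0$ to an order high enough (roughly $2p+1$, or $3$ for part~(iii)) to control the bracket directly, or run the stopping-time argument with $\tau_n=\inf\{t:\langle Y_t,1\rangle\ge n\}$ from the proof of Theorem~\ref{existence}, establish the martingale property for the stopped processes, and then take $n\to\infty$ using~(\ref{lp}) and uniform integrability — exactly the localization already invoked at the end of the proof of Theorem~\ref{BD-mart}(iii). Everything else is the bounded-generator bookkeeping inherited verbatim from the one-type case.
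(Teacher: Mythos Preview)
Your proposal is correct and follows essentially the same route as the paper: Dynkin's formula via Proposition~\ref{gi} and the moment bound~(\ref{lp}) for~(i), the growth check on $\langle\nu,f\rangle^q$ against~(\ref{generator}) for~(ii), and the It\^o/Doob--Meyer comparison of the decompositions of $\langle Y_t,f\rangle$ and $\langle Y_t,f\rangle^2$ for~(iii), with the third-moment assumption relaxed by localization at the end. One small simplification you can make in~(i): you do not need to control the bracket at all --- since $|\phi(\nu)|+|L\phi(\nu)|\le C(1+\langle\nu,1\rangle^p)$ and $\E(\sup_{t\le T}\langle Y_t,1\rangle^p)<\infty$ by~(\ref{lp}), the local martingale~(\ref{pbm2}) is dominated on $[0,T]$ by an integrable random variable, hence is directly a true martingale.
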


\begin{proof} The proof follows  the proof of Theorem \ref{BD-mart}. 
  First of all, note that point~(i) is immediate thanks to
  Proposition~\ref{gi} and~(\ref{lp}). Point~(ii) follows from a
  straightforward computation using~(\ref{generator}). To prove~(iii),
  we first assume that $E \left( \left< Y_0,1 \right>^3 \right)
  <\infty$. We apply~(i) with $\phi(\nu) = \langle \nu, f\rangle$.
  This gives us that $M^f$ is a martingale.  To compute its bracket, we
  first apply~(i) with $\phi(\nu) = \left< \nu, f\right>^2$ and obtain
  that
  \begin{align}
    \langle Y_t, f\rangle^2 &- \langle Y_0,f\rangle^2 - \intot
    \int_{\cal X}\bigg\{\bigg((1-p(x))b(x)(f^2(x) + 2
    f(x)\left< Y_s, f \right>) \notag \\
    &\hskip 3cm+d(x,C*Y_s(x))(f^2(x) - 2 f(x)\left< Y_s, f
    \right>)\bigg) \notag \\ &+p(x)b(x)\int_{\cal X}(f^2(z)
    + 2 f(z) \left< Y_s, f \right>)\,m(x,z)dz \bigg\} Y_s(dx) ds \label{cc1}
  \end{align}
  is a martingale. On the other hand, we apply the It\^o formula to
  compute $\left< Y_t, f\right>^2$ from~(\ref{eq:mart}). We deduce
  that
  \begin{align}
    \langle Y_t, f\rangle^2 &- \langle Y_0,f\rangle^2 -\intot
    2\left< Y_s, f \right>\int_{\cal
      X}\bigg\{\bigg((1-p(x))b(x)-d(x,C*Y_s(x))\bigg)f(x)
    \notag \\ &+p(x)b(x)\int_{\cal X} f(z) m(x,z)dz
    \bigg\} Y_s(dx) ds - \langle M^f \rangle_t \label{cc2}
  \end{align}
  is a martingale. Comparing~(\ref{cc1}) and~(\ref{cc2}) leads
  to~(\ref{qv1}). The extension to the case where only $E \left(
    \left< Y_0,1 \right>^2\right)<\infty$ is straightforward by a localization argument, since
  also in this case, $E(\langle M^f \rangle_t)<\infty$ thanks
  to~(\ref{lp}) with $p=2$.
\end{proof}

\section{Scaling limits for the individual-based
  process}
\label{sec:large-popu}

\me As in Section 2, we consider the case where the  system size becomes very large. We scale  this size by the integer  $K$   and   look for approximations of the conveniently renormalized measure-valued population process, when $K$ tends to infinity. 

\me For any $K$, let the set of parameters $C_K$, $b_K$, $d_K$,
$m_K$, $p_K$ satisfy Assumption \ref{hyp1}. Let $Y^K_t$ be the
counting measure of the population at time $t$. We define the
measure-valued Markov process $(X^K_t)_{t\geq 0}$ by
\begin{equation*}
  X^K_t=\frac{1}{K}\,Y^K_t.
\end{equation*}

\me As the system size $K$ goes to infinity, we need to assume the

\begin{ann}
\label{hyp2} The parameters $C_K$,  $b_K$, $d_K$,
$m_K$ and $p_K$ are continuous,  $\zeta\mapsto d_{K}(x,\zeta)$ is uniformly  Lipschitz  for  $x\in{\cal
X}$ and
\begin{equation*}
  C_K(x)={C(x)\over K}.
\end{equation*}
\end{ann}

\me A biological interpretation of this renormalization is that larger
systems are made up of smaller individuals, which may be a
consequence of a fixed amount of available resources to be
partitioned among individuals.  Indeed, the biomass of each
interacting individual scales like $1/K$, which may imply that the
interaction effect of the global population on a focal individual
is of order $1$. The parameter $K$ may also be interpreted as scaling
the amount of resources available, so that the renormalization of $C_K$ reflects the decrease of competition for resources.

\me The generator $\tilde{L}^K$ of $(Y^K_t)_{t\geq 0}$ is given
by~(\ref{generator}), with parameters $C_K$,  $b_K$, $d_K$,
$m_K$, $p_K$. The generator $L^K$ of $(X^K_t)_{t\geq 0}$ is
obtained by writing, for any measurable function $\phi$ from
$M_F({\cal X})$ into $\rit$ and any $\nu\in M_F({\cal X})$,
\begin{equation*}
  L^K\phi(\nu)=\partial_t\E_{\nu}(\phi(X^K_t))_{t=0}= \partial_t
  \E_{K\nu}(\phi(Y^K_t/K))_{t=0} = \tilde{L}^K\phi^K(K\nu)
\end{equation*}
where $\phi^K(\mu)=\phi(\mu/K)$. Then we get
\begin{align}
  \label{eq:def-LK}
  L^K\phi(\nu)&=K\int_{\cal X} b_K(x)(1-p_K(x))(\phi(\nu+{1\over
    K}\delta_x)-\phi(\nu)) \nu(dx) \notag \\ &+K\int_{\cal X}\int_{\cal X}
    b_K(x)p_K(x)
    (\phi(\nu+{1\over
    K}\delta_{z})-\phi(\nu))\,m_K(x,z)dz \nu(dx)\notag \\ &+K\int_{\cal X}
  d_K(x,C*\nu(x))(\phi(\nu-{1\over
    K}\delta_x)-\phi(\nu)) \nu(dx).
\end{align}

\noindent By a similar proof as that carried out in Section~\ref{secexist}, we may
summarize the moment and martingale properties of $X^K$.
\begin{Prop}
  \label{XK}
  Assume that for some $p\geq 2$,  $\E(\langle X^K_0,1\rangle^p)<+\infty$.
  \begin{description}
  \item[\textmd{(1)}] For any $T>0$, $\E\big(\sup_{t\in[0,T]}\langle X^K_t,1\rangle^p\big)<+\infty$.
  \item[\textmd{(2)}] For any bounded and measurable function $\phi$
    on $M_F$ such that $|\phi(\nu)|+|L^K\phi(\nu)|\leq
    C(1+\langle \nu,1\rangle^p)$, the process
    $\ \phi(X^K_t)-\phi(X^K_0)-\int_0^tL^K\phi(X^K_s)ds\ $
    is a c\`adl\`ag martingale.
  \item[\textmd{(3)}] For each measurable bounded function $f$, the
    process
    \begin{align*}
      M^{K,f}_t & = \langle X^K_t,f\rangle
      -\langle X^K_0, f\rangle \\ &
      -\int_0^t\int_{\cal X}(b_K(x)-d_K(x,C*X^K_s(x)))f(x)
      X^K_s(dx)ds \\ &-\int_0^t\int_{\cal X}
      p_K(x)b_K(x)
      \bigg(\int_{\cal X}f(z)m_K(x,z)dz-f(x)\bigg)X^K_s(dx)ds
    \end{align*}
    is a square integrable martingale with quadratic variation
  \end{description}
  \begin{multline}
    \label{varquad}
    \langle M^{K,f}\rangle_t={1\over
      K}\bigg\{\int_0^t\int_{\cal X}p_K(x)b_K(x)
    \bigg(\int_{\cal X}f^2(z)\,m_K(x,z)dz- f^2(x)\bigg)X^K_s(dx)ds \\
    +\int_0^t\int_{\cal X}( b_K(x)+d_K(x,C*
    X^K_s(x)))f^2(x)X^K_s(dx)ds\bigg\}.
  \end{multline}
\end{Prop}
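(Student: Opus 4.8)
The plan is to adapt to the rescaled process $X^K=Y^K/K$ the arguments already carried out for the individual-based model, namely the moment estimate of Theorem \ref{existence} and the martingale computations of Theorems \ref{BD-mart} and \ref{martingales}. One uses throughout that $\langle X^K_t,1\rangle=\frac1K\langle Y^K_t,1\rangle$ and that, for each fixed $K$, the parameters $C_K,b_K,d_K,m_K,p_K$ satisfy Assumption \ref{hyp1}, so that $Y^K$ is a well-defined population process in the sense of Definition \ref{dbpe}. For point (1), I would repeat the proof of Theorem \ref{existence}(ii) with $Y$ replaced by $Y^K$: introduce $\tau^K_n=\inf\{t\geq 0:\langle Y^K_t,1\rangle\geq n\}$, drop the non-positive death contributions in the Poisson representation of $\langle Y^K_t,1\rangle^p$, take expectations using $(1+x)^p-x^p\leq C_p(1+x^{p-1})$ and the boundedness of $b_K$, and apply Gronwall's lemma; this yields $\E(\sup_{t\leq T\wedge\tau^K_n}\langle Y^K_t,1\rangle^p)\leq C_{p,T,K}$ uniformly in $n$, hence $\tau^K_n$ tends a.s. to infinity, and Fatou's lemma gives $\E(\sup_{t\leq T}\langle Y^K_t,1\rangle^p)<\infty$; dividing by $K^p$ proves (1). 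Point (2) is Dynkin's formula: since $X^K$ is a pure-jump Markov process whose total jump rate out of a state $\nu$ is bounded by a constant times $K\langle\nu,1\rangle(1+K\langle\nu,1\rangle)$, the process $\phi(X^K_t)-\phi(X^K_0)-\int_0^tL^K\phi(X^K_s)\,ds$ is a local martingale, and the growth condition $|\phi(\nu)|+|L^K\phi(\nu)|\leq C(1+\langle\nu,1\rangle^p)$ together with (1) makes all terms integrable with an integrable time-supremum, so it is a genuine c\`adl\`ag martingale (alternatively one writes it directly from the rescaled version of \eqref{bpe} and the compensation formula for Poisson point measures).

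For point (3), apply (2) with $\phi(\nu)=\langle\nu,f\rangle$: from \eqref{eq:def-LK} and the identities $\langle\nu\pm\frac1K\delta_x,f\rangle-\langle\nu,f\rangle=\pm\frac1Kf(x)$ one obtains exactly the drift displayed in the definition of $M^{K,f}$, so $M^{K,f}$ is a martingale, square integrable by (1) with $p=2$. Next apply (2) with $\phi(\nu)=\langle\nu,f\rangle^2$, expanding $\langle\nu+\frac1K\delta_x,f\rangle^2-\langle\nu,f\rangle^2=\frac2Kf(x)\langle\nu,f\rangle+\frac1{K^2}f(x)^2$ and the analogue for the death jump. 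Writing $A_f(\nu)$ for the integrand of the drift of $M^{K,f}$ and $B_f(\nu)$ for the integrand of the right-hand side of \eqref{varquad}, the order-$1$ terms produce $2\langle\nu,f\rangle A_f(\nu)$ and the order-$1/K$ terms produce $\frac1K B_f(\nu)$, so that
$$\langle X^K_t,f\rangle^2-\langle X^K_0,f\rangle^2-\int_0^t\Big(2\langle X^K_s,f\rangle A_f(X^K_s)+\tfrac1K B_f(X^K_s)\Big)\,ds$$
is a martingale. On the other hand, applying It\^o's formula to $\langle X^K_t,f\rangle^2$ starting from $\langle X^K_t,f\rangle=\langle X^K_0,f\rangle+\int_0^tA_f(X^K_s)\,ds+M^{K,f}_t$ shows that $\langle X^K_t,f\rangle^2-\langle X^K_0,f\rangle^2-\int_0^t2\langle X^K_s,f\rangle A_f(X^K_s)\,ds-\langle M^{K,f}\rangle_t$ is a martingale. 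Subtracting the two and invoking uniqueness of the Doob--Meyer decomposition gives $\langle M^{K,f}\rangle_t=\frac1K\int_0^t B_f(X^K_s)\,ds$, which is \eqref{varquad}. Exactly as in Theorem \ref{martingales}, the case where only $\E(\langle X^K_0,1\rangle^2)<\infty$ is recovered by localization along $\tau^K_n$, using that $\E(\langle M^{K,f}\rangle_t)<\infty$ by (1) with $p=2$.

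All of this is a routine transcription; the only step requiring genuine care is the bookkeeping of the powers of $K$ in $L^K\langle\cdot,f\rangle^2$, where the factor $K$ in front of the generator combines with the first-order increment $\frac1K$ to give an $O(1)$ drift and with the second-order increment $\frac1{K^2}$ to give the $\frac1K$-prefactor in the quadratic variation. This vanishing bracket as $K\to\infty$ is precisely what forces the deterministic (integro-differential) limit in the next section and what has to be kept of order $1$ under the accelerated-birth-and-death scaling leading to superprocess limits.
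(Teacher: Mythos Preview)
Your proof is correct and follows essentially the same approach as the paper, which simply states that the proposition is obtained ``by a similar proof as that carried out in Section~\ref{secexist}'' (i.e., by transcribing Theorem~\ref{existence}(ii) for the moment bound and Theorem~\ref{martingales} for the martingale computations to the rescaled process). Your bookkeeping of the powers of $K$ in $L^K\langle\cdot,f\rangle^2$ is accurate, and your handling of the $p=2$ case via localization (after implicitly using $p\geq 3$ to apply (2) to $\phi(\nu)=\langle\nu,f\rangle^2$) mirrors exactly what is done in the proof of Theorem~\ref{martingales}(iii).
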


\noindent The search of tractable limits for the semimartingales $\langle
X^K,f\rangle$ yields  different choices of scalings for the
parameters.   In particular, we obtain a
 deterministic or stochastic  approximation, depending on the quadratic variation of the martingale term given
in~(\ref{varquad}).

\subsection{Large-population limit}
\label{sec:limit1}

\noindent We assume here that $b_K=b$, $d_K=d$, $p_K=p$, $m_K=m$. We also assume that ${\cal X}$ is a compact subset of $\mathbb{R}^d$ and we endow the measure space $M_F({\cal X})$ with the weak topology.

\begin{Thm}
  \label{largepop}
  Assume Assumptions \ref{hyp1} and \ref{hyp2} hold. Assume moreover  that $\ \sup_KE(\langle X^K_0,1\rangle^3)<+\infty$ and that the initial
  conditions $X^K_0$ converge in law and for the weak topology on
  $M_F({\cal X})$ as $K$ increases, to a finite deterministic measure
  $\xi_0$.

  \noindent Then for any $T>0$, the process $(X^K_t)_{t\geq 0}$ converges in
  law, in the Skorohod space $\dit([0,T],M_F({\cal X}))$, as $K$ goes
  to infinity, to the unique deterministic continuous function $\xi\in
  C([0,T],M_F({\cal X}))$ satisfying  for any bounded $f:{\cal
  X}\rightarrow\rit$
  \begin{align}
    \label{eq:limit1}
    \langle\xi_t,f\rangle & =\langle\xi_0,f\rangle
    +\int_0^t\int_{\cal
      X}f(x)[(1-p(x))b(x)-d(x,C*\xi_s(x))]\xi_s(dx)ds
    \notag \\ & +\int_0^t\int_{\cal X}p(x)b(x)
    \left(\int_{\cal X}f(z)\,m(x,z)dz\right)\xi_s(dx)ds
  \end{align}
\end{Thm}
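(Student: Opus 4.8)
The plan is to follow the standard compactness--uniqueness scheme already used for Theorems \ref{det-1} and \ref{Thlimstoc}, now in the measure-valued setting. The four steps are: (1) identify the limit by proving uniqueness of solutions to \eqref{eq:limit1}; (2) establish uniform moment bounds for the family $(X^K)_K$; (3) prove tightness of the laws of $(X^K)$ in $\dit([0,T],M_F({\cal X}))$; (4) identify any limit point as a solution of \eqref{eq:limit1}, whence by Step 1 the whole sequence converges.

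For Step 1, I would argue as follows. Suppose $\xi^1,\xi^2$ are two solutions of \eqref{eq:limit1} with the same initial condition $\xi_0$. Since ${\cal X}$ is compact, the total masses $\langle\xi^i_t,1\rangle$ are bounded on $[0,T]$ by a Gronwall argument applied to \eqref{eq:limit1} with $f\equiv 1$ (using $b\leq\bar b$, $d\geq 0$). Introduce the total variation distance, or more conveniently test \eqref{eq:limit1} against the countable family separating points and bound $\sup_{\|f\|_\infty\leq 1}|\langle\xi^1_t-\xi^2_t,f\rangle|$. The only nonlinearity is through $d(x,C*\xi_s(x))$; since $\zeta\mapsto d(x,\zeta)$ is Lipschitz (Assumption \ref{hyp2}) and $C$ is bounded (Assumption \ref{hyp1}), the map $\xi\mapsto d(\cdot,C*\xi(\cdot))$ is Lipschitz in total variation on bounded sets of $M_F({\cal X})$. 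A Gronwall estimate then gives $\xi^1=\xi^2$.

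For Step 2, Proposition \ref{XK}(1) with $p=3$ gives $\sup_K\E(\sup_{t\leq T}\langle X^K_t,1\rangle^3)<+\infty$ from the hypothesis $\sup_K\E(\langle X^K_0,1\rangle^3)<+\infty$; this uniform bound is exactly what we need. For Step 3, I would use the Aldous--Rebolledo criterion as in the proof of Theorem \ref{det-1}: the semimartingale decomposition of $\langle X^K_t,f\rangle$ from Proposition \ref{XK}(3) has finite-variation part with increment bounded (using Assumption \ref{hyp1}) by $C\|f\|_\infty\E\big(\int_S^{S'}(1+\langle X^K_s,1\rangle^2)\,ds\big)$ over stopping times $S\leq S'\leq (S+\delta)\wedge T$, which is $O(\delta)$ uniformly in $K$ by Step 2; and the quadratic variation \eqref{varquad} carries a prefactor $1/K$, so its increments are $O(\delta/K)$, also $O(\delta)$ uniformly. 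Tightness of $\langle X^K_\cdot,f\rangle$ for each $f$ in a separating class, together with the compact containment provided by the uniform mass bound, yields tightness of $(X^K)$ in $\dit([0,T],M_F({\cal X}))$ by the standard criterion (e.g.\ \cite{EK,JM86,Al78}); since the jumps of $X^K$ have size $1/K\to 0$, any limit point is supported on continuous paths.

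For Step 4, fix $f$ bounded measurable and recall from Proposition \ref{XK}(3) that
\[
M^{K,f}_t = \langle X^K_t,f\rangle - \langle X^K_0,f\rangle - \int_0^t \Phi_f(X^K_s)\,ds,
\]
where $\Phi_f(\nu)=\int_{\cal X}f(x)[(1-p(x))b(x)-d(x,C*\nu(x))]\nu(dx)+\int_{\cal X}p(x)b(x)\big(\int_{\cal X}f(z)m(x,z)dz\big)\nu(dx)$. By \eqref{varquad} and Step 2, $\E((M^{K,f}_t)^2)=\E(\langle M^{K,f}\rangle_t)\to 0$. The functional $\nu\mapsto\Phi_f(\nu)$ is continuous for the weak topology on the (bounded-mass) support of the limit law because $b,d,p,m$ are continuous (Assumption \ref{hyp2}) and $C$ is bounded continuous, so $C*\nu$ is a bounded continuous function of $x$ depending continuously on $\nu$; hence $\nu\mapsto\psi_t(\nu):=\langle\nu_t,f\rangle-\langle\nu_0,f\rangle-\int_0^t\Phi_f(\nu_s)\,ds$ is a.s.\ continuous at paths charged by the limit $Q$ (using that $Q$ is supported on $C([0,T],M_F({\cal X}))$ to handle the otherwise-discontinuous evaluation maps $\nu\mapsto\nu_t$). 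Uniform integrability of $\psi_t(X^K)$ follows from Step 2, so passing to the limit along the convergent subsequence gives $\E_Q(|\psi_t(X)|)=\lim_K\E(|M^{K,f}_t|)\leq\lim_K(\E((M^{K,f}_t)^2))^{1/2}=0$, i.e.\ $Q$ is concentrated on solutions of \eqref{eq:limit1}. By Step 1 this solution is unique and deterministic, equal to $\xi$, so $X^K\Rightarrow\xi$ and the convergence is in probability.

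The main obstacle is Step 4's continuity of the limiting drift functional $\Phi_f$ in the weak topology together with the handling of the discontinuous projections $\nu\mapsto\nu_t$ on Skorokhod space — precisely the point where compactness of ${\cal X}$ and continuity of $C$ are essential (so that $C*\nu$ varies continuously with $\nu$), and where one must exploit that the limit law is carried by continuous paths; this is the measure-valued analogue of the identification step in Theorem \ref{det-1}, but requires more care because the state space is infinite-dimensional and the nonlinearity acts through the convolution $C*\nu$.
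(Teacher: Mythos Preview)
Your proposal is correct and follows essentially the approach the paper intends: the paper explicitly leaves the proof of Theorem~\ref{largepop} to the reader, indicating it is an adaptation of the compactness--uniqueness scheme of Theorems~\ref{det-1}, \ref{readif} and~\ref{readifstoch}, and your four steps (Gronwall uniqueness in total variation using the Lipschitz assumption on $d$, uniform moment bounds, Aldous--Rebolledo tightness, identification via the vanishing of the martingale part) are precisely that adaptation. One small caveat: Proposition~\ref{XK}(1) as stated gives a bound for fixed $K$, not uniformly in $K$; however, since here $b_K=b$, $d_K=d$ do not depend on $K$, the Gronwall constants in the proof of Theorem~\ref{existence} are $K$-independent and the uniform bound $\sup_K\E(\sup_{t\leq T}\langle X^K_t,1\rangle^3)<\infty$ indeed follows --- you should note this explicitly.
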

\noindent The proof of Theorem~\ref{largepop} is left to the reader. It can
be adapted from the proofs of Theorem~\ref{readif}
and~\ref{readifstoch} below, or obtained as a generalization of
Theorem \ref{det-1}. This result is illustrated by the
simulations of Figs.~\ref{fig:IBM-1}~(a)--(c).\bigskip

\noindent {\bf Main Examples:}
\begin{description}
\item[(1) A density case.]   \begin{Prop}
  \label{densBP}
  Suppose  that
  $\xi_0$ is absolutely continuous with respect to  Lebesgue
  measure.  Then for all $t\geq 0$, $\,\xi_t$ is absolutely continuous
  where respect to  Lebesgue
  measure and $\,\xi_t(dx)=\xi_t(x)dx$, where $\,\xi_t(x)$
    is the solution of the functional
  equation
  \begin{align}
    \label{eq:EID}
    \partial_t\xi_t(x) & =\left[(1-p(x))b(x)
      -d(x,C*\xi_t(x))\right]\xi_t(x)
     +\int_{\rit^d}\,m(y,x) p(y) b(y)\xi_t(y)dy
  \end{align}
  for all $x\in{\cal X}$ and $t\geq 0$. 
   \end{Prop}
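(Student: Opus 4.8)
The plan is to characterize the absolutely continuous measures among the solutions of \eqref{eq:limit1} by testing against suitable functions and exploiting the uniqueness already granted by Theorem \ref{largepop}. First I would fix an arbitrary time horizon $T>0$ and work with the unique $\xi\in C([0,T],M_F({\cal X}))$ from Theorem \ref{largepop}; since uniqueness holds, it suffices to exhibit \emph{one} solution of \eqref{eq:limit1} that is absolutely continuous with density solving \eqref{eq:EID}, and conclude it must coincide with $\xi$. To that end I would set up \eqref{eq:EID} as a Banach-space fixed-point problem: writing $g(x)=(1-p(x))b(x)$ and treating the nonlinear term $d(x,C*u(x))$, the map
\[
\Phi(u)_t(x) = \xi_0(x) + \int_0^t\Big\{ [g(x)-d(x,C*u_s(x))]\,u_s(x) + \int_{{\cal X}} m(y,x)p(y)b(y)\,u_s(y)\,dy\Big\}\,ds
\]
is a contraction on $C([0,T'],L^1({\cal X}))$ (or on $C([0,T'],L^\infty\cap L^1)$) for $T'$ small, because $b,d,p,m,C$ are bounded and $\zeta\mapsto d(x,\zeta)$ is Lipschitz by Assumption \ref{hyp2}, and $C*u$ is controlled in sup-norm by $\|C\|_\infty\|u\|_{L^1}$. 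A standard a priori bound on $\int_{\cal X} u_t(x)\,dx$, obtained by integrating \eqref{eq:EID} in $x$ (the death term has a favorable sign, the birth/mutation terms give a linear Gronwall bound), lets me iterate over $[0,T]$ and get a global nonnegative solution $u\in C([0,T],L^1({\cal X}))$.

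Next I would check that $\xi_t(dx):=u_t(x)\,dx$ actually solves the weak equation \eqref{eq:limit1}: this is just a matter of multiplying \eqref{eq:EID} by a bounded measurable $f$, integrating in $x$, and using Fubini in the mutation term (legitimate since $m(x,z)\le \alpha\bar m(z-x)$ and everything is integrable). By the uniqueness statement in Theorem \ref{largepop}, $\xi_t = u_t(x)\,dx$ for all $t$, which is exactly the assertion. Nonnegativity of $u$ should be established along the fixed-point construction — e.g.\ by noting that the death term acts multiplicatively so one can write $u_t(x) = \xi_0(x)\exp(\int_0^t[g(x)-d(x,C*u_s(x))]ds) + (\text{nonnegative mutation contribution})$ via Duhamel, making the cone of nonnegative functions invariant under $\Phi$.

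The main obstacle I anticipate is not the abstract contraction argument but handling the nonlinearity $d(x,C*u(x))$ cleanly in $L^1$: the product $d(x,C*u_s(x))\,u_s(x)$ is only quadratic-type, so to close the contraction I would either carry an $L^\infty$ bound alongside the $L^1$ bound (propagating $\|u_t\|_\infty$ by the same Duhamel/Gronwall device, using boundedness of $d$ from above) or localize in time and patch. A secondary technical point is justifying that absolute continuity of $\xi_0$ is genuinely preserved for all $t$, i.e.\ that the mutation integral $\int m(y,x)p(y)b(y)\xi_t(y)\,dy$ is an honest $L^1_x$ function — this follows from $m(y,\cdot)\le\alpha\bar m(\cdot-y)$ and $\|\bar m\|_{L^1}=1$ by Young's inequality, so it is routine once the framework is set up.
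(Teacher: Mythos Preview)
Your approach is correct but takes a substantially different route from the paper. The paper does not construct a density solution at all: it works directly with the measure-valued solution $\xi$ already produced by Theorem~\ref{largepop}, picks an arbitrary Borel set $A$ of Lebesgue measure zero, plugs $f=\One_A$ into the weak equation~\eqref{eq:limit1}, drops the (nonpositive) death contribution, and observes that the mutation term vanishes because $\int_{\cal X}\One_A(z)m(x,z)\,dz=0$ for every $x$. This leaves $\langle\xi_t,\One_A\rangle\le\langle\xi_0,\One_A\rangle+\bar b\int_0^t\langle\xi_s,\One_A\rangle\,ds$, and Gronwall's lemma together with $\langle\xi_0,\One_A\rangle=0$ gives $\langle\xi_t,\One_A\rangle=0$ for all $t$. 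Absolute continuity follows immediately, and the density equation~\eqref{eq:EID} is then read off from~\eqref{eq:limit1}.

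In contrast, you build the density first via a Banach fixed-point argument on~\eqref{eq:EID} in $C([0,T'],L^1)$, check it induces a measure-valued solution of~\eqref{eq:limit1}, and invoke the uniqueness in Theorem~\ref{largepop} to identify it with $\xi$. This is sound --- the Lipschitz structure of $d$, boundedness of $b,p,m,C$, and the Young-type bound on the mutation integral give the contraction, and your Duhamel representation handles nonnegativity --- but it is considerably heavier. The paper's argument needs only Gronwall and the fact that $m(x,\cdot)$ is a density, avoiding any PDE existence theory. Your approach buys a direct handle on the density (useful if one wanted further regularity), at the cost of setting up and iterating a contraction with quadratic nonlinearity.
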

   
   \noindent Some people refer to $\xi_t(.)$ as the population
  number density.
  
 \begin{proof}
  Consider a Borel set $A$ of $\mathbb{R}^d$ with Lebesgue measure
  zero.  A
  simple computation allows us to obtain, for all $t\geq 0$,   \begin{align*}
    \langle\xi_{t}, \One_A \rangle &\leq
    \langle \xi_0,\One_A\rangle+ \bar{b} \ \int_0^{t} \int_{\cal X} \One_A(x)\xi_s(dx) ds 
    +\bar{b} \ \int_0^{t} \int_{\cal X}
   \int_{\cal X} \One_A(z) m(x,z) dz \xi_s(dx)ds.
  \end{align*}
  By assumption, the first term on the RHS is zero. The third term is
  also zero, since for any $x \in {\cal X}$, $\int_{\cal X}
  \One_A(z)m(x,z)dz=0$. Using Gronwall's Lemma, we conclude that $\langle \xi_{t},\One_A\rangle = 0$ and then, the measure $\xi_{t}(dx)$ is absolutely continuous w.r.t.  Lebesgue measure.
  One can easily prove from \eqref{eq:limit1} that  the density  function $\xi_t(.)$
   is solution of the functional equation \eqref{eq:EID}.
\end{proof}

\item[(2) The mean field case.]  The case of  structured
  populations with $d(x,C*\xi(x)) = d+ \alpha\, C*\xi(x)$ with constant rates $b$, $d$, $\alpha$ is meaningful, and has been developed in a spatial context where the kernel $C$ describes the resources available (see for example \cite{Ki99}).
  In this context,~(\ref{eq:limit1}) leads to the following equation on the total mass
  $\: n_t=\langle \xi_{t},1\rangle$:
  \begin{equation}
    \label{eq:phi=1}
    \partial_t n_t= (b-d)n_t -
    \alpha\int_{{\cal X}\times {\cal X}} C(x-y)\xi_t(dx)\xi_t(dy).
  \end{equation}
  This equation is not closed in $(n_{t})_{t}$ and presents an unresolved hierarchy of nonlinearities. 
  In the very particular case of uniform competition where $C\equiv 1$ (usually called "mean-field case"), there is a "decorrelative" effect and we recover the classical mean-field
  logistic equation of population growth:
  \begin{equation*}
    \partial_t n_t= (b-d)n_t - \alpha n_t^2.
  \end{equation*}
  
 \item[(3) Monomorphic and dimorphic cases with no mutation.]  We
  assume here that the  mutation probability is
  $p=0$. Without mutation,  the  trait space is fixed at time $0$.

  {\bf (a) Monomorphic case:} All the individuals have the same trait $x$.  Thus, we can write $X^K_0=n^K_0(x)\delta_x$, and then
  $X^K_t=n^K_t(x)\delta_x$ for any time $t$. In this case, Theorem~\ref{largepop}
  recasts  into $n^K_t(x)\rightarrow n_t(x)$ with
  $\xi_t=n_t(x)\delta_x$, and~(\ref{eq:limit1}) reads
  \begin{equation}
    \label{eq:monomorph}
    \frac{d}{dt}n_t(x)=n_t(x)\big(b(x)-d(x,C(0)n_t(x))\big),
  \end{equation}
When $d(x,C*\xi(x)) = d+ \alpha\, C*\xi(x)$, we recognize the logistic equation \eqref{log}. 

  {\bf (b) Dimorphic case:} The population consists in two subpopulations of individuals with traits $x$ and
  $y$, i.e.\  $X^K_0=n^K_0(x)\delta_x+n^K_0(y)\delta_y$ and when $K$ tends to infinity, the limit of $X^K_{t}$  is given by  $\xi_t=n_t(x)\delta_x+n_t(y)\delta_y$
  satisfying~(\ref{eq:limit1}), which recasts into the following system
  of coupled ordinary differential equations:
\end{description}
\begin{equation}
  \label{eq:dimorph}
  \begin{aligned}
    \frac{d}{dt}n_t(x) & \!=\!n_t(x)\big(b(x)
    \!-\!d(x,C(0)n_t(x)\!+\!C(x\!-\!y)n_t(y))\big) \\
    \frac{d}{dt}n_t(y) & \!=\!n_t(y)\big(b(y)
    \!-\!d(y,C(0)n_t(y)\!+\!C(y\!-\!x)n_t(x))\big).
  \end{aligned}
\end{equation}
When $d(x,C*\xi(x)) = d+ \alpha\, C*\xi(x)$, we obtain the competitive Lotka-Volterra system \eqref{LV}.

\subsection{Large-population limit with accelerated births and
deaths} \label{sec:accel}

\me We consider here an alternative limit of  large population,
combined with accelerated birth and death. This may  be useful to
investigate the qualitative differences of evolutionary dynamics
across  populations with allometric demographies (larger
populations made up of smaller individuals who reproduce and die
faster, see \cite{Calder:84}, \cite{Charnov:93}).

\noindent Here, we assume that ${\cal X}= \rit^d$.  Let us denote by $M_F$
the space $M_F(\rit^d)$.  It will be endowed with the weak topology (the test functions are continuous and bounded). We will 
also consider the vague topology (the test functions are compactly supported). We assume that the dominant terms of the birth
and death  rates are proportional to $K^{\eta}$ while
preserving the demographic balance. That is,
the birth and death rates satisfy

\begin{ann}\label{hyp3}
There exists  a bounded continuous function  $\gamma$ such that
\begin{equation*}
  b_K(x)=K^{\eta}\gamma(x)+b(x),\quad
  d_K(x,\zeta)=K^{\eta}\gamma(x)+d(x,\zeta).
\end{equation*}
The functions $b$ and $d$ are continuous and satisfy the properties stated in Assumptions \ref{hyp1} and \ref{hyp2}.
\end{ann}

\me
The allometric effect (smaller individuals reproduce and die faster)
is parametrized by a  positive and bounded function $\gamma$ and by  a real number $\eta\in (0,1]$. A detailed discussion of the
biological meaning of these parameters in terms of allometry and
life-history  can be found in~\cite{CFM06}. Observe that $\eta$ is a parameter scaling the speed of acceleration
of the birth and death rates when $K\rightarrow\infty$ (births and
deaths occur faster for larger $\eta$) and that  the individual growth rate $b_K-d_K$ stays bounded from
above. In other words, the timescale of population growth is assumed to be slower
than the timescale of individuals' births and deaths. As in
Section~\ref{sec:limit1}, the interaction kernel $C$ is
renormalized by $K$. Using similar arguments as in
Section~\ref{sec:limit1}, the process $X^K={1\over K}\,Y^K$ is now a
Markov process with generator
\begin{align*}
  L^K\phi(\nu)&=K\int_{\rit^d} (K^{\eta}
  \gamma(x)+b(x))(1-p_K(x))(\phi(\nu+{1\over
    K}\delta_x)-\phi(\nu)) \nu(dx) 
  \\ & +K\int_{\rit^d} (K^{\eta}
  \gamma(x)+b(x))p_K(x)\int_{\rit^d}(\phi(\nu+{1\over
    K}\delta_{z})-\phi(\nu))\,m_K(x,z)dz \nu(dx) 
  \\ & +K\int_{\rit^d}
  (K^{\eta} \gamma(x)+d(x,C*\nu(x)))(\phi(\nu-{1\over
    K}\delta_x)-\phi(\nu)) \nu(dx).
\end{align*}
As before, for any measurable functions $\phi$ on $M_F$ such that
$|\phi(\nu)|+|L^K\phi(\nu)|\leq C(1+\langle\nu,1\rangle^3)$, the
process
\begin{equation}
  \label{eq:mart-gal}
  \phi(X^K_t)-\phi(X^K_0)-\int_0^tL^K\phi(X^K_s)ds
\end{equation}
is a martingale. In particular, for each measurable bounded
function $f$, we obtain
\begin{align}
  & M^{K,f}_t= \langle X^K_t,f\rangle -
  \langle X^K_0,f\rangle \notag \\ &
  -\int_0^t\int_{\rit^d}(b(x)-d(x,C*X^K_s(x)))f(x)
  X^K_s(dx)ds \label{mart1} \\ & -\int_0^t\int_{\rit^d}
 p_K(x)(K^{\eta}\gamma(x)+b(x))
  \bigg(\int_{\rit^d}f(z)m_K(x,z)dz-f(x)\bigg)X^K_s(dx)ds \notag
\end{align}
is a square integrable martingale with quadratic variation
\begin{multline}
  \label{quadra1}
  \langle M^{K,f}\rangle_t={1\over K}\bigg\{
  \int_0^t\int_{\rit^d}(2K^{\eta}\gamma(x)+ b(x)+d(x,C*
  X^K_s(x)))f^2(x)X^K_s(dx)ds \\
  +\int_0^t\int_{\rit^d}p_K(x)(K^{\eta}\gamma(x)+b(x))
  \bigg(\int_{\rit^d}f^2(z)m_K(x,z)dz- f^2(x)\bigg)X^K_s(dx)ds
  \bigg\}.
\end{multline}

\me In what follows, the
variance of the mutation distribution  $\: m_K\:$ is of order $1/K^{\eta}$. That
will ensure that the deterministic part in~(\ref{mart1}) converges.
In the large-population renormalization
(Section~\ref{sec:limit1}), the quadratic variation of the
martingale part was of  order $1/K$. Here, it is of order
$K^{\eta}\times 1/K$. This quadratic variation will thus stay
finite provided that $\eta\in(0,1]$, in which case tractable
limits will result.  Moreover, this limit will be zero if $\eta<1$
and nonzero if $\eta=1$, which will lead to deterministic or
random limit models, as stated in the two following theorems.


\me We assume here that the mutation rate is fixed, so that mutation events
appear more often as a consequence of accelerated births.
We assume\\

\begin{ann}
\label{hyp4}
\begin{description}

\item[\textmd{(1)}] $p_K=p$. \item[\textmd{(2)}] The mutation
step density $m_K(x,z)$ is the
  density of a random variable with mean $x$, variance-covariance
  matrix $\Sigma(x)/K^{\eta}$ (where
  $\Sigma(x)=(\Sigma_{ij}(x))_{1\leq i,j\leq d}$) and with  third centered
  moment of order $1/K^{\eta+\varepsilon}$ uniformly in $x$
  ($\varepsilon>0$). (Thus, as $K$ goes to infinity, mutant traits
  become more concentrated around their progenitors).
\item[\textmd{(3)}] $\sqrt{\Sigma}$ denotes the symmetric
square
  root matrix of $\Sigma$ and the function $\sqrt{\Sigma \,\gamma\,p}$ is
  Lipschitz  continuous and bounded.
\end{description}
\end{ann}

\me The main example is when the mutation density is  the
vector density of independent Gaussian variables with mean
$x$ and variance $\sigma^2(x)/K^{\eta}$:
\begin{equation}
  \label{eq:gauss}
  m_K(x,z)=\left(\frac{K^{\eta}}{2\pi\sigma^2(x)}\right)^{d/2}
  \exp\left(-K^{\eta}|z-x|^2/2\sigma^2(x)\right)
\end{equation}
where $\sigma^2(x)$ is positive and bounded over $\rit^d$.

\bi Then the convergence results of this section can be stated as
follows.
\begin{Thm}
  \label{readif}
  \begin{description}
  \item[\textmd{(1)}] Assume~ that Assumptions \ref{hyp1}--\ref{hyp4} hold and that $0<\eta<1$.
    Assume also that the initial conditions $X^K_0$ converge in law
    and for the weak topology on $M_F$ as $K$ increases, to a finite
    deterministic measure $\xi_0$ and that
    \begin{equation}
      \label{eq:X3}
      \sup_K \E(\langle X^K_0,1\rangle^3)<+\infty.
    \end{equation}
    Then, for each $T>0$, the sequence of processes $(X^K)$ belonging
    to $\dit([0,T],M_F)$ converges (in law) to the unique
    deterministic function $(\xi_t)_{t\geq 0} \in C([0,T],M_F)$
    satisfying: for each function $f\in C^2_b(\rit^d)$,
    \begin{align}
      \label{readif1}
      \langle\xi_t,f\rangle &
      =\langle \xi_0,f\rangle+ \int_0^t\int_{\rit^d}
      (b(x)-d(x,C*\xi_s(x)))f(x)\xi_s(dx)ds \notag \\ &
      +\int_0^t\int_{\rit^d}{1\over 2}p(x)\gamma(x)
      \sum_{1\leq i,j\leq d}\Sigma_{ij}(x)\partial^2_{ij}
      f(x)\xi_s(dx)ds,
    \end{align}
    where $\partial^2_{ij}f$ denotes the second-order partial
    derivative of $\ f$ with respect to $x_i$ and $x_j$
    ($x=(x_1,\ldots,x_d)$).
  \item[\textmd{(2)}] Assume moreover that there exists $c>0$ such
    that $\gamma(x)p(x)s^*\Sigma(x)s\geq c||s||^2$ for any $x$ and $s$ in
    $\rit^d$. Then for each $t>0$, the measure $\xi_t$ has a density
    with respect to Lebesgue measure.
  \end{description}
\end{Thm}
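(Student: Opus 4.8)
\medskip
\noindent\textit{Proof strategy.} The plan is to run the compactness--uniqueness scheme already used for Theorem~\ref{det-1}, the genuinely new ingredient being the $K^{\eta}$--acceleration of the demographic rates. Throughout, write $g_{s}(x):=b(x)-d(x,C*\xi_{s}(x))$ and let $L^{*}f:=\tfrac12 p\gamma\sum_{i,j}\Sigma_{ij}\partial^{2}_{ij}f$ be the diffusion operator of~\eqref{readif1}; since $\sqrt{p\gamma\Sigma}$ is Lipschitz (Assumption~\ref{hyp4}), $L^{*}$ generates a Feller semigroup $(\hat S_{t})_{t\ge0}$ on $\ritd$. By Assumption~\ref{hyp3} the dominant $K^{\eta}\gamma$ contributions cancel in the drift of $\langle X^{K},1\rangle$, so Proposition~\ref{XK}(1) together with~\eqref{eq:X3} yields $\sup_{K}\E(\sup_{t\le T}\langle X^{K}_{t},1\rangle^{3})<\infty$. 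For tightness in $\dit([0,T],M_{F})$ for the weak topology I would use the standard criterion for measure-valued processes: a compact-containment estimate --- which follows from the uniform mass bound, since balls of bounded mass are vaguely compact --- plus tightness of $(\langle X^{K},f\rangle)_{K}$ for $f$ in a countable dense subset of $C_{b}$. For $f\in C^{2}_{b}$, using the semimartingale decomposition $\langle X^{K},f\rangle=\langle X^{K}_{0},f\rangle+M^{K,f}+A^{K,f}$ of~\eqref{mart1}--\eqref{quadra1} one checks the Aldous--Rebolledo conditions; this reduces to bounds $\E(|A^{K,f}_{S'}-A^{K,f}_{S}|)\le C\,\E(S'-S)$ uniform in $K$, which hold because the factor $K^{\eta}\gamma(x)$ in~\eqref{mart1} is paired with $\int f(z)m_{K}(x,z)\,dz-f(x)=O(K^{-\eta})$ (Taylor expansion and Assumption~\ref{hyp4}(2)), and in~\eqref{quadra1} the term $K^{-1}\!\int 2K^{\eta}\gamma f^{2}\,dX^{K}_{s}=O(K^{\eta-1})$ stays bounded.

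\medskip
\noindent\textit{Identification of the limit.} Any limit point $Q$ of $\mathcal L(X^{K})$ charges only continuous paths, since the jumps of $X^{K}$ have total mass $1/K$. Passing to the limit in~\eqref{mart1}, the martingale term disappears because $\E\big((M^{K,f}_{t})^{2}\big)=\E(\langle M^{K,f}\rangle_{t})=O(K^{\eta-1})\to0$ as $\eta<1$, while the accelerated mutation term converges to $\int_{0}^{t}\langle\xi_{s},L^{*}f\rangle\,ds$; this is the core computation. For $f\in C^{2}_{b}$, using that $m_{K}(x,\cdot)$ has mean $x$ and covariance $\Sigma(x)/K^{\eta}$,
\[
K^{\eta}\Bigl(\int f(z)\,m_{K}(x,z)\,dz-f(x)\Bigr)=\tfrac12\sum_{i,j}\Sigma_{ij}(x)\partial^{2}_{ij}f(x)+\rho_{K}(x),
\]
and, splitting $\ritd$ into $\{|z-x|\le\delta\}$, where the second-order Taylor remainder of $f$ is bounded by $|z-x|^{2}\omega_{f}(\delta)$ with $\omega_{f}$ the modulus of continuity of $D^{2}f$, and $\{|z-x|>\delta\}$, where Markov's inequality against the third moment gives a bound of order $\|f\|_{C^{2}}/(\delta K^{\varepsilon})$, one gets $\sup_{x}|\rho_{K}(x)|\le C\bigl(\omega_{f}(\delta)+C'/(\delta K^{\varepsilon})\bigr)\to0$ on letting first $K\to\infty$ and then $\delta\to0$. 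Uniform integrability from the moment bound then passes the expectations to the limit, so under $Q$ the canonical process is a deterministic solution of~\eqref{readif1}.

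\medskip
\noindent\textit{Uniqueness and absolute continuity.} Inserting the time-dependent test function $\psi_{s}:=\hat S_{t-s}\phi$ (which solves $\partial_{s}\psi_{s}+L^{*}\psi_{s}=0$) into the weak formulation~\eqref{readif1} and letting $s\uparrow t$ gives the Duhamel identity
\[
\langle\xi_{t},\phi\rangle=\int\hat S_{t}\phi\,d\xi_{0}+\int_{0}^{t}\!\int g_{s}(x)\,\hat S_{t-s}\phi(x)\,\xi_{s}(dx)\,ds,\qquad\phi\in C_{b}.
\]
Combined with the a priori mass bound, the boundedness of $b$ and $C$ and the Lipschitz continuity of $\zeta\mapsto d(x,\zeta)$ (Assumption~\ref{hyp2}), a Gronwall argument on this identity gives uniqueness of the solution of~\eqref{readif1} in $C([0,T],M_{F})$, whence the whole sequence $(X^{K})$ converges to it, proving~(1). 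For~(2), under the uniform ellipticity $\gamma p\,s^{*}\Sigma s\ge c\|s\|^{2}$ and the Lipschitz regularity of $\sqrt{p\gamma\Sigma}$, parabolic theory provides a transition density $\hat s_{t}(x,y)$ for $\hat S_{t}$ with a Gaussian upper bound; extending the Duhamel identity to bounded Borel $\phi$ by approximation and taking $\phi=\One_{A}$ for a Lebesgue-null set $A$ (so that $\hat S_{t}\One_{A}\equiv\hat S_{t-s}\One_{A}\equiv0$, while $g_{s}$ is bounded on $[0,T]$) yields $\xi_{t}(A)=0$, i.e. $\xi_{t}$ is absolutely continuous for every $t>0$.

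\medskip
\noindent\textit{Main obstacle.} The delicate point is the identification step, namely the uniform-in-$x$ control of the Taylor remainder $\rho_{K}$ with only $f\in C^{2}_{b}$ available (no third derivative of $f$), which is exactly where the third-moment assumption~\ref{hyp4}(2) on $m_{K}$ is used; a secondary technical issue, on which both the uniqueness and the absolute-continuity arguments rest, is the legitimacy of the time-dependent test function $\hat S_{t-s}\phi$ in~\eqref{readif1} and of the passage $s\uparrow t$.
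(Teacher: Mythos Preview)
Your overall architecture is exactly the paper's: moment bounds, Aldous--Rebolledo tightness, vanishing of the martingale via $\langle M^{K,f}\rangle_{t}=O(K^{\eta-1})$, Taylor expansion of the mutation kernel to produce $L^{*}f$, and the Duhamel/mild formulation with the semigroup $\hat S_{t}$ both for uniqueness and for absolute continuity. Your treatment of the remainder $\rho_{K}$ for $f\in C^{2}_{b}$ via the modulus of continuity of $D^{2}f$ plus a third-moment Markov bound on $\{|z-x|>\delta\}$ is actually more explicit than the paper, which simply works with $f\in C^{3}_{b}$ first and then says the extension to $C^{2}_{b}$ ``is not hard''.

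There is, however, a genuine gap in your tightness step. You write that compact containment in $(M_{F},w)$ ``follows from the uniform mass bound, since balls of bounded mass are vaguely compact''. That sentence proves tightness only for the \emph{vague} topology: on $M_{F}(\rit^{d})$ with the weak topology, a set $\{\nu:\langle\nu,1\rangle\le R\}$ is not relatively compact, because mass can escape to infinity. Weak compactness requires, in addition to a mass bound, uniform tightness of the underlying measures on $\rit^{d}$. Nothing in the hypotheses bounds $\langle X^{K}_{t},|x|\rangle$ or any other coercive functional, so this cannot be waved away. The paper handles this by first proving tightness for the vague topology (your Aldous--Rebolledo argument with compactly supported $f$ is fine for that), and then upgrading to the weak topology via an explicit control of the mass near infinity: it introduces smooth cut-offs $f_{n}$ approximating $\One_{\{|x|\ge n\}}$ and proves (Lemma~\ref{trick}) that
\[
\lim_{n\to\infty}\limsup_{K\to\infty}\E\Big(\sup_{t\le T}\langle X^{K}_{t},f_{n}\rangle\Big)=0,
\]
which is precisely the missing uniform-tightness estimate. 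The upgrade from vague to weak limits then goes through the criterion of M\'el\'eard--Roelly~\cite{MR93}, reducing to convergence of the total-mass processes $\langle X^{K},1\rangle$. You should insert this step (or an equivalent Lyapunov-type control of mass at infinity, but note that none is available from the assumptions) between your tightness and identification paragraphs.

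A minor second point: Proposition~\ref{XK}(1) is stated for fixed $K$ and does not by itself give $\sup_{K}\E(\sup_{t\le T}\langle X^{K}_{t},1\rangle^{3})<\infty$; the birth rate $K^{\eta}\gamma+b$ blows up with $K$. The paper in fact only proves $\sup_{K}\sup_{t\le T}\E(\langle X^{K}_{t},1\rangle^{3})<\infty$ (no pathwise sup inside) via the algebraic cancellation $(x+\tfrac1K)^{3}+(x-\tfrac1K)^{3}-2x^{3}=6x/K^{2}$, and then the pathwise bound $\sup_{K}\E(\sup_{t\le T}\langle X^{K}_{t},1\rangle^{2})<\infty$ from Doob's inequality on $M^{K,1}$. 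This weaker pair of estimates is what is actually needed (the third moment for uniform integrability in the identification, the pathwise second moment for tightness), so you should adjust your claim accordingly.
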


\noindent Observe that the limit~(\ref{readif1}) is independent of
$\eta\in(0,1)$. As will appear in the proof, this
comes from the fact that the growth rate $b_K-d_K$ is
independent of $\eta$ and that the mutation kernel $m_K(x,z)$
compensates exactly the dispersion in the trait space induced by the
acceleration of the births with mutations.

\begin{rema}
  In case (2), Eq.~(\ref {readif1}) may be written as
  \begin{equation}
    \label{readifde}
    \partial_t
    \xi_t(x)=\big(b(x)-d(x,C*\xi_t(x))\big)\xi_t(x) +
    {1\over 2}\sum_{1\leq i,j\leq d}
    \partial^2_{ij}(\gamma\, p \Sigma_{ij}\xi_t)(x).
  \end{equation}
  Observe that for the example~(\ref{eq:gauss}), this equation writes
  \begin{equation}
    \label{eq:reacdiff}
    \partial_t\xi_t(x)=\big(b(x)-d(x,C*\xi_t(x))\big)\xi_t(x)
    +{1\over 2}\Delta(\sigma^2\gamma\, p \,\xi_t)(x).
  \end{equation}
  Therefore, Eq.~(\ref{eq:reacdiff}) generalizes the Fisher
  reaction-diffusion equation known from classical population genetics
  (see e.g.~\cite{Bu00}).
\end{rema}

\begin{Thm}
  \label{readifstoch}
  Assume~that Assumptions \ref{hyp1}--\ref{hyp4} hold and that $\eta=1$. Assume also that the
  initial conditions $X^K_0$ converge in law and for the weak topology
  on $M_F$ as $K$ increases, to a finite (possibly random)
  measure $X_0$, and that
  $\: \sup_K \E(\langle X^K_0,1\rangle^3)<+\infty.$

\noindent Then, for each $T>0$, the sequence of processes $(X^K)$ converges in
  law in $\dit([0,T],M_F)$ to the unique (in law) continuous
  superprocess $X \in C([0,T],M_F)$, defined by the following
  equations :
  \begin{equation}
    \label{eca}
    \sup_{t \in [0,T]} \E\left( \langle X_t,1\rangle^3 \right) <\infty,
  \end{equation}
  and for any $f \in C^2_b(\mathbb{R}^d)$,
  \begin{align}
    \label{dfmf}
    \bar M^f_t&=\langle X_t,f\rangle - \langle X_0,f\rangle - \frac{1}{2}
    \int_0^t  \int_{\rit^d}  p(x)\gamma(x)
    \sum_{1\leq i,j\leq d}\Sigma_{ij}(x)\partial^2_{ij}
    f(x) X_s(dx)ds \notag
    \\ &- \int_0^t \int_{\rit^d} f(x)\left(
      b(x)-d(x,C*X_s(x))\right)X_s(dx)ds
  \end{align}
  is a continuous martingale with quadratic variation
  \begin{equation}
    \label{qvmf}
    \langle\bar M^f\rangle_t= 2\int_0^t \int_{\rit^d}\gamma(x) f^2(x) X_s(dx)ds.
  \end{equation}
\end{Thm}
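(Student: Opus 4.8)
The plan is to follow the same compactness-uniqueness scheme that was used for Theorem~\ref{det-1}, Theorem~\ref{Thlimstoc} and (implicitly) Theorem~\ref{readif}, the only new feature being that the limiting object is now a genuine measure-valued diffusion (a superprocess) rather than a deterministic flow. First I would establish the \emph{uniqueness} of the martingale problem~\eqref{eca}--\eqref{qvmf}: this is the classical characterization of the Dawson--Watanabe superprocess with spatially dependent branching rate $2\gamma(x)$, diffusion generator $\frac12 p(x)\gamma(x)\sum_{ij}\Sigma_{ij}(x)\partial^2_{ij}$ and bounded nonlinear drift $b(x)-d(x,C*X_s(x))$; one may invoke the general duality/uniqueness results of Ethier--Kurtz \cite{EK} or Etheridge \cite{Et04}, after checking that the nonlinear death term is Lipschitz in the measure argument for the weak topology (which follows from Assumption~\ref{hyp2}, since $\zeta\mapsto d(x,\zeta)$ is Lipschitz and $C$ is bounded continuous). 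The existence part of the martingale problem will be a byproduct of the convergence argument.

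Next I would collect the \textbf{a priori moment bounds}. Exactly as in Proposition~\ref{XK}(1), from $\sup_K\E(\langle X^K_0,1\rangle^3)<\infty$ and Assumption~\ref{hyp1} one gets $\sup_K\E(\sup_{t\le T}\langle X^K_t,1\rangle^3)<\infty$ for every $T$; the only point to watch is that the accelerated rates $K^\eta\gamma(x)$ cancel in the drift of $\langle X^K_t,1\rangle$ (the $+K^\eta\gamma$ in birth and death compensate), so Gronwall applies with constants uniform in $K$. This bound controls all the error terms below. Then I would prove \textbf{tightness} of $(X^K)$ in $\dit([0,T],M_F)$ endowed with the weak topology: by Roelly's criterion it suffices to show tightness of $(\langle X^K_\cdot,f\rangle)$ in $\dit([0,T],\R)$ for $f$ in a dense (for the bounded-pointwise sense) subset of $C^2_b$, and for this one applies the Aldous--Rebolledo criterion to the semimartingale decomposition~\eqref{mart1}--\eqref{quadra1}, estimating the finite-variation part and the bracket over a small time interval $[S,S']$ by $C(S'-S)\E(\sup_{t\le T}(1+\langle X^K_t,1\rangle^2))$ — here $\eta\le1$ is exactly what keeps the $K^\eta/K$ factor in~\eqref{quadra1} bounded. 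Since the jumps of $X^K$ have mass $1/K\to0$, any limit point charges only $C([0,T],M_F)$.

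Finally I would \textbf{identify the limit}. Fix a limit point $Q$, with canonical process $X$. For $f\in C^2_b(\R^d)$ I apply~\eqref{eq:mart-gal} with $\phi(\nu)=\langle\nu,f\rangle$ and with $\phi(\nu)=\langle\nu,f\rangle^2$, obtaining that $\bar M^{K,f}_t$ in~\eqref{mart1} is a martingale with bracket~\eqref{quadra1}. The key computation is a Taylor expansion of the mutation term: writing $\int f(z)m_K(x,z)\,dz-f(x)=\frac12\sum_{ij}\frac{\Sigma_{ij}(x)}{K^\eta}\partial^2_{ij}f(x)+o(K^{-\eta})$ using Assumption~\ref{hyp4}(2) (mean $x$, covariance $\Sigma(x)/K^\eta$, third moment $O(K^{-\eta-\varepsilon})$ uniformly), the factor $K^\eta\gamma(x)$ in the birth rate turns this into the fixed second-order term $\frac12 p(x)\gamma(x)\sum_{ij}\Sigma_{ij}(x)\partial^2_{ij}f(x)$, plus an error that is $O(K^{-\varepsilon})$ after integration against $X^K_s$ by the moment bound; the residual $p_K(x)b(x)(\ldots)$ contribution is $O(1/K^\eta)$. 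Similarly, in the bracket~\eqref{quadra1} the dominant $\frac{1}{K}\cdot 2K^\eta\int\gamma(x)f^2(x)X^K_s(dx)\,ds$ survives with limit $2\int_0^t\int\gamma f^2\,X_s(dx)ds$ when $\eta=1$, while all other pieces are $O(1/K)$ or $O(1/K^\eta)$ and vanish. Passing to the limit along the subsequence — using that $X\mapsto \bar M^f_t(X)$ and $X\mapsto (\bar M^f_t)^2(X)-\langle\bar M^f\rangle_t(X)$ are $Q$-a.s.\ continuous at continuous paths (the nonlinear term $d(x,C*X_s(x))$ is continuous in $X_s$ for the weak topology because $C$ is bounded continuous) and uniformly integrable by the third-moment bound — shows that under $Q$ the process~\eqref{dfmf} is a continuous martingale with bracket~\eqref{qvmf}, i.e.\ $Q$ solves the martingale problem. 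Uniqueness of that martingale problem then gives $X^K\Rightarrow X$.

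The main obstacle is the limit identification of the martingale term, specifically controlling the Taylor remainder in the mutation kernel \emph{uniformly in $x$ and uniformly in $K$} after integration against the (random, unbounded-support) measure $X^K_s$; this is where Assumption~\ref{hyp4}(2)'s uniform-in-$x$ third-moment estimate $O(K^{-\eta-\varepsilon})$ and the uniform third-moment bound $\sup_K\E(\sup_{t\le T}\langle X^K_t,1\rangle^3)<\infty$ must be combined carefully (one splits $f$'s Hessian, which is bounded, and bounds $\int (1)\,X^K_s(dx)=\langle X^K_s,1\rangle$). A secondary subtlety, already flagged in the statement as needing $\eta=1$, is that the deterministic and stochastic scalings are genuinely different: for $\eta<1$ the bracket vanishes and one recovers Theorem~\ref{readif}, whereas only at $\eta=1$ does the $K^\eta\cdot K^{-1}$ balance produce the nontrivial Gaussian fluctuation~\eqref{qvmf}; this dichotomy should be remarked but requires no extra work beyond tracking the power of $K$.
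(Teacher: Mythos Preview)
Your proposal is correct and follows the same compactness--uniqueness scheme as the paper's proof, including the Taylor expansion of the mutation kernel and the identification of the bracket via $\phi(\nu)=\langle\nu,f\rangle^2$. Two refinements are worth flagging. First, for uniqueness the paper does not invoke duality (which is awkward with the nonlinear competition term $d(x,C*X_s(x))$) but instead uses the Girsanov-type transform of Evans--Perkins~\cite{EP94}, following Etheridge~\cite{Et04}, to reduce to the known uniqueness of the standard super-Brownian martingale problem; the only thing to check is $\E\big(\int_0^t\int[b(x)-d(x,C*X_s(x))]^2 X_s(dx)\,ds\big)<\infty$, which follows from~\eqref{eca}. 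Second, since here $\mathcal X=\R^d$ is noncompact, Roelly's criterion gives tightness only for the \emph{vague} topology on $M_F$; an additional step (Lemma~\ref{trick} in the paper, controlling the mass near infinity via smooth cutoffs $f_n$) is required to upgrade convergence to the weak topology, and you should not skip it. Finally, be careful with the moment bound: the paper only establishes $\sup_K\sup_{t\le T}\E(\langle X^K_t,1\rangle^3)<\infty$ (supremum outside the expectation, your cancellation remark is the key) together with $\sup_K\E(\sup_{t\le T}\langle X^K_t,1\rangle^2)<\infty$, and these two suffice; your stronger claim with the supremum inside for $p=3$ is not what Proposition~\ref{XK}(1) gives uniformly in $K$ under the accelerated rates.
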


\begin{rema}
  \begin{description}
  \item[\textmd{(1)}] The limiting measure-valued process $X$ appears
    as a generalization of the one proposed by Etheridge~\cite{Et04}
    to model spatially structured populations.
  \item[\textmd{(2)}] The conditions characterizing the  above process $X$
    can be formally rewritten as
    \begin{equation*}
      \partial_t X_t(x) =\bigg(b(x)-d(x,C*X_t(x))\bigg)X_t(x)
      +{1\over 2}\sum_{1\leq i,j\leq d}
      \partial^2_{ij}(\gamma\,p\,\Sigma_{ij}X_t)(x)+ \dot{M}_t
    \end{equation*}
    where $\dot{M}_t$ is a random fluctuation term, which reflects the
    demographic stochasticity of this fast birth and death process,
    that is, faster than the accelerated birth and death process which
    led to the deterministic reaction-diffusion
    approximation~(\ref{eq:reacdiff}).
  \item[\textmd{(3)}] As developed in Step~1 of the proof of
    Theorem~\ref{readifstoch} below, a Girsanov's theorem relates the law of
    $X_t$ and the one of a standard super-Brownian motion, which leads us to
    conjecture that a density for $X_t$ exists only when $d=1$, as
    for 
    super-Brownian motion.
  \end{description}
\end{rema}

These two theorems are illustrated by the simulations of
Figs.~\ref{fig:IBM-2}~(a) and~(b).

\paragraph{Proof of Theorem~\ref{readif}} {\bf (1)} We divide the proof in six
steps. Let us fix $T>0$.\bigskip

\noindent {\bf Step 1} \phantom{9} Let us first show the uniqueness of a
solution of the equation~(\ref{readif1}).

\noindent To this aim, we define the evolution equation associated
with~(\ref{readif1}). It is easy to prove that if $\xi$ is a
solution of~(\ref{readif1}) satisfying $\sup_{t\in [0,T]}
\langle\xi_t,1 \rangle <\infty$, then for each test function
$\psi_t(x)=\psi(t,x) \in C^{1,2}_b(\rit_+\times \rit^d)$, one has
\begin{align}
  \label{readif2}
  \langle\xi_t,\psi_t\rangle&=\langle \xi_0,\psi_0\rangle+
  \int_0^t\int_{\rit^d}
  (b(x)-d(x,C*\xi_s(x)))\psi(s,x)\xi_s(dx)ds \notag \\ &
  +\int_0^t\int_{\rit^d}(\partial_s \psi(s,x)+{1\over
    2}\gamma(x)p(x)\sum_{i,j}\Sigma_{ij}(x)\partial^2_{ij}
    \psi_s(x))\xi_s(dx)ds.
\end{align}
Now, since the function $\sqrt{\Sigma \,\gamma\,p}$ is Lipschitz
continuous, we may define the transition semigroup $(P_t)$ with
infinitesimal generator $\displaystyle{f\mapsto{\gamma\,p\over 2} \sum_{i,j}\Sigma_{ij}\partial^2_{ij}f}$.  Then, for each
function $f\in C^{2}_b(\rit^d)$ and fixed $t>0$, choosing
$\psi_s(x)=P_{t-s}f(x)$ yields
\begin{equation}
  \label{readif3} \langle
  \xi_t,f\rangle=\langle \xi_0,P_tf\rangle+ \int_0^t\int_{\rit^d}
  (b(x)-d(x,C*\xi_s(x)))P_{t-s}f(x)\xi_s(dx)ds,
\end{equation}
since $\partial_s \psi_s(x)+{1\over
  2}\gamma(x)p(x)\sum_{i,j}\Sigma_{ij}(x)\partial^2_{ij} \psi_s(x)=0$ for
this choice.

\me We now prove the uniqueness of a solution of~(\ref{readif3}).

\noindent Let us consider two solutions $(\xi_t)_{t\geq 0}$ and $(\bar
\xi_t)_{t\geq 0}$ of~(\ref{readif3}) satisfying $\sup_{t\in [0,T]}
\left< \xi_t+\bar \xi_t,1 \right> =A_T <+\infty$. We consider the
variation norm defined for $\mu_1$ and $\mu_2$ in $M_F$ by
\begin{equation}
  || \mu_1 - \mu_2 || = \sup_{f \in
    L^\infty({\rit^d}), \; | | f ||_\infty \leq 1}
  |\left<\mu_1-\mu_2,f\right>|.
\end{equation}
Then, we consider some bounded and measurable function $f$ defined
on ${\cal X}$ such that $|| f ||_\infty \leq 1$ and obtain
\begin{align}
  \label{lip}
  | \left<\xi_t-\bar \xi_t,f\right> |&\leq
  \intot \left| \int_{\rit^d} [\xi_s(dx) - \bar \xi_s(dx)]
    \left(b(x)-d(x,C*\xi_s(x))\right)P_{t-s}f(x)\right| ds
    \notag \\ &  + \intot \left|\int_{\rit^d} \bar
    \xi_s(dx)(d(x,C*\xi_s(x))-d(x,C*\bar{\xi}_s(x)))P_{t-s}f(x)\right|
    ds.
\end{align}

Since $|| f ||_\infty \leq 1$, then $|| P_{t-s}f ||_\infty \leq 1$
and for all $x\in \rit^d$,
\begin{equation*}
  \left|(b(x)-d(x,C*\xi_s(x)))P_{t-s}f(x)\right|\leq
  \bar{b}+\bar{d}(1+\bar{C}A_T).
\end{equation*}
Moreover,  $d$ is Lipschitz continuous in its second
variable with Lipschitz constant  $K_d$. Thus we obtain
from~(\ref{lip}) that
\begin{equation}
  | \left<\xi_t-\bar \xi_t,f\right> | \leq
  \left[\bar{b}+\bar{d}(1+\bar{C}A_T) +K_d A_T\bar{C}
  \right] \intot  || \xi_s - \bar\xi_s ||ds.
\end{equation}
Taking the supremum over all functions $f$ such that $| | f
||_\infty \leq 1$, and using  Gronwall's Lemma, we finally deduce
that for all $t\leq T$, $|| \xi_t - \bar\xi_t| | =0$. Uniqueness
holds.\bigskip

\me {\bf Step 2} \phantom{9} Next, we would like to obtain some moment
estimates.  First, we check that for any $T>0$,
\begin{equation}
  \label{me1}
  \sup_K \sup_{t \in [0,T]}  \E \big(\langle X^K_t,1\rangle^3
  \big)
  <\infty.
\end{equation}
To this end, we use~(\ref{eq:mart-gal}) with $\phi(\nu)=\langle
\nu,1\rangle^3$. (To be completely rigorous, one should first use
$\phi(\nu)=\langle \nu,1\rangle^3 \land A$ and  let $A$ tend to
infinity). Taking expectation, we obtain that for all $t\geq 0$,
all $K$,
\begin{align*}
  &\E \left(\langle X^K_t,1\rangle^3 \right) = \E \left(\langle
    X^K_0,1\rangle^3 \right) \\ &+ \intot  \E \bigg(\int_{\rit^d}
   \bigg([K^{\eta+1}\gamma(x) +K b(x)] \left\{[\langle
      X^K_s,1\rangle+ \frac{1}{K}]^3-\langle X^K_s,1\rangle^3 \right\}
  \\ &+\left\{K^{\eta+1}\gamma(x) + K d(x,C* X^K_s(x)) \right\}
    \left\{[\langle X^K_s,1\rangle- \frac{1}{K}]^3-\langle
      X^K_s,1\rangle^3 \right\}\bigg) X^K_s(dx) \bigg)ds.
\end{align*}
Dropping the non-positive death term involving $d$, we get
\begin{align*}
 \E \left(\langle X^K_t,1\rangle^3 \right) & \leq  \E \left(\langle
    X^K_0,1\rangle^3 \right)  \\
 & \quad  + \int_0^t
    \E \bigg(\int_{\rit^d}\bigg(
  K^{\eta+1}\gamma(x) \left\{[\langle X^K_s,1\rangle+ \frac{1}{K}]^3 +
    [\langle X^K_s,1\rangle- \frac{1}{K}]^3 -2\langle X^K_s,1\rangle^3
  \right\}  \\
  & \qquad \qquad  + Kb(x) \left\{[\langle X^K_s,1\rangle+
    \frac{1}{K}]^3-\langle X^K_s,1\rangle^3 \right\}\bigg)X^K_s(dx)
  \bigg)ds.
\end{align*}
But for all $x\geq 0$, all $\e \in (0,1]$, $(x+\e)^3 -x^3 \leq 6
\e (1+x^2)$ and $| (x+\e)^3+(x-\e)^3-2x^3 | = 6 \e^2 x$. We
finally obtain
\begin{equation*}
  \E \left(\langle X^K_t,1\rangle^3 \right) \leq \E \left(\langle
      X^K_0,1\rangle^3 \right)+ C \intot  \E\left(\langle
      X^K_s,1\rangle +\langle X^K_s,1\rangle^2  +\langle X^K_s,1\rangle^3
  \right)ds.
\end{equation*}
Assumption~(\ref{eq:X3}) and
 Gronwall's Lemma allow us to conclude that~(\ref{me1}) holds.\\
Next, we wish to check that
\begin{equation}
  \label{me2}
  \sup_K \E \big(\sup_{t \in [0,T]} \langle X^K_t,1\rangle^2\big)
  <\infty.
\end{equation}
Applying~(\ref{mart1}) with $f \equiv 1$, we obtain
\begin{equation*}
  \langle X^K_t,1\rangle = \langle X^K_0,1\rangle+ \intot  \int_{\cal X}
  \left( b(x)-d(x,C* X^K_s(x)) \right) X^K_s(dx)ds +
  M^{K,1}_t.
\end{equation*}
Hence
\begin{equation*}
  \sup_{s \in [0,t] }\langle X^K_s,1\rangle^2 \leq C\bigg(\langle
    X^K_0,1\rangle^2 + \bar{b} \intot  \langle X^K_s,1\rangle^2 ds +
  \sup_{s \in [0,t] } | M^{K,1}_s |^2 \bigg).
\end{equation*}
Thanks to~(\ref{eq:X3}), the Doob inequality and  Gronwall's
Lemma, there exists a constant $C_t$ not depending on $K$ such
that
\begin{equation*}
  \E \big(\sup_{s \in [0,t]} \langle X^K_s,1\rangle^2 \big) \leq
  C_t \left(1+ \E \left( \langle M^{K,1} \rangle_t \right) \right).
\end{equation*}
Using now~(\ref{quadra1}), we obtain, for some other constant
$C_t$ not depending on $K$,
\begin{equation*}
  \E \left( \langle M^{K,1} \rangle_t \right) \leq C \intot
  \big(\E\left( \langle X^K_s,1\rangle +
    \langle X^K_s,1\rangle^2 \right)\big)ds \leq C_t
\end{equation*}
thanks to~(\ref{me1}). This concludes the proof
of~(\ref{me2}).\bigskip

\me {\bf Step 3} \phantom{9} We first endow $M_F$ with the vague
topology, the extension to the weak topology being handled in Step
6 below. To show the tightness of the sequence of laws
$Q^K=\loi(X^K)$ in ${\cal
  P}(\dit([0,T],(M_F,v)))$, it suffices, following Roelly~\cite{Ro86}, to
show that for any continuous bounded function $f$ on $\rit^d$, the
sequence of laws of the processes $\langle X^K,f\rangle$ is tight
in $\dit([0,T], \rit)$. To this end, we use the Aldous
criterion~\cite{Al78} and the Rebolledo criterion
(see~\cite{JM86}). We have to show that
\begin{equation}
  \label{tight} \sup_K
  \E\big(\sup_{t\in [0,T]} | \langle X^K_t,f\rangle | \big)
  <\infty,
\end{equation}
and the Aldous condition  respectively for the  predictable
quadratic variation of the martingale part and
for the drift part of the semimartingales $\langle X^K,f\rangle$. \\
Since $f$ is bounded,~(\ref{tight}) is a consequence
of~(\ref{me2}): let us thus consider a couple $(S,S')$ of stopping
times satisfying a.s.~$0 \leq S \leq S' \leq S+\delta\leq T$.
Using~(\ref{quadra1}) and (\ref{me2}), we get for constants $C,
C'$
\begin{equation*}
  \E\left(\langle M^{K,f}\rangle_{S'}- \langle
    M^{K,f}\rangle_S\right) \leq C \E\left( \int_S^{S+\delta} \left(
      \langle X^K_s,1\rangle+ \langle X^K_s,1\rangle^2
    \right)ds\right)\leq C' \delta.
\end{equation*}
In a similar way,  the expectation of the finite variation part of
$\langle X^K_{S'},f\rangle - \langle X^K_S,f\rangle$ is bounded by
$C' \delta$.

\noindent Hence,  the sequence $(Q^K=\loi(X^K))$ is tight  in ${\cal
P}(\dit([0,T],(M_F,v)))$.

\me {\bf Step 4}  We want to get a convergence result with $M_{F}$ endowed with the weak topology. To this aim, as in   \cite{jourdain12} Lemmas 4.2 and 4.3,
 we use a sequence of functions  which will help to control the measures outside the compact sets.  

\begin{Lem}
\label{trick}
There exists a sequence   $(f_{n})_{n}$   of elements of $C^2_{b}(\mathbb{R}^d, [0,1])$ such that
$$f_{n}= 0 \ \hbox{ on }B(0,n-1) \ ;\  f_{n}= 1 \ \hbox{ on } B(0,n)^c,$$
where $B(0,r)$ is the euclidian ball centered in $0$ with radius $r$,
which satisfies
\be
\label{trick1}
\lim_{n\to \infty } \limsup_{K\to \infty} \E\left(\sup_{t\leq T}\langle X^K_{t}, f_{n}\rangle\right)= 0.
\ee
\end{Lem}

\noindent 
We refer to \cite{jourdain12} for the proof of Lemma \ref{trick}. 

\bi {\bf Step 5} \phantom{9} Let us now denote by $Q$ the weak limit  in ${\cal
P}(\dit([0,T],(M_F,v)))$
of a subsequence of $(Q^K)$ which we  also denote by
$(Q^K)$. Let $X=(X_t)_{t\geq 0}$ a process with law $Q$. We remark
that by construction, almost surely,
\begin{equation*}
  \sup_{t\in [0,T]}\ \sup_{f \in L^\infty(\rit^d), || f
    ||_\infty\leq 1} | \langle X^K_t,f \rangle -
  \langle X^K_{t^-},f \rangle | \leq 1/K.
\end{equation*}
Since, for each $f$ in a countable measure-determining set of continuous and compactly supported functions on $\rit$, the mapping $\nu\mapsto\sup_{t\leq T} |\langle \nu_t,f\rangle - \langle \nu_{t-},f\rangle |$ is continuous on $\mathbb{D}([0,T], (M_F,v))$, one deduces that $Q$ only charges the continuous processes  from $[0,T]$ into $(M_F,v)$. 
Let us now endow  $M_F$ with the weak convergence topology and check that $Q$ only charges the continuous processes from $[0,T]$ into $(M_F,w)$, and that the sequence $(Q^K)$ in ${\cal
P}(\dit([0,T],(M_F,w)))$ converges weakly to $Q$.
 For this purpose, we need to control the behavior of the total mass of the measures. We  employ  the sequence $(f_n)$ of smooth functions introduced in  Lemma  \ref{trick} which approximate the functions ${\bf 1}_{\{|x|\geq n\}}$.
For each  $n\in{\mathbb N}$, the continuous and compactly supported functions $(f_{n,l}:=f_n(1-f_l))_{l\in{\mathbb N}}$ increase to $f_n$, as $l\to\infty$. Continuity of the mapping $\nu\mapsto \sup_{t\leq T} \langle \nu_t,f_{n,l}\rangle$ on $\mathbb{D}([0,T], (M_F,v))$, and its    uniform integrability deduced from 
\eqref{me2}, imply the bound
$$
\E\left(\sup_{t\leq T}\langle X_t,f_{n,l}\rangle\right)=\lim_{K\to\infty}\E\left(\sup_{t\leq T}\langle X^K_t,f_{n,l}\rangle\right)\leq\liminf_{K\to\infty}\E\left(\sup_{t\leq T}\langle  X^K_t,f_{n}\rangle\right).
$$
Taking the limit, $l\to\infty$,  in the left-hand-side, in view of the monotone  convergence theorem and respectively \eqref{me2} and Lemma \ref{trick}, one concludes that   for $n=0$, 
\begin{equation}
 \E\left(\sup_{t\leq T}\langle X_t,1\rangle\right)=\E\left(\sup_{t\leq T}\langle X_t,f_0\rangle\right)<+\infty\label{contmass}
\end{equation} 
 and for general $n$,    \begin{equation}
 \lim_{n\to\infty}\E\left(\sup_{t\leq T}\langle X_t,f_n\rangle\right)=0.\label{tightlim}
\end{equation}
As a consequence one may extract a subsequence of the sequence $(\sup_{t\leq T}\langle X_t,f_n\rangle)_n$ that converges a.s. to $0$ under $Q$, and the set of measures $(X_t)_{t\leq T}$ is tight  $Q$-a.s. Thus $(X_{s_n}
 )$ is relatively compact for any $s_n\rightarrow t$. Moreover the unique limiting measure is 
 $X_t$ since  $Q$ also only charges  the continuous processes  from $[0,T]$ into $(M_F,v)$.
One deduces that $Q$ also only charges  the continuous processes  from $[0,T]$ into $(M_F,w)$.

\noindent According to M\'el\'eard and Roelly~\cite{MR93}, to prove that the sequence $(Q^K)$ converges weakly to $Q$ in ${\cal
P}(\dit([0,T],(M_F,w)))$, it is sufficient to check that the processes $(\langle X^K,1\rangle=(\langle X^K_t,1\rangle)_{t\leq T})_K$ converge in law to $\langle X,1\rangle\stackrel{\rm def}{=}(\langle X_t,1\rangle)_{t\leq T}$ in $\dit([0,T],\rit)$.
For  a Lipschitz continuous and bounded function $F$ from  $\mathbb{D}([0,T],  \mathbb{R})$ 
to $\mathbb{R}$, we have
\begin{align*}
\limsup_{K\to\infty}|\E(F(\langle \nu^K,1\rangle)&- F(\langle X,1\rangle)| \leq \limsup_{n\to\infty}\, \limsup_{K\to\infty}|\E(F(\langle X^K,1\rangle)- F(\langle X^K,1-f_n\rangle))| \\
&+
\limsup_{n\to\infty}\limsup_{K\to\infty}|\E(F(\langle X^K,1-f_n\rangle)- F(\langle X,1-f_n\rangle))| \\
&+\limsup_{n\to\infty}|\E(F(\langle X,1-f_n\rangle)- F(\langle X,1\rangle))|. 
\end{align*}
Since $|F(\langle X,1-f_n\rangle)- F(\langle X ,1\rangle)|\leq C\sup_{t\leq T}\langle X_t,f_n\rangle$, Lemma \ref{trick} and  \eqref{tightlim} respectively imply that the first and the third terms in the r.h.s. are equal to $0$. The second term is $0$ in view of the  continuity of the mapping $\nu\mapsto \langle\nu,1-f_n\rangle$ in $\dit([0,T],(M_F,v))$.
\bigskip

\me {\bf Step 6} \phantom{9} The time $T>0$ is fixed. Let us now check
that almost surely, the process $X$ is the unique solution
of~(\ref{readif1}).  Thanks to~(\ref{contmass}), it satisfies
$\sup_{t\in
  [0,T]} \langle X_t,1\rangle <+\infty$ a.s., for each $T$. We fix now
a function $f\in C^3_b(\rit^d)$ (the extension of~(\ref{readif1})
to
any function $f$ in $C^2_b$ is not hard) and some $t\leq T$.\\
For $\nu \in C([0,T],M_F)$, denote 
\begin{align}
  \Psi^1_t(\nu)&= \langle \nu_t,f\rangle - \langle\nu_0,f\rangle -
  \intot \int_{\rit^d}
  (b(x)-d(x,C*\nu_s(x)))f(x)\nu_s(dx)ds,\notag \\
  \Psi^2_t(\nu)&=-\intot \int_{\rit^d}  {1\over
    2}p(x)\gamma(x)\sum_{i,j}\Sigma_{ij}(x)\partial^2_{ij}f(x)\nu_s(dx) ds.
\end{align}

\me Our aim is to show that
\begin{equation}
  \label{wwhtp}
  \E \left( |\Psi^1_t(X)+\Psi^2_t(X) | \right)=0,
\end{equation}
implying that $X$ is solution of \eqref{readif2}.

\noindent By~(\ref{mart1}), we know that for each $K$,
\begin{equation*}
  M^{K,f}_t=\Psi^1_t(X^K)+\Psi^{2,K}_t(X^K),
\end{equation*}
where
\begin{multline}
  \Psi^{2,K}_t(X^K)=-\int_0^t\int_{\rit^d}
  p(x)(K^{\eta}\gamma(x)+b(x))\\ \times \bigg(\int_{\rit^d}
  f(z)m_K(x,z)dz-f(x)\bigg)X^K_s(dx)ds.
\end{multline}
Moreover,~(\ref{me2}) implies that for each $K$,
\begin{equation}
  \label{cqmke}
 \E \left( | M^{K,f}_t |^2 \right) = \E \left( \langle
      M^{K,f}\rangle_t \right)\leq \frac{C_{f}K^{\eta}}{K} \E\left(
    \intot  \left\{\langle X^K_s,1\rangle
      +\langle X^K_s,1\rangle^2\right\}ds \right) \leq
  \frac{C_{f,T}K^{\eta}}{K},
\end{equation}
which goes to $0$ as $K$ tends to infinity, since $0<\eta<1$.
Therefore,
\begin{equation*}
  \lim_K \E(|\Psi^1_t(X^K)+\Psi^{2,K}_t(X^K)|)=0.
\end{equation*}

\noindent Since $X$ is a.s.~strongly continuous (for the weak topology) and since $f\in C^3_b(\rit^d)$
and thanks to the continuity of the parameters, the functions
$\Psi^1_t$ and $\Psi^2_t$ are a.s.~continuous at $X$. Furthermore,
for any $ \nu \in \dit([0,T],M_F)$,
\begin{equation}
  |\Psi^1_t(\nu)+\Psi^2_t(\nu)| \leq C_{f} \int_0^T
  \left(1+\langle \nu_s,1\rangle^2 \right)ds.
\end{equation}
Hence using~(\ref{me1}), we see that the sequence
$(\Psi^1_t(X^K)+\Psi^2_t(X^K))_K$ is uniformly integrable, and
thus
\begin{eqnarray}
  \label{cqv1}
  \lim_K \E\left(|\Psi^1_t(X^K)+\Psi^2_t(X^K)|\right)
  = \E\left(|\Psi^1_t(X)+\Psi^2_t(X)|\right).
\end{eqnarray}

\noindent We have now to deal with $\Psi^{2,K}_t(X^K)-\Psi^2_t(X^K)$. The
convergence of this term is due to the fact that the measure
$m_K(x,z)dz$ has mean $x$, variance $\Sigma(x)/K^{\eta}$, and
third moment bounded by $C/K^{\eta+\varepsilon}$ ($\varepsilon>0$)
uniformly in $x$.  Indeed, if $Hf(x)$ denotes the Hessian matrix
of $f$ at $x$,
\begin{align}
  & \int_{\rit^d} f(z) m_K(x,z)dz \notag \\ &=\int_{\rit^d}
  \left(f(x)+(z-x)\cdot\nabla f(x) +\frac{1}{2} (z-x)^*
  Hf(x)(z-x)+O((z-x)^3)\right) m_K(x,z)dz \notag \\ &= f(x)+{1\over
    2}\sum_{i,j}\frac{\Sigma_{ij}(x)}{K^{\eta}}\partial^2_{ij}
  f(x)+{\it o}({1\over K^{\eta}}). \label{eq:M_K}
\end{align}
where $K^{\eta}{\it o}({1\over K^{\eta}})$ tends to $0$ uniformly
in $x$ (since $f$ is in $C^3_b$), as $K$ tends to infinity.  Then,
\begin{multline*}
  \Psi^{2,K}_t(X^K)=-\int_0^t\int_{\rit^d}
  p(x)(K^{\eta}\gamma(x)+b(x))\times \\ \times\bigg({1\over 2}
  \sum_{i,j}\frac{\Sigma_{ij}(x)}{K^{\eta}}\partial^2_{ij}
  f(x)+{\it o}({1\over K^{\eta}})\bigg)X^K_s(dx)ds,
\end{multline*}
and
\begin{equation*}
  |\Psi^{2,K}_t(X^K)-\Psi^2_t(X^K)| \leq C_f
  \big(\sup_{s\leq T}<X^K_s,1>\big)\bigg({1\over K^{\eta}}+K^{\eta}{\it o}({1\over
    K^{\eta}})\bigg).
\end{equation*}
Using~(\ref{me2}), we conclude the proof of~(\ref{wwhtp}) and Theorem~\ref{readif}  {\bf (1)}.

\bi Let us now prove the point  {\bf (2)}. The non-degeneracy property
$\gamma(x)p(x)s^*\Sigma(x)s\geq c\|s\|^2$ for each $x,s\in \mathbb{R}^d$ implies that for each time $t>0$, the transition
semigroup $P_t(x,dy)$ introduced in Step 1 of this proof has for
each $x$ a density function $p_t(x,y)$ with respect to 
Lebesgue measure. Then if we come back to the evolution
equation~(\ref{readif3}), we can write
\begin{multline*}
  \int_{\rit^d} f(x) \xi_t(dx) =\int_{\rit^d}
  \left(\int_{\rit^d}f(y)p_t(x,y)dy\right) \xi_0(dx)\\ +
  \int_0^t\int_{\rit^d}
  (b(x)-d(x,C*\xi_s(x)))\bigg(\int_{\rit^d}
  f(y)p_{t-s}(x,y)dy\bigg)\xi_s(dx)ds.
\end{multline*}

\noindent Using the fact that the parameters are bounded, that $\
\sup_{t\leq
  T}\langle\xi_t,1\rangle<+\infty\ $ and that $\ f\ $ is bounded, we can
apply Fubini's theorem and deduce that
\begin{equation*}
  \int_{\rit^d} f(x) \xi_t(dx)=\int_{\rit^d}H_t(y)f(y)dy
\end{equation*}
with $H\in L^{\infty}([0,T],L^1(\rit^d))$, which implies that
$\xi_t$ has a density with respect to  Lebesgue measure for
each time $t\leq T$.

\noindent Equation~(\ref{readifde}) is then the dual form of
(\ref{readif1}).\hfill$\Box$
\bigskip

\paragraph{Proof of Theorem~\ref{readifstoch}}
We  use a similar method as the one of the previous theorem.
Steps~2, 3, 4 and~6 of this proof can be achieved exactly in the
same way. Therefore, we only have to prove the uniqueness (in law)
of the solution to the martingale
problem~(\ref{eca})--(\ref{qvmf}) (Step~1), and that any
accumulation point of the sequence of laws of $X^K$ is solution
to~(\ref{eca})--(\ref{qvmf}) (Step~6).\bigskip

\me {\bf Step 1} \phantom{9} This uniqueness result is well-known for
the super-Brownian process (defined by a similar martingale
problem, but with $b=d=0$, $\gamma=p=1$ and $\Sigma=\mbox{Id}$, cf.
\cite{Ro86}). Following ~\cite{Et04}, we may use the version of
Dawson's Girsanov transform obtained in Evans and
Perkins~\cite{EP94} (Theorem~2.3), to deduce the uniqueness in our
situation, provided the condition
\begin{equation*}
  \E\left(\intot \intrd
    [b(x)-d(x,C*X_s(x))]^2X_s(dx) ds\right)<+\infty
\end{equation*}
is satisfied. This is easily obtained from the assumption that
$\sup_{t\in [0,T]} \E[\langle X_t,1\rangle^3]<\infty$ since the
coefficients are bounded.\bigskip

\me {\bf Step 6} \phantom{9} Let us now identify the limit. Let us call
$Q^K=\loi(X^K)$ and denote by $Q$ a limiting value of the tight
sequence $Q^K$, and by $X=(X_t)_{t\geq 0}$ a process with law $Q$.
Because of Step~5, $X$ belongs a.s.~to $C([0,T],(M_F,w))$. We want to
show that $X$ satisfies the conditions~(\ref{eca}), (\ref{dfmf})
and~(\ref{qvmf}). First note that~(\ref{eca}) is straightforward
from~(\ref{me2}). Then, we show that for any function $f$ in
$C^3_b(\mathbb{R}^d)$, the process $\bar M^f_t$ defined
by~(\ref{dfmf}) is a martingale (the extension to every function
in $C^2_b$ is not hard). We consider $0\leq s_1\leq...\leq
s_n<s<t$, some continuous bounded maps $\phi_1,...\phi_n$ on
$M_F$, and our aim is to prove that, if the function $\Psi$ from
$\mathbb{D}([0,T],M_F)$ into $\mathbb{R}$ is defined by
\begin{align}
  &\Psi(\nu) = \phi_1(\nu_{s_1})...\phi_n(\nu_{s_n}) \Big\{
  \langle \nu_t,f\rangle -\langle \nu_s,f\rangle
  \notag \\ & -\! \int_s^t\! \intrd\! \bigg({1\over 2}p(x)
  \gamma(x)\sum_{i,j}\Sigma_{ij}\partial^2_{ij} f(x)+ f(x)
  \left[b(x)-d(x,C*\nu_u(x)) \right]\bigg)\nu_u(dx)du
  \Big\},
\end{align}
then
\begin{equation}
  \label{cqfd44}
  \E\left( |\Psi(X) |\right)=0.
\end{equation}
It follows from~(\ref{mart1}) that
\begin{eqnarray}
  \label{262626}
  0=\E \left(\phi_1(X^K_{s_1})...\phi_n(X^K_{s_n})\left\{ M^{K,f}_t -
    M^{K,f}_s \right\}\right)=\E \left( \Psi(X^K) \right) - A_K,
\end{eqnarray}
where $A_K$ is defined by
\begin{multline*}
  A_K=\E \Big( \phi_1(X^K_{s_1})...\phi_n(X^K_{s_n})\int_s^t \intrd
    p(x) \Big\{ b(x)\Big[\intrd  (f(z)- f(x))m_K(x,z)dz\Big]\\
    +\gamma(x) K\Big[ \intrd  (f(z) - f(x)
    - \sum_{i,j}{\Sigma_{ij}(x)\over
    2K}\partial^2_{ij}f(x))m_K(x,z)dz\Big]
  \Big\}X^K_u(dx)du\Big).
\end{multline*}
Using~(\ref{eq:M_K}), we see  that $A_K$ tends to zero  as $K$
grows to infinity and using~(\ref{me2}), that the sequence
$(|\Psi(X^K)|)_K$ is uniformly integrable, so
\begin{equation}
  \label{333}
  \lim_K \E\left(|\Psi (X^K)|\right) = \E\left(|\Psi(X)| \right).
\end{equation}
Collecting the previous results allows us to conclude
that~(\ref{cqfd44}) holds, and thus $\bar M^f$ is a martingale.\\
We finally have to show that the bracket of $\bar M^f$ is given
by~(\ref{qvmf}). To this end, we first check that
\begin{align}
  \label{lfalqoc}
  \bar N^{f}_t &= \langle X_t,f \rangle^2 - \langle X_0,f \rangle^2
  -\intot  \intrd 2 \gamma(x)f^2(x) X_s(dx)ds \notag \\
  &- 2\intot \langle X_s,f \rangle  \intrd  f(x)
  \left[b(x)-d(x,C*X_s(x)) \right]X_s(dx) ds \notag \\
  &- \intot   \langle X_s,f \rangle  \intrd
  p(x)\gamma(x)\sum_{i,j}\Sigma_{ij}(x)\partial^2_{ij}f(x) X_s(dx) ds
\end{align}
is a martingale. This can be done exactly as for $\bar M^f_t$,
 using the semimartingale
decomposition of $\langle X^K_t,f \rangle^2$, given by
(\ref{eq:mart-gal}) with $\phi(\nu)=\langle\nu,f\rangle^2$.
  On the other hand, It\^o's formula implies that
\begin{multline*}
  \langle X_t,f \rangle^2 - \langle X_0,f \rangle^2 - \langle \bar
  M^f\rangle_t
  - \intot  \langle X_s,f \rangle \intrd
  \gamma(x)p(x)\sum_{i,j}\Sigma_{ij}(x)\partial^2_{ij}f(x)X_s(dx) ds \\
  - 2\intot  \langle X_s,f \rangle  \intrd  f(x)
  \big[b(x)-d(x,C*X_s(x)) \big]X_s(dx) ds
\end{multline*}
is a martingale. Comparing this formula with~(\ref{lfalqoc}), we
obtain (\ref{qvmf}).\hfill$\Box$
\bigskip

\section{Splitting  Feller Diffusion for Cell Division with Parasite Infection}
\label{div}
 \me We now deal  with  a continuous time model for dividing cells which are infected by parasites. We assume that parasites proliferate in the cells and that their life times are much shorter than the cell life times. The  quantity of parasites $(X_t : t\geq 0)$ in a cell is modeled by a  Feller diffusion (See  Sections \ref{LPA} and \ref{CSBP}). The cells divide in continuous time at  rate $\tau(x)$ which may depend on the quantity of parasites $x$ that they contain. When a cell divides, a random fraction $F$ of the parasites goes in 
 the first daughter cell and a fraction $(1-F)$ in the second one. 
 More generally,  splitting  Feller diffusion may model  the quantity of some biological content which grows (without ressource limitation) in the cells and is shared randomly when the cells divide (for example  proteins, nutriments, energy or extrachromosomal rDNA circles in yeast). 
 
 \me   Let us give some details about the biological motivations.
 The modeling of  parasites sharing is inspired by experiments conducted in Tamara's Laboratory where bacteria E-Coli have been infected with bacteriophages. These experiments show that 
 a heavily infected cell often shares in a heavily infected cell and a lightly infected  cell. Thus we are interested in taking into account unequal parasite splitting  and we do not make restrictive 
 (symmetry) assumptions about the sharing of parasites. 
 We aim at quantifying the role of asymmetry in the infection. Without loss of generality, we assume that $F$ is distributed as $1-F$ and we say that the sharing is asymmetric when its distribution is not closely concentrated  around $1/2$. \\
This splitting diffusion is a "branching within branching" process, in the same vein as 
the multilevel model for plasmids considered by Kimmel \cite{kimmel}. In the latter model,
the cells divide in continuous time at a constant rate and the number of parasites is a discrete quantity which is fixed at the birth of the cell: the parasites reproduce 'only when the cells divide'. Moreover the parasites sharing is symmetric. 

\me Let us first describe briefly our process.
We denote by  $\mathfrak{I}:=\cup_{n\geq 0} \{1,2\}^n$   the usual labeling of a binary tree and by   $n(\ud i)$  the counting measure on $\mathfrak{I}$. We  define $V_t\subset \mathfrak{I}$ as the set of cells alive at time $t$ and $N_{t}$ the number of cells alive at time $t$: $N_{t}= \# V_{t}$.
  For $i\in V_t$, we denote by  $X^i_t\in \R_+$ the quantity of parasites in the cell $i$ at time $t$. \\
The population of cells at time $t$ including their parasite loads is modeled  by  the random point measure on $\R_+$ :
\begin{equation}Z_t(\ud x)=\sum_{i\in V_t}\delta_{X_t^i}(\ud x),\label{defz}
\end{equation}
and the dynamics of $Z$ is described as follows.
\begin{enumerate}
\item A cell with load $x$ of parasites  divides into two daughters at rate $\tau(x)$, where for some $p\geq 1$ and any
$x \geq0$,  $\tau(x)\leq \bar{\tau}(1+x^p)$.
\item During the division, the parasites are shared between the two daughters: $Fx$ parasites  in one cell (chosen at random) and $(1-F)x$.
\item Between two divisions, the quantity of parasites in a cell follows a Feller diffusion process (see \eqref{Feller}), with  diffusion coefficient $\,\sqrt{2\gamma x}$ and drift coefficient $\, r x$, $r$ and $\gamma$ being two real numbers, $\gamma>0$.
\end{enumerate}
Let us give a pathwise representation of the Markov process $(Z_{t},t\geq 0)$. 
 Let $(B^i,i\in \mathfrak{I})$ be a family of independent Brownian motions (BMs) and let $N(\ud s,\ud u,\ud i,\ud \theta)$ be a Poisson point measure (PPM) on $\R_+\times \R_+\times \mathfrak{I}\times [0,1]$ with intensity $q(\ud s,\ud v,\ud i,\ud \theta)=\ud s\, \ud v\,n(\ud i)\,\P(F \in \ud \theta)$ independent of the BMs.  We denote by $(\mathcal{F}_t : t\geq 0)$ the canonical filtration associated with the BMs and the PPM. Then, 
for every $(t,x)\mapsto f(t,x)\in C^{1,2}_b(\R_+\times \R_+,\R)$ (the space of bounded functions of class $C^1$ in $t$ and $C^2$ in $x$ with bounded derivatives),
 \begin{align}
\langle Z_t,f\rangle = & f(0,x_0) +\int_0^t\int_{\R_+}\left(\partial_s f(s,x)+rx\partial_x f(s,x)+\gamma x \partial^2_{xx} f(s,x)\right)
Z_s(\ud x)\,\ud s \nonumber\\
 +&  M^f_t + \int_0^t \int_{\R_+\times \mathfrak{I}\times [0,1]} \ind_{i\in V_{s_-},\, u\leq \tau (X^i_{s_-})}\Big(f(s,\theta X^i_{s_-})\nonumber\\
 & \quad \qquad \qquad \qquad +f(s,(1-\theta)X^i_{s_-})-f(s,X^i_{s_-})\Big)N(\ud s,\ud u,\ud i,\ud \theta), \label{martingalegdepopp}
\end{align}
where $x_0$ is the load of parasites in the ancestor cell $\emptyset$ at $t=0$ and
 \begin{align}
M^f_t=\int_0^t \sum_{i\in V_s}\sqrt{2 \gamma X^i_s}\partial_x f(s,X^i_s)dB^i_s\end{align}is a continuous square integrable martingale with quadratic variation:
\begin{align}
\langle M^{f}\rangle_t= & \int_0^t\int_{\R_+}2\gamma x (\partial_x f(s,x))^{2}\,ds\,Z_s(\ud x).\label{crochetmartingalegdepopp}
\end{align}

\begin{rema} The existence and uniqueness of a solution of  \eqref{martingalegdepopp} are obtained from an adaptation of Subsection 5.3.  See also the next subsection for an approximation proof 
of the existence and \cite{MR2754402} for details. \\
Notice that between two jumps, $\langle Z_t,f\rangle = \sum_{i\in V_t}f(X_t^i)$ and It\^o's formula explains the second and third terms of \eqref{martingalegdepopp}. The fourth term (driven by the PPM) models the division  events with the random sharing of parasites. 
\end{rema}

\begin{Prop}
The total quantity of parasites $X_t=\int_{\R_+}x \,Z_t(\ud x)$ is a  Feller diffusion  (defined in \eqref{Feller}) with drift $r x$ and diffusion coefficient $\sqrt{2 \gamma x}$ starting from $x_{0}$. As a consequence,
\be
\forall t\in \R_+,\, \E_{x_0}(X_t)= x_0 e^{rt}<+\infty\quad, \quad \P_{x_0}(\exists t\geq 0 : \ X_t =0)=\exp(-rx_0/\gamma).\label{equationmomentfeller}
\ee
\end{Prop}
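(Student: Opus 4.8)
The plan is to read off the law of the total parasite mass $X_t=\langle Z_t,\mathrm{id}\rangle=\sum_{i\in V_t}X_t^i$ directly from the semimartingale decomposition \eqref{martingalegdepopp}, using the fact that the random sharing of parasites at a division conserves their total number. Morally one wants to take $f(t,x)=x$ in \eqref{martingalegdepopp}; since this $f$ is unbounded I would first apply \eqref{martingalegdepopp} to a sequence $f_k\in C^{1,2}_b(\R_+\times\R_+)$ with $f_k(x)=x$ on $[0,k]$, $f_k$ nondecreasing and $f_k'\in[0,1]$, and then let $k\to\infty$. The key point is that for $x\le k$ the jump contribution vanishes identically, $f_k(\theta x)+f_k((1-\theta)x)-f_k(x)=\theta x+(1-\theta)x-x=0$, so the only surviving jump terms come from $\{x>k\}$ and are killed in the limit by a localization/moment argument; meanwhile \eqref{crochetmartingalegdepopp} with $\partial_x f_k\to 1$ gives the bracket. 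Passing to the limit I expect
$$X_t=x_0+\int_0^t rX_s\,\ud s+M_t,\qquad \langle M\rangle_t=\int_0^t 2\gamma X_s\,\ud s,$$
with $M$ a continuous local martingale.

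The integrability needed to upgrade $M$ to a true square-integrable martingale, and to guarantee $X_t<\infty$, comes from a standard localization: stopping at $T_n=\inf\{t:X_t\ge n\}$, taking expectations in the decomposition kills $M$ and yields $\E(X_{t\wedge T_n})\le x_0+|r|\int_0^t\E(X_{s\wedge T_n})\,\ud s$, so Gronwall's lemma gives $\E(X_{t\wedge T_n})\le x_0e^{|r|t}$ uniformly in $n$; hence $T_n\to\infty$ a.s., $\E(X_t)\le x_0e^{|r|t}<\infty$, and $\E\langle M\rangle_t=2\gamma\int_0^t\E(X_s)\,\ud s<\infty$. Then, exactly as in the proof of Theorem \ref{Thlimstoc}, a martingale representation argument (enlarging the probability space if necessary, and invoking L\'evy's characterization) produces a Brownian motion $B$ with $M_t=\int_0^t\sqrt{2\gamma X_s}\,\ud B_s$. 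Thus $X$ is a solution of the Feller equation \eqref{Feller} with $\lambda-\mu=r$, and by the uniqueness in law recalled in Subsection \ref{ScalingDim1}, $X$ is the Feller diffusion with drift $rx$ and diffusion coefficient $\sqrt{2\gamma x}$ started at $x_0$.

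The two announced identities then follow immediately. Taking expectations in $X_t=x_0+\int_0^t rX_s\,\ud s+M_t$ shows that $m(t):=\E_{x_0}(X_t)$ solves $m'=rm$, $m(0)=x_0$, i.e. $\E_{x_0}(X_t)=x_0e^{rt}<\infty$ (equivalently, $e^{-rt}X_t$ is a martingale, cf. the exercise in Subsection \ref{CP}). For the absorption probability, the Feller diffusion is the CSBP with branching mechanism $\psi(\lambda)=\gamma\lambda^2-r\lambda=\lambda(\gamma\lambda-r)$ by Corollary \ref{expo}, whose largest root is $\eta=r/\gamma$; since $\int^{\infty}\ud\lambda/\psi(\lambda)<\infty$ and, for the (stable) Feller diffusion, hitting $0$ and converging to $0$ coincide, Proposition \ref{fel-beh} yields $\P_{x_0}(\exists t\ge 0:X_t=0)=\exp(-x_0\eta)=\exp(-rx_0/\gamma)$.

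The main obstacle is the single genuinely technical step: the passage from \eqref{martingalegdepopp}, which is stated for bounded $C^{1,2}$ test functions, to the linear unbounded function $f(x)=x$ — that is, justifying the truncation $f_k\uparrow\mathrm{id}$, controlling the residual jump terms on $\{x>k\}$, and carrying out the localization that simultaneously shows non-explosion of $X_t$ and square-integrability of $M$. Once the decomposition for $X_t$ is established, the remaining steps (L\'evy representation, uniqueness in law of the Feller diffusion, and the two corollaries) are routine given the results already proved in Sections \ref{ScalingDim1}, \ref{CP} and \ref{CSBPP}.
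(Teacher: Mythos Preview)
Your proposal is correct and follows essentially the same route as the paper: derive the semimartingale decomposition $X_t=x_0+\int_0^t rX_s\,\ud s+M_t$ with $\langle M\rangle_t=\int_0^t 2\gamma X_s\,\ud s$ from \eqref{martingalegdepopp} (the crucial observation being that the jump term cancels because sharing conserves total mass), then invoke the representation theorem from the proof of Theorem~\ref{Thlimstoc} to produce the driving Brownian motion. The paper's proof is terse and does not spell out the truncation/localization step you rightly flag as the only technical point; your treatment of this step and of the two corollaries via Corollary~\ref{expo} and Proposition~\ref{fel-beh} is more detailed than, but entirely in line with, what the authors call ``classical arguments.''
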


\begin{proof}
We remark that $X_{t}$ can be written
$\, X_{t}= x_{0}+ \int_{0}^t
 r X_{s}  ds + M_{t}$ where $M$ is a continuous square integrable martingale with quadratic variation $\int_{0}^t 2 \gamma X_{s} ds$. The representation theorem explained in the proof of Theorem \ref{Thlimstoc}   allows us to conclude. The properties \eqref{equationmomentfeller} follow by classical arguments.
  \end{proof}

 \subsection{Approximation by scaling limit}

\me
Inspired by the previous sections, we are looking for discrete approximations of the continuous model defined in \eqref{martingalegdepopp}, where each cell hosts a discrete parasite population. Let us introduce, as previously, the scaling parameter $K$. Let us describe the approximating model. The initial cell contains $[K x_0]$ parasites. 
The parasites  reproduce asexually with the individual birth and death rates
$K\gamma+\lambda, \ 
K\gamma+\mu$, where $\lambda,\mu>0$ satisfy  $\lambda-\mu=r>0$. The cell population is fully described by the point measure
$$\bar{Z}^K_t(\ud u, \ud x)=\sum_{i\in V_t}\delta_{(i,X_t^{K,i})}(\ud u, \ud x)$$
where $X_t^{K,i}$ is the number of parasites renormalized by $K$ in the cell $i$ at time $t$. This representation allows to keep a record of  the underlying genealogy, which is useful in the forthcoming proofs. It also  provides a closed equation to characterize the process $\bar{Z}$ (and derive $Z$). Notice that an alternative representation has been given in  Section \ref{sec:large-popu}
 by ordering the atoms  $X_t^{K,i}$ for $i\in V_t$. \\

\me Let $N^0$ and $N^1$ be two independent PPMs on $\R_+\times \mathcal X_0:=\R_+\times \mathfrak{I}\times \R_+$ and $\R_+\times \mathcal X_1:=\R_+\times \mathfrak{I}\times \R_+\times [0,1]$ with intensity measures $\ud s n(\ud i)\ud u$ and  $\ud s \,n(\ud i)\ud u \,\P(F\in \ud \theta)$. We associate $N^1$ to 
 to the births and deaths of parasites, while $N^2$ corresponds to  the cell divisions. The discrete space process is the unique strong solution of
\begin{align}
& \bar{Z}^K_t=  \delta_{(\emptyset,[Kx_0]/K)} \label{eds} \\
&+  \int_0^t \int_{\mathcal X_0} N^0(\ud s,\ud i, \ud u)\ind_{i\in V_{s_-}}\left[\left(\delta_{(i,X_{s_-}^{K,i}+1/K)}-\delta_{(i,X_{s_-}^{K,i})}\right)\ind_{u\leq  \lambda_K
 X^{K,i}_{s_-}}\right.\nonumber\\
  &\qquad \qquad \qquad \qquad \qquad +  \left. \left(\delta_{(i,X_{s_-}^{K,i}-1/K)}-\delta_{(i,X_{s_-}^{K,i})}\right)\ind_{\lambda_K X^{K,i}_{s_-}< u
   \leq (\lambda_K+\mu_K)X^{K,i}_{s_-}}\right].\nonumber\\
&+ \int_0^t \int_{\mathcal X_1} N^1(\ud s,\ud i,\ud u,\ud \theta)\ind_{i\in V_{s_-}}\ind_{u\leq \tau(X_{s_-}^{K,i})}\Big(\delta_{(i1,[\theta K X_{s_-}^{K,i}]/K)}\nonumber\\
&  \qquad \qquad \qquad \qquad \qquad \qquad \qquad \qquad \qquad +\delta_{(i2, X_{s_-}^{K,i}-[\theta K X_{s_-}^{K,i}]/K)}-\delta_{(i,X_{s_-}^{K,i})}\Big),\nonumber
\end{align}
where we set $\lambda_K=K(\lambda+K\gamma), \mu_K=K(\mu+K\gamma)$. 
We recall from Sections \ref{scaling} and \ref{scaling2} that other  discrete models would lead to the same continuous limiting object. For example, 
the parasites could be shared following a binomial distribution whose parameter is picked according to $\P(F \in \ud \theta)$.
\begin{Prop}\label{propconvergence}
Assume that  there exists an integer $p\geq 1$ and a positive $\bar{\tau}>0$ such that for all $x\in \R_+$, $0\leq \tau(x)\leq \bar{\tau}(1+x^p)$.
Then, the sequence $(Z^K : n\in \N^*)$ defined in (\ref{eds}) converges in distribution in
$\mathbb{D}(\R_+,\mathcal{M}_F(\R_+))$  as  $K\rightarrow +\infty$ to the process
$Z$ defined in (\ref{martingalegdepopp})-(\ref{crochetmartingalegdepopp}).
\end{Prop}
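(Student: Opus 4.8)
The plan is to follow the compactness-uniqueness scheme used repeatedly in the paper (Theorems~\ref{det-1}, \ref{Thlimstoc}, \ref{readif}, \ref{readifstoch}), now applied to the measure-valued processes $\bar Z^K$ carrying the genealogical labels, and then to push the result forward to $Z^K=\langle \bar Z^K, \cdot\rangle$ via the projection $(i,x)\mapsto x$. First I would record the semimartingale decomposition of $\langle \bar Z^K_t, f\rangle$ for test functions $f(i,x)$ that are bounded and $C^2$ in the $x$-variable (and in fact it suffices to work with $f$ depending only on $x$ together with indicators of finite label sets, or directly with $f=f(x)\in C^2_b(\R_+)$ applied coordinatewise): from \eqref{eds} one gets a drift term converging formally to $\int_0^t\int (rx\partial_x f+\gamma x\partial^2_{xx}f)\,Z_s(\ud x)ds$ plus the division term $\int_0^t\int \tau(x)(f(\theta x)+f((1-\theta)x)-f(x))\,Z_s(\ud x)\P(F\in d\theta)ds$, plus a martingale $M^{K,f}$ whose bracket is of order $1/K$ on the Feller part (because the individual birth/death rates are $K\gamma+\lambda$, $K\gamma+\mu$ with amplitude $1/K$, exactly as in Theorem~\ref{Thlimstoc}) and $O(1/K)$ on the division part since the jump in $\langle\bar Z^K,f\rangle$ at a division is bounded. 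Thus the martingale bracket vanishes on the Feller part but the Feller diffusion term $\sqrt{2\gamma x}$ survives in the limit, as in Section~\ref{ScalingDim1}.

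The key steps, in order, are: (1) \emph{Moment estimates}. Since $\tau(x)\le\bar\tau(1+x^p)$, I would prove $\sup_K\sup_{t\le T}\E(\langle \bar Z^K_t,1\rangle^q)<\infty$ and $\sup_K\E(\sup_{t\le T}\langle\bar Z^K_t,1\rangle^q)<\infty$ for $q$ large enough (depending on $p$), and also control $\sup_K\E(\sup_{t\le T}\int x^m\,Z^K_t(\ud x))$ for enough moments $m$ — here one uses that the total parasite mass $\int x\,Z^K_t(\ud x)$ is dominated by a supercritical-type branching quantity and that each division does not increase $\int x\,Z_t(\ud x)$, combined with Gronwall. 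These estimates are needed both to make the $\tau(x)$-terms integrable and to get uniform integrability. (2) \emph{Tightness}. Using the Roelly criterion (tightness of $\langle\bar Z^K,f\rangle$ in $\D([0,T],\R)$ for $f$ in a suitable separating class) together with the Aldous--Rebolledo criterion applied to the drift and bracket of $\langle\bar Z^K,f\rangle$; the moment bounds from step (1) give the required $\delta$-increment control. Since $M_F$ with the weak topology requires controlling mass escaping to infinity, I would reuse the auxiliary truncation functions $(f_n)$ of Lemma~\ref{trick}-type (here in the spatial variable $x$) to pass from the vague to the weak topology, exactly as in Steps~4--5 of the proof of Theorem~\ref{readif}. (3) \emph{Identification of the limit}. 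Any limit point $Z$ of $(Z^K)$ charges only continuous paths (jumps of $\langle\bar Z^K,f\rangle$ are $O(1/K)$ for the parasite moves and, for division jumps, one shows via the moment bounds and a size-biasing argument that they accumulate to a continuous contribution; more precisely the division term is a \emph{finite-variation} continuous term in the limit because $\tau$ integrates against a finite measure), and satisfies the martingale problem \eqref{martingalegdepopp}--\eqref{crochetmartingalegdepopp}: one shows $\E|\Psi_t(Z)|=0$ for the natural functionals $\Psi_t$, approximating $L_K$ by the limiting generator and using that the martingale bracket tends to the Feller bracket $\int 2\gamma x(\partial_x f)^2$. (4) \emph{Uniqueness}. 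Uniqueness in law of the solution of \eqref{martingalegdepopp}, which the paper asserts holds (Remark after \eqref{crochetmartingalegdepopp}, via an adaptation of Subsection~5.3); granting this, tightness plus identification of all limit points with the unique solution gives convergence of the full sequence.

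The main obstacle I expect is controlling the division term uniformly in $K$ when $\tau$ has polynomial growth: one must ensure that $\int_0^t\int \tau(x)\,Z^K_s(\ud x)ds$ and the associated quadratic-variation-type quantities stay bounded in expectation, which forces propagation of moments of order $\ge p+1$ of the parasite-load measure through both the Feller dynamics and the random splitting. The splitting by $[\theta K X]/K$ versus $\theta X$ introduces a discretization error of order $1/K$ per division; bounding the cumulative effect of these errors, together with the interplay between the exponentially growing number of cells and the parasite moments, is the delicate part — it is handled by a Gronwall argument on $\E(\sup_{s\le t}\langle \bar Z^K_s, 1+x^m\rangle)$ after carefully checking that the generator applied to $(i,x)\mapsto 1+x^m$ produces only terms controlled by lower-order moments and by $\langle \bar Z^K_s,1+x^m\rangle$ itself. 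Everything else is a routine transcription of the compactness-uniqueness machinery already developed in Sections~\ref{LPA} and~\ref{sec:large-popu}, and the detailed verification is deferred to \cite{MR2754402}.
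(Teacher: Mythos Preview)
Your overall architecture (moments $\to$ tightness via Aldous--Rebolledo/Roelly $\to$ identification via the martingale problem $\to$ uniqueness) is exactly the scheme the paper indicates, and your remarks on the polynomial growth of $\tau$ forcing propagation of parasite moments are on point. However, there is one substantive error in your identification step, and a related slip in the bracket computation.

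\textbf{The limit is not continuous.} You claim that any limit point charges only continuous paths, with the division jumps ``accumulating to a continuous contribution'' and the division term being ``finite-variation continuous in the limit''. This is false. Look at \eqref{martingalegdepopp}: the division term is an integral against the Poisson \emph{point} measure $N$, not against its compensator. A division at time $s$ replaces $\delta_{X^i_{s-}}$ by $\delta_{\theta X^i_{s-}}+\delta_{(1-\theta)X^i_{s-}}$, which is an $O(1)$ jump of $\langle Z,f\rangle$ for generic $f$. The division rate $\tau$ carries no factor of $K$, so these jumps neither shrink nor accelerate; they persist in the limit exactly as discrete jumps. Consequently you cannot invoke the ``$\sup_t|\Delta x_t|$ is continuous, hence $Q$ charges $C([0,T],M_F)$'' argument from the proofs of Theorems~\ref{readif}--\ref{readifstoch}. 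The identification must instead handle a c\`adl\`ag limit: one uses that for each fixed $t$ the probability of a jump exactly at $t$ is zero (the intensity $\tau(X^i_{s-})ds$ is absolutely continuous in $s$), so the projections $\nu\mapsto\langle\nu_t,f\rangle$ are $Q$-a.s.\ continuous at each deterministic $t$, and the martingale-problem functionals can still be passed to the limit. The discretisation error $[\theta K X]/K-\theta X=O(1/K)$ that you mention is the \emph{only} correction needed on the division side.

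\textbf{The Feller bracket is $O(1)$, not $O(1/K)$.} You write that the bracket is ``of order $1/K$ on the Feller part'' yet simultaneously that ``the Feller diffusion term $\sqrt{2\gamma x}$ survives''. These two statements are incompatible. With $\lambda_K=K\lambda+K^2\gamma$, $\mu_K=K\mu+K^2\gamma$ and jump size $1/K$, the bracket contribution per cell is $(\lambda_K+\mu_K)X^{K,i}\cdot K^{-2}(\partial_x f)^2\to 2\gamma X^i(\partial_x f)^2$, which is exactly \eqref{crochetmartingalegdepopp}. So the martingale part does \emph{not} vanish; it converges to the continuous martingale $M^f$. Your later sentence (``the martingale bracket tends to the Feller bracket'') is the correct one; the earlier $O(1/K)$ claim should be deleted.

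With these two corrections --- acknowledging that $Z$ is genuinely c\`adl\`ag with PPM-driven jumps, and that the parasite bracket is $O(1)$ --- your outline matches the paper's and would go through.
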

\noindent The proof can be found in the Appendix  of \cite{MR2754402}.  It follows the scheme of proof developed in the previous section, namely  control of the moments and  tightness and 
 identification of the limit via the martingale problem for $<\bar{Z}_t^K,f>$. The additional regularities on the division rate $\tau$ are required to control the difference between the microscopic process (\ref{eds}) and its approximation (\ref{martingalegdepopp})-(\ref{crochetmartingalegdepopp}). \\
 
 \begin{exo}
Write the measure-valued equation which characterizes the limiting process 
 $\bar{Z}_t(du,dx)=\sum_{i\in V_t}\delta_{(i,X_t^i)}(du,dx)$. 
\end{exo}

 \subsection{Recovery criterion when the  division rate is constant}

\me  We now consider the case where the infection does not influence the  division rate and $\tau(.)=\tau$. We say that the organism recovers when the proportion of infected cells becomes negligible compared to the population of cells. 
\label{BFDpara}
 \begin{Thm}\label{threcoveryorganismrconstant}
(i) If $r\leq  2\tau\E(\log(1/F))$, then the organism recovers a.s.:
$$ \lim_{t\rightarrow +\infty}\frac{\# \{ i \in V_t : X^i_t>0\}}{N_t} = 0\quad \mbox{ a.s}.$$
(ii) If $r>  2\tau\E(\log(1/F))$ then the parasites proliferate in the cells as soon as  the parasites do not become
extinct in the sense that
\begin{equation}
\big\{\limsup_{t\rightarrow +\infty} \frac{\# \{ i \in V_t : X^i_t\geq e^{\kappa t} \}}{N_t} > 0 \big\}=\{\forall  t>0: \ X_t>0\} \quad \mbox{a.s.}\label{recoveryrctcroissexp}
\end{equation}
for every  $\kappa<r- 2\tau \E(\log(1/\Theta))$. The probability of this event is $\,1-\exp(-rx_0/\gamma)$.
\end{Thm}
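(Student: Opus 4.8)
The plan is to study the process along a random cell lineage, reducing the two-level (branching within branching) dynamics to the one-dimensional Feller diffusion with catastrophes analyzed in Section~\ref{BFDrc}. The key observation is that, because the cell division rate $\tau$ is constant, the genealogical tree of cells is a Yule tree (a pure binary branching process at rate $\tau$), independent of the parasite loads. Hence one can introduce the auxiliary process: pick the ancestral cell, and at each division keep one of the two daughters uniformly at random. Along this random lineage the parasite load $(Y_t)_{t\ge0}$ evolves as a Feller diffusion with drift $rx$ and diffusion $\sqrt{2\gamma x}$ between divisions, and undergoes a multiplicative jump by a factor distributed as $F$ (using that $F\overset{d}{=}1-F$) at the division times, which form a Poisson process of rate $\tau$ (each division along the kept lineage). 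This is exactly the Feller diffusion with catastrophes of Section~\ref{cata} with catastrophe rate $\tau$ and catastrophe law $F$.

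First I would make the many-to-one formula precise: for a suitable test function,
$$\E_{x_0}\Big(\sum_{i\in V_t} g(X^i_t)\Big) = e^{\tau t}\,\E_{x_0}\big(g(Y_t)\big),$$
since $\E(N_t)=e^{\tau t}$ and, conditionally on $V_t$, a uniformly chosen cell of $V_t$ has the law of the lineage process $Y_t$. I would also record the refinement tracking whether the lineage parasite load is positive (or exceeds a level), i.e.
$$\E_{x_0}\big(\#\{i\in V_t : X^i_t>0\}\big)=e^{\tau t}\,\P_{x_0}(Y_t>0),$$
and similarly with $\{X^i_t\ge e^{\kappa t}\}$ replaced by $\{Y_t\ge e^{\kappa t}\}$. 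Dividing by $N_t$ and using the law of large numbers $N_t e^{-\tau t}\to W_\infty>0$ a.s.\ for the Yule process (on the non-extinction event, which here is the full event since cells never die), the asymptotics of the infected fraction will be governed by $e^{-\tau t}\,\#\{i\in V_t : X^i_t>0\}$ and its $\ge e^{\kappa t}$ analogue.

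For part~(i): Theorem~\ref{extlignee}~(i) applied to $Y$ with catastrophe rate $\tau$ gives absorption a.s.\ of the lineage, but to control the \emph{fraction} of infected cells I need the exponential rate at which $\P_{x_0}(Y_t>0)\to0$. This is where Theorem~\ref{equiv} enters: in all subcritical/critical regimes $\P_{x_0}(Y_t>0)$ decays like $e^{t(r+\tau(\E(F)-1))}$ up to polynomial factors (or like $t^{-1/2}$ in the critical case). The condition $r\le 2\tau\E(\log(1/F))$ is precisely the absorption threshold for catastrophe rate $\tau$ (recall the threshold reads $r\le\tau\E(\log(1/F))$ for rate $\tau$; the factor $2$ here reflects that along a lineage of a binary tree the ``effective'' comparison is with $\E(\log(1/F))$ for $F\overset d=1-F$, but more carefully: a single division splits into $Fx$ and $(1-F)x$ and keeping one at random multiplies by a variable distributed as $F$, so the threshold is $r\le\tau\E(\log(1/F))$ — I will need to recheck the combinatorial factor, and I suspect the $2$ comes from the fact that in Theorem~\ref{equiv} the relevant quantity is $\tau\E(\log(1/F))$ while the statement here uses a symmetrized version; this bookkeeping is the first delicate point). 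Granting the correct threshold, one gets $e^{-\tau t}\E_{x_0}(\#\{i : X^i_t>0\})=\P_{x_0}(Y_t>0)\to0$, hence $\#\{i:X^i_t>0\}/N_t\to0$ in probability; upgrading to almost sure convergence requires a Borel--Cantelli argument along a discrete sequence of times plus a monotonicity/continuity control between consecutive times, using that the number of infected cells can only grow by division (not by a healthy cell becoming infected), which bounds its fluctuations.

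For part~(ii): here $r>2\tau\E(\log(1/F))$ is the supercritical regime, so by Theorem~\ref{equiv}~(c) (or directly Lemma~\ref{intexp}) the lineage process satisfies $e^{-K_t}Y_t\to W$ a.s.\ with $\{W=0\}=\{\lim Y_t=0\}$ and $\P_{x_0}(W>0)=1-e^{-rx_0/\gamma}$, and moreover $K_t\sim(r-\tau\E(\log(1/F)))t$ so that $Y_t$ grows roughly like $e^{(r-2\tau\E(\log(1/F)))t}$ on the survival event — matching the threshold $\kappa<r-2\tau\E(\log(1/\Theta))$. The inclusion $\supseteq$ in \eqref{recoveryrctcroissexp} is the easy direction: if $\{\forall t:X_t>0\}$ holds (total parasite mass never dies), one shows a positive density of heavily infected cells by a second-moment / many-to-one argument, estimating $\E_{x_0}(\#\{i : X^i_t\ge e^{\kappa t}\})$ from below via $e^{\tau t}\P_{x_0}(Y_t\ge e^{\kappa t})$ and controlling the second moment $\E_{x_0}((\#\{i:X^i_t\ge e^{\kappa t}\})^2)$ through a two-lineage (spine-with-branching, or ``many-to-two'') formula, which introduces the coalescence time of two random lineages; the Yule structure makes this coalescence law explicit. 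The reverse inclusion $\subseteq$ follows because on $\{\exists t: X_t=0\}$ the total parasite mass hits $0$ and stays there (Feller diffusion absorbed at $0$, Proposition~\ref{equationmomentfeller}), so no cell is infected eventually and the limsup is $0$; the probability identity $1-e^{-rx_0/\gamma}$ is then \eqref{equationmomentfeller}.

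The main obstacle I anticipate is the second-moment control in part~(ii): getting a matching lower bound on the infected fraction requires showing that the heavily-infected cells are not concentrated on a vanishing sub-lineage, i.e.\ that the $L^2$ norm of $e^{-\tau t}\#\{i:X^i_t\ge e^{\kappa t}\}$ is comparable to its mean. This needs the ``many-to-two'' decomposition: two uniformly chosen cells share a common ancestor up to a coalescence time $C_t$ (with the explicit Yule law, roughly $C_t$ uniform on $[0,t]$ after rescaling), evolve as a single Feller-with-catastrophes lineage up to $C_t$ and then as two conditionally independent such lineages; estimating the resulting double integral against the supercritical growth of $Y$, and checking the exponents balance for every $\kappa<r-2\tau\E(\log(1/\Theta))$, is the technical heart. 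The secondary difficulty, already flagged, is pinning down the correct constant ($2\tau$ versus $\tau$) in the threshold; I would resolve it by carefully writing the generator of $(Y_t)$ along the random lineage and matching it to the Feller-with-catastrophes SDE of Section~\ref{scaling2}, then invoking Theorems~\ref{extlignee} and~\ref{equiv} with that exact catastrophe law.
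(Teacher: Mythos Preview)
There is a genuine gap in your identification of the auxiliary process. Your forward random lineage --- follow the root and at each division keep one daughter uniformly at random --- does have catastrophes at rate $\tau$, but this is \emph{not} the process that appears in the many-to-one formula. The claim ``a uniformly chosen cell of $V_t$ has the law of the lineage process $Y_t$'' is false: the backward lineage of a uniformly sampled individual is size-biased. Concretely, when you compute the evolution of $\gamma_t=e^{-\tau t}\E(Z_t(\cdot))$, each division replaces $f(x)$ by $f(\theta x)+f((1-\theta)x)$, i.e.\ two terms, so the jump part of the generator of $\gamma_t$ is $2\tau\int(f(\theta x)-f(x))\P(F\in d\theta)$ (this is exactly Lemma~\ref{propprocessusauxiliaire2}). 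The correct auxiliary process therefore has catastrophes at rate $2\tau$, not $\tau$; equivalently, it is the spine of Proposition~\ref{propprocessusauxiliaire} with $m=2$. The paper even remarks explicitly that the forward random lineage (rate $\tau$) can go to infinity with positive probability while the organism still recovers --- so your process cannot give the right threshold. Once you use the right $Y$, the mysterious factor $2$ disappears: the condition $r\le 2\tau\,\E(\log(1/F))$ is precisely the extinction criterion of Theorem~\ref{extlignee} applied to a Feller diffusion with catastrophes at rate $2\tau$.

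Two secondary points. For part~(i) you do not need the decay rates of Theorem~\ref{equiv}: once $Y$ has rate $2\tau$, the hypothesis gives $\P(Y_t>0)\to 0$, hence $\E(N_t^*)/\E(N_t)=\P(Y_t>0)\to 0$, and with $N_t/e^{\tau t}\to W>0$ a.s.\ you get convergence in probability directly. The a.s.\ upgrade in the paper is not Borel--Cantelli along a subsequence; instead one shows $\sup_{s\ge 0}N^*_{t+s}/N_{t+s}\to 0$ in probability by bounding $N^*_{t+s}$ above by $\sum_{i\in V_t^*}\#V_{t,s}(i)$ (only descendants of cells infected at time $t$ can be infected at time $t+s$) and $N_{t+s}$ below by $\sum_{i\in V_t}\#V_{t,s}(i)$, then using that the subtree sizes are i.i.d.\ with finite $\sup_s e^{-\tau s}N_s$ in $L^1$. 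For part~(ii), the paper does not run a many-to-two second-moment computation; it uses the first-moment identity $\E(\#\{i:X^i_t\ge e^{\kappa t}\})=e^{\tau t}\P(Y_t\ge e^{\kappa t})$ together with the supercritical behavior of $Y$ (Theorem~\ref{equiv}~c) to get positivity of the limsup with positive probability, and then a zero-one law inherited from the branching property to identify the event with $\{\forall t:X_t>0\}$.
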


\me The factor $2$ in the criterion comes from a bias phenomenon in continuous time. It appears in the following result shedding light on an auxiliary Markov process, namely
 the infection process $X$ with catastrophes occurring at the accelerated rate $2\tau$.  This factor $2$ 'increases the probability of recovery' in the sense that
the amount of parasites in a random cell lineage (which can be obtained by keeping one cell at random at each division)  may go to infinity with positive probability whereas the organism recovers a.s. \\ \\
Since the division rate is constant, the process  $(N_t, t\geq 0)$ is a simple linear (branching) birth process (called Yule process). Then $\E(N_t)=\exp(\tau t)$ and we define
\begin{equation*}
\gamma_t(dx) := \E(Z_t(dx))/\E(N_t)=e^{-\tau t}\E(Z_t(dx)).\label{defgammat}
\end{equation*}
The evolution of $\gamma_t$ is given  by the following result.
\begin{Lem}\label{propprocessusauxiliaire2}The family of probability measures $(\gamma_t, t\geq 0)$ is the unique solution of the following equation: for $f\in C^{1,2}_b(\R_+^2,\R)$ and $t\in \R_+$ (and $f_{t}(.)=f(t,.)$):
\begin{multline}
\langle \nu_t,f_t\rangle
= f_0(x_0) 
  + \int_0^t \int_{\R_+} \left(\partial_sf_s(x)+rx \partial_x f_s(x)+\gamma x \partial^2_{xx}f_s(x)\right) \nu_s(\ud x)\ud s. \\
  + 2\tau \int_0^t \int_{\R_+}\int_0^1 \left[f_s(\theta x)-f_s(x)\right] \P(F \in \ud \theta)\nu_s(\ud x)\ud s \qquad  \label{nugaltonwatson}
\end{multline}
\end{Lem}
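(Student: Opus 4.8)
The plan is to derive the evolution equation for $\gamma_t$ directly from the martingale representation \eqref{martingalegdepopp} of $Z$, and then to prove uniqueness of its solution. Recall that $\gamma_t(\ud x) = e^{-\tau t}\E(Z_t(\ud x))$ and that $\E(N_t) = e^{\tau t}$, so $\langle \gamma_t, 1\rangle = 1$ for all $t$ and $(\gamma_t)_t$ is indeed a family of probability measures.

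First I would take expectations in \eqref{martingalegdepopp}. Since the parasite load is dominated by an exploding Feller diffusion with finite moments (using the moment control of Proposition \ref{propconvergence} or the bound $\tau(x)\le\bar\tau(1+x^p)$ together with the moment estimates of the type \eqref{BD-moment}), all the terms are integrable and the martingale $M^f$ has zero expectation. Using that $N(\ud s,\ud u,\ud i,\ud\theta)$ has intensity $\ud s\,\ud u\,n(\ud i)\,\P(F\in\ud\theta)$, the expectation of the last (Poisson) term becomes
$$
\E\left(\int_0^t \sum_{i\in V_s}\tau(X^i_s)\int_0^1\big(f(s,\theta X^i_s)+f(s,(1-\theta)X^i_s)-f(s,X^i_s)\big)\P(F\in\ud\theta)\,\ud s\right).
$$
Specializing now to $\tau(\cdot)=\tau$ constant, and using that $F$ is distributed as $1-F$ so that the two offspring terms have equal expectation, this equals
$$
\int_0^t \tau\,\E\langle Z_s, f(s,\cdot)\rangle\,\ud s \;+\; 2\tau\int_0^t\int_{\R_+}\int_0^1\big(f(s,\theta x)-f(s,x)\big)\P(F\in\ud\theta)\,\E(Z_s(\ud x))\,\ud s.
$$
Writing $m_t(\ud x):=\E(Z_t(\ud x))$, combining with the diffusion terms, and then substituting $\gamma_t(\ud x)=e^{-\tau t}m_t(\ud x)$ (i.e. $m_t = e^{\tau t}\gamma_t$ and $\partial_t m_t = \tau e^{\tau t}\gamma_t + e^{\tau t}\partial_t\gamma_t$ in the weak sense), the extra $+\tau\langle m_s,f\rangle$ term is exactly absorbed by the $e^{-\tau t}$ rescaling, leaving precisely \eqref{nugaltonwatson}. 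Some care is needed to justify differentiating under the expectation and interchanging $\E$ with the time integrals, but this is routine given the moment bounds; one works with truncated test functions or a localizing sequence of stopping times and passes to the limit.

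For uniqueness, I would argue that \eqref{nugaltonwatson} characterizes the one-dimensional marginals of an auxiliary real-valued Markov process: it is the (time-inhomogeneous) Kolmogorov forward equation of the process $\xi$ which between jumps is a Feller diffusion with drift $rx$ and diffusion $\sqrt{2\gamma x}$, and which jumps from $x$ to $\theta x$ at rate $2\tau$ with $\theta\sim\P(F\in\cdot)$. Indeed, taking $f(s,x)=\phi(x)$ independent of $s$, the right-hand side of \eqref{nugaltonwatson} is $\langle\nu_0,\phi\rangle+\int_0^t\langle\nu_s,\mathcal A\phi\rangle\,\ud s$ with
$$
\mathcal A\phi(x) = rx\,\phi'(x)+\gamma x\,\phi''(x)+2\tau\int_0^1\big(\phi(\theta x)-\phi(x)\big)\P(F\in\ud\theta),
$$
a well-posed generator (the martingale problem for $\mathcal A$ has a unique solution, since the diffusion part is the Feller diffusion — uniqueness already established via the representation theorem in the proof of Theorem \ref{Thlimstoc} — and the jump part is a bounded perturbation acting by a sub-stochastic kernel). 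Uniqueness of the solution of the forward equation then follows from uniqueness of this martingale problem by the standard duality/semigroup argument: any two solutions $(\gamma_t)$, $(\tilde\gamma_t)$ of \eqref{nugaltonwatson} with the same initial condition $\delta_{x_0}$ must agree when tested against $P_{t-s}\phi$ for the semigroup $(P_u)$ generated by $\mathcal A$, whence $\langle\gamma_t,\phi\rangle=\langle\tilde\gamma_t,\phi\rangle=P_t\phi(x_0)$ for all bounded $\phi$. Finally, existence of a solution is guaranteed by the construction itself — $\gamma_t = e^{-\tau t}\E(Z_t)$ solves it by the first part — so the family is exactly the law of $\xi_t$ started at $x_0$.

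The main obstacle I anticipate is the integrability bookkeeping in the first step: the test functions in \eqref{martingalegdepopp} are only $C^{1,2}_b$, but the drift and the jump terms involve unbounded quantities ($x\,\partial_x f$, and more importantly one would like to test eventually against functions that are not bounded to extract information), and the number of cells $N_t$ grows, so one must check that $\E\langle Z_t,1\rangle = e^{\tau t}<\infty$ (immediate, as $N$ is a Yule process) and that $\E\langle Z_t, x\rangle = x_0 e^{rt}<\infty$ (this is exactly the content of the Proposition on the total parasite mass being a Feller diffusion). Once these moment identities are in hand, Fubini and the martingale property apply cleanly. The uniqueness step is comparatively soft, resting only on well-posedness of the martingale problem for $\mathcal A$, which reduces to the already-known well-posedness for the Feller diffusion plus a bounded-jump perturbation.
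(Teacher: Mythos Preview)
Your derivation that $\gamma_t$ satisfies \eqref{nugaltonwatson} is correct and essentially the same as the paper's, though the paper streamlines your substitution step by applying \eqref{martingalegdepopp} directly to the test function $(s,x)\mapsto e^{-\tau s}f_s(x)$: the $-\tau f_s$ coming from $\partial_s(e^{-\tau s}f_s)$ then cancels on the spot with the $+\tau\langle\gamma_s,f_s\rangle$ you extract from the symmetrized jump term, and no weak differentiation of $t\mapsto\langle\gamma_t,f_t\rangle$ is needed.

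Your uniqueness argument, however, takes a genuinely different route. You invoke the full semigroup generated by $\mathcal A$ (Feller diffusion plus rate-$2\tau$ multiplicative jumps) and appeal to well-posedness of its martingale problem. The paper instead uses only the \emph{Feller diffusion} semigroup $(P_s)$ --- which it already has in hand --- and treats the jump part as a bounded perturbation: testing two solutions against $f_s(x)=P_{t-s}\varphi(x)$ kills the diffusion terms and leaves
\[
|\langle\nu^1_t-\nu^2_t,\varphi\rangle|\le 4\tau\int_0^t\|\nu^1_s-\nu^2_s\|_{TV}\,\ud s,
\]
whence Gronwall gives $\|\nu^1_t-\nu^2_t\|_{TV}=0$. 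This is exactly the mild-formulation trick of Step~1 in the proof of Theorem~\ref{readif}. Your approach is conceptually cleaner once the $\mathcal A$-semigroup is available, but it requires you to \emph{establish} that well-posedness (your ``bounded perturbation'' remark is the right idea, but it is an extra step). The paper's approach is more self-contained: it needs nothing beyond the Feller diffusion semigroup and a one-line Gronwall estimate.
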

\begin{proof}[Proof of Lemma  \ref{propprocessusauxiliaire2}] Let $t\in \R_+$ and $(f\, : \, (s,x)\mapsto f_s(x))\in C_b^{1,2}(\R_+^2,\R)$.
Using (\ref{martingalegdepopp}) with $(s,x)\mapsto f_s(x)e^{-\tau s}$ entails:
\begin{align*}
&\langle e^{-\tau t}Z_t(\ud x), f_t\rangle  =\langle Z_t(\ud x), e^{-\tau t}f_t\rangle \\
&\ \ = f_0(x_0)
 +  \int_0^t \int_{\R_+} \Big(rx \partial_x f_s(x)+\gamma x \partial^2_{xx}f_s(x)
-\tau f_s(x)+\partial_s f_s(x)\Big)e^{-\tau s} Z_s(\ud x)\, \ud s \\
 &\ \quad +  \int_0^t \int_{\R_+\times \mathfrak{I}\times [0,1]}\ind_{i\in V_{s_-}}\ind_{u\leq \tau}  \Big[f_s(\theta X^i_{s-})+f_s((1-\theta)X^i_{s-})
  -f_s(X^i_{s-})\Big]e^{-\tau s} N(\ud s,\ud u,\ud i,\ud \theta)\\
& \ \quad +M_t^f
\end{align*}where $M_t^f$ is a continuous square integrable martingale started at $0$. Taking the expectation and using 
the symmetry of the distribution of $F$ with respect to $1/2$:
\begin{align}
\langle \gamma_t, f_t\rangle 
= & f_0(x_0) + \int_0^t \int_{} \left(rx \,\partial_x f_s(x)+\gamma x \,\partial^2_{xx}f_s(x)+\partial_s f_s(x)\right) \gamma_s(\ud x)  \ud s  \nonumber\\
 & \qquad +\int_0^t\int_{\R_+\times [0,1]} 2\tau \left[f_s(\theta x)-f_s(x)\right]    \P(F \in \ud \theta) \gamma_s(\ud x)\ud s.
\label{equationaux}
\end{align}

\me Let us prove that there is a unique solution to (\ref{nugaltonwatson}). We follow Step $1$ of the proof of Theorem~\ref{readif} and let $(\nu^1_t, t\geq 0)$ and $(\nu^2_t, t\geq 0)$ be two probability measures solution of (\ref{nugaltonwatson}). The total variation distance between $\nu^1_t$ and $\nu^2_t$ is
\begin{equation}
\| \nu^1_t-\nu^2_t\|_{TV}=\sup_{\substack{\phi\in C_b(\R_+,\R) \\ \|\phi\|_\infty\leq 1}}|\langle \nu^1_t,\phi\rangle-\langle \nu^2_t,\phi\rangle|.\label{totalvariation}
\end{equation}
Let $t\in \R_+$ and $\varphi\in C^2_b(\R_+,\R)$ with $\|\varphi\|_\infty\leq 1$. We denote by $(P_s : s\geq 0)$ the semi-group associated with the Feller diffusion 
started at $x\in \R_+$: $P_s\varphi(x)=\E_x(\varphi(X_s))$. Notice that $\|P_{t-s}\varphi\|_\infty\leq \|\varphi\|_\infty\leq 1$.  By (\ref{equationaux}) with $f_s(x)=P_{t-s}\varphi(x)$, the first term equals $0$ and
\begin{align}
\left| \langle \nu^1_t-\nu^2_t, \varphi\rangle\right| = & \left|2\tau \int_0^t\int_{\R_+} \int_0^1  \Big(P_{t-s}\varphi(\theta x)-P_{t-s}\varphi(x)\Big)\P(F \in \ud\theta) (\nu_s^1-\nu^2_s)(\ud x)\, \ud s\right|\nonumber\\
\leq & 4\tau  \int_0^t \|\nu^1_s-\nu^2_s\|_{TV} \ud s. \nonumber
 \end{align}Since $C^2_b(\R_+,\R)$ is dense in $C_b(\R_+,\R)$ for the bounded pointwise topology, taking the supremum in the l.h.s. implies that: $\|\nu^1_t-\nu^2_t\|_{TV}\leq 4\tau \int_0^t \|\nu^1_s-\nu^2_s\|_{TV} \ud s$. Gronwall's Lemma ensures   that $\|\nu^1_t-\nu^2_t\|_{TV}=0$, which ends up the proof.
\end{proof}

\me 
 We can then interpret $\gamma_t$ as the marginal distribution (at time $t$) of an auxiliary process  $(Y_t, t\geq 0)$. 
\begin{Prop}
\label{vincent}For all $f\in C^{2}_b(\R_+,\R)$ and $t\in \R_+,$
 \be
 \langle \gamma_t,f\rangle=e^{-\tau t}\E\big(\sum_{i\in V_t} f(X_t^i) \big)= \mathbb{E}(f(Y_t)),
\label{relationdelmasmarsalle}
\ee
where $(Y_t, t\geq 0)$ is a Feller branching diffusion with parameters $(r,\gamma)$ and catastrophes  with rate $2\tau$ and distribution given by $F$. 
Moreover, 
\begin{equation} \E\left(\# \{i \in V_t : X^i_t>0\}\right)=e^{\tau t}\P(Y_t>0).\label{etape4}
\end{equation}
\end{Prop}
\begin{proof}
One can describe the dynamics of  $t\rightarrow \nu_t(f_t)$ where $\nu_t(dx)=\P(Y_t\in dx)$ thanks to  It\^o's formula and check that it satisfies (\ref{nugaltonwatson}). Uniqueness of the solution  of this equation yields  (\ref{relationdelmasmarsalle}). We can then apply \eqref{relationdelmasmarsalle}
 with  $f(x)=\ind_{x>0}$ by taking a monotone limit of $C^2_b$ functions  to get \eqref{etape4}.
\end{proof}
$\newline$
\begin{proof}[Proof of Theorem \ref{threcoveryorganismrconstant}] Let us first prove the convergence in probability in  $(i)$. We denote  by $V_t^*=\{i \in V_t : X^i_t>0\}$ the set of infected 
cells and by $N_t^*=\# V_t^*$ its cardinality.  By Theorem \ref{extlignee}, under the assumption of $(i)$, $(Y_t, t\geq 0)$ dies in finite time a.s. Thus,  Proposition \ref{vincent} ensures that $N_t^*/\exp(\tau t)$  converges in $\mathbb{L}^1$ 
and hence in probability to $0$. Moreover the nonnegative martingale $N_t/\exp(\tau t)$ tends a.s. to a nonnegative random variable  $W$. In addition,  the $\mathbb{L}^2$-convergence  can be
easily obtained and is left to the reader.  Therefore, $\P(W>0)>0$  and then $W>0$ a.s. using the branching  property and $\P(\forall t >0 : N_t>0)=1$. One could actually even show that  $W$ is an exponential random variable with mean $1$. 
 Then
\begin{equation}
\lim_{t\rightarrow +\infty}\frac{N_t^*}{N_t}=\lim_{t\rightarrow +\infty}\frac{N_t^*}{e^{\tau t}}\frac{e^{\tau t}}{N_t}=0\quad \emph{in probability}.\label{etape5}\end{equation}
It remains to show that the convergence holds a.s, which is achieved by checking that
 \be
\label{CV}
\sup_{s\geq 0}N^*_{t+s}/N_{t+s} \stackrel{t\rightarrow \infty}{\longrightarrow} 0  \quad \emph{in probability}.
\ee
Indeed, let us denote  by $V_{t,s}(i)$ the set of cells alive at time $t+s$ and whose ancestor at time $t$ is the cell $i\in V_t$. Then $(\#V_{t,s}(i), s\geq 0)$ are i.i.d. random processes for $i\in V_t$ distributed as $(N_{s}, s\geq 0)$. We have
$$N^*_{t+s}\leq \sum_{i\in V_t^*} \# V_{t,s}(i) \leq e^{\tau s} \sum_{i\in V_t^*} M_t(i) \quad \text{a.s.}$$
where $M_t(i)$ are i.i.d random variables distributed like $M:=\sup\{ e^{-\tau s}N_s, s\geq 0\}$. Similarly,
$$N_{t+s} \geq \sum_{i\in V_t} \# V_{t,s}(i) \geq e^{\tau s} \sum_{i\in V_t} I_t(i) \quad \text{a.s.},$$
where $I_t(i)$ are i.i.d. random variables  distributed like $I:=\inf\{ e^{-\tau s}N_s, s\geq 0\}$. We add that $\E(M)<\infty$ since the martingale  $e^{-\tau s}N_s$ is bounded in $\mathbb{L}^2$. Moreover   $\E(I) \in (0,\infty)$   so that 
$$\frac{N_t^{*-1}\sum_{i\in V_t^*} M_t(i) }{N_t^{-1}\sum_{i\in V_t} I_t(i) }$$
is stochastically bounded (or tight) for $t\geq 0$. 
Using that  $N_t^*/N_t\rightarrow 0$ when $t\rightarrow +\infty$ in probability yields (\ref{CV}). This ensures the a.s. convergence of $R_t=N_t^*/N_t$ to $0$, using that
$\P(\limsup_{t\rightarrow \infty} R_t \geq \epsilon) \leq \lim_{t \rightarrow \infty} \P(\sup_{s \geq 0} R_{t+s} \geq \epsilon)=0$. 

\me The proof of $(ii)$ is similar. One can first note thanks to Section \ref{cata} that
$\,\P(Y_t\geq \exp(\kappa t))$ has a positive limit and prove that $$\left\{\limsup_{t\rightarrow +\infty} \frac{\# \{ i \in V_t : X^i_t\geq e^{\kappa t} \}}{N_t} > 0 \right\}$$
has a positive probability. To check that this latter event coincides with $\{\forall  t>0: \ X_t>0\}$, 
 a zero-one law is involved, which is inherited from the branching property by a standard argument.
\end{proof}

\paragraph{Asymptotic regimes for the speed of infection.}  Combining Theorem \ref{equiv} and  Proposition \ref{relationdelmasmarsalle} yield  different asymptotic regimes for the mean number of infected cells $\E(N_t^*)$. They are plotted in Figure \ref{figg3}  when the sharing of parasites is deterministic. We stress that it differs from the discrete analogous model \cite{kim}.
\begin{figure}
\begin{center}
\label{figg3}
\includegraphics[scale=0.4]{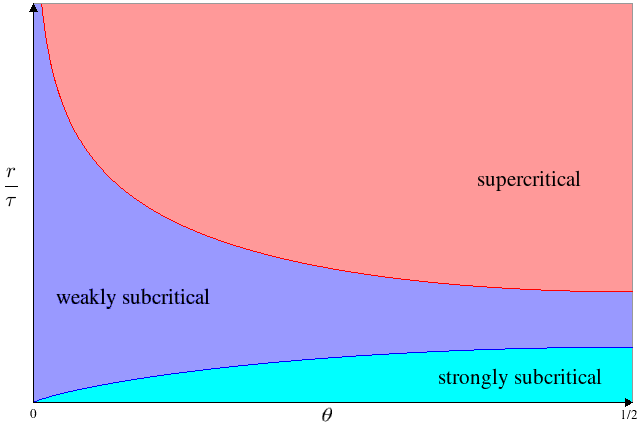}
\end{center}
\caption{Asymptotic regimes for the mean number of infected cells when $\P(F=\theta)=\P(F=1-\theta)=1/2$ and $\theta\in (0,1)$.} 
\label{figg3}
\end{figure}
In the supercritical regime, the number of infected cells and the number of cells are of  the same order. In the strongly subcritical regime, the number of infected cells 
and the parasite loads are of  the same order. In the weakly subcritical regime, the number of infected cells  is negligible compared to the number of cells and the 
amount of parasites.

 \subsection{Some first results for a monotonic rate of division}
\me The asymptotic study of such processes with a non constant rate of division is the object of recent works (see \cite{cloez,DHKR, Bansaye} and works in progress). Let us simply mention some relevant consequences of the previous results for non-decreasing rate $\tau$. 

\subsubsection{A sufficient condition for recovery}
\me We assume either  that $\tau$ is non-decreasing and there exists $x_1>0$ such that $\tau(x_1)<\tau(0)$ or that $\tau(0)>\tau(x)$ for any $x>0$.
The second case means that the non-infected cells divide faster than the infected ones. Let us recall from Proposition \ref{linerincre} in Section \ref{catam} the notation
$$\tau_*:= \inf\{ \tau(x) : x\geq 0\}.$$
\begin{Prop}
If $r\leq \tau_* \E(\log1/(\min(F,1-F)))$, the organism recovers a.s.
\end{Prop}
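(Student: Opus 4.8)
The plan is to reduce the recovery statement to the constant-rate case already treated in Theorem~\ref{threcoveryorganismrconstant}(i) by a comparison (coupling) argument. The key observation is that under the stated hypothesis the infection rate felt by any cell is bounded below by $\tau_*$, so that parasite loads in the structured population can be dominated, cell lineage by cell lineage, by loads evolving under the \emph{constant} catastrophe rate $\tau_*$ but with the \emph{worst} sharing fraction, namely $\min(F,1-F)$ instead of $F$. Indeed, when a cell with load $x$ divides, the two daughters receive $Fx$ and $(1-F)x$; at least one of them receives $\min(F,1-F)\,x \leq \tfrac12 x$, but in order to track \emph{all} infected cells we note that both daughters receive at most $\max(F,1-F)\,x$. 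The right object to compare with is therefore the auxiliary one-dimensional process: first I would invoke the auxiliary process representation of Proposition~\ref{relationdelmasmarsalle} (valid in the constant-rate setting), applied to a constant rate $\tau_*$ and splitting law $\min(F,1-F)$, to get that $\E(\log(1/\min(F,1-F)))$ is exactly the quantity governing a.s.\ extinction of that auxiliary Feller diffusion with catastrophes.

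\textbf{Main steps.} (1) \emph{Pathwise domination.} Construct, on the same probability space carrying the Poisson measures and Brownian motions driving $Z$, a coupled family of one-dimensional processes indexed by the cells $i\in V_t$: between divisions each $X^i$ is a Feller diffusion with drift $r$ and diffusion $\sqrt{2\gamma x}$, and at each division both daughters' loads are bounded above by $\max(F,1-F)$ times the mother's load. Passing from $F$ to $\max(F,1-F)=1-\min(F,1-F)$ and exploiting that catastrophes occur at rate $\tau(X^i_{s-})\geq \tau_*$, one couples $X^i$ from above with a process $\widehat X^i$ that is a Feller diffusion undergoing catastrophes of law $\max(F,1-F)$ at constant rate $\tau_*$; the coupling uses the thinning of the division PPM to realize rate $\tau_*$ as a sub-rate of $\tau(\cdot)$, and a monotone coupling of the catastrophe sizes (a Feller diffusion is monotone in its initial condition). (2) \emph{Reduction.} The number of infected cells $N_t^* = \#\{i\in V_t : X^i_t>0\}$ is then bounded above by the number of cells $i\in V_t$ whose coupled load $\widehat X^i_t$ is still positive, and the genealogy of the $\widehat X^i$ is still that of a Yule process with rate $\tau$ (only the parasite dynamics were altered, not the branching clock $\tau(\cdot)$... \emph{wait}: the cell division times are those of the original process, governed by $\tau(X^i)$, so the cell tree is \emph{not} a Yule tree). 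To circumvent this, I would instead dominate $N_t^*$ directly by an expectation computation: couple the \emph{whole} system $Z$ with a constant-division-rate splitting diffusion $\widehat Z$ of rate $\tau_*$ for which Theorem~\ref{threcoveryorganismrconstant}(i) applies; since $\tau$ is non-decreasing (or $\tau(0)>\tau(x)$), one checks that infected cells in $Z$ divide at least as fast and uninfected ones at most as fast, so that $N_t^*/N_t$ in $Z$ is stochastically dominated by the corresponding ratio in $\widehat Z$. (3) \emph{Conclusion.} Apply Theorem~\ref{threcoveryorganismrconstant}(i) with constant rate $\tau_*$ and catastrophe law $\min(F,1-F)$: the hypothesis $r\leq \tau_*\,\E(\log(1/\min(F,1-F)))$ gives a.s.\ recovery for $\widehat Z$, hence $N_t^*/N_t\to 0$ a.s.\ for $Z$ by domination and the a.s.-upgrade argument of \eqref{CV}.

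\textbf{Main obstacle.} The delicate point is that the cell division clock depends on the parasite load, so the genealogical tree of $Z$ is not the Yule tree on which the clean auxiliary-process identity \eqref{relationdelmasmarsalle} and the $\mathbb{L}^2$-martingale argument rest. The right way to handle this, and what I expect to be the technical heart of the proof, is to build the coupling so that infected cells divide at a rate $\geq \tau_*$ while non-infected cells divide at rate $\tau(0)$, and to observe that replacing the infected cells' rate by the larger value $\tau_*$ can only \emph{increase} the number of infected cells (each extra division keeps two infected daughters from one) — so $N_t^*$ in $Z$ is dominated by $N_t^*$ in a modified system where infected cells divide at constant rate $\tau_*$; meanwhile the total $N_t$ dominates a pure Yule($\tau(0)$) process seeded by the single cell's descendants, whose growth rate $\tau(0)\geq \tau_*$ still dominates $\E(\exp(-\tau_* t)N_t^{\widehat{}})$... the bookkeeping of which rate controls numerator versus denominator must be done carefully, and the hypothesis $\tau(0)>\tau(x)$ for all $x>0$ is exactly what makes the non-infected-cell comparison go the right way in the second scenario. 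Once the comparison system has constant infected-cell division rate $\tau_*$, Lemma~\ref{propprocessusauxiliaire2} and Proposition~\ref{relationdelmasmarsalle} apply to it verbatim (with the bias factor $2$ and the worst sharing law $\min(F,1-F)$), and the a.s.\ statement follows as in the proof of Theorem~\ref{threcoveryorganismrconstant}, invoking \eqref{CV} for the almost-sure upgrade from convergence in probability.
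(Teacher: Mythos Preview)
Your proposal has a genuine gap that stems from trying to dominate \emph{every} cell's parasite load from above. To bound $X^i_t$ from above for all $i\in V_t$, you are forced (as you note) to use the larger fraction $\max(F,1-F)$ at each split and the smaller catastrophe rate $\tau_*$; but then the extinction criterion for your dominating process $\widehat X$ would read $r\leq \tau_*\,\E(\log(1/\max(F,1-F)))$, which is \emph{strictly stronger} than the stated hypothesis $r\leq \tau_*\,\E(\log(1/\min(F,1-F)))$. So the coupling you build does not close under the given assumption. Your attempted workaround in Step~(2)---comparing $N_t^*/N_t$ directly between $Z$ and a constant-rate system $\widehat Z$---runs into the state-dependent genealogy problem you yourself flag, and the monotonicity claims you make there are not correct as stated (e.g.\ ``replacing the infected cells' rate by the larger value $\tau_*$'' is backwards, since $\tau_*$ is the infimum).

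The paper's route is both simpler and explains why $\min(F,1-F)$ is the right quantity: instead of controlling all lineages, follow a \emph{single} cell lineage by choosing at each division the \emph{less} infected daughter. Along this lineage the load is a one-dimensional Feller diffusion with catastrophes of law $\min(F,1-F)$ occurring at the state-dependent rate $\tau$. Proposition~\ref{linerincre} (monotone catastrophe rate) then gives a.s.\ extinction of this single lineage under the hypothesis $r\leq \tau_*\,\E(\log(1/\min(F,1-F)))$. Once one cell reaches zero load, its descendants form a non-infected subpopulation, and one concludes recovery by comparing the growth rate of this non-infected subpopulation with that of the infected cells (this is where the assumption on $\tau$ relating $\tau(0)$ to $\tau(x)$ for $x>0$ is used). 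The key idea you are missing is that $\min(F,1-F)$ arises from tracking one well-chosen lineage, not from bounding the whole tree.
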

\me We only give  the idea of  the proof and refer to \cite{MR2754402} for more details. Let us follow a cell lineage by starting from the initial cell and
 choosing the less infected cell at each division. The infection along this lineage is a  Feller diffusion with catastrophes whose distribution is $\min(F,1-F)$ and the catastrophe rate is
 $\tau$. We know from Section \ref{catam} when the infection in   such a cell line becomes extinct a.s. Then one  uses that the population of non-infected cells is growing faster than the population of cells infected by more than $x_1$ parasites.

\subsubsection{An example of moderate infection}\label{section62}

\me We assume here that $\tau(x)$ is an increasing function of the parasite load. This means that the more the cell is infected, the faster it divides. Low infected cells divide slower and may even stop dividing if $\tau(0)=0$. That's why a new regime appears here, between recovery and proliferation of the parasites, where a positive fraction of cells is infected but the quantity of parasites inside remains bounded.
We then say that the infection is $\emph{moderate}$. Let us provide an example where the infection is indeed moderate:
the organism does not recover but the parasites do not proliferate in the cells.

$$F=1/2 \ \ \t{a.s.}, \quad \tau(x)=0  \ \  \t{if}  \ x< 2 \  \  \ \t{and}  \ \ \ \tau(x)=\infty \ \ \t{if} \ \  x\geq 2.$$
Indeed, as soon as the quantity of parasites in a cell reaches $2$, the cell divides and the  quantity of parasites in each daughter cell is equal to one. The parasites do not proliferate in the cells since the parasite load  in each cell is less than $2$. \\
We now fix the growth rate of parasites $r$ such that the probability that the Feller diffusion $(X_t, t\geq 0)$ reaches $0$ before  $2$ is strictly less than $1/2$. Then the number of infected cells follows a supercritical branching process and grows exponentially
 with positive probability. Conditionally on this event, the proportion of infected cells  doesn't tend to zero since the non-infected cells do not divide. Thus the organism doesn't recover. \\

\section{Markov Processes along Continuous Time Galton-Watson Trees}

\me In this section, we consider measure-valued processes associated with a discrete genealogy given by a branching process.   These processes describe a structured population where individuals are characterized by a trait. We focus on the case where the  branching rate is constant and the number of offspring belongs to $\mathbb{N}$. During the life of an individual, its trait dynamics
is modeled by a general Markov process. 
More precisely, the individuals behave independently and
\begin{enumerate}
\item Each individual lives during an independent exponential time of parameter  $\tau$ and
then gives birth to a random number of offspring whose distribution is given by $(p_k, k\geq 0)$.  
\item Between two branching events, the trait dynamics of an individual follows  a c\`adl\`ag
strong Markov process $(X_t)_{t\geq 0}$ with values in a Polish space
$\mathcal X$ and  infinitesimal generator $L$ with domain $D(L)$. Here again, we can assume that $\mathcal X\subset\R^d$. 
\item When an individual with trait $x$ dies, the distribution of the traits of its offspring is given by   $(P^{(k)}(x,dx_1,\ldots, dx_k) : k\geq 1)$, where $k$ is the number of offspring. 
\end{enumerate}
 Let us note that an individual may die without descendance when $p_0>0$. Moreover,  when $X$ is a Feller diffusion and $p_2=1$, we recover the splitting Feller diffusion of Section \ref{div}. In the general case,  the process $X$ is no longer a branching process and the key property  for the long time study of the measure-valued process
will be  the ergodicity of  a well chosen auxiliary Markov process. 


  \me A vast literature can be found concerning branching Markov processes and  special attention has been payed to Branching Brownian Motion from the pioneering
  work of Biggins \cite{biggins77} about branching random walks,  see  e.g. \cite {Eng, Shi} and references therein.
More recently, non-local branching events  (with jumps occurring
at the branching times) and superprocesses limits corresponding to small and rapidly branching particles have been considered and we  refer e.g. to  the works of Dawson et al.
 and Dynkin \cite{Dynkin}.

\subsection{Continuous Time Galton-Watson Genealogy}
\me The genealogy of the population is  a branching process with reproduction at rate $\tau$ and offspring distribution given by $(p_{k},k\geq 0)$. We assume that the population arises from a single ancestor $\{\emptyset \}$.
Roughly speaking, the genealogy is obtained by adding i.i.d. exponential life lengths (with parameter $\tau$) to a  (discrete)  Galton-Watson tree.
Let us give some details on this construction, which will be useful in the sequel.
We define $\mathbf I:=\{\varnothing\} \cup \bigcup_{n\geq 1} (\N^*)^n$, which we endow with the  order relation $\preceq$  : $u\preceq v$ if there exists $w\in \mathbf I$ such that $v=(u,w)$.  For example, the individual $(2,3,4)$ is the fourth child of the third child of the second child of the root $\emptyset$. 
We denote by $(A(i), i \in \mathbf{I})$ i.i.d. random variables with distribution $p$. The discrete genealogy $\mathcal I$ is the random  subset of
 $\mathbf I$ obtained by keeping the individuals which are indeed born: 
$$\mathcal I:= \cup_{q\geq 0}  \{(i_1,\ldots, i_q) : \forall k=1, \ldots, q, \  i_k\leq A((i_1,\ldots, i_{k-1}))\},$$
where by convention the first set in the right hand side is $\{\emptyset \}$.
 We
 consider now a sequence
 $(l_i, i \in \mathbf{I})$ of exponential random variables, so that    $l_i$ is the life time 
 of the individual $i\in \mathcal{I}$
and  \begin{equation}
\alpha (i)=  \sum_{j \prec i} l_j \quad \mbox{ and }\quad
\beta  (i)=   \sum_{j \preceq i} l_j = \alpha (i)+ l_i,\label{defalphabeta}
\end{equation}
with the convention  $\alpha(\emptyset)=0$, are the birth and death times of $i \in \mathcal{I}$.  
We assume that the offspring  distribution $p$ has a finite
second moment and that
$$
m=\sum_{k \ge 0} k\,p_k> 
1 \ \text{ (supercriticality)}.$$

\noindent  Let us  denote by $V_t\subset \mathcal I$ the set of individuals alive at time $t$ :
$$V_t:=\{ i \in \mathcal I : \alpha(i)\leq t<\beta(i)\}, \quad \hbox{ and as before } \ N_t=\#V_t.$$

\noindent The supercriticality assumption on the reproduction law  implies the possible persistence of the process. 
\begin{Prop}
\label{moy} The population size process $(N_{t}, t\geq 0)$ survives with positive probability. Moreover, for any $t\geq 0$, \be
\label{moy-sc}\E(N_t)=\exp(\tau(m-1)t)\ee  and
 $$\frac{N_t}{\E(N_t)}\stackrel{t\rightarrow \infty}{\longrightarrow} W \quad \emph{a.s. and in } \mathbb L^2,$$
where $W \in \R_+$ is positive on the survival event. \\
\end{Prop}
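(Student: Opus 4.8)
The plan is to exploit the classical theory of continuous-time Markov branching processes, for which $(N_t,t\geq 0)$ is precisely the prototype: each individual lives an independent $\text{Exp}(\tau)$ time and then is replaced by a random number of offspring with law $(p_k)_k$ having mean $m$ and finite second moment. First I would establish the moment formula \eqref{moy-sc}. One clean route is to condition on the time and outcome of the first branching event of the ancestor: writing $u(t)=\E(N_t)$ and using the strong Markov (branching) property together with the i.i.d. structure of the subtrees rooted at the children, one gets the renewal-type identity
\begin{equation*}
u(t)=e^{-\tau t}+\int_0^t \tau e^{-\tau s}\, m\, u(t-s)\,\ud s,
\end{equation*}
and differentiating (or taking Laplace transforms) yields $u'(t)=\tau(m-1)u(t)$ with $u(0)=1$, hence $u(t)=e^{\tau(m-1)t}$. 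An alternative I would mention is to note that $(N_t)_t$ has generator acting on $n\mapsto n$ by $\tau\sum_k p_k(n-1+k-... )$, i.e. $L\,\mathrm{id}(n)=\tau(m-1)n$, so $M_t:=e^{-\tau(m-1)t}N_t$ is a local martingale, and since $\sup_{s\le t}\E(N_s)<\infty$ (a standard Gronwall estimate on $\E(N_t)$ using that the jump rate is $\tau N_t$ and jumps increase the population by at most... , bounded in $L^1$ by the second-moment hypothesis), it is a true martingale; this simultaneously gives \eqref{moy-sc} and the candidate for $W$.

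Next I would prove the a.s. and $\mathbb L^2$ convergence $N_t/\E(N_t)\to W$. Set $W_t:=e^{-\tau(m-1)t}N_t$, a nonnegative martingale, so $W_t\to W$ a.s. by the martingale convergence theorem. To upgrade to $\mathbb L^2$ convergence and to ensure $W$ is nontrivial, the key is a uniform second-moment bound $\sup_{t\geq 0}\E(W_t^2)<\infty$. I would obtain $\E(N_t^2)$ by the same first-step decomposition: conditioning on the first branching event and using $\E((\sum_{j=1}^A N^{(j)}_{t-s})^2)$ with the $N^{(j)}$ i.i.d. copies and $A$ independent with law $p$, one derives a linear ODE (or renewal equation) for $v(t)=\E(N_t^2)$ of the form $v'(t)=2\tau(m-1)v(t)+\tau\,\sigma^2\,u(t)$ where $\sigma^2=\sum_k k(k-1)p_k+m-m^2$ involves the finite second moment of $p$; solving gives $v(t)=c\,e^{2\tau(m-1)t}+O(e^{\tau(m-1)t})$ since $m>1$, whence $\E(W_t^2)=e^{-2\tau(m-1)t}v(t)$ is bounded. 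Boundedness in $L^2$ gives uniform integrability, hence $W_t\to W$ in $\mathbb L^1$ (so $\E(W)=1$, in particular $W\not\equiv 0$) and in $\mathbb L^2$.

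Finally I would identify $\{W>0\}$ with the survival event up to a null set. The inclusion $\{W>0\}\subseteq\{\text{survival}\}$ is immediate since $W_t=0$ for all $t$ once $N_t=0$. For the converse, the supercriticality $m>1$ gives that the embedded discrete Galton–Watson process survives with positive probability $1-q>0$, where $q<1$ is the extinction probability; a standard $0$–$1$ argument using the branching property shows $\P(W=0)$ equals the extinction probability of the branching process, i.e. $\P(W=0)=q=\P(\text{extinction})$, so $\{W>0\}=\{\text{survival}\}$ a.s. and in particular survival has probability $1-q>0$. Concretely: decomposing the tree at the first branching event, if $h:=\P(W=0)$ then $h=\E(h^{A})=f(h)$ with $f$ the generating function of $(p_k)_k$; since $m=f'(1)>1$, the equation $f(s)=s$ on $[0,1]$ has the unique solution $h=q<1$ besides $s=1$, and $h=1$ is excluded because $\E(W)=1$.

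The main obstacle I expect is making the uniform $\mathbb L^2$ bound on $W_t$ fully rigorous — i.e. justifying the first-step renewal/ODE computation for $\E(N_t^2)$ (handling the interchange of expectation and infinite sums over offspring, which is where the finite-second-moment hypothesis on $p$ is genuinely used) and checking that the inhomogeneous term is of strictly lower exponential order than $e^{2\tau(m-1)t}$, which is exactly what supercriticality $m>1$ buys. Everything else — martingale convergence, the renewal identity for the first moment, and the fixed-point identification of $\P(W=0)$ — is standard branching-process machinery that can be invoked or sketched briefly.
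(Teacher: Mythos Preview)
Your plan is correct and matches the paper's own approach: the paper simply says ``The proof uses the generator of $N$ and the martingale $N_t/\E(N_t)$. It is left to the reader,'' which is exactly your second route for \eqref{moy-sc} together with the $\mathbb L^2$-bounded martingale argument you outline. Your treatment is in fact more detailed than what the paper provides, and the extra care you take with the second-moment ODE and the identification $\{W>0\}=\{\text{survival}\}$ via the fixed point of the generating function is the standard way to fill in those details.
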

\noindent The proof uses the generator of $N$  and the martingale $N_t/\E(N_t)$. It is left to the reader.

\bi The continuous time Galton-Watson genealogy $\mathcal T$ is defined as the  (random) subset  of $\mathcal I \times \R_+$ such that $(i,t) \in \mathcal T$ if and only if $i \in V_t$.

\me 
We define now the branching Markov process  along this genealogy  $\mathcal T$.     We use the shift operator $\theta$ for such trees and $\theta_{(i,t)}\mathcal T$ is the subtree of $\mathcal T$ rooted in $(i,t)$. \begin{Def}
\label{def:XT}
Let $X=(X_t,t\geq 0)$ be a c\`adl\`ag $\mathcal X$-valued strong Markov process and $\mu\in \cp(\mathcal X)$.
Let $(P^{(k)}(x,dx_1\ldots dx_k) ,k\geq 1)$ be a family of
transitions probabilities  from $\mathcal X$ to $\mathcal X^k$. \\ The continuous time branching
Markov  process $X_\mathcal T=(X_t^i,(i,t)\in \mathcal T)$ indexed by $\mathcal T$, the underlying trait dynamics  $X$  and starting
distribution $\mu$, is defined conditionally on $\mathcal T$ recursively as follows:
\begin{itemize}
\item[(i)] $X^\emptyset=(X^\emptyset_t,t\in
  [0,\beta(\emptyset)))$ is distributed as $(X_t,t\in
  [0,\beta(\emptyset)))$ with  $X_0$ distributed according   $\mu$.
\item[(ii)] Conditionally  on $X^\emptyset$,  the initial traits of
  the first generation  of offspring   $(X^i_{\alpha(i)},1\leq  i\leq
  A({\emptyset}))$               are              distributed
as $P^{(A({\emptyset}))}(x,dx_1\ldots dx_{A({\emptyset})})$. 
\item[(iii)]    Conditionally    on   $X^\emptyset$,    $A(\emptyset)$,
  $\beta_\emptyset$  and $(X^i_{\alpha(i)},1\leq  i\leq A(\emptyset))$,
  the  tree-indexed  Markov  processes $(X^{ij}_{\alpha(i)+t},  (j,t)\in
  \theta_{(i,\alpha(i))}\mathcal{T})$  for $1\leq  i  \leq A(\emptyset)$
  are  independent  and  respectively  distributed  as  $X$ with
  starting distribution  $\delta_{X^i_{\alpha(i)}}$.
\end{itemize}
\end{Def}

\noindent There is no spatial structure on the genealogical tree and without loss of generality, we assume that the marginal measures of  $P^{(k)}(x,dx_1\ldots dx_k)$ are identical.  It can be achieved simply by a uniform permutation of the traits of the offspring.



\bi 
Following the previous sections, we give a pathwise representation of the  point measure-valued process defined  at
time $t$ by
\begin{equation}
{Z}_t=\sum_{i\in V_t}\delta_{X_t^i}.
\end{equation}
The dynamics  of
$Z$ is given by the following stochastic differential equation. Let
$N(\ud s, \ud i, \ud k, \ud \theta)$ be a
Poisson point measure on  $\mathbb R_+\times \mathbf I \times \mathbb N \times [0,1]$ with intensity
$\tau\ud s n(\ud i) p(\ud k) d\theta$ where 
$n(\ud i)$ is the counting measure on $\mathbf{I}$ and $p(\ud k)=\sum_{l\in
  \N} p_l \delta_l (\ud k)$ is the
offspring distribution.  Let $L$ be the infinitesimal generator of $X$. Then for test functions
$f\,:\,(t,x)\mapsto f_t(x)$ in $\Co^{1,0}_b(\R_+\times \mathcal X,\R)$ such that
$\forall t\in \R_+,f_t\in D(L)$, we have
\begin{align}
\langle Z_t,f_t\rangle= & f_0(X^\emptyset_0) 
  + \int_0^t\int_{\R_+}\left(Lf_s(x) +\partial_s
   f_s(x)\right)\ud sZ_s(\ud x) + M^f_t \label{martingalegdepop} \\
 &   +\int_0^t
\int_{\mathbf{I}\times \N\times [0,1]} \One_{\{i\in V_{s-}\}}
\left(\sum_{j=1}^k
  f_s(F^{(k)}_j(X^i_{s_-},\theta))-f_s(X^i_{s_-})\right)N(\ud s,\ud i,
\ud k,\ud \theta), \nonumber 
\end{align}
where $M^f_t$  is a  martingale and $(F_j^{(k)}(x,\Theta) : j=1\ldots k)$ is a random vector distributed like  $P^{(k)}(x,\ud x_1\ldots \ud x_k)$ when $\Theta$ is uniform in $[0,1]$. 

\subsection{Long time behavior}

\me We  are now interested in studying the long time behavior of the branching Markov process $Z$. We will show that it is deduced from the knowledge of  the long time behavior of a well-chosen one-dimensional auxiliary Markov process. In particular, the irreducibility of the auxiliary process will give a sufficient condition in the applications, to obtain a limit as time goes to infinity. 

\me Let us recall from Proposition \ref{moy} that the expectation at time $t$ and the  long time behavior of the population size process $N$ are known. 

\subsubsection{Many-to-one formula}
\me We introduce the auxiliary Markov process $Y$ with  infinitesimal generator given by
$$Af(x)=Lf(x)+\tau m\int_{\mathcal X} \big(f(y)-f(x)\big)Q(x,\ud y)$$ 
for $f \in D(L)$
and
$$Q(x,dy):=\frac{1}{m} \sum_{k\geq 0} kp_kP^{(k)}(x,\ud y \mathcal X^{k-1}).$$
 In  words, $Y$ follows the dynamics of  $X$ with additional jumps at rate $\tau m$ whose distribution is given by the size biased transition probability measure $Q$.

\begin{Prop}
\label{propprocessusauxiliaire}
For $t\ge 0$ and for any non-negative measurable function $f\in \mathcal{B}(\mathbb D([0,t],\mathcal X))$  and $t \ge 0$, we
have
\begin{equation}
\label{defYt}
\E_\mu\left(\sum_{i \in V_t} f(X^i_s, \   s\leq t)
   \right)
= \E(N_t)\, \mathbb{E}_\mu(f({Y}_s, \ s\leq t )) = e^{\tau(m-1)t} \, \mathbb{E}_\mu(f({Y}_s, \ s\leq t )) ,
\end{equation}
where (with a slight abuse) $X^i_s$ is the trait of the ancestor of $i \in V_t$ living at time $s$.
\end{Prop}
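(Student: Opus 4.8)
The identity \eqref{defYt} is a "many-to-one" (or Feynman–Kac–type) formula, and the natural route is induction on the number of branching events together with the recursive construction of $X_\mathcal{T}$ in Definition \ref{def:XT}. Write $n(\ud i)$ for the counting measure on $\mathbf I$, and recall that the genealogy is a continuous-time Galton–Watson tree: the root lives an $\mathrm{Exp}(\tau)$ time, then branches into $A(\emptyset)$ children with $A(\emptyset)\sim p$, and the subtrees rooted at the children are i.i.d. copies. The first step is to decompose the left-hand side of \eqref{defYt} according to whether the root is still alive at time $t$ or has already branched. Conditioning on $\beta(\emptyset)$ (the first branching time), on $\{\beta(\emptyset)>t\}$ the sum $\sum_{i\in V_t}f(X^i_s,s\le t)$ reduces to $f(X^\emptyset_s,s\le t)$, and on $\{\beta(\emptyset)=u\le t\}$ it splits, by the branching structure, into a sum over the $A(\emptyset)$ children of (conditionally independent) copies of the same functional started from the trait handed down at time $u$.

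Carrying this out, one gets
$$
\E_\mu\Big(\sum_{i\in V_t}f(X^i_s,s\le t)\Big)
= e^{-\tau t}\,\E_\mu\big(f(X_s,s\le t)\big)
+ \int_0^t \tau e^{-\tau u}\,\E_\mu\Big[\textstyle\sum_{k\ge0}p_k\, k\!\int_{\mathcal X} G_{t-u}(y)\,P^{(k)}(X_u,\ud y\,\mathcal X^{k-1})\Big]\ud u,
$$
where $G_{t-u}$ is the (path-)functional of the tree-indexed process restarted at $y$ and time $u$, and the factor $k$ comes from the fact that each of the $k$ children contributes a term with the same marginal law (using the assumed identical-marginals property of $P^{(k)}$). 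Recognising that $\sum_k p_k k\int f(y)P^{(k)}(x,\ud y\,\mathcal X^{k-1}) = m\int f(y)Q(x,\ud y)$, the integral term becomes $m\int_0^t \tau e^{-\tau u}\E_\mu[\int_{\mathcal X} G_{t-u}(y)Q(X_u,\ud y)]\ud u$. On the other side, the auxiliary process $Y$ with generator $A = L + \tau m\int(\,\cdot\,)Q(x,\ud y)$ has its own renewal decomposition at its first jump time, which is $\mathrm{Exp}(\tau m)$-distributed: $\E_\mu(g(Y_s,s\le t)) = e^{-\tau m t}\E_\mu(g(X_s,s\le t)) + \int_0^t \tau m e^{-\tau m u}\E_\mu[\int Q(X_u,\ud y)\,(\text{restarted }Y\text{-functional at }y)]\ud u$. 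Multiplying this by $e^{\tau(m-1)t}$ and matching term by term with the previous display — the pure-survival terms both carry $e^{-\tau t}$ after the multiplication, and the integral terms match because $\tau m e^{-\tau m u}\cdot e^{\tau(m-1)t}/e^{\tau(m-1)(t-u)} = \tau m e^{-\tau u}\cdot m^{\text{(bookkeeping)}}$ collapses correctly once one tracks the $\E(N_t)=e^{\tau(m-1)t}$ prefactors through the recursion — closes the induction. The cleanest way to make the induction rigorous is to prove the statement first for functionals of the form $f(X_{t_1},\dots,X_{t_p})$ with $0\le t_1<\dots<t_p\le t$ by induction on $p$, or alternatively to set up the identity at the level of generators: show that both $t\mapsto \E_\mu(\sum_{i\in V_t}f(X^i_t))$ and $t\mapsto e^{\tau(m-1)t}\E_\mu(f(Y_t))$ solve the same linear evolution equation $\partial_t h_t = A^* h_t$ (in weak form) with the same initial condition $\mu$, and invoke uniqueness for that equation (exactly as in the uniqueness arguments of Lemma \ref{propprocessusauxiliaire2}).

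**Main obstacle.** The genuine technical point is the $k$-to-$m$ size-biasing step: one must be careful that summing the contributions of all $k$ offspring of a branching individual, each weighted by $p_k$, produces the factor $m$ and the biased kernel $Q$, and that this is consistent with the assumed symmetry (identical marginals) of $P^{(k)}$ — without that assumption the statement as written, involving a single marginal $P^{(k)}(x,\ud y\,\mathcal X^{k-1})$, would need adjusting. A secondary nuisance is integrability: to take expectations and exchange sums/integrals freely one needs $\E(N_t)<\infty$ and enough control on $f$ (boundedness suffices), both of which are available here since $p$ has finite mean (indeed finite second moment) by Proposition \ref{moy}. The path-functional version (general measurable $f$ on $\mathbb D([0,t],\mathcal X)$) then follows from the finite-dimensional case by a monotone-class argument, since cylinder functionals are measure-determining on path space.
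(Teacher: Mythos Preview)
Your renewal-decomposition approach is correct and genuinely different from the paper's. The algebra you were hesitant about does close: after multiplying the $Y$-recursion by $e^{\tau(m-1)t}$, the integral term carries $e^{\tau(m-1)t}\cdot\tau m e^{-\tau m u}\cdot e^{-\tau(m-1)(t-u)}=\tau m e^{-\tau u}$, which matches the tree side once you use $\sum_k kp_kP^{(k)}(\cdot,\ud y\,\mathcal X^{k-1})=mQ(\cdot,\ud y)$; uniqueness for the resulting renewal equation (or the induction on the number of branching events you suggest) then finishes. The paper takes a different, more global route: it proves a Girsanov-type lemma comparing the PPP $\{(T_k,A_k)\}$ of intensity $\tau\,\ud s\,p(\ud k)$ with the size-biased PPP $\{(\tilde T_k,\tilde A_k)\}$ of intensity $\tau m\,\ud s\,\tilde p(\ud k)$, $\tilde p_k=kp_k/m$, showing that expectations under the latter equal $e^{-\tau(m-1)t}\E[\,\cdot\,\prod_{T_k\le t}A_k]$. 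It then observes that for every label $i$ of fixed length $q$ the pair (trait path, ancestral branching history) has the same law, and that summing the indicator $\One_{\{i\in V_t\}}$ over all labels of length $q$ produces exactly the factor $\prod_{T_k\le t}A_k$; applying the Girsanov identity converts this into an expectation under the biased PPP, which is the jump skeleton of $Y$. Your approach is more elementary and parallels the analytic argument of Lemma~\ref{propprocessusauxiliaire2}, but the restarted functional $G_{t-u}$ must carry the pre-$u$ path, so the notation is heavier and you need either the cylinder-functional induction or a uniqueness argument. The paper's approach handles the full path functional in one stroke and makes the spine/size-biasing mechanism explicit, at the price of the auxiliary Girsanov lemma and a label-counting step.
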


\me To prove such a formula in the particular case $f(x_s,  s\leq t)=f(x_t)$, one can use   It\^o's calculus and follow Section \ref{BFDpara} and conclude with a monotone class argument. 
Here we prove the general statement using the following Girsanov type formula.
In the rest of this section, the random jumps $\w{T}_k$ and $T_k$ of the Poisson point processes on $\mathbb R^+$ that we consider, are ranked in  increasing order.

\begin{Lem} Let $\{(\w{T}_k,\w{A}_k) : k \geq 0\}$ be a Poisson point process  with intensity $\tau m \, \ud s \, \w{p}(\ud k)$ on $\R_+\times \N$, where
$$\w{p}(\{k\})= kp_k/m.$$ 
Then, for any $t\geq 0$ and $q\geq 0$ and any non-negative measurable function $g$ on $(\mathbb{R}_{+}\times \mathbb{N})^{q+1}$:
\begin{align*}
\E\left(g( (\w{T}_k,\w{A}_k) : k \leq q)\ind_{\w{T}_q\leq t <\w{T}_{q+1}}\right)=\expp{-\tau(m-1)t}\E\left(g(({T}_k,{A}_k) : k \leq q) \One_{\{T_q\leq t <T_{q+1}\}} \prod_{k\leq q}   A_k
  \right),
\end{align*}
where $\{({T}_k,{A}_k) : k \geq 0\}$  is a Poisson point process on $\R_+\times \N$ with intensity $\tau \, \ud s \, p(\ud k)$. Thus, for any measurable non-negative function $h$,
\begin{align}
\E\left(h( (\w{T}_k,\w{A}_k) : k\geq 0, \ \w{T}_{k}\leq t)\right)=\expp{-\tau(m-1)t}\E\left(h(({T}_k,{A}_k) : k\geq 0, \  T_{k} \leq t)  \prod_{T_{k} \leq t}  A_k
  \right).\label{utgirsanov}
\end{align}
\end{Lem}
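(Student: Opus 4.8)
The plan is to prove the first displayed identity by conditioning on the jump structure of the two point processes, and then derive the second (the identity for a general functional $h$) by a monotone class argument. First I would recall the explicit description of a Poisson point process $\{(T_k,A_k)\}$ on $\R_+\times\N$ with intensity $\tau\,\ud s\,p(\ud k)$: on the event $\{T_q\le t<T_{q+1}\}$, the first $q$ jump times $(T_1,\dots,T_q)$ have, jointly with the "no further jump before $t$" constraint, density $\tau^q e^{-\tau t}\mathbf 1_{\{0<s_1<\cdots<s_q\le t\}}$ with respect to Lebesgue measure, while the marks $(A_1,\dots,A_q)$ are i.i.d.\ with law $p$, independent of the times. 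The tilted process $\{(\w T_k,\w A_k)\}$ has the \emph{same} intensity in $s$ (namely $\tau m\,\ud s$ integrated against $\w p(\N)=\sum_k kp_k/m=1$, i.e.\ total rate $\tau m$) — careful: its total jump rate is $\tau m$, not $\tau$ — so on $\{\w T_q\le t<\w T_{q+1}\}$ its jump times have density $(\tau m)^q e^{-\tau m t}\mathbf 1_{\{0<s_1<\cdots<s_q\le t\}}$ and its marks are i.i.d.\ with law $\w p(\{k\})=kp_k/m$.

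Then I would write both sides of the claimed identity as integrals over $\{0<s_1<\cdots<s_q\le t\}$ and over $\N^q$. The left-hand side becomes
$$
\sum_{k_1,\dots,k_q}\int_{0<s_1<\cdots<s_q\le t} g\big((s_j,k_j)_{j\le q}\big)\,(\tau m)^q e^{-\tau m t}\prod_{j=1}^q\frac{k_jp_{k_j}}{m}\,\ud s_1\cdots\ud s_q,
$$
and the $m^q$ in $(\tau m)^q$ cancels the $m^{-q}$ from $\prod \w p(\{k_j\})$, leaving $\tau^q e^{-\tau m t}\prod_j k_jp_{k_j}$. The right-hand side, by the description of the untilted process, equals
$$
\sum_{k_1,\dots,k_q}\int_{0<s_1<\cdots<s_q\le t} g\big((s_j,k_j)_{j\le q}\big)\,\tau^q e^{-\tau t}\Big(\prod_{j=1}^q k_j\Big)\prod_{j=1}^q p_{k_j}\,\ud s_1\cdots\ud s_q,
$$
multiplied by the extra factor $e^{-\tau(m-1)t}$ that appears in the statement. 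Since $e^{-\tau t}\cdot e^{-\tau(m-1)t}=e^{-\tau m t}$, the two expressions coincide term by term, which proves the first identity.

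Finally, to pass from $g$ depending on the first $q$ jumps to a general non-negative measurable functional $h$ of the whole restricted point process $\{(\w T_k,\w A_k):\w T_k\le t\}$, I would decompose according to the (a.s.\ finite) number $q$ of jumps before $t$, i.e.\ write $h=\sum_{q\ge 0} h\,\mathbf 1_{\{N^{\w{}}_t=q\}}$ where on $\{\w T_q\le t<\w T_{q+1}\}$ the functional $h$ is a measurable function $g_q$ of $((\w T_k,\w A_k))_{k\le q}$; applying the first identity to each $g_q$ and summing (using Tonelli, all terms being non-negative) yields \eqref{utgirsanov}, noting $\prod_{\w T_k\le t}(\cdot)$ becomes the empty product $=1$ on $\{q=0\}$, consistently on both sides. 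The only mild subtlety — and the step I would be most careful about — is bookkeeping the total jump rates: the tilted process runs at rate $\tau m$ while the original runs at rate $\tau$, and it is precisely the mismatch $e^{-\tau m t}$ vs.\ $e^{-\tau t}$ that produces the Radon–Nikodym factor $e^{-\tau(m-1)t}\prod A_k$; getting these exponential normalizations and the $m^{\pm q}$ cancellation exactly right is the crux, everything else being routine.
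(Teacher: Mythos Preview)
Your approach is correct and essentially the same as the paper's: both compute the joint law of the jump times and marks explicitly and verify the identity term by term, with the paper parametrizing by inter-arrival times (i.i.d.\ exponentials) while you use the ordered jump times directly. One minor indexing slip: in the statement the jumps are indexed from $k=0$, so on $\{\w T_q\le t<\w T_{q+1}\}$ there are $q+1$ jumps (not $q$), and your densities should carry the exponent $q+1$ rather than $q$; once that is corrected, the $m^{\pm(q+1)}$ cancellation and the $e^{-\tau t}\cdot e^{-\tau(m-1)t}=e^{-\tau m t}$ matching go through exactly as you describe.
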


\begin{proof} Let $q\geq 0$ and remark that 
\begin{align*}
g((\w{T}_k,\w{A}_k) : k \leq q)=& G_q(\w{T}_0,\w{T}_1-\w{T}_0,\ldots, \w{T}_q-\w{T}_{q-1},\w{A}_0, \w{A}_1,\dots, \w{A}_q),
\end{align*}
for some non-negative functions $(G_q, q\in \N)$. Using that $\w{T}_0$ and $(\w{T}_{k+1}-\w{T}_k : k\geq 0)$ are i.i.d. exponential random variables with parameter
$\tau m$, we  deduce that
\begin{multline*}
\E[g((\w{T}_k,\w{A}_k) : k \leq q) \ind_{\w{T}_q\leq t <\w{T}_{q+1}}]  \\
\begin{aligned}
 =&  \int_{\R_+^{q+2}}\sum_{n_0,\dots, n_q}\!\! (\tau m)^{q+2} \expp{-\tau m(t_0+\ldots+t_{q+1})} G_q(t_0,\dots, t_q,n_0,\dots ,n_q) \\
& \qquad \qquad \qquad \qquad \qquad \qquad \qquad \times\prod_{k=0}^q \frac{p_{n_k}n_k}{m}\, 
 \ind_{\{\sum_{k=0}^{q} t_k\leq t<\sum_{k=0}^{q+1} t_k\}}\ud t_0\ldots \ud t_{q+1}\\
= &  \int_{\R_+^{q+1}}\sum_{n_0,\dots, n_q}\!\!\tau^{q+1}
\expp{-\tau t} G_q(t_0,\dots, t_q,n_0,\dots, n_q)
\expp{-\tau (m-1)t} \\
& \qquad \qquad \qquad \qquad \qquad \qquad \qquad \times \prod_{k=0}^q n_k  \, p_{n_k}\,
\ind_{\{\sum_{k=0}^q t_k\leq t\}} \ud t_0 \ldots \ud t_q,
\end{aligned}
\end{multline*}
which yields the first result. 
\end{proof}

\me 
\begin{proof}[Proof of Proposition \ref{propprocessusauxiliaire}]  We give here the main steps of the proof.
We  recall that the random variables  $(A(i),l(i))$ have been defined  for $i\in \mathbf I$. Let us   now introduce the point process describing the birth times and number of offspring of
the ancestral lineage of $i$:
$$\Lambda^i := \{(\beta(j), A(j)) : j \preceq i  \}  \qquad (i \in \mathbf I).$$
We stress that  for any $q\geq 0$, the processes   $(\Lambda^i , i\in (\mathbb{N}^*)^q)$ corresponding to individual labels with length $q$, are  identically distributed (but dependent).  
We introduce now $(X', \Lambda)$ and $\Lambda_q$ , where 
\begin{itemize}
\item  $\Lambda=\{(T_k,A_k) :  k \geq 0\}$ is
 a Poisson point process   on $\R_+\times \N$ with intensity $\tau  \ud s  p(\ud k)$ (with $T_k$   ranked  in increasing order).
  $\Lambda_q=\{(T_k,A_k) :  k \leq q\}$.
 \item Conditionally on $\Lambda=\{ (t_k,n_k) : k \geq 0\}$ for $n_k \geq 1$, $X'$ is the time non-homogeneous Markov process such that
\begin{itemize}
\item  at time $t_{i}$,  $X'$ jumps and the transition probability is given by $P^{(n_i)}(x,  \ud y \mathcal X^{n_i-1})$.
 \item during the time intervals $[t_i,t_{i+1})$, the infinitesimal generator of $X'$ is $L$; 
\end{itemize}
\end{itemize}
We denote by $|i|$ the length of the label $i$. We can now compute for any $t\geq 0$ and $i \in \mathbf I$ such that $|i|=q$,
\Bea
\E_\mu\big(f(X^i_s : s \leq t)\One_{\{ i\in V_t\}} \vert \Lambda^i\big)&=&
 \One_{\{\alpha(i) \leq t < \beta(i); \ \forall k=1,\ldots,q : \ A(i_1, \ldots ,i_{k-1}) \geq  i_{k} \}}F_t(\Lambda^i)\Eea
where $ F_t(\lambda)= \E_\mu(f(X'_s : s\leq t)| \Lambda \cap ([0,t]\times \N)=\lambda  \cap ([0,t]\times \N) )$. Since $\Lambda^i$ is distribued as $\Lambda_q$, we get
\Bea
&&\sum_{i\in \mathbf I}
\E_\mu\big(f(X^i_s : s \leq t)\One_{\{ i\in V_t\}}\big) \\
 &&=\sum_{i\in \mathbf I } \sum_{q\in \N} \One_{\{|i|=q\}}
\E_\mu\big(F_t(\Lambda^i)\One_{\{\alpha(i) \leq t < \beta(i); \ \forall k=1,\ldots,q : \ A(i_1, \ldots ,i_{k-1}) \geq  i_{k} \}} \big)\\
&&= \sum_{i\in \mathbf I} \sum_{q\in \N} \One_{\{|i|=q\}}
\E_\mu\big( F_t(\Lambda_q) \One_{\{T_{q-1}\leq t < T_{q}; \ \forall k=0,\ldots, q-1: \ A_{k} \geq  i_{k+1} \}} \big) \\
\Eea
where we have used the convention $T_{-1}=0$.
Adding that
\Bea
&&\sum_{q\in \N}\sum_{i\in \mathbf I }  \One_{\{|i|=q\}} F_t(\Lambda_q)
\ind_{\{T_{q-1}\leq t < T_{q}; \ \forall k=0,\ldots,q-1:  \ A_{k} \geq  i_{k+1}\} } \\
&&\quad =\sum_{q\in \N}F_t(\Lambda_q)\#\{i \in \mathbf{I} : \vert i\vert=q,   \ \forall k=0,\ldots,q-1 :  i_{k+1}\leq A_{k}\} \ind_{T_{q-1}\leq t < T_{q}}\\
&&\quad = F_t((T_k,A_k) : k \geq 0, T_k\leq t)\prod_{T_{k}\leq t} A_{k}
\Eea
and using (\ref{utgirsanov}), we get 
\Bea
\sum_{i\in \mathbf I}
\E_\mu\big(f(X^i_s : s \leq t)\One_{\{ i\in V_t\}}\big)
&=& \E_\mu\left( F_t((T_k,A_k) : k \geq 0, T_k\leq t)\prod_{T_{k}\leq t} A_{k}\right) \\
&=&   e^{\tau (m-1)t} \E_\mu\left(F_t((\w{T}_k,\w{A}_k) : k \geq 0, \w{T}_k\leq t)\right). 
\Eea
Finally,  we  combine the definitions of $X'$ and $Y$ to conclude, recalling that $\{(\w{T}_k,\w{A}_k) : k \geq 0\}$ is a Poisson point process with intensity $\tau m  \ud s  \w{p}(\ud k)$ and \eqref{moy-sc}.
\end{proof}
\subsubsection{Law of large numbers}

\me Let us now  describe the  asymptotic distribution of traits within the population (see \cite{BDMT} for details).

\begin{Thm}\label{thLGNannonceintro}
Assume that for some bounded measurable function $f$, the auxiliary process satisfies  
\be
\label{limf}
\E_x(f(Y_t)) \stackrel{t\rightarrow \infty}{\longrightarrow} \pi(f)
\ee
for every $x\in \mathcal X$ and $\pi$ a probability measure on ${\cal X}$. 

\noindent Then, for every probability distribution $\mu$ on $\mathcal X$,
\begin{equation}
\lim_{t\rightarrow \infty} \frac{\ind_{\{N_t>0\}}}{N_t}\sum_{i\in V_t}f\big(X^i_t\big)=  \ind_{\{W>0\}} \pi(f)\end{equation}
in  $\P_{\mu}$  probability.
\end{Thm}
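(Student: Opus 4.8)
The plan is to control the first and second moments of the empirical measure $\langle Z_t,f\rangle=\sum_{i\in V_t}f(X_t^i)$ using the many-to-one formula of Proposition \ref{propprocessusauxiliaire}, and then to combine these moment estimates with the almost sure martingale convergence $N_t/\E(N_t)\to W$ from Proposition \ref{moy}. First I would normalize: write $\langle Z_t,f\rangle/\E(N_t) = \E(N_t)^{-1}\sum_{i\in V_t}f(X_t^i)$, so that by Proposition \ref{propprocessusauxiliaire} its expectation is exactly $\E_\mu(f(Y_t))$, which converges to $\pi(f)$ by hypothesis \eqref{limf}. Thus the normalized empirical mean has the right limit in expectation; the work is to upgrade this to convergence in probability and to divide by $N_t$ rather than $\E(N_t)$.

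The key step is a second-moment (or $\mathbb L^2$) estimate. I would compute $\E_\mu\big((\langle Z_t,f\rangle)^2\big)=\E_\mu\big(\sum_{i,j\in V_t}f(X_t^i)f(X_t^j)\big)$ by splitting over the most recent common ancestor of $i$ and $j$: the diagonal terms $i=j$ give $\E_\mu(\sum_{i\in V_t}f^2(X_t^i))=\E(N_t)\E_\mu(f^2(Y_t))$, which is $O(\E(N_t))$ and hence negligible after dividing by $\E(N_t)^2$; for the off-diagonal terms one conditions on the branching event at time $s<t$ where the lines leading to $i$ and $j$ separate, and uses the branching/Markov structure so that the two subtrees evolve independently given the trait at the split. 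This is the standard "spine decomposition at the split time" computation: it produces a term of the form $\int_0^t (\text{rate of splits})\,\E\big[\,(\text{descendants to the left})\,g(\text{trait at split})\,(\text{descendants to the right})\,h(\text{trait at split})\big]\,ds$, which after the many-to-one reduction and the $\mathbb L^2$-boundedness of $N_t/\E(N_t)$ from Proposition \ref{moy} is seen to be of order $\E(N_t)^2$. Putting the pieces together gives
\[
\frac{\E_\mu\big((\langle Z_t,f\rangle)^2\big)}{\E(N_t)^2}\ \longrightarrow\ \E(W^2)\,\pi(f)^2 ,
\]
and similarly $\E_\mu\big(\langle Z_t,f\rangle\,W_t\big)/\E(N_t)\to \E(W^2)\pi(f)$ where $W_t=N_t/\E(N_t)$, so that $\E_\mu\big((\langle Z_t,f\rangle/\E(N_t)-W_t\,\pi(f))^2\big)\to 0$. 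Hence $\langle Z_t,f\rangle/\E(N_t)-W_t\pi(f)\to 0$ in $\mathbb L^2$, and since $W_t\to W$ a.s.\ we get $\langle Z_t,f\rangle/\E(N_t)\to W\pi(f)$ in probability.

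Finally I would conclude by writing, on $\{N_t>0\}$,
\[
\frac{1}{N_t}\sum_{i\in V_t}f(X_t^i)=\frac{\langle Z_t,f\rangle/\E(N_t)}{N_t/\E(N_t)}=\frac{\langle Z_t,f\rangle/\E(N_t)}{W_t},
\]
and note that on the survival event $W>0$ a.s., so $W_t$ is bounded away from $0$ for large $t$ on that event; applying the limits above to numerator and denominator (and noting $\{N_t>0\}$ eventually coincides with the survival event $\{W>0\}$ up to a null set, since extinction is absorbing) yields $\ind_{\{N_t>0\}}N_t^{-1}\sum_{i\in V_t}f(X_t^i)\to \ind_{\{W>0\}}\pi(f)$ in $\P_\mu$-probability. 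To handle general bounded continuous $f$ rather than, say, $f^2$, I would use that \eqref{limf} applied to $f$ controls the first moment and apply the whole scheme with the auxiliary process; the second-moment bound only needs $f$ bounded. The main obstacle is the off-diagonal second-moment computation: organizing the sum over pairs by their split time and justifying rigorously (via the branching property and the tree-indexed construction of Definition \ref{def:XT}) that the two subtrees are conditionally independent and each contributes a factor reducible by many-to-one, while keeping uniform-in-$t$ control through the $\mathbb L^2$-boundedness of $N_t/\E(N_t)$.
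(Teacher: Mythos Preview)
Your approach is essentially the same as the paper's: both arguments reduce to a second-moment computation split into diagonal and off-diagonal terms, with the off-diagonal sum organized via the most recent common ancestor, factored by the branching property, and then reduced by the many-to-one formula. The only organizational difference is that the paper centers first, working with $G_t^i:=f(X_t^i)-\pi(f)$ and showing $\E\big[(\sum_{i\in V_t}G_t^i)^2\big]=o(\E(N_t)^2)$; since $\sum_i G_t^i=\langle Z_t,f\rangle-N_t\pi(f)$, this is exactly your statement that $\langle Z_t,f\rangle/\E(N_t)-W_t\pi(f)\to 0$ in $\mathbb L^2$, and it spares you the separate computation of the cross-moment. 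The one ingredient you allude to but do not name explicitly, and which the paper singles out as the crux (citing \cite{BDMT}), is that for two uniformly chosen individuals in $V_t$ the time $\beta(u)$ of their most recent common ancestor remains tight as $t\to\infty$; this is what makes $\E_{X^{(u,k)}_{\beta(u)}}\big(f(Y_{t-\beta(u)})-\pi(f)\big)\to 0$ and allows the dominated-convergence step in the off-diagonal term to go through.
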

\noindent This result  implies in particular that for such a function $f$,
\begin{equation}
\lim_{t\rightarrow +\infty} \E\big[f(X_t^{U(t)})\,|\, N_t>0\big]=\pi(f),\label{equationintro2}
\end{equation}where $U(t)$ stands for an individual chosen at random in the
set $V_t$ of individuals alive at time $t$. \\
Condition  $(\ref{limf})$ deals with the ergodic behavior of $Y$ and will be obtained for regular classes of functions $f$, see below for an example.  

\begin{proof}[Main ideas of the proof]
Let $f$  be a non negative function  bounded by $1$ and define 
$$G^i_t:=f(X_t^i)- \pi(f)$$
 and let us prove that
$$A_t:=\E\left( \left( \sum_{i \in V_t} G_t^i\right)^2 \right) \ll \E(N_t)^2$$
Indeed we can write
$A_t=B_t+C_t$, where 
$$B_t:=\E\left( \sum_{i \in V_t} (G_t^i)^2 \right) \quad \hbox{ and } \quad
C_t:=\E\left(  \sum_{i \ne j \in V_t} G_t^i G_t^j \right).$$
We easily remark  from \eqref{moy-sc} that 
$B_t \leq \E(N_t) \ll \E(N_t)^2$.
Let us now deal with $C_t$ and use the most recent common ancestor of $i$ and $j$:
\Bea
C_t = \E\left(\sum_{\substack{u, (u,k_1), (u,k_2) \in \mathcal I,  \\  k_1\ne k_2}} \One_{\{\beta(u)<t\}}\,\E\left(  \sum_{ i \in V_t : i\succeq (u,k_1)} \sum_{j \in V_t : j\succeq (u,k_2)} \E\left(G_t^i G_t^j \big\vert\, \beta(u), X^{(u,k_1)}_{\beta(u)},X^{(u,k_2)}_{\beta(u)}\right) \right) \right) \\
\Eea
The key point is that on the event $\{\beta(u)<t, \ (u,k_1) \in \mathcal I, (u,k_2) \in \mathcal I\}$,
\Bea
&& \sum_{ i \in V_t : i\succeq (u,k_1)} \sum_{j \in V_t : j\succeq (u,k_2)} \E\left(G_t^i G_t^j \big\vert \,\beta(u), X^{(u,k_1)}_{\beta(u)},X^{(u,k_2)}_{\beta(u)} \right) \\
&&\qquad \qquad = \sum_{ i \in V_t : i\succeq (u,k_1)}  \E\left(G_t^i  \big\vert \,\beta(u),  X^{(u,k_1)}_{\beta(u)}\right) \times \sum_{j \in V_t : j\succeq (u,k_2)} \E\left( G_t^j \big\vert \,\beta(u), X^{(u,k_2)}_{\beta(u)}\right) 
\Eea
by the branching property. Moreover  the many-to-one formula  \eqref{defYt}
ensures that
\Bea
\sum_{ i \in V_t : i\succeq uk_1} \E\left(G_t^i  \big\vert\,  \beta(u), X^{(u,k_1)}_{\beta(u)}\right)&=&\E(N_t)\E_{X^{(u,k_1)}_{\beta(u)}} \left(f(Y_{t-\beta(u)})-\mu(f)\right)
\Eea
on the event $\{\beta(u)<t, \ (u,k_1) \in \mathcal I, (u,k_2) \in \mathcal I\}$. The  convergence \eqref{limf} ensures that the second term in the right-hand side tends to zero for $\beta(u)$ fixed. This convergence depends on the initial condition. Nevertheless  this difficulty can be overcome by  proving (see \cite{BDMT}) that  the common ancestor of two individuals lives almost-surely at the beginning of the continuous time Galton-Watson tree. This fact also allows  to sum over $(u,k_1) \in \mathcal I, (u,k_2) \in \mathcal I$ and obtain that
$\, C_t \ll\E(N_t)^2$
by dominated convergence arguments. Recalling that $N_t/\E(N_t)$ converges to $W$ in $\mathbb{L}^2$ yields the result.
\end{proof}

\subsection{Application  to splitting diffusions}
\me For the sake of simplicity, we assume in this section that the branching events are binary ($p(\ud k)=\delta_2(\ud k)$), so that the genealogical tree is the Yule tree. We describe  a population of infected cells undergoing a binary division, as in  the previous section for constant division rates. When a division occurs, 
 a random fraction $F$
is inherited by a daughter cell and the rest by the other daughter cell. But in contrast with the previous section, the process $X$ may not be a branching process, which allows for example to take into account ressources limitation for the parasites living in the cell. Here, $X$ is a diffusion with  infinitesimal generator
$$Lf(x)=r(x) f'(x)+\frac{\sigma(x)^2}{2} f''(x)$$
and we require the ergodicity of the auxiliary process $Y$. We refer to \cite{BDMT} for other applications, such as cellular aging.

\me The  infinitesimal generator of the auxiliary process $Y$ is characterized for $f\in
\Co^2_b(\R,\R)$ by:
\begin{align}
Af(x) 
= &  r(x)f'(x)+\frac{\sigma(x)^2}{2} f''(x)+2\tau \int_0^1 \big(f(\theta x)-f(x)\big)\P(F \in d\theta).
\end{align}

\begin{Prop}\label{propapplimeyntweedie}
  Assume that $Y$ is  a Feller process which is  irreducible, i.e. there exists  a probability measure $\nu$ on $\mathbb R$ such that for any measurable set $B$ and $x\in \mathbb R$, 
$$\nu(B)>0 \Rightarrow \int_0^{\infty} \P_x( Y_t\in B) \ud t>0. $$
Assume also that there exists $K\geq 0$, such that for every $|x|>K$, $r(x)<\tau'\vert x \vert$  for some   $\tau'<\tau$. \\
Then,  $Y$   is
  ergodic  with stationary  probability  $\pi$ and   we have  
$$\frac{\ind_{\{N_t>0\}}}{N_t} \#\{ i \in V_t : X_t^i \in A \} \stackrel{t\rightarrow \infty}{\longrightarrow} \pi (A)$$
for every  Borelian set $A$ such that $\pi(\partial A)=0$ and $\partial A$ is the boundary of $A$.
\end{Prop}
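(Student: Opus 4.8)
The plan is to deduce Proposition~\ref{propapplimeyntweedie} from Theorem~\ref{thLGNannonceintro} by verifying that the hypothesis~\eqref{limf} holds for a suitable class of functions. The strategy has two parts: first, establish ergodicity of the auxiliary process $Y$ with a unique stationary distribution $\pi$; second, upgrade the convergence $\E_x(f(Y_t))\to\pi(f)$ from test functions $f$ to indicator functions of Borel sets $A$ with $\pi(\partial A)=0$, and then invoke the law of large numbers of Theorem~\ref{thLGNannonceintro}.

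First I would prove ergodicity using the Meyn--Tweedie theory of stability for continuous-time Markov processes. The Feller property plus irreducibility (as assumed) give that all compact sets are, up to the support hypothesis on $\nu$, petite sets. The key analytic input is a Lyapunov (Foster) drift condition: I would test the generator $A$ on $V(x)=1+x^2$ (or $V(x)=1+|x|$), using $Af(x)=r(x)f'(x)+\tfrac{\sigma(x)^2}2 f''(x)+2\tau\int_0^1(f(\theta x)-f(x))\P(F\in d\theta)$. For $V(x)=x^2$ the jump part contributes $2\tau(\E(F^2)-1)x^2$, which is strictly negative since $\E(F^2)<1$ (as $\P(F\in(0,1))>0$ and $F\overset{d}{=}1-F$), while the drift part contributes $2x\,r(x)$, which for $|x|>K$ is bounded by $2\tau'x^2$ with $\tau'<\tau$; choosing the combination so that $2\tau'+2\tau(\E(F^2)-1)<0$ outside a compact set yields $AV(x)\le -cV(x)+b\,\One_C(x)$ for some constants $c,b>0$ and a compact $C$. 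Together with irreducibility and the Feller property this gives, by the standard positive-Harris/ergodicity theorems (Meyn--Tweedie, Down--Meyn--Tweedie), the existence of a unique invariant probability $\pi$ and convergence $\P_x(Y_t\in\cdot)\to\pi$ in total variation for every $x$. In particular~\eqref{limf} holds for every bounded measurable $f$ — this is stronger than what Theorem~\ref{thLGNannonceintro} requires, and total variation convergence immediately covers indicators of arbitrary Borel sets, so the restriction $\pi(\partial A)=0$ is in fact not even needed under total variation convergence; I would nonetheless state it as in the Proposition since only weak ergodicity may be available if one works with a weaker Lyapunov function.

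Given ergodicity, the conclusion is essentially immediate: apply Theorem~\ref{thLGNannonceintro} with $f=\One_A$ (or, if one insists on continuous test functions, approximate $\One_A$ from above and below by continuous functions and use $\pi(\partial A)=0$ to sandwich) to obtain
\[
\frac{\One_{\{N_t>0\}}}{N_t}\#\{i\in V_t : X_t^i\in A\}=\frac{\One_{\{N_t>0\}}}{N_t}\sum_{i\in V_t}\One_A(X_t^i)\ \xrightarrow[t\to\infty]{}\ \One_{\{W>0\}}\pi(A)
\]
in $\P_\mu$-probability. On the survival event $\{W>0\}$ (which under the supercriticality assumption $m>1$ has positive probability, cf. Proposition~\ref{moy}) this is the stated convergence of empirical trait distributions to $\pi$; one normalizes by $N_t$ rather than $\E(N_t)$ using that $N_t/\E(N_t)\to W$ a.s.\ and in $\mathbb L^2$ with $W>0$ on survival.

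The main obstacle is the verification of the Foster--Lyapunov drift condition together with the petiteness of compact sets, i.e.\ turning the qualitative irreducibility hypothesis into the quantitative minorization needed by Meyn--Tweedie; the interplay between the unbounded diffusion coefficient $\sigma$, the at-most-linear drift $r$ outside a compact set, and the contractive multiplicative jumps has to be balanced carefully, and one must also check the non-explosion of $Y$ (which follows from the same Lyapunov function, as in Corollary~\ref{xor-exist} and Lemma~\ref{extinction}). Everything downstream — the passage from $C_b$ test functions to Borel indicators and the application of the many-to-one/law-of-large-numbers machinery — is routine once ergodicity is in hand, and I would refer to \cite{BDMT} for the detailed estimates.
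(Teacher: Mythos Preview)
Your approach is essentially identical to the paper's: a Meyn--Tweedie Foster--Lyapunov drift condition for ergodicity of $Y$, followed by Theorem~\ref{thLGNannonceintro}. One correction is needed, however: your primary choice $V(x)=1+x^2$ does not work under the stated hypotheses. First, $AV$ then contains the diffusion term $\sigma(x)^2$, on which no growth assumption is made, so it cannot be absorbed. Second, even with $\sigma$ bounded, the jump contribution to $AV$ is $2\tau(\E(F^2)-1)x^2$ while the drift bound is $2\tau' x^2$, so the required inequality becomes $\tau'<\tau(1-\E(F^2))$, strictly stronger than the assumed $\tau'<\tau$. The paper instead uses your alternative $V(x)=|x|$ (regularized near $0$): then $V''=0$ away from the origin so $\sigma$ drops out entirely, and since $\E(F)=1/2$ the jump term is exactly $2\tau|x|(\E(F)-1)=-\tau|x|$, giving $AV(x)=\mathrm{sign}(x)\,r(x)-\tau|x|$ for $|x|>\varepsilon$, so the drift condition $AV\le -\eta V + d$ follows precisely from $\tau'<\tau$. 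With this adjustment your argument goes through as written.
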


\begin{proof}
  Once    we    check    that    $Y$    is    ergodic,    the second part comes from  Theorem \ref{thLGNannonceintro}.
The ergodicity of $Y$ is based on Theorems 4.1 of \cite{meyntweedieII} and 6.1 of \cite{meyntweedie}. Since $Y$ is Feller and irreducible, the process $Y$ admits a unique invariant probability measure $\pi$ and is exponentially ergodic provided  there exists a positive measurable function $\,V\,$ such that $\lim_{x\rightarrow \pm \infty}V(x)=+\infty$ and for which:
\begin{align}
\exists c>0,\, d\in \R,\, \forall x\in \R,\, AV(x)\leq -cV(x)+d.\label{conditiondeLyapounov}
\end{align}
For  $V(x)=|x|$  regularized   on  an  $\varepsilon$-neighborhood  of  0
($0<\varepsilon<1$), we have:
\begin{align}
\forall |x|>\varepsilon, \, AV(x)= &\mbox{sign}(x) r(x)+2\tau |x| \E( F -1)=\mbox{sign}(x)r(x)-\tau |x|,\label{applifosterlyapunov}
\end{align}as the distribution of $F$ is symmetric with respect to $1/2$. Then, by assumption, there exist $\eta>0$ and $K>\varepsilon$ such that 
\begin{align}
\forall x\in \R,\, AV(x)\leq -\eta V(x)+\big(\sup_{|x|\leq K}|r(x)|+\tau K\big)\ind_{\{|x|\leq K\}}.\label{critereLyapounovexemple}
\end{align}This implies (\ref{conditiondeLyapounov}) and  the geometric ergodicity gives us that
\begin{equation*}
\exists \beta>0,\, B<+\infty,\,\forall t\in \R_+,\, \forall x\in \R,\, \sup_{g \,/\, |g(u)|\leq
  1+|u|}\big|\E_x( g(Y_t))-\langle \pi,g\rangle\big|\leq B(1+|x|)\expp{-\beta t}.
\end{equation*}
The proof is complete.
\end{proof}

\subsection{Some extensions}

\me Following Section \ref{LPA}, we could consider   a model for cell division with parasites where the growth of parasites is limited by the resources for the cells. The Markovian dynamics 
of the parasite population size could be described by a logistic Feller diffusion process. Since this process  goes to extinction almost surely (or to a finite positive limit if the process is deterministic), Proposition \ref{propapplimeyntweedie} may be applied to derive the asymptotic distribution of the infection among the cell population. The construction of the model and the proofs are left to the reader. 

\bi
In the other hand, let us note  that the many-to-one formula \eqref{defYt} holds for $f$ depending on time. Therefore the  large numbers law (Theorem  \ref{thLGNannonceintro})
can be extended to the case where $Y$ isn't ergodic as soon as  we can find some renormalization  $g_t$ such  $g_{t}(Y_{t})$ satisfies \eqref{limf}. We refer to \cite{BDMT} 
for an application when $X$ is a branching L\'evy process and in particular we recover the classical central limit theorem for  branching  Brownian motions.    

\section{Appendix : Poisson point measures}
\noindent In this appendix, we summarize  the main definitions and results concerning the Poisson point  measures. The reader can consult the two main books   by Ikeda-Watanabe \cite{SDEsApp1} and  by Jacod-Shiryaev \cite{Jacod} for more details.

\begin{Def}
Let $(E,\EE)$ be a measurable space and $\mu$ a $\sigma$-finite measure  on this space.  A (homogeneous)  Poisson point measure $N$  with intensity $\, \mu(dh) dt\,$ on $\mathbb{R}_{+}\times E$ is a $(\mathbb{R}_{+}\times E, {\cal B}(\mathbb{R}_{+})\otimes \EE)$-random measure  defined on a probability space $(\Omega, {\cal F}, \mathbb{P})$ which satisfies the following properties:
\begin{enumerate}
\item $N$ is a counting measure: $\forall \widehat{A}\in {\cal B}(\mathbb{R}_{+})\otimes \EE$, $\forall \omega\in \Omega$, $N(\omega, \widehat{A}) \in \mathbb{N}\cup \{+\infty\}$.

\item $\forall \omega\in \Omega$, $N(\omega, \{0\}\times E) = 0$: no jump at time $0$.

\item $\forall   \widehat{A}\in {\cal B}(\mathbb{R}_{+})\otimes \EE$, $\E(N( \widehat{A})) = \nu(\widehat{A})$, where $\nu(dt,dh) = \mu(dh) dt\,$.

\item If  $\widehat{A}$ and $ \widehat{B}$ are disjoint in ${\cal B}(\mathbb{R}_{+})\otimes \EE$ and if $\nu(\widehat{A})<+\infty, \nu(\hat{B})<+\infty$, then the random variables $N(\widehat{A})$ and $N(\widehat{B})$ are independent. 
\end{enumerate}
\end{Def}

\me The existence of such a  Poisson point measure with intensity $\mu(dh)dt$ is proven in \cite{Jacod}, for any $\sigma$-finite measure $\mu$ on $(E,\EE)$. 

\me 
Let us remark that for any $A\in \EE$ with $\mu(A)<\infty$ the process defined by
$$N_{t}(A) = N((0,t]\times A)$$ is a Poisson process with intensity 
$\mu(A)$. 

\begin{Def} The filtration $({\cal F}_{t})_{t}$ generated by $N$ is given by
$${\cal F}_{t}= \sigma(N((0,s]\times A), \forall s\leq t, \forall A \in \EE).$$
If $\widehat{A}\in (s,t]\times \EE$ and $\nu(\widehat{A})<\infty$, then $N(\widehat{A})$ is independent of ${\cal F}_{s}$.
\end{Def}

\bi
Let us first assume that the measure $\mu$ is finite on $(E,\EE)$. Then $(N_{t}(E), t\geq 0)$ is a Poisson process with intensity $\mu(E)$. The point measure is associated with a compound Poisson process. Indeed, let us write
$$\mu(dh) = \mu(E)\, {\mu(dh)\over \mu(E)},$$ the decomposition of the measure $\mu$ as the product of the jump rate $\mu(E)$ and the jump amplitude law $ {\mu(dh)\over \mu(E)}$. Let us fix $T>0$ and introduce $T_{1}, \ldots, T_{\gamma}$ the jump times of the process $(N_{t}(E), t\geq 0)$ between $0$ and $T$. We know that the jump number  $\gamma$ is a Poisson variable with parameter $T \mu(E)$. Moreover, conditionally on $\gamma$, $T_{1}, \ldots, T_{\gamma}$, the jumps $(U_{n})_{n=1,\ldots, \gamma}$ are independent with the same law $ {\mu(dh)\over \mu(E)}$. We can write in this case
$$N(dt, dh) = \sum_{n=1}^\gamma
 \delta_{(T_{n}, U_{n})}.$$
 Therefore, one can define for any measurable function $G(\omega, s, h)$ defined on $\Omega\times \mathbb{R}_{+} \times E$ the random variable \ben
 \int_{0}^T\int_{E} G(\omega, s, h) N(\omega, ds,dh) =\, \sum_{n=1}^\gamma
G(\omega, T_{n}, U_{n}).\een
In the following, we will forget the $\omega$. Let us remark that $T\longrightarrow  \int_{0}^T\int_{E} G(s, h) N( ds,dh)$  is a finite variation process which is  increasing if $G$ is positive. A main example is the case where $G(\omega, s, h)=h$. Then $$X_{T}=\int_{0}^T\int_{E}  h\, N(ds,dh) =\, \sum_{n=1}^\gamma
U_{n} = \sum_{s\leq T} \Delta X_{s}$$  is the sum of the jumps between $0$ and $T$. 

\bi Our aim now is to generalize the  definition of the integral of $G$ with respect to $\,N$ when  $\mu(E)=+\infty$. In this case, one can have an accumulation of jumps during the finite time interval $[0,T]$ and the counting measure $N$ is associated with  a countable set of points:
$$N = \sum_{n\geq 1} \delta_{(T_{n},U_{n})}.$$ We need additional properties on the process $G$.  

\noindent Since $\mu$ is $\sigma$-finite, there exists an increasing  sequence $(E_{p})_{p\in \mathbb{N}}$ of subsets of $E$ such that $\mu(E_{p})<\infty$ for each $p$ and $E=\cup_{p} E_{p}$.  As before we can define $\int_{0}^T\int_{E_{p}} G(s, h) N( ds,dh)$ for any $p$.

\me
We introduce the predictable $\sigma$-field ${\cal P}$ on $\Omega\times \mathbb{R}_{+}$ (generated by all left-continuous adapted processes) and define a predictable process $(G(s,h), s\in \mathbb{R}_{+}, h\in E)$ as a ${\cal P}\otimes \EE$ measurable process. 
\begin{Thm} Let us consider a predictable process  G(s, h) and assume that 
\be
\label{H-integ}
\E\left(\int_{0}^T\int_{E} |G(s, h)| \mu(dh)ds\right)<+\infty.
\ee
1) The sequence of random variables $\left(\int_{0}^T\int_{E_{p}} G(s, h) N(ds,dh)\right)_{p}$ is Cauchy in $\mathbb{L}^{1}$ and converges to a  $\mathbb{L}^{1}$-random variable that we denote by $\,\int_{0}^T\int_{E} G(s, h) N(ds,dh)$.  It's an increasing process if $G$ is non-negative. Moreover,
we get 
$$
\E\left(\int_{0}^T\int_{E} G(s, h) N(ds,dh)\right)=\E\left(\int_{0}^T\int_{E} G(s, h) \mu(dh)ds\right)
$$

\me 2) The process $M=(\int_{0}^t\int_{E} G(s, h) N(ds,dh)-\int_{0}^t\int_{E} G(s, h) \mu(dh)ds, t\leq T)$ is a martingale.

\noindent The random measure 
$$\widetilde N(ds,dh) = N(ds,dh) - \mu(dh)ds$$
is called the compensated martingale-measure of $N$.

\me 3) If we assume moreover that \be
\label{H-integ2}
\E\left(\int_{0}^T\int_{E} G^2(s, h) \mu(dh)ds\right)<+\infty,
\ee
then the martingale $M$ is square-integrable with quadratic variation 
$$
\langle M \rangle_{t} = \int_{0}^t\int_{E} G^2(s, h) \mu(dh)ds.$$

\end{Thm}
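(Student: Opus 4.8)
The plan is to build the stochastic integral against a Poisson point measure with $\sigma$-finite intensity by exhaustion, i.e.\ reducing to the finite-intensity case already understood (compound Poisson processes), and then passing to the limit along the increasing sequence $E_p \uparrow E$. First I would fix a predictable process $G$ satisfying the integrability hypothesis \eqref{H-integ} and, for each $p$, note that $\mu(E_p)<\infty$, so that $N$ restricted to $\mathbb{R}_+\times E_p$ is a finite point measure on $[0,T]\times E_p$ and $\int_0^T\int_{E_p} G(s,h)\,N(ds,dh)$ is a finite sum, well defined pathwise. The key elementary identity, valid in the finite case by conditioning on the (Poisson) number of jumps and the i.i.d.\ jump marks (exactly as in the compound Poisson discussion preceding the theorem), is the compensation formula
$$\E\left(\int_0^T\int_{E_p} G(s,h)\,N(ds,dh)\right)=\E\left(\int_0^T\int_{E_p} |G(s,h)|\,\mu(dh)\,ds\right)$$
when $G\geq 0$, and with $G$ replaced by $|G|$ in general; this is where predictability of $G$ enters, since it lets one replace $G(T_n,U_n)$-type expressions by their conditional expectations using that $N$ on $(s,t]\times E_p$ is independent of $\mathcal F_s$.

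Next I would prove the Cauchy property in $\mathbb{L}^1$. For $p<q$ the difference $\int_0^T\int_{E_q\setminus E_p} G(s,h)\,N(ds,dh)$ has $\mathbb{L}^1$-norm at most $\E\big(\int_0^T\int_{E_q\setminus E_p}|G(s,h)|\,\mu(dh)\,ds\big)$ by the compensation formula applied to $|G|\mathbf 1_{E_q\setminus E_p}$, and this tends to $0$ as $p\to\infty$ by dominated convergence from \eqref{H-integ}. Hence the sequence converges in $\mathbb{L}^1$; I define $\int_0^T\int_E G(s,h)\,N(ds,dh)$ as the limit, check it does not depend on the chosen exhausting sequence (two exhaustions have a common refinement), and observe that monotone convergence preserves the inequality-free compensation identity on $E$, giving part 1). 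Monotonicity in $t$ and positivity for $G\geq 0$ are inherited from the finite-intensity approximants.

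For part 2), I would first treat $M^p_t:=\int_0^t\int_{E_p} G\,dN-\int_0^t\int_{E_p} G\,d(\mu\otimes ds)$. In the finite case this is a martingale: for $s\le t$, $\E(M^p_t-M^p_s\mid \mathcal F_s)=0$ follows from the independence of $N$ on $(s,t]\times E_p$ from $\mathcal F_s$ together with predictability of $G$, again via the compensation formula localized to the time interval $(s,t]$. Then $M^p_t\to M_t$ in $\mathbb{L}^1$ uniformly enough (the bound above is uniform in $t\le T$) to pass the martingale property to the limit $M$; this identifies $\widetilde N=N-\mu\otimes ds$ as the compensating martingale measure. For part 3), under the stronger assumption \eqref{H-integ2} I would compute, in the finite-intensity case, $\E((M^p_t)^2)$ by conditioning on the jump configuration: the cross terms vanish and one is left with $\E(\int_0^t\int_{E_p}G^2\,\mu(dh)\,ds)$, so $M^p$ is square integrable with $\langle M^p\rangle_t=\int_0^t\int_{E_p}G^2\,d(\mu\otimes ds)$. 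Finally $M^p_t-M^q_t$ is a martingale with $\mathbb{L}^2$-norm squared equal to $\E(\int_0^t\int_{E_q\setminus E_p}G^2\,d(\mu\otimes ds))\to 0$, so $M^p\to M$ in $\mathbb{L}^2$, Doob's inequality gives uniform convergence on $[0,T]$ in $\mathbb{L}^2$, and the angle bracket passes to the limit; hence $\langle M\rangle_t=\int_0^t\int_E G^2\,\mu(dh)\,ds$.

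I expect the main obstacle to be the careful bookkeeping in the finite-intensity base case: making rigorous the conditioning argument (on the Poisson number of jumps and the i.i.d.\ marks) that yields the compensation formula and the second-moment formula for a \emph{predictable} integrand rather than merely a deterministic one, since predictability is what guarantees the integrand is ``frozen'' relative to the fresh jumps of $N$. Once that is cleanly established the exhaustion and limiting arguments are routine $\mathbb{L}^1$/$\mathbb{L}^2$ estimates; I would present the base case in detail and then state the passage to the limit compactly, invoking dominated convergence from \eqref{H-integ} and \eqref{H-integ2} and Doob's inequality for the uniform-in-time control.
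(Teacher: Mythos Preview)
The paper does not actually give a proof of this theorem: it is stated in the Appendix as a summary of standard material, with the reader referred to Ikeda--Watanabe \cite{SDEsApp1} and Jacod--Shiryaev \cite{Jacod} for details. So there is no ``paper's own proof'' to compare against beyond the setup that precedes the statement, namely the compound Poisson case with $\mu(E)<\infty$ and the choice of an exhausting sequence $(E_p)_p$.

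Your proposal is the standard argument one finds in those references and is correct in outline: reduce to the finite-intensity case where the integral is a finite sum, use the compensation identity there, show the sequence over $E_p$ is Cauchy in $\mathbb{L}^1$ (resp.\ $\mathbb{L}^2$) via dominated convergence from \eqref{H-integ} (resp.\ \eqref{H-integ2}), and pass the expectation, martingale property and quadratic variation to the limit. Your identification of the delicate point is also right: the compensation formula $\E\big(\int_0^t\int_{E_p} G\,dN\big)=\E\big(\int_0^t\int_{E_p} G\,\mu(dh)ds\big)$ for a \emph{predictable} $G$ (rather than deterministic) is where the work lies, and one typically establishes it first for simple predictable integrands $G(\omega,s,h)=\xi(\omega)\One_{(u,v]}(s)\One_A(h)$ with $\xi$ bounded $\mathcal F_u$-measurable, using independence of $N((u,v]\times A)$ from $\mathcal F_u$, then extends by a monotone class argument. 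This is slightly cleaner than the ``conditioning on the number of jumps and i.i.d.\ marks'' description you give, which is really the deterministic-$G$ case; I would make the simple-predictable step explicit. One small cosmetic point: in your displayed compensation formula you have $G$ on the left and $|G|$ on the right; for $G\ge 0$ this is of course the same, but it reads as a typo.
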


\noindent Let us remark that when \eqref{H-integ} holds, the random integral $\int_{0}^t\int_{E} G(s, h) N(ds,dh)$
can be defined  without the predictability assumption on $H$ but the martingale property of the stochastic integral  $\int_{0}^t\int_{E} G(s, h)  \widetilde N(ds,dh)$ is  only true  under this assumption.

\bi We can improve the condition under which the martingale $(M_{t})$ can be defined. The proof of the next theorem is tricky and consists in studying the $\mathbb{L}^{2}$-limit of the  sequence of martingales  $\int_{0}^t\int_{E_{p}} G(s, h)  \widetilde N(ds,dh)$ as $p$ tends to infinity. Once again, this sequence is Cauchy in $\mathbb{L}^{2}$ and converges to a limit which is a square-integrable martingale. Let us recall that the quadratic variation of a square-integrable martingale $M$ is the unique predictable process $\langle M \rangle$ such that 
$\ M^2 - \langle M \rangle\,$ is a martingale.

\begin{Thm}Let us consider a predictable process  G(s, h)  satisfying \eqref{H-integ2}. Then the process 
$M=(\int_{0}^t\int_{E} G(s, h) \widetilde N(ds,dh), t\leq T)$ is a square-integrable martingale with quadratic variation 
$$
\langle M \rangle_{t} = \int_{0}^t\int_{E} G^2(s, h) \mu(dh)ds.$$

\end{Thm}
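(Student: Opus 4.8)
The plan is to define $M$ as the $\mathbb{L}^2$-limit of the stochastic integrals over the sets $E_p$ exhausting $E$, and then to transfer the quadratic-variation identity to the limit. First I would fix an increasing sequence $(E_p)_p$ with $\mu(E_p)<\infty$ and $E=\cup_p E_p$, and check that the previous theorem (the one requiring both \eqref{H-integ} and \eqref{H-integ2}) applies to $G\One_{E_p}$. Indeed, by the Cauchy--Schwarz inequality in the variables $(s,h)$ and then Jensen's inequality in $\omega$,
$$\E\Big(\int_0^T\int_{E_p}|G(s,h)|\,\mu(dh)ds\Big)\leq \big(T\mu(E_p)\big)^{1/2}\,\E\Big(\int_0^T\int_{E}G^2(s,h)\,\mu(dh)ds\Big)^{1/2}<\infty,$$
so \eqref{H-integ} holds for $G\One_{E_p}$, and \eqref{H-integ2} as well. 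Hence $M^p_t:=\int_0^t\int_{E_p}G(s,h)\,\widetilde N(ds,dh)$ is a well-defined square-integrable martingale on $[0,T]$ with $\langle M^p\rangle_t=\int_0^t\int_{E_p}G^2(s,h)\,\mu(dh)ds$.

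Second, I would show that $(M^p)_p$ is Cauchy in the space of $\mathbb{L}^2$-martingales. For $p\leq q$, linearity of the integral gives $M^q-M^p=\int_0^{\cdot}\int_{E_q\setminus E_p}G\,\widetilde N$, a square-integrable martingale, so combining the isometry from part 3 of the previous theorem with Doob's inequality,
$$\E\Big(\sup_{t\leq T}(M^q_t-M^p_t)^2\Big)\leq 4\,\E\Big(\int_0^T\int_{E_q\setminus E_p}G^2(s,h)\,\mu(dh)ds\Big)\underset{p,q\to\infty}{\longrightarrow}0$$
by dominated convergence, the integrands being dominated by the integrable function $G^2\One_E$. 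Therefore $(M^p)_p$ converges, uniformly on $[0,T]$ in $\mathbb{L}^2$, to a càdlàg square-integrable martingale $M$, which we take as the definition of $\int_0^{\cdot}\int_E G(s,h)\,\widetilde N(ds,dh)$; a further dominated convergence argument shows that this limit is independent of the chosen exhausting sequence.

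Finally, to identify $\langle M\rangle$, I would pass to the limit in the Doob--Meyer decomposition of $(M^p)^2$. Since $M^p_t\to M_t$ in $\mathbb{L}^2$, we get $(M^p_t)^2\to M_t^2$ in $\mathbb{L}^1$; and $\langle M^p\rangle_t=\int_0^t\int_{E_p}G^2\,\mu(dh)ds\to \int_0^t\int_E G^2\,\mu(dh)ds$ in $\mathbb{L}^1$ by monotone convergence. As the martingale property is preserved under $\mathbb{L}^1$-convergence, $M_t^2-\int_0^t\int_E G^2(s,h)\,\mu(dh)ds$ is a martingale on $[0,T]$; and since $t\mapsto\int_0^t\int_E G^2(s,h)\,\mu(dh)ds$ is continuous and adapted, hence predictable, uniqueness of the predictable compensator yields $\langle M\rangle_t=\int_0^t\int_E G^2(s,h)\,\mu(dh)ds$. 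I expect the main obstacle to be the bookkeeping of this limiting procedure: ensuring the $\mathbb{L}^2$-convergence of the martingales is genuinely uniform in time (so the limit is again a càdlàg martingale and not merely an adapted process), that it does not depend on $(E_p)$, and that the quadratic-variation characterization survives the passage to the limit — this last point resting on the facts that an $\mathbb{L}^1$-limit of martingales is a martingale and that the predictable increasing part is unique.
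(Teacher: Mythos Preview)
Your proposal is correct and follows exactly the approach the paper indicates: the paper does not give a detailed proof but only sketches that one takes the $\mathbb{L}^2$-limit of the martingales $\int_0^t\int_{E_p}G(s,h)\,\widetilde N(ds,dh)$, shows the sequence is Cauchy, and identifies the limit as a square-integrable martingale. You have filled in precisely these details --- the Cauchy--Schwarz reduction to the previous theorem, the Cauchy estimate via the isometry and Doob, and the passage to the limit in the Doob--Meyer decomposition --- all of which are sound.
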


\bi If \eqref{H-integ2} is satisfied but not \eqref{H-integ}, the definition of $M$ comes from a $\mathbb{L}^{2}$- limiting argument, as  for the usual stochastic integrals. In this case the quantity $\int_{0}^t\int_{E} G(s, h) N(ds,dh)$ isn't always well defined and we are obliged to compensate.

 \bi
{\bf Example:} Let $\alpha\in (0,2)$. A symmetric $\alpha$-stable process $S$ can be written  
\be
\label{stable}S_{t}= \int_{0}^t\int_{\mathbb{R}} h \One_{\{0<|h|<1\}}\widetilde N(ds,dh) + \int_{0}^t\int_{\mathbb{R}} h \One_{\{|h|\geq 1\}}  N(ds,dh),
\ee
where $N(ds,dh)$ is a Poisson point measure with intensity $\mu(dh) ds = {1\over |h|^{1+\alpha}} dh ds$. 
There is an accumulation of  small jumps and the first term in the r.h.s. of \eqref{stable} is defined as a compensated martingale. The second term corresponds to the big jumps, which are in finite number on any finite time interval. 

\noindent If  $\alpha\in (1,2)$, then $\int h\wedge h^2 \mu(dh)<\infty$ and the process is integrable. If $\alpha\in (0,1)$, we only have that $\int 1\wedge h^2 \mu(dh)<\infty$ and the integrability of the process can fail. 

\bi Let us now consider a stochastic differential equation driven both by a Brownian term and a Poisson point measure. We consider a  random variable $X_{0}$, a Brownian motion $B$ and a Poisson point measure $N(ds,dh)$ on $\mathbb{R}_{+} \times \mathbb{R}$ with intensity $\mu(dh)ds$. Let us fix some measurable functions $b$ and $\sigma$ on $\mathbb{R}$  and   $G(x,h)$   and $K(x,h)$ on $\mathbb{R}\times \mathbb{R}$.

\me We consider a   process $X\in \mathbb{D}(\mathbb{R}_{+}, \mathbb{R})$ such that for any $t>0$, 
\be
\label{SDE-jump}
X_{t}&=& X_{0}+ \int_{0}^t b(X_{s}) ds + \int_{0}^t \sigma(X_{s}) dB_{s} \nonumber \\
&& \qquad \qquad + \int_{0}^t \int_{\mathbb{R}}G(X_{s-}, h) N(ds,dh) + \int_{0}^t \int_{\mathbb{R}}K(X_{s-}, h) \widetilde N(ds,dh).
\ee
\bi To give a sense to the equation, one expects 
  that for any $T>0$,
$$\E\left(\int_{0}^T\int_{\mathbb{R}} |G(X_{s}, h)| \mu(dh)ds\right)<+\infty \ ;\ \E\left(\int_{0}^T\int_{\mathbb{R}} K^2(X_{s}, h) \mu(dh)ds\right)<+\infty.
$$
We refer to \cite{SDEsApp1}  Chapter IV-9 for general existence and uniqueness assumptions (generalizing the Lipschitz continuity assumptions asked in the case without jump).

\bi Let us assume that a solution of  \eqref{SDE-jump} exists. The process $X$ is a left-limited and right-continuous semimartingale. A standard question is to ask when the process $f(X_{t})$ is a semimartingale and to know its Doob-Meyer decomposition. For a smooth function $f
$, there is   an It\^o's formula generalizing the usual one stated for continuous semimartingales.

\me Recall (cf.  Dellacherie-Meyer VIII-25 \cite{Del-Mey}) that for a   function $a(t)$ with bounded variation, the change of variable formula gives that for a $C^1$-function $f$,
$$f(a(t)) = f(a(0)) + \int_{(0,t]}f'(a(s)) da(s) + \sum_{0<s\leq t} (f(a(s) - f(a(s^-
) - \Delta a(s) f'(a(s^-
)).
$$

\me We wish to replace $a$ by a semimartingale.  We have to add smoothness to  $f$ and we will get two additional terms in the formula because of the two martingale terms. As in the continuous case, we assume that the function $f$ is $C^2$. 

 \begin{Thm} (see \cite{SDEsApp1}  Theorem 5.1 in Chapter II).
Let $f$ a $C^2$-function. Then  $f(X)$ is a semimartingale and for any $t$,
\begin{align}
f(X_{t}) 
&= f(X_{0}) + \int_{0}^t f'(X_{s}) b(X_{s}) ds + \int_{0}^t f'(X_{s}) \sigma(X_{s}) dB_{s}+  {1\over 2}\int_{0}^t f''(X_{s}) \sigma^2(X_{s}) ds    \nonumber\\
&\hskip 0.5cm +\int_{0}^t \int_{\mathbb{R}}( f(X_{s-}+G(X_{s-}, h))- f(X_{s-})) N(ds,dh)  \nonumber\\
&\hskip 0.5cm + \int_{0}^t \int_{\mathbb{R}}(f(X_{s-}+K(X_{s-}, h))- f(X_{s-})) \widetilde N(ds,dh) \nonumber\\
&\hskip 0.5cm + \int_{0}^t \int_{\mathbb{R}}\left(f(X_{s}+K(X_{s}, h))- f(X_{s}) - K(X_{s}, h) f'(X_{s})\right) \mu(dh)ds.\label{ito-saut}\end{align}
\end{Thm}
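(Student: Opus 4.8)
The plan is to reduce \eqref{ito-saut} to the classical It\^o formula for continuous semimartingales, by first handling exactly a model with finitely many jumps and then passing to the limit to incorporate the (possibly infinite) activity of the Poisson measure.

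First I would localize. Set $\tau_n=\inf\{t\ge 0 : |X_t|\ge n\}$; by the moment estimates available for solutions of \eqref{SDE-jump} (in the spirit of \eqref{BD-moment}), $\tau_n\to+\infty$ a.s., so it suffices to prove \eqref{ito-saut} for the stopped process $X_{\cdot\wedge\tau_n}$ and let $n\to\infty$. On $[0,\tau_n)$ the process stays in a compact set, on which $f,f',f''$ are bounded, giving the bounds $|f(x+y)-f(x)|\le C_n|y|$ and $|f(x+y)-f(x)-yf'(x)|\le C_n|y|^2$ that match the integrability assumptions \eqref{H-integ}--\eqref{H-integ2}. Next, since $\mu$ is $\sigma$-finite, write $\mathbb{R}=\bigcup_p E_p$ with $E_p\uparrow$ and $\mu(E_p)<\infty$, and let $X^{(p)}$ be the solution of \eqref{SDE-jump} with $N$ and $\widetilde N$ replaced by $\One_{\{h\in E_p\}}N$ and $\One_{\{h\in E_p\}}\widetilde N$. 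Under the hypotheses guaranteeing existence and uniqueness (see \cite{SDEsApp1}, Chapter IV), one checks that $\mathbb{E}(\sup_{t\le T\wedge\tau_n}|X^{(p)}_t-X_t|^2)\to 0$ as $p\to\infty$.

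The finite-activity step: for fixed $p$, the contribution of $\int_0^\cdot\int_{E_p}G(X^{(p)}_{s-},h)\,N(ds,dh)$ has only finitely many jumps on $[0,T]$, at times $0<T_1<T_2<\cdots$ with marks $U_k\in E_p$, and between consecutive jumps $X^{(p)}$ is a continuous It\^o semimartingale with diffusion coefficient $\sigma(X^{(p)}_s)$ and drift $b(X^{(p)}_s)-\int_{E_p}K(X^{(p)}_s,h)\,\mu(dh)$. Applying the classical It\^o formula on each interval $[T_k,T_{k+1})$, telescoping, and adding the exact jump increments $f(X^{(p)}_{T_k})-f(X^{(p)}_{T_k^-})$ at each $T_k$ (which equal $f(X^{(p)}_{T_k^-}+G(X^{(p)}_{T_k^-},U_k))-f(X^{(p)}_{T_k^-})$, the $K$-jumps being carried by a disjoint set of marks), one gets \eqref{ito-saut} for $X^{(p)}$ with $N,\widetilde N$ restricted to $E_p$: the sum of $f$-increments over jump times is exactly $\int_0^\cdot\int_{E_p}(f(X^{(p)}_{s-}+G)-f(X^{(p)}_{s-}))N(ds,dh)$, and splitting the $K$-driven part of that integral as $\int\int_{E_p}K\,\widetilde N+\int\int_{E_p}K\,\mu(dh)ds$ and combining with the $-\int_{E_p}K\,\mu(dh)$ of the drift and a first-order Taylor expansion reproduces the compensated $\widetilde N$-integral together with the last term of \eqref{ito-saut}.

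The passage to the limit: letting $p\to\infty$ in the identity for $X^{(p)}$, the two finite-variation integrals converge by dominated convergence using the boundedness of $f',f''$ on the localizing set; the Brownian integral $\int f'(X^{(p)}_s)\sigma(X^{(p)}_s)\,dB_s$ converges in $L^2$ by the It\^o isometry and the $L^2$-convergence of $X^{(p)}$; the $\widetilde N$-integral and the compensator term converge using \eqref{H-integ2} together with the quadratic Taylor bound; and the $N$-integral of $f(X^{(p)}_{s-}+G)-f(X^{(p)}_{s-})$ converges using \eqref{H-integ} and monotone/dominated convergence over $E_p\uparrow\mathbb{R}$. Passing to an a.s. convergent subsequence where needed, we obtain \eqref{ito-saut} for $X_{\cdot\wedge\tau_n}$, and $n\to\infty$ finishes; in particular the right-hand side displays $f(X)$ as a semimartingale. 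The main obstacle is this last step: controlling the compensated and raw jump integrals uniformly in the localization when $f$ is only $C^2$ --- which is exactly why the stopping times $\tau_n$ are introduced, to convert $C^2$ regularity into the linear and quadratic increment bounds matching \eqref{H-integ}--\eqref{H-integ2}. A shorter alternative would be to invoke the general It\^o formula for semimartingales of \cite{Jacod}, after identifying the continuous martingale part $X^c$ (with $\langle X^c\rangle_t=\int_0^t\sigma^2(X_s)\,ds$) and the jump random measure of $X$, the remaining work being the bookkeeping that rewrites $\int f'(X_{s-})\,dX_s$ and $\sum_{s\le t}(f(X_s)-f(X_{s-})-f'(X_{s-})\Delta X_s)$ in the stated form.
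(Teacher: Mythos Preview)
The paper does not give its own proof of this statement: it is stated in the Appendix with a reference to Ikeda--Watanabe (Theorem~5.1, Chapter~II) and then used. So there is no ``paper's approach'' to compare with; what you are really doing is sketching a proof of the cited result.

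Your sketch contains a genuine detour that creates a gap. You introduce auxiliary processes $X^{(p)}$ solving \emph{modified} SDEs (with $N,\widetilde N$ restricted to $E_p$) and then need the convergence $\E\big(\sup_{t\le T\wedge\tau_n}|X^{(p)}_t-X_t|^2\big)\to 0$. That is an SDE stability theorem in its own right, not a consequence of anything already on the table; and even granting it, your localization does not mesh with it: $\tau_n$ is defined from $X$, so $X^{(p)}$ need not stay in $\{|x|\le n\}$ on $[0,\tau_n]$, and the bounds $|f(x+y)-f(x)|\le C_n|y|$, $|f(x+y)-f(x)-yf'(x)|\le C_n y^2$ you rely on are not available for $X^{(p)}$ on that interval. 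A second slip is in the finite-activity bookkeeping: on $E_p$ both the $G$- and the $K$-terms jump with the \emph{same} Poisson atoms, so the jump of $X^{(p)}$ at $(T_k,U_k)$ is $G(X^{(p)}_{T_k^-},U_k)+K(X^{(p)}_{T_k^-},U_k)$ in general; the phrase ``the $K$-jumps being carried by a disjoint set of marks'' is an extra hypothesis, not a consequence of the setup in \eqref{SDE-jump}.

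The clean route is the one you mention only as an afterthought. One does \emph{not} approximate $X$ by other processes; one works with $X$ itself, which is already a semimartingale. Either (i) invoke the general It\^o formula for semimartingales (this is exactly the cited Theorem~5.1 of \cite{SDEsApp1}), identify $\langle X^c\rangle_t=\int_0^t\sigma^2(X_s)\,ds$ and the jump measure of $X$, and expand $\int f'(X_{s-})\,dX_s$ and $\sum_{s\le t}\big(f(X_s)-f(X_{s-})-f'(X_{s-})\Delta X_s\big)$ using \eqref{SDE-jump}; or (ii) carry out your ``between jumps'' argument on $X$ directly, by truncating the \emph{jumps of $X$} (not the SDE), i.e.\ removing from $X$ the finitely many jumps of size $>\varepsilon$, applying the continuous It\^o formula on the inter-jump intervals, reinserting the large jumps exactly, and letting $\varepsilon\downarrow 0$ under the integrability assumptions. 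Either way the approximating-SDE step and its attendant stability lemma disappear.
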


\me
\begin{Cor}
\label{GenEt}
Under suitable integrability and regularity conditions on $b$, $\sigma$, $G$, $K$ and $\mu$, the process $X$ is a Markov process with extended generator: for any $C^2$-function $f$, for $x\in \mathbb{R}$, 
\be
\label{gen-jump}
Lf(x)& =& b(x) f'(x) + {1\over 2}\sigma^2(x)\f''(x) + \int_{\mathbb{R}} \left(f(x+G(x,h))-f(x)\right)\mu(dh) \nonumber\\
&&+ \int_{\mathbb{R}} \left(f(x+K(x,h))-f(x) - K(x,h) f'(x)\right)\mu(dh).
\ee
\end{Cor}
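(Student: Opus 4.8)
\textbf{Proof plan for Corollary \ref{GenEt}.}

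The plan is to derive the generator $L$ directly from the It\^o formula \eqref{ito-saut}, by isolating the finite-variation (absolutely continuous) part of the semimartingale $f(X_t)$ and recognizing it as $\int_0^t Lf(X_s)\,ds$. First I would start from \eqref{ito-saut} and rewrite the term involving $N(ds,dh)$ (the jumps coming from $G$) by compensating it: under the integrability assumption $\E\big(\int_0^T\int_{\R}|f(X_s+G(X_s,h))-f(X_s)|\,\mu(dh)\,ds\big)<\infty$, which follows from the stated regularity/integrability conditions on $f$, $G$ and $\mu$ together with the moment bounds on $X$, we may split
$$\int_0^t\!\!\int_{\R}\big(f(X_{s-}+G(X_{s-},h))-f(X_{s-})\big)N(ds,dh)$$
into the martingale $\int_0^t\int_{\R}(f(X_{s-}+G(X_{s-},h))-f(X_{s-}))\widetilde N(ds,dh)$ plus the drift $\int_0^t\int_{\R}(f(X_s+G(X_s,h))-f(X_s))\mu(dh)\,ds$. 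Likewise the Brownian integral $\int_0^t f'(X_s)\sigma(X_s)\,dB_s$ and the $\widetilde N$-integral coming from $K$ are (local, hence true under the integrability hypotheses) martingales. Collecting all martingale terms into a single process $M^f_t$, what remains is
$$f(X_t)=f(X_0)+\int_0^t Lf(X_s)\,ds+M^f_t,$$
with $Lf$ exactly as in \eqref{gen-jump}: the $b f'$ and $\tfrac12\sigma^2 f''$ terms from the continuous part, the $\int_{\R}(f(x+G(x,h))-f(x))\mu(dh)$ term from compensating the $G$-jumps, and the $\int_{\R}(f(x+K(x,h))-f(x)-K(x,h)f'(x))\mu(dh)$ term already present in \eqref{ito-saut}.

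Next I would invoke the Markov property of $X$, which is part of the hypotheses (it follows from existence and uniqueness in law of the solution of \eqref{SDE-jump} under Lipschitz-type conditions, as referenced via Ikeda--Watanabe \cite{SDEsApp1}, Chapter IV-9). Given that $M^f$ is a martingale and that $X$ is Markov, the Dynkin formula $\E_x(f(X_t))-f(x)=\E_x\big(\int_0^t Lf(X_s)\,ds\big)$ holds, so dividing by $t$ and letting $t\to 0$ (using right-continuity of $s\mapsto Lf(X_s)$ at $s=0$ and dominated convergence, valid because $|Lf|$ is controlled by the integrability assumptions) identifies $Lf(x)=\lim_{t\to 0}\tfrac1t(\E_x(f(X_t))-f(x))$. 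This shows $L$ is the extended generator in the sense that $f(X_t)-f(X_0)-\int_0^t Lf(X_s)\,ds$ is a martingale for every $f\in C^2$ satisfying the integrability conditions.

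The main obstacle is purely a matter of integrability bookkeeping: one must verify that each of the five integrals produced by \eqref{ito-saut} is separately well defined and that the ones declared to be martingales genuinely are (not merely local martingales), which requires quantitative control of $f(X_s+G(X_s,h))-f(X_s)$, $f(X_s+K(X_s,h))-f(X_s)-K(X_s,h)f'(X_s)$ and $f'(X_s)\sigma(X_s)$ in $L^1$ or $L^2$ uniformly on $[0,T]$. Under the ``suitable integrability and regularity conditions'' hypothesized in the statement — e.g. $f$ with bounded first two derivatives, $|G(x,h)|,|K(x,h)|$ dominated by an $h$-integrable function times a moderate function of $x$, and the usual moment estimates on $\sup_{t\le T}|X_t|$ obtained as in Proposition \ref{mom-1} — these are all routine, so the compensation-plus-Dynkin argument goes through without further difficulty. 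I would not carry out these estimates in detail, merely cite the moment bounds and the reference \cite{SDEsApp1}.
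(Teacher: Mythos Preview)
Your proposal is correct and follows exactly the approach implied by the paper: the corollary is stated there without proof, as an immediate consequence of the It\^o formula \eqref{ito-saut}, and your argument (compensate the $N$-integral, collect the martingale terms, read off the drift as $\int_0^t Lf(X_s)\,ds$) is precisely the intended derivation. The additional step you describe---dividing by $t$ and letting $t\to 0$ via Dynkin's formula---is more than the paper bothers with, since the ``extended generator'' is characterized by the martingale problem rather than the pointwise limit, but it does no harm.
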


\bi
{\bf Example:} let us study the case where
 $$X_{t}=  X_{0}+ \int_{0}^t b(X_{s}) ds + \int_{0}^t \sigma(X_{s}) dB_{s}+ S_{t},$$
where $S$ is the stable process introduced in \eqref{stable}. Let us  consider a $C^2$-function $f$. Then $f(X)$ is a semimartingale and writes
\begin{align}
f(X_{t})& =  f(X_{0}) + M_{t} +\int_{0}^t f'(X_{s}) b(X_{s}) ds +  {1\over 2}\int_{0}^t f''(X_{s}) \sigma^2(X_{s}) ds    \nonumber\\
&\hskip 0.8cm +\int_{0}^t \int_{\mathbb{R}}( f(X_{s-}+h \One_{\{|h|>1\}})- f(X_{s-})){1\over |h|^{1+\alpha}} dhds \nonumber\\
&\hskip 0.8cm + \int_{0}^t \int_{\mathbb{R}}\left(f(X_{s-}+h\One_{\{|h|\leq1\}})- f(X_{s-}) - h\One_{\{|h|\leq1\}}f'(X_{s-})\right) {1\over |h|^{1+\alpha}} dhds \nonumber\\
& =  f(X_{0}) + M_{t} +\int_{0}^t f'(X_{s}) b(X_{s}) ds +  {1\over 2}\int_{0}^t f''(X_{s}) \sigma^2(X_{s}) ds  \nonumber\\
&\hskip 0.8cm + \int_{0}^t \int_{\mathbb{R}}\left(f(X_{s-}+h)- f(X_{s-}) - h\One_{\{|h|\leq1\}} f'(X_{s-})\right) {1\over |h|^{1+\alpha}} dhds, \nonumber
\end{align}
where $M$ is a martingale. 

 \vskip 1cm
 \noindent
Let us come back to the general case 
 and apply It\^o's formula \eqref{ito-saut} to $f(x)=x^2$:
\begin{align}
X_{t}^2 
&= X_{0}^2 + \int_{0}^t 2 X_{s} b(X_{s}) ds + \int_{0}^t 2 X_{s-} \sigma(X_{s-}) dB_{s}+  \int_{0}^t  \sigma^2(X_{s}) ds    \nonumber\\
&\hskip 0.5cm +\int_{0}^t \int_{\mathbb{R}}( 2 X_{s-} G(X_{s-}, h)+(G(X_{s-}, h))^2) N(ds,dh)  \nonumber\\
&\hskip 0.5cm + \int_{0}^t \int_{\mathbb{R}}(2 X_{s-} K(X_{s-}, h) + (K(X_{s-}, h))^2) \widetilde N(ds,dh) \nonumber\\
&\hskip 0.5cm + \int_{0}^t \int_{\mathbb{R}} (K(X_{s-}, h))^2 \mu(dh)ds.\label{ito-saut}\end{align}

\bi In the other hand, since 
$$X_{t} = X_{0}+ M_{t} + A_{t},$$
where $M$ is square-integrable and $A$ has finite variation, then
$$X_{t}^2 
= X_{0}^2 + N_{t} + \int_{0}^t 2 X_{s-} dA_{s}  + \langle M\rangle_{t}
.$$
 Doob-Meyer's decomposition allows us to identify the martingale parts and the finite variation parts in the two previous decompositions and therefore
 \ben \langle M\rangle_{t} &=& \int_{0}^t  \sigma^2(X_{s}) ds   + \int_{0}^t \int_{\mathbb{R}} (G^2(X_{s-}, h)+ K^2(X_{s-}, h)) \mu(dh)ds.\een

\end{document}